\newcommand{\IR}{\mathbb{R}}
\newcommand{\IZ}{\mathbb{Z}}
\newcommand{\RR}{\mathcal{R}}
\newcommand{\XX}{\mathcal{X}}
\newcommand{\eps}{\varepsilon}
\newcommand{\ov}[1]{\overline{#1}}
\newcommand{\td}[1]{\widetilde{#1}}
\newcommand{\textQQqq}[1]{\qquad \text{#1} \qquad}
\newcommand{\textQq}[1]{\quad \text{#1} \quad}
\DeclareMathOperator*{\osc}{osc}
\DeclareMathOperator{\sn}{sn}
\DeclareMathOperator{\Int}{Int}
\DeclareMathOperator{\Ric}{Ric}
\DeclareMathOperator{\Rm}{Rm}
\DeclareMathOperator{\id}{id}
\DeclareMathOperator{\inj}{inj}
\DeclareMathOperator{\supp}{supp}
\DeclareMathOperator{\DIV}{div}
\DeclareMathOperator{\proj}{proj}
\newcommand{\tdrrm}{\td{r}_{\Rm}}
\newcommand{\rrm}{r_{\Rm}}
\newcommand{\rrrm}{r_{\Rm, r}}
\newcommand{\EMPTY}[1]{}
\newtheorem*{Claim}{Claim}
\newtheorem*{Claim1}{Claim 1}
\newtheorem*{Claim2}{Claim 2}
\newtheorem*{Claim3}{Claim 3}
\newtheorem*{Claim4}{Claim 4}
\newtheorem*{Claim5}{Claim 5}
\newtheorem*{Claim6}{Claim 6}
\newtheorem*{Claim7}{Claim 7}
\newtheorem*{Claim8}{Claim 8}
\newtheorem*{Claim9}{Claim 9}
\newtheorem*{Claim10}{Claim 10}
\newtheorem*{Claim11}{Claim 11}
\newtheorem{Theorem}{Theorem}[section]
\newtheorem{Lemma}[Theorem]{Lemma}
\newtheorem{Corollary}[Theorem]{Corollary}
\newtheorem{Proposition}[Theorem]{Proposition}
\newtheorem{Definition}[Theorem]{Definition}
\newtheorem*{PropertyA}{Property (A)}
\newtheorem*{PropertyB}{Property (B)}
\newtheorem*{PropertyC}{Property (C)}
\newtheorem*{PropertyD}{Property (D)}
\newtheorem*{PropertyE}{Property (E)}
\newtheorem*{PropertyF}{Property (F)}
\newtheorem*{PropertyG}{Property (G)}
\numberwithin{equation}{section}
\title{Structure theory of singular spaces}
\author{Richard H Bamler}
\thanks{This material is in part based upon work supported by the National Science Foundation under Grant No. DMS-1440140 while the author was in residence at the Mathematical Sciences Research Institute in Berkeley, California, during the Spring 2016 semester.}
\subjclass[2010]{53C21 (Primary), 53C23 (Secondary)}
\address{Department of Mathematics, UC Berkeley, CA 94720, USA}
\email{rbamler@math.berkeley.edu}
\date{\today}
\begin{document}

\begin{abstract}
In this paper we develop a structure theory of Einstein manifolds or manifolds with lower Ricci curvature bounds for certain singular spaces that arise as geometric limits of sequences of Riemannian manifolds.
This theory generalizes the results that were obtained by Cheeger, Colding and Naber in the smooth setting.
In the course of the paper, we will carefully characterize the assumptions that we have to impose on this sequence of Riemannian manifolds in order to guarantee that the individual results hold.

An important aspect of our approach is that we don't need impose any Ricci curvature bounds on the sequence of Riemannian manifolds leading to the singular limit.
The Ricci curvature bounds will only be required to hold on the regular part of the limit and we will not impose any (synthetic) curvature condition on its singular part.

The theory developed in this paper will have applications in the blowup analysis of certain geometric equations in which we study scales that are much larger than the local curvature scale.
In particular, this theory will have applications in the study of Ricci flows of bounded scalar curvature, which we will describe in a subsequent paper.
\end{abstract}

\maketitle
\tableofcontents

\section{Introduction and statement of the main results}
\subsection{Introduction} \label{subsec:Introduction}
Geometric limit and, in particular, blowup arguments have become popular tools in the study of geometric PDEs.
Useful applications of these tools can be found in the analysis of intrinsic objects, such as Einstein metrics and Ricci flows, or extrinsic objects, such as minimal surfaces and mean curvature flow.
The goal behind limit and blowup arguments is often to gain approximate characterizations of solutions of the geometric PDEs under investigation, at small scales.
In a broad sense, the strategy of proof is the following:
One first shows compactness of blowups of these solutions in an appropriate topology, possibly under additional assumptions.
Consequently, any sequence of solutions subconverges to a limit space, which often exhibits additional geometric properties.
If arguments are set up adequately, then these additional properties may be used to analyze these limit spaces more deeply, which may give rise to extra structural information or geometric bounds.
This extra information can then be used to derive geometric bounds or local characterizations of the actual solutions that led to the limit space.

Often, however, the conditions under which a geometric limit can be extracted are rather restrictive.
For example, in many situations it is necessary to assume that the solutions of the geometric PDE under investigation satisfy uniform curvature bounds, in order to ensure that the limit space is smooth.
These restrictions usually confine our freedom to choose blowup scales and basepoints.
In other words, limit arguments often only characterize the behavior of the solution at the ``smallest scale''.
If, on the other hand, we want to consider intermediate blowup scales, then we need to accept that limit spaces have lower regularity, which in turn may complicate their analysis.
In the study of extrinsic geometric PDEs, such as minimal surfaces and mean curvature flow, strategies of looking at an ``intermediate scale'' have been proven useful, for example via removable singularities or the theory of varifolds.

In this paper, we discuss a similar strategy for intrinsic objects, namely Riemannian manifolds.
More specifically, our paper consists of two parts:

In the first part, we establish a compactness property of Riemannian manifolds under relatively weak uniform geometric bounds.
This compactness property is a hybrid of the (metric) Gromov-Hausdorff compactness result and the (smooth) Cheeger-Gromov compactness result.
Recall that Gromov-Hausdorff compactness holds under the --- relatively weak --- condition of uniform total boundedness, but gives us very little control on the regularity of the limit space.
On the other hand, Cheeger-Gromov compactness ensures that the limit space is a Riemannian manifold, but it requires uniform bounds on the Riemannian curvature (and its derivatives) and the injectivity radius.
The compactness property that we will establish in this paper requires much weaker uniform geometric bounds than in Cheeger-Gromov compactness and it guarantees that the limit is differentiable away from a singular set of bounded Minkowski dimension.
We will also characterize the regularity of the limit in further detail, depending on various additional uniform properties that we impose on the sequence of the Riemannian manifolds.

In the second part, we analyze the geometry of the singular limit spaces, assuming that the Einstein equation or a lower bound on the Ricci curvature holds on its regular part.
It is worth mentioning, that on the regular part, the geometry of the limit space is given by a (sufficiently differentiable) Riemannian metric.
Therefore, the Einstein condition or the lower Ricci curvature bound is purely elementary.
Moreover, we will not impose a (synthetic) curvature condition on the singular part of the limit space under investigation.
Assuming this extra condition on the Ricci curvature, we will then carry out the theory of Cheeger, Colding and Naber  (see \cite{Colding-vol-conv, Cheeger-Colding-Cone, Cheeger-Naber-quantitative, Cheeger-Naber-Codim4}).
More specifically, we will show that the following results hold on the singular limit space: Colding's volume stability theorem, Cheeger and Colding's cone rigidity theorem and Cheeger and Naber's structure theory leading to $L^{p < 2}$-curvature bounds in the Einstein case.

The results of this paper can find applications in the study of geometric equations on Riemannian manifolds that guarantee Ricci curvature bounds at scales that are small and below the curvature scale.
An application, which is of particular interest here, is the structure theory of Ricci flows with bounded scalar curvature.
This theory will be developed in a subsequent paper by the author (see \cite{Bamler-Conv-RF}, see also \cite{Bamler:2015yg} for a preprint).

%
%
%
%
%
%
%

\subsection{Geometric compactness and convergence of Riemannian manifolds} \label{subsec:characterlimit}
We will now present our first result of this paper.
For the remainder of this subsection, we consider a sequence $\{ (M_i, g_i, q_i) \}_{i=1}^\infty$ of pointed, complete, $n$-dimensional Riemannian manifolds, each of which has uniformly bounded curvature.
Let $d_{M_i}$ be the induced length metric of each $(M_i, g_i)$.
If we assume that the sequence $\{ (M_i, d_{M_i}, q_i) \}_{i=1}^\infty$ is uniformly totally bounded, then we can pass to a subsequence such that we have Gromov-Hausdorff convergence
\[ (M_i, d_{M_i}, q_i) \xrightarrow{i \to \infty} (X, d_X, q_\infty) \]
to a pointed, complete metric space.
Let us assume from now on that the limit $(X, d_X, q_\infty)$ exists.
In the following, we will analyze in what way uniform bounds on the sequence $\{ (M_i, g_i, q_i) \}_{i=1}^\infty$ affect the regularity of this limit.
These bounds occur in the study of certain geometric PDEs, for example if the $(M_i, g_i)$ are time-slices of Ricci flows with bounded scalar curvature (see \cite{Bamler-Conv-RF, Bamler:2015yg}).

Let us now fix some constants for the remainder of this subsection
\[ A , T > 0 \textQQqq{and} \mathbf{p}_0 > 0. \]
A property that we will naturally need to impose on $\{ (M_i, g_i, q_i) \}_{i=1}^\infty$ is that volumes of distance balls are uniformly bounded from above and below.
In particular, this property implies the uniform total boundedness mentioned above.

\begin{PropertyA}
For any $D < \infty$ and for large $i$ we have the following lower and upper volume bounds on geodesic balls for all $x \in B^{M_i}(q_i, D)$ and all $0 < r < \sqrt{T}$:
\[ A^{-1} r^n < \big| B^{M_i} (x,r) \big| < A r^n. \]
\end{PropertyA}

Next, we wish to ensure that the limit space $(X, d_X)$ is sufficiently differentiable away from a singular set of Minkowski dimension not greater than $\mathbf{p}_0$.
For technical reasons, we would like to require that the metric on the regular set of the limit has regularity $C^4$.
We will therefore coin the following terminology:

\begin{Definition}[curvature radius] \label{Def:curvradius}
Let $(M, g)$ be a (not necessarily complete) Riemannian manifold and let $p \in M$ be a point.
Then we define the \emph{curvature radius $\rrm (p)$ at $p$} to be the supremum over all $r > 0$ such that the ball $B(p,r)$ is relatively compact in $M$ and such that
\[ |{\Rm}| < r^{-2} \textQq{and} |{\nabla \Rm}| < r^{-3} \textQQqq{on} B(p,r). \]
We will moreover define the \emph{modified curvature radius} $\tdrrm (p)$ to be the supremum over all $r > 0$ such that the ball $B(p,r)$ is relatively compact in $M$ and such that
\[ |{\Rm}| < r^{-2}, \quad |{\nabla \Rm}| < r^{-3} \textQq{and} |{\nabla^2 \Rm}| < r^{-4} \textQQqq{on} B(p,r). \]

We will often denote by $\{ a < \rrm < b \}$ or $\{ a < \tdrrm < b \}$ the set of all points $x \in M$ such that $a < \rrm (x) < b$ or $a < \tdrrm (x) < b$, respectively.
\end{Definition}

Note that in related literature, the bounds on $|{\nabla \Rm}|$ and $|{\nabla^2 \Rm}|$ are often not required.
In the case in which $g$ is an Einstein metric or a metric that arises from a Ricci flow, which will be of most interest for us, we can in fact drop this bound and use Shi's estimates instead whenever we need a bound on $|{\nabla \Rm}|$ or $|{\nabla^2 \Rm}|$ (see \cite{MR1010165}).
Furthermore, the reason why we have introduced two different notions of the curvature radius, $\rrm$ and $\tdrrm$, is purely technical and has to do with the fact that under geometric convergence a lower bound on $\tdrrm$ implies a lower bound on $\rrm$ in the limit.

With this terminology at hand, we can finally state the second property that we will impose on the sequence $\{ (M_i, g_i, q_i) \}_{i=1}^\infty$.
This property will guarantee that the singular part in the limit has Minkowski dimension not larger than $\mathbf{p}_0$.
We will later often assume that $\mathbf{p}_0 > 3$, which will allow us to carry out standard analytic arguments on the limit of the $(M_i, g_i, q_i)$, such as gradient estimates for functions that are harmonic on the regular part of $(X, d_X)$.

\begin{PropertyB}
For any $D < \infty$ there is a $C = C(D) < \infty$ such that for large $i$ and all $x \in B^{M_i} (q_i, D)$ and all $0 < r < D$ we have the following bound on the sublevel sets of the modified curvature radius:
\[ \big| \big\{ \tdrrm^{M_i} (\cdot) < sr \big\} \cap B^{M_i} (x,r) \big| < C s^{\mathbf{p}_0} r^n. \]
\end{PropertyB}

The next property will guarantee that any pair of regular points in the limit $(X,d_X)$ can be connected by a curve within the regular part of $(X, d_X)$ whose length is arbitrarily close to their metric distance.

\begin{PropertyC}
For any $\sigma_0, \eta > 0$ and $D < \infty$ there is a $\sigma = \sigma (\sigma_0, \eta, D) > 0$ such that the following is true for sufficiently large $i$ (depending on $\sigma_0, \eta$ and $D$):
Assume that $x,y \in B^{M_i} (q_i, D) \cap \{ \tdrrm^{M_i} > \sigma_0 \}$.
Then there is a smooth curve $\gamma : [0,1] \to M_i$ between $x, y$ such that
\[ \ell^{M_i} (\gamma) < d_{M_i} (x, y) + \eta \]
and such that
\[ \tdrrm^{M_i} (\gamma(s)) > \sigma \textQQqq{for all} s \in [0,1]. \]
\end{PropertyC}

In the following we will always have to rely on the, relatively basic, properties (A)--(C).
In order to carry out more analytic arguments, we will furthermore often assume that we have a uniform log-Sobolev inequality on $\{ (M_i, g_i, q_i) \}_{i=1}^\infty$.
Inequalities of this type occur naturally in the study of Ricci flows (with bounded scalar curvature), using Perelman's monotonicity of the $\mathcal{W}$-functional (see \cite{PerelmanI}).
Log-Sobolev inequalities are useful for us as they can be used to derive a Poincar\'e inequality and Gaussian heat kernel estimates (see for example \cite{Davies_1987} or proof of Theorem \ref{Thm:tame}, item (4), in subsection \ref{subsec:tame}).

\begin{PropertyD}
We have the following uniform log-Sobolev inequality at scales less than $\sqrt{T}$:
For large $i$ and any $0 < \tau \leq 2T$ and any $f \in C^\infty (M_i)$ with
\[ \int_{M_i} (4\pi \tau)^{-n/2} e^{-f} dg_i = 1 \]
we have
\[ \int_{M_i} \big( \tau |\nabla f|^2 + f \big) (4 \pi \tau)^{-n/2} e^{-f} dg_i > - A. \]
\end{PropertyD}

Our analysis of the limit space $(X, d_X)$ will also often rely on Bishop-Gromov volume comparison (see Proposition \ref{Prop:volumecomparison}) and the segment inequality (see Proposition \ref{Prop:segmentinequ}).
Our derivation of these results in the singular setting, will require that every regular point can be connected with almost every other regular point by a minimizing geodesic that entirely lies in the regular part of $(X, d_X)$.
This property of the limit, which we will refer to in this paper as ``mildness'' (see Definition \ref{Def:mild} in subsection \ref{subsec:terminology}), is implied by the following property, which we will sometimes impose on the sequence $\{ (M_i, g_i, q_i) \}_{i=1}^\infty$.

\begin{PropertyE}
For any $\delta, \sigma_0 > 0$ and $D < \infty$ there is a $\sigma = \sigma (\delta, \sigma_0, D) > 0$ such that for any $\eta > 0$ and any sufficiently large $i$ (depending on $\delta, D$ and $\eta$) the following holds: For any $x \in B^{M_i} (q_i, D) \cap \{ \tdrrm^{M_i} ( \cdot ) > \sigma_0 \}$ there is a subset $S \subset B^{M_i} (x, D)$ such that
\[ \big| B^{M_i} (x, D) \setminus S \big| < \delta \]
and such that for any $y \in S$ there is a curve $\gamma : [0,1] \to M_i$ between $x$ and $y$ such that \[ \ell^{M_i} (\gamma) < d_{M_i} (x, y) + \eta \]
and
\[ \tdrrm^{M_i} (\gamma(s)) > \sigma \textQQqq{for all} s \in [0,1]. \]
(Note that the constant $\sigma$ is required to be independent of $\eta$.)
\end{PropertyE}

We remark that property (E) implies property (C).

We will moreover occasionally use cutoff functions with controlled gradient and Laplacian (wherever defined) on $(X, d_X)$ in order to localize certain analytic arguments.
In our paper, such cutoff functions will arise from cutoff function on the manifolds $(M_i, g_i)$ via the following property:

\begin{PropertyF}
For any $i$, any $0 < r < \sqrt{T}$ and any $x \in M_i$ there is a compactly supported function $\phi \in C^\infty_c (B^{M_i} (x,r))$ that only takes values in $[0,1]$, that satisfies $\phi \equiv 1$ on $B^{M_i} (x, r/ A)$ and that satisfies the bounds
\[ |\nabla \phi | < A r^{-1} \textQQqq{and} |\triangle \phi | < A r^{-2}. \]
\end{PropertyF}

Finally, we introduce a property, which we will assume surprisingly rarely.
This property will allow us to deduce curvature bounds based on volume bounds and is well known to hold for smooth Einstein metrics.
Moreover, this property will be used to rule out the existence of singularities at points in the limit $(X, d_X)$ whose tangent cones are almost Euclidean.

\begin{PropertyG}
For any $i$ and any $x \in M_i$ and $0 < r < \sqrt{T}$ for which
\[ \big| B^{M_i} (x, r) \big| > (\omega_n - A^{-1} ) r^n \]
we have $\tdrrm (x) > A^{-1} r$.
Here $\omega_n$ denotes the volume of the standard ball in $n$-dimensional Euclidean space.
\end{PropertyG}

Note that properties (A), (D), (F) and (G) depend on the choice of the constants $A$ and $T$.

Let us finally phrase the exact characterization of the limiting space $(X, d_X, q_\infty)$ of the sequence $\{ (M_i, d_{M_i}, q_i ) \}_{i =1}^\infty$, depending on the properties (A)--(G).
The precise terminology used in the following theorem will be introduced in subsection~\ref{subsec:terminology}.

\begin{Theorem}[convergence to singular space] \label{Thm:basicconvergence}
Assume that the sequence $\{ (M_i, \linebreak[1] g_i, \linebreak[1] q_i ) \}_{i=1}^\infty$ satisfies properties (A)--(C) above for some constants $A, T, \mathbf{p}_0 > 0$.
Then the limit space $(X, d_X)$ is part of a singular space $\XX = (X, d_X, \RR, g)$ in the sense of Definition \ref{Def:singlimitspace}, with singularities of codimension $\mathbf{p}_0$ in the sense of Definition \ref{Def:codimensionsingularities}.
After passing to a subsequence, the pointed manifolds $(M_i, g_i, q_i)$ converge to the pointed singular space $(\XX, q_\infty)$ in the sense of Definition \ref{Def:convergencescheme}.
Moreover:
\begin{enumerate}[label=(\alph*)]
\item If the sequence $\{ (M_i, g_i, q_i ) \}_{i=1}^\infty$ additionally satisfies property (E), then the singularities of $\XX$ are mild in the sense of Definition \ref{Def:mild}.
\item If the sequence $\{ (M_i, g_i, q_i ) \}_{i=1}^\infty$ additionally satisfies properties (D)--(F) and if $\mathbf{p}_0 > 1$, then $\XX$ is $Y_1$-tame at scale $c \sqrt{T}$ in the sense of Definition~\ref{Def:tameness}.
Here $Y_1 = Y_1 (n, A, \mathbf{p}_0) < \infty$ can be chosen only depending on $n$, $A$ and $\mathbf{p}_0$ and $c = c(n, A) > 0$ can be chosen only depending on $n$ and $A$.
\item If the sequence $\{ (M_i, g_i, q_i ) \}_{i=1}^\infty$ additionally satisfies property (G), then $\XX$ is $Y_2$-regular at scale $\sqrt{T}$ in the sense of Definition \ref{Def:Yregularity}.
Here $Y_2 = Y_2 (n, A) < \infty$ can be chosen only depending on $n$ and $A$.
\end{enumerate}
\end{Theorem}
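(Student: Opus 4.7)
The plan is to extract the limit as a Gromov--Hausdorff limit and then enhance it to a smooth limit on a well-controlled ``regular'' locus. Property~(A) gives uniform total boundedness, hence a subsequential pointed Gromov--Hausdorff limit $(X, d_X, q_\infty)$. To identify the regular part $\RR \subset X$ and the metric $g$, I would define, for each $\sigma > 0$, the sets $U_i^\sigma := \{\tdrrm^{M_i} > \sigma\} \cap B^{M_i}(q_i, \sigma^{-1})$. On $U_i^\sigma$ we have uniform bounds on $|\Rm|$, $|\nabla \Rm|$, $|\nabla^2 \Rm|$, together with uniform volume bounds from~(A); by standard Cheeger--Gromov/elliptic regularity arguments (using harmonic coordinates and applying Shi-type a~priori estimates), a diagonal subsequence yields $C^4$-convergence on $U_i^\sigma$ to an open Riemannian manifold $(\RR^\sigma, g)$ with $\rrm_g \geq \sigma$ on a slightly smaller subset. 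Taking $\sigma \downarrow 0$ produces the regular set $\RR = \bigcup_\sigma \RR^\sigma$ equipped with a $C^4$ metric $g$, and embeds $\RR$ as an open subset of $X$ via the limit Gromov--Hausdorff maps. This is the convergence scheme of Definition~\ref{Def:convergencescheme}.

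\textbf{Singular set, dimension, and metric compatibility.} Property~(B) gives uniform measure bounds on sublevel sets of $\tdrrm^{M_i}$; these pass to the limit on $\RR^\sigma$ by the smooth convergence, and because we can cover $X \setminus \RR$ by nested limits of such sublevel sets, we obtain the Minkowski dimension bound $\dim_{\mathrm{Mink}}(X \setminus \RR) \leq n - \mathbf{p}_0$, establishing that $\XX$ has singularities of codimension $\mathbf{p}_0$ in the sense of Definition~\ref{Def:codimensionsingularities}. To get that $\XX = (X, d_X, \RR, g)$ is a singular space per Definition~\ref{Def:singlimitspace}, one must check that the Riemannian distance $d_g$ on $\RR$ agrees with $d_X|_{\RR \times \RR}$; this is exactly where property~(C) enters, since it produces, for any pair of points in $\RR$, a curve in $\RR$ of nearly optimal length. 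The inequality $d_g \geq d_X|_{\RR}$ is automatic from the convergence scheme; (C) gives the reverse bound by constructing near-minimizing curves supported in $\{\tdrrm > \sigma\}$, which converge to admissible curves in $\RR$ under the Cheeger--Gromov convergence.

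\textbf{The enhanced properties (a), (b), (c).} Each additional property transfers to the limit via the already-constructed convergence scheme. For~(a), mildness (Definition~\ref{Def:mild}) asserts the existence of near-minimizing regular-part paths from any regular basepoint to an almost-full-measure set of targets; this is exactly the statement of~(E) after taking a limit in $\eta$, using that sublevel sets of the limiting curvature radius inherit the volume bounds from~(B) and that limits of the curves provided by~(E) give admissible curves in $\RR$. For~(b), tameness (Definition~\ref{Def:tameness}) packages together a log-Sobolev/Poincar\'e inequality, cutoff functions on $\RR$, and uniform heat kernel bounds; these come from the respective limits of~(D) (via standard Davies-type arguments to produce Gaussian heat kernel estimates and an associated Poincar\'e inequality) and~(F), with the restriction $\mathbf{p}_0 > 1$ ensuring that the singular set has sufficiently small capacity to make integration by parts against cutoff functions legitimate. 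The constants $Y_1, c$ depend only on $n, A, \mathbf{p}_0$ because the estimates from~(A), (D), (F) depend only on those parameters. For~(c), property~(G) is an $\eps$-regularity statement of the ``almost-Euclidean volume implies curvature bound'' type; combined with the smooth convergence on $U_i^\sigma$ one shows that any point of $X$ with sufficiently Euclidean volume ratio lies in $\RR$ with quantitative curvature radius bound, which is the content of $Y_2$-regularity (Definition~\ref{Def:Yregularity}).

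\textbf{Main obstacle.} The hardest step is establishing the metric identity $d_g = d_X|_\RR$ together with the full convergence scheme (so that geometric arguments on $X$ can be genuinely approximated by arguments on $(M_i, g_i)$). The difficulty is that the regular set $\RR$ has complicated topology near the singular stratum, and one needs to rule out ``shortcuts'' through the singular set and simultaneously produce competitor curves in $\RR$ that track $d_X$ tightly. Property~(C) is designed precisely for this, but its use requires careful bookkeeping: one must control the $\sigma$ parameter uniformly as $\eta \downarrow 0$ and extract a limit curve whose curvature-radius lower bound survives. Once this compatibility and the diagonal subsequence argument are in place, parts (a)--(c) follow rather mechanically by combining the convergence scheme with the pointwise content of the respective properties.
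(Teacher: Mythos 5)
Your outline for the base statement together with parts (a) and (c) follows essentially the same route as the paper's proof: the regular set is carved out by limits of the curvature-radius superlevel sets (you use $U_i^\sigma$ with a diagonal over $\sigma$; the paper equivalently passes to the limit of the $1$-Lipschitz functions $\tdrrm^{M_i}$ to get a function $h$ and sets $\RR=h^{-1}((0,\infty))$), property (C) enters exactly to identify the length metric of $(\RR,g)$ with $d_X\vert_\RR$, property (E) yields mildness by a limiting-curve argument, and (G) combined with the semicontinuity of regular-part volume gives regularity. All of that matches Theorems~\ref{Thm:detailedconvergence} and~\ref{Thm:regular}.

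There is a genuine gap in your treatment of part (b). You describe $Y$-tameness as ``a log-Sobolev/Poincar\'e inequality, cutoff functions on $\RR$, and uniform heat kernel bounds,'' and claim these ``come from the respective limits of (D) and (F).'' But this misidentifies the content of Definition~\ref{Def:tameness}: items (4) and (5) are indeed cutoffs and a Gaussian heat kernel, but items (2) and (3) are existence statements for harmonic functions $h$ that approximate the radial distance $d(\cdot,q)$ (respectively $d^{2-n}(\cdot,p)$ on an annulus) in $W^{1,2}$, with the error controlled by the total variation of the distributional measure $\mu_{\triangle b}$ (respectively by a volume-ratio pinching plus a $\kappa r^2$ term). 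None of this is a limit of (D) or (F). Proving items (2) and (3) is the technical heart of Theorem~\ref{Thm:basicconvergence}(b): one must (i) solve Dirichlet problems on the approximating manifolds $M_i$, (ii) convert the log-Sobolev inequality (D) into a Poincar\'e inequality for $h_i-b_i$, (iii) obtain a priori bounds on $\int d|\mu_{\triangle b_i}|$ over balls, which uses (B), the Laplacian upper bound for distance functions, and a dyadic summation that converges precisely because $\mathbf{p}_0>1$ (this is the only place that hypothesis enters, not the capacity-of-the-singular-set claim you make), and (iv) carry out a delicate passage to the limit of the harmonic functions $h_i$, controlling the contribution of the neighborhood of the singular set and of a thin annulus near $\partial B(p,r)$. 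Your proposal contains no plan for any of steps (i)--(iv); as written, it does not establish the tameness property.
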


The importance of the properties of the singular space $\XX = (X, d_X , \RR, g)$ that were derived in Theorem \ref{Thm:basicconvergence} (i.e. ``singularities of codimension $\mathbf{p}_0$'', ``mild singularities'', ``$Y$-tameness'' and ``$Y$-regularity'') comes from the fact that they capture all the regularity properties of $\XX$ that are necessary for our subsequent analysis.
So for the remaining results we don't need to remember that $\XX$ arises as a limit of smooth Riemannian manifolds that satisfy all or some of the properties (A)--(G).
Instead, we will simply characterize $\XX$ using those derived properties.

\subsection{Structure theory of singular spaces} \label{subsec:strcttheory}
Next, we will analyze the singular spaces $\XX = (X, d, \RR, g)$, assuming certain conditions on the Ricci curvature, which we only impose on the regular set $\RR$ of $\XX$.
In particular, we will not impose any (synthetic) curvature conditions on the singular points $X \setminus \RR$.
In terms of the sequence $\{ (M_i, g_i, q_i) \}_{i = 1}^\infty$, converging to $\XX$, this means that we will allow that the conditions on the Ricci curvature are violated in small regions of the Riemannian manifolds $(M_i, g_i)$, for example in regions around points that converge to singular points in $\XX$.

Our first result is a generalization of Colding's volume stability result (cf \cite{Colding-vol-conv}) to the singular setting.
It states that Gromov-Hausdorff closeness to Euclidean space $\IR^n$ implies volume closeness.
For technical reasons, which will become apparent later, we will phrase our result such that we allow Gromov-Hausdorff closeness to a Cartesian product of an arbitrary metric space $(Z,d_Z)$ with $\IR^n$.

\begin{Theorem}[Volume Stability] \label{Thm:volconv}
For any $n \geq 2$, $\eps > 0$, $\mathbf{p}_0 > 3$ and $Y < \infty$ there is a constant $\delta = \delta(n, \eps, \mathbf{p}_0, Y) > 0$ such that the following holds:

Assume that $\XX = (X, d, \RR, g)$ is an $n$-dimensional singular space with mild singularities of codimension $\mathbf{p}_0$ that is $Y$-tame at some scale $r > 0$.
Assume moreover that the Ricci curvature condition $\Ric \geq - \delta r^{-2}$ holds on $\RR$.
Let $x \in X$ and assume that there is a metric space $(Z, d_Z)$ and a point $z \in Z$ such that
\[ d_{GH} \big( \big( B^{X} (x, r),x \big), \big( B^{Z \times \IR^n} ( (z,0^n), r), (z, 0^n) \big) \big) < \delta r. \]
Then
\[ \big| B^X (x, \delta r) \cap \RR \big| > (\omega_n - \eps) (\delta r)^n. \]
Here $\omega_n$ denotes the volume of the $n$-dimensional Euclidean ball of radius $1$.
\end{Theorem}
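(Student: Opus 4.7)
The natural strategy is to adapt Colding's \emph{harmonic almost-splitting} technique from \cite{Colding-vol-conv} to the present singular setting. After rescaling so that $r = 1$, the $\mathbb{R}^n$-factor of the product $Z \times \mathbb{R}^n$ should give rise to $n$ approximately harmonic functions on $B^X(x, 1) \cap \RR$ that together form an almost-isometric map onto a subset of a Euclidean ball. The hypothesis of $Y$-tameness is what permits one to pose such analytic problems on the non-complete manifold $\RR$ (via cutoff functions, Poincar\'e inequality, and heat kernel estimates), while mildness ensures that Bishop-Gromov volume comparison (Proposition~\ref{Prop:volumecomparison}) and the segment inequality (Proposition~\ref{Prop:segmentinequ}) are at our disposal.

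First I would exploit the Gromov-Hausdorff closeness to pick, for each standard coordinate direction $e_\alpha \in \mathbb{R}^n$, a pair of almost-antipodal reference points $p_\alpha^\pm \in X$ whose approximate Busemann functions $b_\alpha^\pm(y) := d_X(y, p_\alpha^\pm) - d_X(x, p_\alpha^\pm)$ agree with $\pm x_\alpha$ up to an error $\Psi(\delta) \to 0$ on $B^X(x, 1)$. Each $b_\alpha^\pm$ is $1$-Lipschitz and has distributional Laplacian of size $\Psi(\delta)$ on $\RR$ by Laplacian comparison (valid because of $\Ric \geq -\delta$ and mildness). Using the cutoff functions built into tameness together with the Poincar\'e inequality, I would solve a Dirichlet problem on a slightly shrunken ball to obtain harmonic replacements $h_\alpha : B^X(x, 3/4) \cap \RR \to \mathbb{R}$ satisfying $\|h_\alpha - b_\alpha^+\|_{C^0} < \Psi(\delta)$ and $\|\nabla(h_\alpha - b_\alpha^+)\|_{L^2} < \Psi(\delta)$.

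The heart of the argument is an $L^2$-Hessian bound. Integrating the Bochner identity
\[ \tfrac{1}{2} \triangle |\nabla h_\alpha|^2 = |\nabla^2 h_\alpha|^2 + \Ric(\nabla h_\alpha, \nabla h_\alpha) \]
against a tame cutoff supported in $B^X(x, 1/2)$ and using $\Ric \geq -\delta$ together with the $L^2$-closeness of $|\nabla h_\alpha|^2$ to $1$ yields
\[ \int_{B^X(x, 1/2) \cap \RR} |\nabla^2 h_\alpha|^2 \, dg \leq \Psi(\delta \mid n, Y, \mathbf{p}_0). \]
Complementing this with the segment inequality along the almost-minimizing curves produced by mildness shows that $\langle \nabla h_\alpha, \nabla h_\beta \rangle$ is close to $\delta_{\alpha\beta}$ in $L^1$. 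Promoted to pointwise estimates via the harmonic gradient bounds that $\mathbf{p}_0 > 3$ puts at our disposal, this means $\Phi := (h_1, \dots, h_n)$ is a Gromov-Hausdorff almost-isometry from a definite-measure subset of $B^X(x, \delta) \cap \RR$ onto a subset of a Euclidean $\delta$-ball. A change-of-variables argument on the Jacobian $\det D\Phi$, which is close to $1$ in $L^1$ thanks to the Hessian bound, then produces
\[ \big| B^X(x, \delta) \cap \RR \big| \geq (\omega_n - \eps) \delta^n \]
once $\delta$ is small enough depending on $\eps, n, \mathbf{p}_0$ and $Y$.

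The principal technical obstacle will be rigorously executing each of these PDE constructions on the incomplete Riemannian manifold $\RR$. Every integration by parts must be truncated with a cutoff $\eta$ supported away from the singular set $X \setminus \RR$, and the boundary contributions must be shown to vanish in the limit. This is exactly where the codimension hypothesis $\mathbf{p}_0 > 3$ becomes crucial: it controls the measure of tubular neighborhoods, $\big| \{ \tdrrm < \sigma \} \cap B \big| \lesssim \sigma^{\mathbf{p}_0}$, so that the Cauchy-Schwarz estimates of error terms like $\int |\nabla \eta| \, |\nabla^2 h_\alpha|$ tend to zero as the cutoff parameter $\sigma$ shrinks, and simultaneously it is the regime in which the gradient estimates for harmonic functions on $\RR$ retain their classical scaling. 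Once this cutoff-removal step has been set up, the remainder of the proof reduces to a careful, singularity-aware transcription of the classical smooth argument.
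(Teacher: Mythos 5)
Your overall strategy mirrors the paper's: Theorem~\ref{Thm:volconv} is proved by first producing an $\eps$-splitting map via harmonic approximation (this is Proposition~\ref{Prop:GHsplittingepssplitting}, whose proof uses exactly the ingredients you list: almost-antipodal reference points and their approximate Busemann functions, the Laplacian comparison estimate of Lemma~\ref{Lem:trianglebpmsmall}, harmonic replacement via tameness property~(2), the Hessian bound of Lemma~\ref{Lem:Hessianbound} obtained from Bochner's identity, and the segment inequality for almost-orthogonality), and then converting the splitting map into a volume bound (Lemma~\ref{Lem:splittingimpliesvolbound}).

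The gap is in the last step. You write that the harmonic map $\Phi = (h_1, \dots, h_n)$ sends a definite-measure subset of $B^X(x,\delta) \cap \RR$ \emph{onto a subset} of a Euclidean $\delta$-ball, and that a change-of-variables argument then yields the lower volume bound. But the Jacobian estimate alone only gives $|B^X(x,\delta)\cap\RR| \geq (1-\Psi(\delta))\,|\Phi(B^X(x,\delta)\cap\RR)|$; it does not control $|\Phi(B^X(x,\delta)\cap\RR)|$ from below. A GH almost-isometry has $\Psi(\delta)$-dense image, but denseness carries no measure information, so there is no cheap route from ``$\Phi$ is close to an isometry in $L^2$'' to ``the image of $\Phi$ nearly fills the Euclidean ball.'' This is precisely what the almost-radial hypothesis (\ref{eq:udeps}) for the $k=n$ case of Proposition~\ref{Prop:GHsplittingepssplitting} and the Vitali covering argument of Lemma~\ref{Lem:splittingimpliesvolbound} are designed to supply: one covers the complement $A := B(0^n, 1-3\nu)\setminus u(B(p,1))$ by maximal balls $B(y_i,r_i)$ missing the image, shows via the escape Lemma~\ref{Lem:eps0splitting} that $u$ restricted to each corresponding $B(x_i,r_i)$ cannot be an $\eps_0$-splitting, and sums the resulting lower bounds for $\int_{B(x_i,r_i)\cap\RR}(r_i^2|\nabla^2 u|^2 + |\langle\nabla u^{l_1},\nabla u^{l_2}\rangle - \delta_{l_1l_2}|^2)$ against the global $\delta$-splitting bound to force $|A|$ to be small. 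Without this covering/degree step the change-of-variables argument is vacuous, and your proposal does not indicate how one would establish the needed lower bound on the measure of the image.

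A secondary but still important detail you have glossed over: you also need to show $|u(B(p,1)\setminus\RR)| = 0$, so that the image measure is contributed by the regular set and the change of variables can legitimately be taken over $B(p,1)\cap\RR$. The paper does this by a covering argument on $\{\rrm < s\}$ using the codimension-$\mathbf{p}_0$ hypothesis and the Lipschitz bound on $u$; this is an independent use of $\mathbf{p}_0$ beyond the cutoff removal you describe.
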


As a corollary of Theorem \ref{Thm:volconv}, we will obtain a curvature bound at points that are almost Euclidean, under $Y$-regularity assumptions.

\begin{Corollary}[Gromov-Hausdorff $\eps$-regularity] \label{Cor:GHepsregularity}
For any $n \geq 2$, $\mathbf{p}_0 > 3$ and $Y < \infty$ there is a constant $\eps = \eps(n, \mathbf{p}_0, Y) > 0$ such that the following holds:

Assume that $\XX = (X, d, \RR, g)$ is an $n$-dimensional singular space with mild singularities of codimension $\mathbf{p}_0$ that is $Y$-tame and $Y$-regular at some scale $r > 0$.
Assume moreover that the Ricci curvature condition $\Ric \geq - (n-1) r^{-2}$ holds on $\RR$.
Let $x \in X$ and assume that there is a metric space $(Z, d_Z)$ and a point $z \in Z$ such that
\[ d_{GH} \big( \big( B^{X} (x, r),x \big), \big( B^{Z \times \IR^n} ( (z,0^n), r), (z, 0^n) \big) \big) < \eps r. \]
Then $\rrm ( x ) > \eps r$.
\end{Corollary}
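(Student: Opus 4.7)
The plan is to compose Theorem \ref{Thm:volconv} (Volume Stability) with the $Y$-regularity assumption. The latter, by analogy with Property~(G), is expected to assert that an almost-Euclidean volume lower bound on a ball of radius $\rho < r$ forces a definite lower bound $\rrm > Y^{-1} \rho$ at the center. So the overall strategy is: Gromov-Hausdorff closeness to $Z \times \IR^n$ at some small scale, via Theorem \ref{Thm:volconv}, forces the regular volume of a smaller concentric ball to be almost maximal, which in turn, via $Y$-regularity, forces $\rrm(x)$ to be definite.

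The main technical task is a scale reduction, since Theorem \ref{Thm:volconv} requires the Ricci lower bound $\Ric \geq -\delta r^{-2}$ with small $\delta$, whereas we are only given $\Ric \geq -(n-1) r^{-2}$. I would fix the Euclidean-deficit parameter $\eps_0 = Y^{-1}/2$ in Theorem \ref{Thm:volconv}, obtaining a constant $\delta_0 = \delta(n, \eps_0, \mathbf{p}_0, Y) > 0$. Then I would choose $\alpha = \alpha(n, \mathbf{p}_0, Y) > 0$ so that $(n-1)\alpha^2 \leq \delta_0$, and work at the subscale $r' = \alpha r$. At this scale the Ricci bound reads
\[ \Ric \geq -(n-1) r^{-2} = -(n-1)\alpha^2 (r')^{-2} \geq -\delta_0 (r')^{-2}, \]
which is the required hypothesis. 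For the Gromov-Hausdorff input at scale $r'$, I would restrict a pointed $\eps r$-approximation between $B^X(x,r)$ and $B^{Z \times \IR^n}((z,0^n), r)$ to the concentric subballs of radius $r'$, using the metric product structure on $Z \times \IR^n$ to project back into the concentric ball. This preserves the additive error $\eps r$, so taking $\eps \leq \delta_0 \alpha$ yields GH-closeness within $\delta_0 r'$ at scale $r'$. Since $Y$-tameness and $Y$-regularity at scale $r$ are expected to be inherited by subscales $r' < r$, I may apply Theorem \ref{Thm:volconv} at scale $r'$ to conclude
\[ \big| B^X(x, \delta_0 r') \cap \RR \big| > (\omega_n - Y^{-1}/2) (\delta_0 r')^n. \]

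Setting $\rho = \delta_0 r' = \delta_0 \alpha r$, the bound above exhibits $B^X(x, \rho)$ as a ball with almost-maximal regular volume. Applying the $Y$-regularity hypothesis at the subscale $\rho$ then yields $\rrm(x) > Y^{-1}\rho = Y^{-1} \delta_0 \alpha r$. The corollary follows with $\eps := \min\{\delta_0 \alpha, \, Y^{-1} \delta_0 \alpha\}$, which depends only on $n$, $\mathbf{p}_0$ and $Y$.

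The hardest step, conceptually, is the restriction-of-GH-approximations argument that transfers closeness at scale $r$ to closeness at the smaller scale $r'$ with a controlled error; the product factor $\IR^n$ is what makes this work, since one can symmetrize with the obvious retraction onto the concentric subball on the model side. A secondary concern is that the precise formulation of $Y$-regularity (Definition~\ref{Def:Yregularity}) must allow the volume to be measured on $\RR$ rather than on all of $X$, and must apply at subscales of $r$; if either convention differs, one only needs to trivially adjust the numerical constants $\eps_0$ and $\alpha$, but the structure of the argument is unchanged.
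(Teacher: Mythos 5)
Your strategy is the intended one: compose Theorem~\ref{Thm:volconv} with the $Y$-regularity hypothesis, after rescaling to a subscale $r' = \alpha r$ so that the weak Ricci bound $\Ric \geq -(n-1)r^{-2}$ becomes the stronger bound $\Ric \geq -\delta_0 (r')^{-2}$ required by the theorem as stated. The paper offers no explicit proof of the corollary and this is exactly the reduction it has in mind; your bookkeeping of the constants and the observation that $Y$-tameness and $Y$-regularity at scale $r$ are inherited at subscales is correct.

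The one step worth tightening is your justification of the Gromov--Hausdorff transfer from scale $r$ to scale $r'$. You attribute this to ``the metric product structure on $Z\times\IR^n$,'' proposing to project back into the concentric ball. But contracting only the $\IR^n$-factor does not move a point $(w,v)$ with $d_{Z\times\IR^n}((w,v),(z,0^n)) \in [r', r'+\eps r)$ into $B^{Z\times\IR^n}((z,0^n),r')$ when $d_Z(w,z)$ is close to $r'$ and $|v|$ is small; and $Z$ is only assumed to be a metric space, so there need be no geodesics in $Z$ along which to contract. What actually makes the restriction work is that $(X,d)$ is a length space (Definition~\ref{Def:singlimitspace}(1)): any $a\in B^X(x,r')$ mapped by the approximation to a point of radius slightly greater than $r'$ has, along an almost-minimizing curve to $x$, a nearby point $a'$ with $d(x,a') < r' - 2\eps r$, and $\Phi(a')$ lands safely inside $B^{Z\times\IR^n}((z,0^n),r')$ while $d(\Phi(a),\Phi(a')) < 4\eps r$. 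This costs a universal multiplicative constant (say $5$) on the approximation error, so one should take $\eps \leq \delta_0\alpha / 5$, and then $\eps := \min\{\delta_0\alpha/5,\ Y^{-1}\delta_0\alpha\}$ works; the dependence on $(n,\mathbf{p}_0,Y)$ is unchanged. This is a minor imprecision in the explanation, not a gap in the proof.
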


Next, we show that Cheeger and Colding's Cone Rigidity Theorem (cf \cite{Cheeger-Colding-Cone}) holds in the singular setting:

\begin{Theorem}[Cone Rigidity] \label{Thm:conerigidityIntroduction}
For any $n \geq 2$, $\eps > 0$, $\mathbf{p}_0 > 3$ and $Y < \infty$ there is a $\delta = \delta (n, \eps, \mathbf{p}_0, Y) > 0$ such that the following holds:

Let $\XX = (X, d, \RR, g)$ be a singular space with mild singularities of codimension $\mathbf{p}_0$ that is $Y$-tame at scale $\delta^{-1} r$ for some $r > 0$.
Assume moreover that $\Ric \geq - (n-1) \kappa$ on $\RR$ for some $0 \leq \kappa \leq \delta^2 r^{-2}$.
Let $p \in X$ be a point and assume that
\[ \frac{| B(p, \delta r) \cap \RR |}{v_{- \kappa} (\delta r)} - \frac{|B(p, 32r) \cap \RR|}{v_{- \kappa} (32r)} < \delta. \]
Then there is a metric cone $(\mathcal{C}, d_{\mathcal{C}}, \ov{p})$ with vertex $\ov{p} \in \mathcal{C}$ such that
\[ d_{GH} \big( \big( B^X (p, r), p \big), \big( B^{\mathcal{C}} (\ov{p}, r), \ov{p} \big) \big) < \eps r. \]
\end{Theorem}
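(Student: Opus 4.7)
The approach is by contradiction, via a compactness-plus-rigidity argument in the spirit of Cheeger-Colding. Suppose the conclusion fails for some fixed $n, \eps, \mathbf{p}_0, Y$: then there is a sequence $\delta_i \to 0$ and, for each $i$, a singular space $\XX_i = (X_i, d_i, \RR_i, g_i)$ with mild singularities of codimension $\mathbf{p}_0$, $Y$-tame at scale $\delta_i^{-1} r_i$, satisfying $\Ric \geq -(n-1)\kappa_i$ on $\RR_i$ for some $0 \leq \kappa_i \leq \delta_i^2 r_i^{-2}$, together with points $p_i \in X_i$ satisfying
\[ \frac{|B(p_i,\delta_i r_i) \cap \RR_i|}{v_{-\kappa_i}(\delta_i r_i)} - \frac{|B(p_i,32 r_i) \cap \RR_i|}{v_{-\kappa_i}(32 r_i)} < \delta_i, \]
yet $B^{X_i}(p_i, r_i)$ is not $\eps r_i$-Gromov-Hausdorff close to any ball of radius $r_i$ in a metric cone about its vertex. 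Rescaling $g_i \mapsto r_i^{-2} g_i$ reduces to the case $r_i = 1$, with $\kappa_i \to 0$ and $Y$-tameness at scales tending to $\infty$.

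Next I would extract a limit. Because of $Y$-tameness and Bishop-Gromov volume comparison on the regular parts (Proposition \ref{Prop:volumecomparison}), the pointed spaces $(X_i, d_i, p_i)$ are uniformly totally bounded on bounded balls, so after passing to a subsequence we obtain pointed Gromov-Hausdorff convergence $(X_i, d_i, p_i) \to (X_\infty, d_\infty, p_\infty)$. Using the singular-space compactness machinery developed earlier in the paper (compare Definition \ref{Def:convergencescheme}), this convergence can be upgraded so that $\XX_\infty$ is itself a singular space with mild singularities of codimension $\mathbf{p}_0$, $Y$-tame at every scale, satisfying $\Ric \geq 0$ on $\RR_\infty$, and so that the measures $|\cdot \cap \RR_i|$ pass to the limit measure $|\cdot \cap \RR_\infty|$ on balls of bounded radius about $p_i$.

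Finally I would apply Bishop-Gromov monotonicity together with its rigidity. On each $\XX_i$ the function $\rho \mapsto |B(p_i,\rho) \cap \RR_i| / v_{-\kappa_i}(\rho)$ is non-increasing, and the pinching inequality together with $\delta_i \to 0$ forces its limit $\rho \mapsto |B(p_\infty,\rho) \cap \RR_\infty|/\omega_n \rho^n$ to be constant on $(0, 32]$. The rigidity half of Bishop-Gromov in the singular setting — proved via the segment inequality (Proposition \ref{Prop:segmentinequ}) and mildness, which together force the Bochner defect of $\frac{1}{2} d^2_{p_\infty}$ to vanish integrally on $\RR_\infty$, so that $\nabla^2 (\tfrac{1}{2}d^2_{p_\infty}) = g_\infty$ on $\RR_\infty$ — then gives that $B^{X_\infty}(p_\infty, 32)$ is isometric to a ball in a metric cone $\mathcal{C}_\infty$ with vertex corresponding to $p_\infty$. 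Gromov-Hausdorff convergence yields $d_{GH}(B^{X_i}(p_i,1), B^{\mathcal{C}_\infty}(\ov{p}_\infty, 1)) < \eps$ for large $i$, the desired contradiction.

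The main obstacle I anticipate is the rigidity step: showing that constant volume ratio on a singular space with only partial regular structure forces an exact metric cone decomposition. On smooth manifolds this is Cheeger-Colding and relies on pointwise access to the distance function's Hessian and to full-measure geodesic flow; here the argument must be carried out using only $\RR_\infty$, with mildness and the segment inequality providing enough almost-everywhere connectivity by minimizing geodesics lying in $\RR_\infty$ to integrate the infinitesimal identity $\nabla^2 (\tfrac{1}{2} d_{p_\infty}^2) = g_\infty$ into the global cone structure. A secondary delicate point is verifying that the limit extraction in the second step preserves the structural properties (volume convergence on regular parts, the Ricci lower bound on $\RR_\infty$, mildness, uniform $Y$-tameness) with enough precision to feed into this rigidity argument.
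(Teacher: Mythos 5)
Your proposal follows the same opening move as the paper (contradiction plus compactness) but then diverges sharply, and the divergence opens two genuine gaps.

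\textbf{Where you differ.} You want to extract a limiting \emph{singular space} $\XX_\infty$, show its volume ratio about $p_\infty$ is constant, and then invoke a rigidity theorem for singular spaces (constant volume ratio plus $\Ric\ge 0$ on $\RR_\infty$ forces a metric cone, via $\nabla^2(\tfrac12 d^2_{p_\infty})=g_\infty$). The paper never does this. It extracts only a pointed metric length space $(Z,d_Z,z_0)$ as the Gromov--Hausdorff limit of the balls $B^{X_i}(p_i,1)$, proves quantitative almost-rigidity statements \emph{on the spaces $\XX_i$ themselves} at scale $\delta_i$ --- an almost-law-of-cosines along minimizing geodesics (Lemmas~\ref{Lem:bonannulus}, \ref{Lem:almostlawcos}, via the harmonic approximant $h=d^{2-n}$ from tameness item (3) and the segment inequality), a three-point/four-point invariance of the angle functional under radial motion (Lemmas~\ref{Lem:simpleconeid}, \ref{Lem:coneidentity}), and an almost-star-shapedness statement (Lemma~\ref{Lem:almoststar}) --- and then observes that in the limit these become exact, so the cone can be built by hand as the cone over the space of unit-speed rays from $z_0$ equipped with the angle pseudometric. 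No structure beyond the metric-space level is needed on $Z$.

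\textbf{Gap 1: compactness of singular spaces.} Your second step asserts that a subsequence of the $\XX_i$ converges to a singular space $\XX_\infty$ that still has mild singularities of codimension $\mathbf{p}_0$, is $Y$-tame, and carries $\Ric\ge 0$ on $\RR_\infty$. Nothing in the paper supplies this. Theorem~\ref{Thm:basicconvergence}/\ref{Thm:detailedconvergence} and Definition~\ref{Def:convergencescheme} concern sequences of \emph{smooth complete manifolds} under properties (A)--(G); there is no compactness theorem in this paper (or an obvious one) asserting that a sequence of singular spaces subconverges to a singular space with the same tameness, mildness, and codimension properties. In particular $Y$-tameness involves harmonic comparison functions, cutoffs with Laplacian bounds, and a heat kernel with Gaussian bounds; pushing all of these to a limit of singular spaces is a substantial undertaking that is not done anywhere in the text, and without it your argument has nothing to which to apply the rigidity step.

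\textbf{Gap 2: the rigidity step itself.} Even granting an appropriate $\XX_\infty$, the rigidity ``constant volume ratio on a singular space $\Rightarrow$ metric cone'' is not proved in the paper either, and you cannot simply cite Cheeger--Colding since their argument uses the full smooth (or RCD) structure. The paper gives Bishop--Gromov monotonicity (Proposition~\ref{Prop:volumecomparison}) and Laplacian comparison (Proposition~\ref{Prop:LaplacecomparisonXX}) for singular spaces, but not the rigidity half, and turning $\nabla^2(\tfrac12 d^2_{p_\infty})=g_\infty$ on $\RR_\infty$ (which itself needs to be extracted from an integral identity via the segment inequality, as you note) into a global cone decomposition across the singular set $X_\infty\setminus\RR_\infty$ would require precisely the kind of almost-everywhere geodesic and law-of-cosines bookkeeping that the paper instead carries out quantitatively on the $\XX_i$ and passes to the metric-space limit. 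In short: the approach is not wrong in principle, but neither of its two load-bearing steps is available, and filling them in would amount to redoing the paper's Section~11 plus a new singular-space compactness theorem.
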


Finally, we generalize the structure theory of Cheeger and Naber (cf \cite{Cheeger-Naber-quantitative,Cheeger-Naber-Codim4}) to singular spaces.
The following bound is slightly more general than an $L^{p<2}$-curvature bound.

\begin{Theorem} \label{Thm:Lpbound}
For any $n \geq 2$, $\eps > 0$, $\mathbf{p}_0 > 3$ and $Y < \infty$ there is a constant $E = E(n, \eps, \mathbf{p}_0, Y) < \infty$ such that the following holds:

Assume that $\XX = (X, d, \RR, g)$ is an orientable $n$-dimensional singular space with mild singularities of codimension $\mathbf{p}_0$ that is $Y$-tame and $Y$-regular at scale $E r$ for some $r > 0$.
Assume that the Einstein equation $\Ric = \lambda g$, $|\lambda | \leq n-1$ holds on $\RR$.
Let $x \in X$ and $0 <  s < 1$.
Then
\begin{equation} \label{eq:LpboundinThm}
 \big| \big\{ \rrm < sr \big\} \cap B^X (x,r) \cap \RR \big| \leq E s^{4-\eps} r^n. 
\end{equation}
\end{Theorem}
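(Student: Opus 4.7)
The plan is to adapt the quantitative stratification framework of Cheeger--Naber (from \cite{Cheeger-Naber-quantitative, Cheeger-Naber-Codim4}) to the singular setting, using Corollary~\ref{Cor:GHepsregularity} and Theorem~\ref{Thm:conerigidityIntroduction} as the two main analytic inputs. Concretely, for parameters $0 < \eta, \rho \ll 1$ I would define, for each $x \in B^X(x_0, r) \cap \RR$, the quantitative stratification
\[
\mathcal{S}^k_{\eta, \rho}(x) = \big\{ s \in (\rho, 1] : B^X(x, sr) \text{ is not $(k+1, \eta)$-symmetric at scale } sr \big\},
\]
and count the number of ``bad'' scales $N(x) = \#\{j : \rho 2^{j} \in \mathcal{S}^{n-3}_{\eta, \rho}(x) \text{ after dyadic rounding}\}$. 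The first step is to prove, using Theorem~\ref{Thm:conerigidityIntroduction} (cone rigidity) together with Bishop--Gromov monotonicity on $\RR$ (which is available because singularities are mild and $\XX$ is $Y$-tame), that once $\eta$ is fixed the total number of bad scales is uniformly bounded by a constant $N_0(n, \eta, Y)$, independently of $\rho$. This is the standard telescoping argument: each bad scale forces a definite volume drop, and the total volume drop is bounded.

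The second, and crucial, step is a codimension~$4$ $\eps$-regularity statement: there exists $\eta_0 = \eta_0(n, \mathbf{p}_0, Y) > 0$ such that if a ball $B^X(x, sr)$ is $(n-3, \eta_0)$-symmetric and the Einstein equation holds on $\RR$, then $\rrm(x) > c(n, Y) \cdot sr$. In the smooth setting this is the Cheeger--Naber codimension~$4$ theorem; in the singular setting I would argue by contradiction and blowup, extracting a limit singular space that splits isometrically as $\IR^{n-3} \times \mathcal{C}(Y')$ with $Y'$ a non-smooth ``cross-section'' on which the Einstein condition degenerates. The key lemma, mirroring \cite{Cheeger-Naber-Codim4}, is then that no non-trivial Einstein cone can split off $\IR^{n-3}$ isometrically, which together with Corollary~\ref{Cor:GHepsregularity} promotes $(n-3, \eta_0)$-symmetry to full $(n, \eta_0')$-symmetry and hence to Euclidean volume ratio, which yields $\rrm(x) > c \cdot sr$.

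Combining these, I obtain the implication: if $x \in B^X(x_0, r) \cap \RR$ satisfies $\rrm(x) < sr$ with $s = 2^{-J}$, then $N(x) \geq J - C_0$ (the geometry at a definite fraction of scales between $s$ and $1$ must fail to be $(n-3, \eta_0)$-symmetric, for otherwise the previous step would force a larger curvature radius). A standard dyadic covering and counting argument then shows
\[
\big| \{\rrm < sr \} \cap B^X(x_0, r) \cap \RR \big| \leq \sum_{\text{bad chains of length } \geq J - C_0} (\text{volume per chain}),
\]
and using $Y$-tameness to control the volumes of small balls, the number of chains of a given length is at most $s^{-(n-3)} \cdot (\text{a factor})^{J}$. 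Optimising the combinatorial exponent (as in \cite{Cheeger-Naber-quantitative}) and absorbing the $(\text{factor})^J$ into $s^{\eps}$ gives \eqref{eq:LpboundinThm}.

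The main obstacle is the codimension~$4$ $\eps$-regularity step. The smooth proof of Cheeger--Naber relies on harmonic splitting maps, sharp transformation theorems, and an elliptic argument on cross-sections of cones; porting each of these to a space where the Einstein equation only holds on $\RR$ requires that cutoff functions (property (F)), a Poincaré inequality (from the log-Sobolev inequality in (D)), and geodesic connectivity through $\RR$ (mildness) all survive in the limit. The rest of the argument is essentially bookkeeping on top of Theorems~\ref{Thm:volconv} and~\ref{Thm:conerigidityIntroduction}, but the codimension~$4$ rigidity is where most of the real analytic work sits.
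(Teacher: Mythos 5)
Your architecture is correct: Proposition \ref{Prop:LpboundXXversion1} in the paper is exactly the quantitative-stratification bookkeeping you describe (parametrized by the highest order $e$ of cone symmetry that forces regularity), and Corollary~\ref{Cor:GHepsregularity} and Theorem~\ref{Thm:conerigidityIntroduction} are the basic inputs. You also correctly identify the codimension-$4$ $\eps$-regularity statement as the real work.

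However, your Step~2 as stated is not the right shape. The claim that ``no non-trivial Einstein cone can split off $\IR^{n-3}$ isometrically'' presupposes that the $3$-dimensional cross-section $\mathcal{C}(Y')$ of the limit cone is smooth away from its tip --- but a priori, in a blowup of a singular space, this cross-section is just a metric space and you have no Einstein equation on it pointwise. The paper deals with this by a two-stage argument. First, Lemma~\ref{Lem:codim4regularity} proves \emph{codimension-$2$} $\eps$-regularity (GH-closeness to $Z \times \IR^{n-2}$ forces $\rrm(p) > \eps r$), and this is where almost all of the analytic work lives: the $L^1$ Laplacian bound on $|\omega^l|$ (Lemma~\ref{Lem:L1Laplacesplitting}), the singular-scale Transformation Theorem (Proposition~\ref{Prop:TransformationThm}), and the weak Slicing Lemma (Lemma~\ref{Lem:weakslicing}). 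Only \emph{then}, in Proposition~\ref{Prop:LpboundXXversion2}, is this fed back through a dimension-reduction argument (Lemma~\ref{Lem:coneissmooth}) to show that the $3$-dimensional cone cross-section is smooth away from its vertex, hence Ricci-flat, hence flat, hence (using orientability to exclude $\IR^3/\IZ_2$) a union of copies of $\IR^3$ --- which then gives Euclidean volume ratio and, by $Y$-regularity, smoothness. Your proposal skips the codimension-$2$ step entirely and never invokes orientability, which is a stated hypothesis of the theorem and is genuinely needed to rule out $\IR^3/\IZ_2$ cones; without it, the conclusion of Proposition~\ref{Prop:LpboundXXversion2} is false. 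Beyond this structural issue, deferring the codimension-$4$ regularity to ``porting Cheeger--Naber'' is not a proof: in the singular setting the cutoff-function arguments, the transformation theorem's contradiction compactness, and the heat-kernel gradient bounds each require nontrivial replacements (Propositions~\ref{Prop:Uiphii}, \ref{Prop:ChengYau-heatequation}, Lemma~\ref{Lem:Hessianbound}), and these constitute the bulk of what the paper actually proves.
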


As a consequence of (\ref{eq:LpboundinThm}) we obtain a bound of the form
\[ \int_{B^X (x,r) \cap \RR} |{\Rm}|^{2-\eps} < C(\eps, \mathbf{p}_0, Y) r^{n- 4 + 2\eps}. \]

Note that the bound (\ref{eq:LpboundinThm}) in Theorem \ref{Thm:Lpbound} is similar to the bound (\ref{eq:LpboundDefcodim}) in Definition \ref{Def:codimensionsingularities} of the codimension of the singular set (see the following subsection).
However, (\ref{eq:LpboundDefcodim}) is only used to characterize the severity of the singularities of the singular spaces $\XX$ and was allowed to depend on $\XX$.
So the content of Theorem~\ref{Thm:Lpbound} is that $E$ can, in fact, be chosen depending only on $\mathbf{p}$ and $Y$ and independently of the constant in (\ref{eq:LpboundDefcodim}).

\subsection{Important terminology} \label{subsec:terminology}
We now give a precise definition of the terminology that was used in the theorems of corollaries of the previous subsection and which we will use throughout this paper.
Let us first introduce the following notion:
Given a measurable subset $S \subset M$ of a Riemannian manifold $(M,g)$ we will denote by $|S| = |S|_g$ the Riemannian measure of $S$ with respect to the metric $g$.

We now define what we mean by the singular spaces that appeared as limit spaces in Theorem~\ref{Thm:basicconvergence}.
The following definition comprises the most basic notions of a metric space that is sufficiently differentiable on a generic subset.

\begin{Definition}[singular space] \label{Def:singlimitspace}
A tuple $\mathcal{X} = (X, d, \RR, g)$ is called an \emph{($n$-dimensional) singular space} if the following holds:
\begin{enumerate}[label=(\arabic*)]
\item $(X,d)$ is a locally compact, complete metric length space.
\item $\RR \subset X$ is an open and dense subset that is equipped with the structure of a differentiable $n$-manifold of regularity $C^4$ whose topology is equal to the topology induced by $X$.
\item $g$ is a Riemannian metric on $\RR$ of regularity $C^3$.
\item The length metric of $(\RR, g)$ is equal to the restriction of $d$ to $\RR$.
In other words, $(X,d)$ is the completion of the length metric on $(\RR, g)$.
\item For any compact subset $K \subset X$ and any $D < \infty$ there are constants $0 < \kappa_1(K,D) < \kappa_2(K,D) < \infty$ such that for all $x \in K$ and $0 < r < D$
\[ \kappa_1 r^n < | B(x,r) \cap \RR | < \kappa_2 r^n. \]
Here $| \cdot |$ denotes the Riemannian volume with respect to the metric $g$ and distance balls $B(x,r)$ are measured with respect to the metric $d$.
\end{enumerate}
If $q \in X$ is a point, then the tuple $(\mathcal{X}, q)$ or $(X, d, \RR, g, q)$ is called \emph{pointed singular space}.
The subset $\RR$ is called the \emph{regular part of $\XX$} and its complement $X \setminus \RR$ the \emph{singular part of $\XX$}.
We say that $\XX$ is orientable, if $\RR$ is orientable.
\end{Definition}

Decorations of $\XX$ are inherited by its members.
That is, for example, if we refer to the space $\XX'_j$, then $\RR'_j$ automatically denotes its regular part.
We also denote open annuli in $\XX$ by $A(p,r_1, r_2) = \{ x \in X \;\; : \;\; r_1 < d(x,p) < r_2 \}$.

We emphasize that the metric $d$ on $X$ is induced by the length metric of the Riemannian metric $g$ on $\RR$ (see item (4)).
So the distance between any two points in $\RR$ can be approximated arbitrarily well by the length of a differentiable connecting curve in $\RR$.

We can generalize the concept of curvature radius from Definition \ref{Def:curvradius} to singular spaces $\mathcal{X} = (X, d, \RR, g)$ by defining the function $\rrm : X \to [0, \infty]$ as follows: $\rrm |_{X \setminus \RR} \equiv 0$ and for any $x \in \RR$ let $\rrm (x)$ be the curvature radius of the (incomplete) Riemannian manifold $(\RR, g)$ as defined in Definition \ref{Def:curvradius}.

We will always imagine the singular part $X \setminus \RR$ as a closed subset of measure zero.
All analytic arguments of this paper that require local computations will be carried out on the regular part $\RR \subset X$ and, similarly, curvature conditions will only be imposed on $\RR$.
Whenever necessary, we will argue why it is enough to consider only this generic subset.
The following two characterizations of the singular set will be helpful hereby:

\begin{Definition}[mild singularities] \label{Def:mild}
A singular space $\mathcal{X} = (X, d, \RR, g)$ is said to have \emph{mild singularities} if for any $p \in \RR$ there is a closed subset $Q_p \subset \RR$ of measure zero such that for any $x \in \RR \setminus Q_p$ there is a minimizing geodesic between $p$ and $x$ whose image lies in $\RR$.
\end{Definition}

The idea behind the notion of mild singularities also occurs in the work of Cheeger and Colding (see \cite[Theorem~3.9]{Cheeger-Colding-structure-II} and Chen and Wang (see \cite[Definition 2.1]{Chen-Wang-II}).

\begin{Definition}[singularities of codimension $\mathbf{p}_0$]  \label{Def:codimensionsingularities}
A singular space $\mathcal{X} = (X, \linebreak[1] d, \linebreak[1] \RR, \linebreak[1] g)$ is said to have \emph{singularities of codimension $\mathbf{p}_0$}, for some $\mathbf{p}_0 > 0$, if for any $0 < \mathbf{p} < \mathbf{p}_0$, $x \in X$ and $r_0 > 0$ there is an $\mathbf{E}_{\mathbf{p}, x,r_0} < \infty$ (which may depend on $\XX$) such that the following holds:
For any $0 < r < r_0$ and $0 < s < 1$ we have
\begin{equation} \label{eq:LpboundDefcodim}
 | \{ \rrm < s r \} \cap B(x, r) \cap \RR | \leq \mathbf{E}_{\mathbf{p}, x,r_0} s^{\mathbf{p}} r^n. 
\end{equation}
\end{Definition}

We will often combine the last two properties and say that \emph{$\XX$ has mild singularities of codimension $\mathbf{p}_0$}.

An elementary covering argument shows that if an $n$-dimensional singular space $\mathcal{X} = (X, d, \RR, g)$ has singularities of codimension $\mathbf{p}_0$, then the Minkowski (and hence also Hausdorff) dimension of $X \setminus \RR$ is $\leq n - \mathbf{p}_0$.

Definition \ref{Def:codimensionsingularities} can be seen as an analogue of the density estimate in the work of Chen and Wang (see \cite[Definition 3.3]{Chen-Wang-II}).

We will now define the following properties of singular spaces:

\begin{Definition}[$Y$-regularity] \label{Def:Yregularity}
A singular space $\XX$ is called \emph{$Y$-regular at scales less than $a$}, for some $a, Y > 0$, if for any $p \in X$ and $0 < r < a$ the following holds:
If 
\[ | B(p, r) \cap \RR| > (\omega_n - Y^{-1}) r^n, \]
then $p \in \RR$ and $\rrm (y) > Y^{-1} r$.
\end{Definition}

The notion of $Y$-regularity is standard in the study of Einstein metrics.
A similar notion has been used in \cite{Cheeger-Colding-structure-II} and \cite{MR999661} and, in the setting of Ricci flows with bounded scalar curvature, in \cite[Definition 3.3]{Chen-Wang-II} and \cite[Theorem~2.35]{Tian-Zhang:2013}.

It can be shown that in a $Y$-regular space with singularities of codimension $\mathbf{p}_0$ (for some $\mathbf{p}_0 > 0$), any point $p \in X$ whose tangent cone is isometric to $\IR^n$, is actually contained in $\RR$.
Therefore, the regular set $\RR$ and the metric $g$ in such a space is uniquely characterized by the metric $d$.

Next, we define what we understand by convergence towards a singular space.

\begin{Definition}[convergence and convergence scheme] \label{Def:convergencescheme}
Consider a sequence $(M_i, g_i, q_i)$ of pointed $n$-dimensional Riemannian manifolds and a pointed, $n$-dimensional singular space $(\mathcal{X}, q_\infty) = (X, d, \RR, g,q_\infty)$.
Let $U_i \subset \RR$ and $V_i \subset M_i$ be open subsets and $\Phi_i : U_i \to V_i$ be (bijective) diffeomorphisms such that the following holds:
\begin{enumerate}[label=(\arabic*)]
\item $U_1 \subset U_2 \subset \ldots$
\item $\bigcup_{i=1}^\infty U_i = \RR$.
\item For any open and relatively compact $W \subset \RR$ we have $\Phi_i^* g_i \to g$ on $W$ in the $C^3$-sense.
\item There exists a sequence $q^*_i \in U_i$ such that
\[ d_{M_i} ( \Phi_i(q^*_i), q_i) \to 0. \]
\item For any $R < \infty$ and $\eps > 0$ there is an $i_{R, \eps} < \infty$ such that for all $i > i_{R, \eps}$ and $x, y \in B^X (q_\infty, R) \cap U_i$ we have
\[ \big| d_{M_i} (\Phi_i(x), \Phi_i(y)) - d^X (x,y) \big| < \eps \]
and such that for any $i > i_{R, \eps}$ and $x \in B^{M_i} (q_i, R)$ there is a $y \in V_i$ such that $d^{M_i} (x,y) < \eps$.
\end{enumerate}
Then the sequence $\{ (U_i, V_i, \Phi_i) \}_{i =1}^\infty$ is called a \emph{convergence scheme for the sequence of pointed Riemannian manifolds $(M_i, g_i, q_i)$ and the pointed singular space $(\mathcal{X}, q_\infty)$}.
We say that \emph{$(M_i, g_i, q_i)$ converges to $(\mathcal{X}, q_\infty)$} if such a convergence scheme exists.
\end{Definition}

Lastly, we introduce the $Y$-tameness properties.
These properties are rather technical and mostly ensure that the theory of Cheeger, Colding and Naber can be applied to $Y$-tame singular spaces.

\begin{Definition}[$Y$-tameness] \label{Def:tameness}
A singular space $\mathcal{X}$ is said to be \emph{$Y$-tame at scale $a$} for some $Y, a > 0$ if the following \emph{tameness properties} hold:
\begin{enumerate}[label=(\arabic*)]
\item We have the volume bounds
\[\qquad\quad Y^{-1} r^n <  |B(p,r) \cap \RR| < Y r^n \textQQqq{for all} p \in X \textQq{and} 0 < r < a. \]
\item For any $p \in X$, $q \in \RR$ and $0 < r < \min \{ a, d(p,q) \}$ the following holds:

Define the function $b (x) := d(x, q)$ on $X$.
Then there is a bounded $C^3$-function $h : B(p, r) \cap \RR \to \IR$ that satisfies $\triangle h = 0$ and
\[ \int_{B(p,r) \cap \RR} |\nabla (h - b)|^2 dg \leq 2r \int_{\ov{B(p, r)} \cap \RR} d|\mu_{\triangle b}|, \]
where the integrals on the left and right-hand sides are defined according to Proposition \ref{Prop:weakLaplaciandistance} in section \ref{sec:Riemgeometry}.
Moreover,
\[ \int_{B(p,r) \cap \RR} |h - b|^2 dg \leq Yr^3 \int_{\ov{B(p, r)} \cap \RR} d|\mu_{\triangle b}|. \]
\item For any $p \in \RR$ and $0 < r_1 < r_2 < a$ the following is true:
Assume that $n \geq 3$ and that we have the lower Ricci curvature bound $\Ric \geq - (n-1) \kappa$ on $B(p, 4r_2) \cap \RR$ for some $\kappa \geq 0$ with $\kappa  r_2^2  \leq 1$.

Then there is a $C^3$-function $h : A(p, r_1, r_2) \cap \RR \to \IR$ such that $\triangle h = 0$ and that satisfies the following bounds
\[ r_2^{2-n} \leq h \leq r_1^{2-n} \textQQqq{on} A(p, r_1, r_2) \cap \RR \]
and
\begin{multline} \label{eq:part2Ytame1}
\qquad\quad \int_{A(p, r_1, r_2) \cap \RR} |\nabla (h - d^{2-n} (\cdot, p)) |^2 dg \\
 \leq Y r_1^{2-n} \bigg( \kappa r_2^2 +  \frac{|\ov{B(p, r_1)} \cap \RR|}{v_{-\kappa} (r_1)} - \frac{|A(p, r_2, 2r_2) \cap \RR|}{v_{-\kappa} (2r_2) - v_{-\kappa} (r_2)} \bigg)
\end{multline}
and
\begin{multline} \label{eq:part2Ytame2}
\qquad\quad \int_{A(p, r_1, r_2) \cap \RR} | h - d^{2-n} (\cdot, p) |^2 dg \\
 \leq Y r_2^2 r_1^{2-n} \bigg( \kappa r_2^2 + \frac{|\ov{B(p, r_1)} \cap \RR|}{v_{-\kappa} (r_1)} - \frac{|A(p,r_2, 2r_2) \cap \RR|}{v_{-\kappa} (2r_2) - v_{-\kappa} (r_2)} \bigg) .
\end{multline}
Here $v_{-\kappa} (r)$ denotes the volume of a geodesic $r$-ball in the $n$-dimensional model space of constant sectional curvature $- \kappa$ (compare with (\ref{eq:vkappaformula}) in section \ref{sec:Riemgeometry}).
\item For every $p \in X$ and $0 < r \leq a$ there is an increasing sequence
\[ U_1 \subset U_2 \subset \ldots \subset B(p, 2r) \cap \RR \]
of open subsets and a sequence of $C^2$ function $\phi_i : U_i \to [0,1]$ such that the following holds:
\begin{enumerate}[label=(4\alph*)]
\item $\bigcup_{i=1}^\infty U_i = B(p,2r) \cap \RR$,
\item $\phi_i$ vanishes on a neighborhood of $U_i \setminus B(p,r)$,
\item $\phi_i \equiv 1$ on $U_i \cap B(p, r/2)$,
\item $|\nabla \phi_i | < Y r^{-1}$ and $|\triangle \phi_i | < Y r^{-2}$.
\end{enumerate}
\item There is a function $K : X \times X \times (0, \infty) \to [0,\infty)$ that is locally bounded on $X \times X \times (0,\infty)$ and $C^3$ on $\RR \times \RR \times (0,a^2)$ such that:
\begin{enumerate}[label=(5\alph*)]
\item $K(x,y,t) = K(y,x,t)$ for all $x, y \in X$, $t \in (0,a^2)$,
\item For all $y \in X$
\[ (\partial_t - \triangle ) K ( \cdot, y , t) = 0 \textQQqq{on} \RR, \]
\item For all $y \in \RR$ and all $r > 0$ we have
\[ \lim_{t \searrow 0} \int_{\RR \setminus B(y,r)} K(\cdot, y,t) = 0. \]
\item For all $y \in X$ and $t \in (0,a^2)$ we have
\[ \int_{\RR} K(\cdot, y, t) dg = 1. \]
\item For all $x, y \in X$ and $0 < t < a^2$, we have
\[ K (x, y, t) < \frac{Y}{t^{n/2}} \exp \bigg({ - \frac{d^2(x,y)}{Yt } }\bigg). \]
\end{enumerate}
\end{enumerate}
\end{Definition}

Note that in property (2), we do not claim that $h = b$ on $\partial B(p,r)$, since it is somewhat technically difficult to verify this condition if $\XX$ arises as a limit of smooth Riemannian manifolds.
We may imagine in the following that $h$ is the solution of the Dirichlet problem with the boundary values $b$ on $\partial B(p,r)$, but this fact will never be used.

\subsection{Acknowledgements}
I would like to thank Tristan Ozuch-Meersseman and Yalong Shi for helpful comments.

\subsection{Conventions}
In the following we will fix a dimension $n \geq 2$ and we will omit the dependence of our constants on $n$.
Moreover, when working with a manifold $M$ of regularity $C^4$, then we will call objects involving $M$ that have maximal regularity \emph{smooth}.
For example, we will call a curve $\gamma : [a,b] \to M$ smooth if it has regularity $C^4$ and a function $f : M \to \IR$ smooth if it has regularity $C^4$.

\section{Geometry of incomplete Riemannian manifolds} \label{sec:Riemgeometry}
In this section we review basic facts from Riemannian geometry.
We will phrase our results in such a way that they also hold for incomplete Riemannian mani\-folds.
So let in the following $(M, g)$ be a (not necessarily complete) Riemannian manifold of dimension $n \geq 2$.
More specifically, we assume that $M$ is of regularity $C^4$ (meaning that the chart transition maps have regularity $C^4$) and the Riemannian metric $g$ has regularity $C^3$.
Note that $g$ induces a length metric $d$ on $M$, but due to the possible incompleteness, the distance $d(x,y)$ between two points $x,y \in M$ may not be represented by a minimizing geodesic between $x, y$.
However, we have the following result:

\begin{Lemma} \label{Lem:geodincomplete}
Let $(M, g)$ be a (not necessarily complete) Riemannian manifold and $x, y \in M$.
Then there is an arclength, minimizing geodesic $\gamma : [0, l) \to M$ with $\gamma(0) = x$ such that
\[ d(x,\gamma(t)) + d(\gamma(t), y) = d(x,y) \textQQqq{for all} t \in [0, l) \]
and such that either $l = d(x,y)$ and $\gamma(t) \to y$ as $t \to l$ or $l < d(x,y)$ and $\gamma(t)$ does not converge in $M$ as $t \to l$.
\end{Lemma}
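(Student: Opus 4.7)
The plan is to realize $\gamma$ as a maximal element of the set $\mathcal{F}$ consisting of arclength-parameterized smooth geodesics $\eta : [0,\ell) \to M$ with $\eta(0) = x$ that satisfy the intermediate-point condition $d(x,\eta(t)) + d(\eta(t),y) = d(x,y)$ for every $t \in [0,\ell)$, partially ordered by extension of the domain.

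First I would verify non-emptiness. Fix a relatively compact normal coordinate ball $B(x, r_0)$ with $r_0 < d(x,y)$. For each $\eps > 0$ choose a piecewise smooth curve $c_\eps$ from $x$ to $y$ of length less than $d(x,y) + \eps$, and let $z_\eps$ be its first crossing of $\partial B(x, r_0/2)$. Then $d(x, z_\eps) = r_0/2$, while the initial piece of $c_\eps$ uses up length at least $r_0/2$, so the remaining piece has length at most $d(x,y) + \eps - r_0/2$, giving $d(z_\eps, y) \leq d(x,y) + \eps - r_0/2$. The sphere $\partial B(x, r_0/2)$ is compact, so along a subsequence $z_\eps \to z$, and passing to the limit with the triangle inequality forces $d(x, z) + d(z, y) = d(x,y)$. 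Since $z$ lies in the normal neighborhood of $x$, the unique minimizing radial geodesic $\gamma_0 : [0, r_0/2] \to M$ from $x$ to $z$ automatically satisfies the intermediate-point condition at every interior time (by the same triangle inequality), so its restriction to $[0, r_0/2)$ lies in $\mathcal{F}$. Every chain in $\mathcal{F}$ has the obvious upper bound obtained by taking the union of its domains, and Zorn's lemma then yields a maximal element $\gamma : [0,\ell) \to M$.

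Next I would split on the value of $\ell$. If $\ell = d(x,y)$, then combining $d(x,\gamma(t)) = t$ with the intermediate-point condition yields $d(\gamma(t),y) = d(x,y) - t \to 0$ as $t \to \ell$, so $\gamma(t) \to y$. Suppose instead $\ell < d(x,y)$ and, for contradiction, that $\gamma(t) \to z^* \in M$ as $t \to \ell$. Then $d(x, z^*) = \ell$, and the intermediate-point condition passes to the limit to give $d(z^*, y) = d(x,y) - \ell > 0$. Because $\gamma$ has unit speed, $\gamma'(t)$ extends continuously to some unit vector at $z^*$. Applying the non-emptiness construction to the pair $(z^*, y)$ produces a smooth minimizing geodesic $\gamma_1 : [0, s_0] \to M$ starting at $z^*$ with $d(z^*, \gamma_1(s)) + d(\gamma_1(s), y) = d(z^*, y)$ for every $s$. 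The concatenation $\td{\gamma}$ of $\gamma$ with $\gamma_1$ is a curve from $x$ to $\gamma_1(s_0)$ of length $\ell + s_0$, which by two applications of the triangle inequality equals $d(x, \gamma_1(s_0))$; hence $\td{\gamma}$ is length-minimizing between its endpoints. A standard corner-smoothing argument then forces the incoming and outgoing unit tangent vectors at $z^*$ to coincide, so $\td{\gamma}$ is a smooth geodesic strictly extending $\gamma$ and still lying in $\mathcal{F}$, contradicting maximality.

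The main obstacle is the smoothness of the concatenation at $z^*$ in the last step: it is essential that a length-minimizing curve which is already a smooth geodesic on both sides of an interior point is in fact $C^1$ (and hence smooth via the geodesic equation) at that point. The cleanest way to handle this is to work entirely inside a normal coordinate ball around $z^*$, where the exponential map is a diffeomorphism and any mismatch of incoming and outgoing tangent directions would allow a short chord to strictly decrease the length of $\td{\gamma}$, contradicting its minimality.
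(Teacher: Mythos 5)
Your overall strategy — build an initial minimizing segment from $x$ by watching where nearly-optimal curves first cross a small geodesic sphere, then maximize the domain, then derive a contradiction at a hypothetical limit point by re-running the initial construction there — is precisely the route the paper takes (the paper just picks $t_0$ as a supremum rather than invoking Zorn, and it extracts the initial segment by minimizing $d(x,z)+d(z,y)$ over the sphere rather than by passing to a limit of crossing points; these are cosmetic differences). However, there is a real gap in your definition of $\mathcal{F}$: you only require that $\eta$ be an arclength geodesic satisfying the two-point relation $d(x,\eta(t))+d(\eta(t),y)=d(x,y)$, and this does \emph{not} imply that $\eta$ is minimizing, i.e.\ that $d(x,\eta(t))=t$. (For instance, on a flat torus with $x$ and $y$ antipodal on a closed geodesic, the arclength parametrization of that closed geodesic on $[0,\ell)$ for arbitrarily large $\ell$ lies in your $\mathcal{F}$ but is not minimizing, has $\ell > d(x,y)$, and converges to neither of the alternatives the lemma describes.) You then use $d(x,\gamma(t))=t$ silently in three essential places: to infer $d(\gamma(t),y)\to 0$ when $\ell=d(x,y)$; to get $d(x,z^*)=\ell$ and hence $d(z^*,y)=d(x,y)-\ell$; and to conclude that the concatenation $\td{\gamma}$ has length $\ell+s_0=d(x,\gamma_1(s_0))$, which is what forces it to be a geodesic. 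Without the minimizing hypothesis, a Zorn-maximal element of $\mathcal{F}$ need not satisfy the lemma's conclusion.

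The fix is entirely local: add to the definition of $\mathcal{F}$ the requirement that $\eta$ be minimizing (equivalently $d(x,\eta(t))=t$ for all $t$). This property is preserved by unions along chains, so Zorn still applies; your initial radial geodesic from $x$ to $z$ is minimizing by construction; it forces $\ell\le d(x,y)$, so the two cases you consider are exhaustive; and the concatenation $\td{\gamma}$ is minimizing (hence a smooth geodesic, hence a strict extension of $\gamma$ inside $\mathcal{F}$) because its length $\ell+s_0=d(x,z^*)+s_0$ is then bounded above by $d(x,\gamma_1(s_0))$ via the triangle inequality together with the intermediate-point identity for $\gamma_1$, while the reverse inequality is automatic. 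This is exactly the role the explicit minimizing hypothesis plays in the paper's choice of $t_0$. With this correction your argument is complete and matches the paper's.
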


\begin{proof}
Choose $r > 0$ such that the ball $B(x,2r)$ is relatively compact in $M$ and such that the exponential map $\exp_x$ is defined on $B(0,2r) \subset T_x M$.
Choose $z  \in B(x,2r)$ such that $d(x,z) = r$ and $d(x, z) + d(z, y)$ is minimal.
Such a $z$ exists by relative compactness of $B(x,2r)$.
Since $d$ is a length metric, we have $d(x,z) + d(z, y) = d(x,y)$.
By standard Riemannian geometry, there exists an arclength, minimizing geodesic $\sigma : [0,r] \to M$ between $x,z$.
Then for all $t \in [0,r]$
\begin{multline*}
 d(x,y) \leq d(x,\sigma(t)) + d(\sigma(t), y) = d(x, z) - d(\sigma(t), z) + d(\sigma(t), y) \\
  \leq d(x,z) - d(\sigma(t), z) + d(\sigma(t), z) + d(z,y) = d(x,y). 
\end{multline*}
So
\[ d(x, \sigma(t)) + d(\sigma(t), y) = d(x,y) \textQQqq{for all} t \in [0, r]. \]
By solving the geodesic equation, we can extend $\sigma$ to an arclength geodesic $\gamma : [0,l) \to M$ for some maximal $l \leq d(x,y)$.
We now show that $\gamma$ satisfies the claim of the lemma.
Choose $t_0 \in [0, l]$ maximal with the property that $\gamma |_{[0,t_0)}$ is minimizing and with the property that $d(x, \gamma(t)) + d(\gamma(t), y) = d(x,y)$ for all $t \in [0, t_0)$.

If $t_0 = l$, then we are done, so assume $t_0 < l$ and set $w := \gamma(t_0)$.
By the same arguments as before, replacing $x$ by $w$, we can find an $r^* > 0$ and an arclength, minimizing geodesic $\sigma^* : [0,r^*] \to M$ with $\sigma^* (0) = w$ such that
\[ d(w, \sigma^*(t)) + d(\sigma^*(t), y) = d(w,y) \textQQqq{for all} t \in [0, r^*]. \]
Then
\begin{multline*}
 \ell(\gamma |_{[0,t_0]}) + \ell(\sigma^*) = d(x, w) + d(w, \sigma^*(r^*)) \\
 = d(x,y) - d(w, y) + d(w, \sigma^*(r^*)) = d(x,y) - d(\sigma^*(r^*), y) \leq d(x, \sigma^*(r^*)).  
\end{multline*}
It follows that the concatenation of $\gamma |_{[0,t_0]}$ and $\sigma^*$ is a minimizing geodesic.
This implies that $\gamma'(t_0) = (\sigma^*)'(0)$ and hence $\gamma(t_0 + t) = \sigma^* (t)$ for all $t \in [0,r^*]$.
So $\gamma|_{[0, t_0 + r^*]}$ is minimizing and for any $t \in [0,r^*]$ we have
\begin{multline*}
 d(x, y) \leq d(x , \gamma(t_0+t)) + d(\gamma(t_0+t), y) \\
 \leq d(x,w) + d(w, \sigma^*(t)) + d(\sigma^*(t), y) = d(x,w) + d(w,y) = d(x,y), 
\end{multline*}
contradicting the maximal choice of $t_0$.
This finishes the proof.
\end{proof}

Next, we generalize the concept of the exponential map and the cut locus to incomplete Riemannian manifolds.

\begin{Definition} \label{Def:expmap}
Let $(M, g)$ be a (not necessarily complete) Riemannian manifold and $p \in M$.
For every vector $v \in T_p M$ let $\gamma_v : [0, l_p (v)) \to M$ be the constant speed geodesic with $\gamma_v(0) = p$ and $\gamma'_v (0) = v$, where $l_p(v) \in (0, \infty]$ is chosen such that $[0,l_p(v))$ is the maximal interval of existence.
Define the domain
\[ \mathcal{D}_p := \big\{ v \in T_p M \;\; : \;\;  l_p (v) > 1 \big\} \subset T_p M \]
and the exponential map $\exp_p : \mathcal{D}_p \to M$ by $\exp_p (v) := \gamma_v(1)$.
We also define
\[ \mathcal{D}^*_p := \big\{ v \in \mathcal{D}_p \;\; :\;\; d(p, \exp_p ((1+\eps) v)) = |(1+\eps) v| \textQq{for some} \eps > 0 \big\}. \]
Next, we define the following two ranges in $M$:
\begin{multline*}
 \mathcal{G}_p := \big\{ x \in M \;\; : \;\; \text{there is a minimizing geodesic} \; \\ \gamma : [0,l] \to M \; \text{with} \;\gamma(0) = p, \gamma(l) = x \big\}
\end{multline*}
and
\begin{multline*}
\mathcal{G}^*_p := \big\{ x \in M \;\; : \;\; \text{there is a minimizing geodesic} \; \gamma : [0,l] \to M \; \\
\text{and some} \; 0 \leq t <  l \; \text{such that} \; \gamma(0) = p, \gamma(t) = x \big\}.
\end{multline*}
Finally, we set
\[ \mathcal{D} := \bigcup_{p \in M} \mathcal{D}_p \subset TM, \qquad \mathcal{D}^* := \bigcup_{p \in M} \mathcal{D}^*_p \subset TM \]
and
\[ \mathcal{G}^* := \bigcup_{p \in M} \{ p \} \times \mathcal{G}^*_p \subset M \times M. \]
\end{Definition}

Note that in the complete case $\mathcal{D}_p = T_p M$, $\mathcal{G}_p = M$ and $\mathcal{G}_p \setminus \mathcal{G}^*_p$ is commonly referred to as the cut locus.

We now review the following basic identities.

\begin{Proposition} \label{Prop:expincomplete}
Using the same notation as in Definition \ref{Def:expmap}, the following is true for any $p \in M$:
\begin{enumerate}[label=(\alph*)]
\item $\exp_p (\mathcal{D}_p) \supset \mathcal{G}_p$, $\exp_p (\mathcal{D}^*_p) = \mathcal{G}^*_p$ and $\exp (\mathcal{D}^*) = \mathcal{G}^*$.
\item The subsets $\mathcal{D}_p$, $\mathcal{D}$, $\mathcal{D}^*_p$, $\mathcal{D}^*$, $\mathcal{G}^*_p$ and $\mathcal{G}^*$ are open and the subsets $\mathcal{D}_p$, $\mathcal{D}^*_p$ star-shaped with respect to $0 \in \mathcal{D}_p, \mathcal{D}^*_p$.
\item The subset $\mathcal{G} \setminus \mathcal{G}^*$ has measure zero.
\item The restriction $\exp_p |_{\mathcal{D}^*_p} : \mathcal{D}^*_p \to \mathcal{G}^*_p$ is a (bijective) diffeomorphism.
Slightly more generally, for each $x \in \mathcal{G}^*_p$, there is a unique minimizing geodesic between $p, x$ and the point $x$ is not a conjugate point to $p$ along this geodesic.
\item The function $b(x) := d(x,p)$ is smooth on $\mathcal{G}^*_p$ and satisfies $|\nabla b| = 1$ there.
\item If $q \in \mathcal{G}^*_p$, then $p \in \mathcal{G}^*_q$.
So the set $\mathcal{G}^*$ is symmetric.
\item For any $(p,q) \in \mathcal{G}^*$ there is a unique arclength, minimizing geodesic $\gamma_{p,q} : [0, d(p,q)] \to M$ between $p, q$ and $\gamma_{p,q}$ depends smoothly on $p, q$.
\end{enumerate}
\end{Proposition}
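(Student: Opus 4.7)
The plan is to adapt the standard arguments from complete Riemannian geometry, using Lemma \ref{Lem:geodincomplete} as a substitute for completeness, and to prove the parts in the order (a), (b), (d), (e), (f), (g), (c), since the measure-zero statement (c) relies on the structural information from (d).

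Part (a) is essentially a translation of definitions: given $x \in \mathcal{G}_p$, reparametrize a minimizing geodesic between $p$ and $x$ so its endpoint lies at parameter $1$, producing a vector in $\mathcal{D}_p$ with $\exp_p(v) = x$; the identities $\exp_p(\mathcal{D}^*_p) = \mathcal{G}^*_p$ and $\exp(\mathcal{D}^*) = \mathcal{G}^*$ are analogous, where the strictly-beyond-endpoint extension in the definition of $\mathcal{D}^*_p$ matches the strict inequality $t < l$ in the definition of $\mathcal{G}^*_p$. For (b), openness of $\mathcal{D}_p$ and $\mathcal{D}$ is the standard ODE fact that the maximal flow domain is open, while star-shapedness follows from $\gamma_{tv}(s) = \gamma_v(ts)$ together with the fact that subsegments of minimizing geodesics are minimizing.

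The main delicate point in (b) is that $\mathcal{D}^*_p$ is open. If $v \in \mathcal{D}^*_p$, pick $\eps > 0$ so that $\gamma_v$ is minimizing on $[0, 1+\eps]$. For $v'$ sufficiently close to $v$, the geodesic $\gamma_{v'}$ exists on $[0, 1+\eps/2]$ and its endpoint varies continuously in $v'$; using continuity of $d(p,\cdot)$ in a compact neighborhood of $\gamma_v([0, 1+\eps/2])$ and the equality $d(p, \gamma_v(1+\eps/2)) = (1+\eps/2)|v|$, a short triangle inequality argument shows $\gamma_{v'}$ is still minimizing on this longer interval, placing $v' \in \mathcal{D}^*_p$. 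Openness of $\mathcal{D}^*$, $\mathcal{G}^*_p$ and $\mathcal{G}^*$ then follows because $\exp$ is a smooth open map on the open set $\mathcal{D}^*$ by virtue of (d).

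For (d), the extension condition in the definition of $\mathcal{D}^*_p$ excludes conjugate points at parameter $\leq 1$ along $\gamma_v$, by the standard second-variation argument (a conjugate point would kill minimality just past it). Hence $\exp_p$ is an immersion on $\mathcal{D}^*_p$; injectivity is the usual corner-smoothing argument, since two distinct minimizing extendable geodesics from $p$ to a common point would admit a strictly shorter connecting curve, contradicting the strict extension of minimality. Part (e) is then immediate: $b = |(\exp_p|_{\mathcal{D}^*_p})^{-1}|$ is smooth on $\mathcal{G}^*_p$, and $|\nabla b| = 1$ by the Gauss lemma applied on the image of radial rays. Part (f) follows by reversing the unique minimizing geodesic $\gamma_{p,q}$ given by (d) and observing that it can be continued past $p$ in $M$ (a neighborhood of $p$ supports the geodesic flow), with the extension remaining minimizing by the same argument as in (b). Part (g) is then a direct consequence of (d), with joint smoothness obtained from smooth dependence of both $\exp_p$ and its inverse on $p$.

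Finally, (c) reduces to the classical cut-locus measure-zero argument applied to $T_pM$: define $c(v) := \sup\{t > 0 : tv \in \overline{\mathcal{D}^*_p}\}$ on $S_pM$, verify upper semi-continuity of $c$ in the directions where $c < \infty$, and conclude that the graph $\{c(v)\, v\}$ has measure zero in $T_pM$, so its $\exp_p$-image (which contains $\mathcal{G}_p \setminus \mathcal{G}^*_p$) has measure zero in $M$ since $\exp_p$ is locally Lipschitz on $\mathcal{D}_p$. The principal obstacle I expect is the openness of $\mathcal{D}^*_p$: the defining condition is not local on $\mathcal{D}_p$, and one must carefully arrange compact containment inside the possibly incomplete manifold $M$ to propagate the strict minimizing extension property to nearby initial vectors.
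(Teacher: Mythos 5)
Your overall strategy is sound and several parts (a), (d), (e), (g) line up with the paper's proof, but the crucial step---openness of $\mathcal{D}^*_p$, and a fortiori of $\mathcal{D}^*$ in $TM$---is not actually established by what you wrote, and a similar gap recurs in (f).

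Concretely: for (b) you claim that "a short triangle inequality argument shows $\gamma_{v'}$ is still minimizing" on $[0,1+\eps/2]$. Continuity of $d(p,\cdot)$ together with $d(p,\gamma_v(1+\eps/2))=(1+\eps/2)|v|$ only shows that $d(p,\gamma_{v'}(1+\eps/2))$ is \emph{close} to $(1+\eps/2)|v'|$. But the inequality $d(p,\gamma_{v'}(1+\eps/2))\le (1+\eps/2)|v'|$ always holds, and nothing in a triangle inequality upgrades ``close'' to ``equal.'' The real obstruction is that a nearby geodesic could stop being minimizing because of a competing shorter path (or a nearby conjugate point), and ruling this out requires a compactness argument, not pointwise continuity of the distance. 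The paper's proof does exactly this: it assumes a sequence $l_i v_i \to lv$ with $l_i v_i \notin \mathcal{D}^*_{p_i}$, uses Lemma \ref{Lem:geodincomplete} to produce competitor minimizing geodesics $\gamma^*_i$ toward the slightly-overshot endpoints $x'_i$, uses the $a$-neighborhood control built into Lemma \ref{Lem:geodincomplete} to keep these geodesics from escaping the incomplete manifold, passes to a limit, identifies the limit as $\gamma_v$ via Claim~1 (which encapsulates the uniqueness/non-conjugacy underlying (d)), and finally uses the invertibility of $D\exp_p$ at $lv$ to force $\gamma^*_i$ to coincide with $\gamma_{v_i}$ for large $i$, contradicting $l_iv_i \notin\mathcal{D}^*_{p_i}$. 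Note also that the paper proves openness of $\mathcal{D}^*$ as a subset of $TM$, with the base point $p$ varying; your sketch keeps $p$ fixed, but openness of $\mathcal{G}^*\subset M\times M$ requires the stronger statement. The same gap re-appears in (f): to conclude $p\in\mathcal{G}^*_q$ you need the continuation of $\gamma_{p,q}$ past $p$ to remain \emph{minimizing from $q$}, and "the same argument as in (b)" points to a step you have not supplied; in the paper this is a separate, structurally parallel contradiction argument using Claim~2, Lemma~\ref{Lem:geodincomplete} and non-conjugacy (to invoke the local diffeomorphism property of $\exp_q$).

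For (c) you propose the classical cut-locus route (upper semi-continuity of the cut time $c$ on $S_pM$, hence measure-zero graph $\{c(v)v\}$, pushed forward by $\exp_p$), whereas the paper observes the inclusion $\mathcal{G}_p\setminus\mathcal{G}^*_p \subset \exp_p\big(\bigcap_{\eps>0}((1+\eps)\mathcal{D}^*_p\setminus\mathcal{D}^*_p)\cap\mathcal{D}_p\big)$ and applies star-shapedness plus Fubini. Both work; the paper's version avoids having to establish any regularity of the cut-time function, which is convenient here.
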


\begin{proof}
For assertion (a) observe that if $x \in \mathcal{G}_p$, then there is a $v \in T_p M$ such that $l_v > 1$ and $x = \gamma_v(1)$.
So $\exp_p ( \mathcal{D}_p ) \supset \mathcal{G}_p$.
The remaining identities are clear.

Before continuing with the proof, we establish the following two useful claims:

\begin{Claim1}
Let $lv \in \mathcal{D}^*_p$, $|v| = 1$, $l \geq 0$ and set $x := \gamma_v (l) \in \mathcal{G}^*_p$.
Let $\sigma : [l-a,l] \to M$ be an arclength, minimizing geodesic such that $\sigma(l) = x$ and
\[ d(p, \sigma(l-a)) + d(\sigma(l-a), x) = d(p,x). \]
Then $\sigma$ is a subsegment of $\gamma_v$, i.e. $\sigma (t) = \gamma_v(t)$ for all $t \in [l-a,l]$.
\end{Claim1}

\begin{proof}
This claim is similar to parts of \cite[Chp 13, Proposition 2.2]{doCarmo}.
By definition, there is a constant $\eps > 0$ such that $\gamma_v |_{[0,l+\eps]}$ is minimizing.
Then
\begin{multline*}
  \ell(\gamma_v |_{[l,l+\eps]} ) + \ell(\sigma) = d(x, \gamma_v(l+\eps)) + d(\sigma(l-a), x) \displaybreak[2] \\
= d(p, \gamma_v(l+ \eps)) - d(p, x) + d(\sigma(l-a), x) = d(p, \gamma_v(l+ \eps)) - d(p, \sigma(l-a)) \\
\leq d(\sigma(l-a), \gamma_v(l+\eps)). 
\end{multline*}
It follows that the concatenation of $\sigma$ and $\gamma_v |_{[l, l+\eps]}$ is a minimizing geodesic and hence that $\sigma'(l) = \gamma_v'(l)$.
The claim follows immediately from this.
\end{proof}

\begin{Claim2}
Assume that $lv \in \mathcal{D}^*_p$, $|v| = 1$, $l \geq 0$ and set $x := \gamma_v (l) \in \mathcal{G}^*_p$.
Assume that there is a point $y \in M$ such that
\[ d(p,y) + d(y,x) = d(p,x) \]
Then $y = \gamma_v(t)$ for some $t \in [0,l]$.
\end{Claim2}

\begin{proof}
Apply Lemma \ref{Lem:geodincomplete} with $p \leftarrow x$ and $x \leftarrow y$ to obtain an arclength, minimizing geodesic $\sigma : (l',l] \to M$ for some minimal $l' \in [d(p,y), l]$ such that
\[ d(y, \sigma(t)) + d(\sigma(t), x) = d(y,x) \textQQqq{for all} t \in (l',l] \]
Then for all $t \in (l', l]$
\begin{multline*}
 d(p,x) \leq d(p, \sigma(t)) + d(\sigma(t), x) \\
  \leq d(p,y) + d(y, \sigma(t)) + d(\sigma(t), x) 
 = d(p,y) + d(y,x) = d(p,x). 
\end{multline*}
So by Claim 1 we have $\sigma = \gamma_v |_{(l',l]}$.
It follows that $l' = d(p,y)$ and hence $y = \lim_{t \to d(p,y)} \sigma(t) = \gamma_v (d(p,y))$.
\end{proof}

Let us now return to the proof of the proposition.
Assertion (d) is a direct consequence of Claim 1 and \cite[Chp 11, Corollary 2.9]{doCarmo}, modulo the fact that $\mathcal{D}^*_p$ is open (which is needed for the statement that the map is a diffeomorphism).

Consider now assertion (b).
The fact that $\mathcal{D}_p$ and $\mathcal{D}$ are open follows from the lower semi-continuity of $l_p(v)$ in $p$ and $v$.
It remains to show that $\mathcal{D}^*$ is open.
The fact that $\mathcal{G}^*$ is open will then follow using assertion (d).
Assume that $\mathcal{D}^*$ was not open, i.e. there is some $l v \in T_p M \cap \mathcal{D}^* = \mathcal{D}^*_p$, $|v|= 1$, $l \geq 0$ that is not in the interior of $\mathcal{D}^*$.
By assumption, we can find a sequence $p_i \in M$ with $p_i \to p$ and a sequence of vectors 
\[ l_i v_i \in T_{p_i} M \cap (\mathcal{D} \setminus \mathcal{D}^*) = \mathcal{D}_{p_i} \setminus \mathcal{D}^*_{p_i}, \]
$|v_i| = 1$, $l_i \geq 0$ with $l_i v_i \to l v$.
Set $x := \gamma_{v} (l) \in \mathcal{G}^*_p$, $x_i := \gamma_{v_i} (l_i)$ and $x'_i := \gamma_{v_i} (l_i + 1/i)$.
Then, since $l_i v_i \not\in \mathcal{D}^*_{p_i}$,
\begin{equation} \label{eq:dilpi}
d_i := d(p_i ,x'_i) < l_i + 1/i =: l'_i. 
\end{equation}
Note that $x_i, x'_i \to x$ and $d_i \to l$.
Using Lemma \ref{Lem:geodincomplete}, we can find a sequence of arclength, minimizing geodesics $\gamma^*_i : (l^*_i, d_i]$ such that $\gamma^*_i(d_i) = x'_i$,
\[ d(p_i, \gamma^*_i (t)) + d(\gamma^*_i(t), x'_i) = d(p_i, x'_i) = d_i \textQQqq{for all} t \in (l^*_i, d_i] \]
and such that for each $i$ either $l^*_i = 0$ and $\lim_{t \to 0} \gamma^*_i (t) = p_i$ or $l^*_i > 0$ and $\lim_{t \searrow l^*_i} \gamma^*_i(t)$ does not exist.
After passing to a subsequence, we may assume that $(\gamma^*_i)'(d_i) \to u \in T_x M$.
Since $x'_i \to x_i$, we can find a uniform $a > 0$ such that we can solve the geodesic equation starting from $x'_i$ for at least time $a$, therefore $l^*_i < d_i - a$.
This implies that the subsegments $\gamma^*_i |_{[d_i-a,d_i]}$ converge to an arclength, minimizing geodesic $\gamma^*_\infty : [l-a,l] \to M$ such that $\gamma^*_\infty (l) = x$, $(\gamma^*_\infty)'(l) = u$ and
\[ d(p, \gamma^*_\infty (t)) + d(\gamma^*_\infty (t), x) = d(p, x) = l \textQQqq{for all} t \in [l-a, l]. \]
Using Claim 1, we find that $\gamma^*_\infty = \gamma_v |_{[l-a,l]}$ and $u = \gamma_v'(l)$.
So the geodesics $\gamma^*_i$ converge to $\gamma_v$ and thus we have $l^*_i = 0$ for large $i$.
It follows that there is a sequence $v^*_i \in T_{p_i} M$ such that for large $i$ we have $\gamma^*_i = \gamma_{v^*_i} |_{(0,d_i]}$ and that $v^*_i \to v$.
So for large $i$
\[ \exp_{p_i} (d_i v^*_i) = x'_i = \exp_{p_i} (l'_i v_i) \textQQqq{and} l_i v_i, d_i v^*_i \to lv. \]
Since the differential of the exponential map is invertible at $lv$, it follows that $d_i v^*_i = l'_i v_i$ for large $i$.
This, however, contradicts (\ref{eq:dilpi}), finishing the proof of assertion (b).

So far, we have established assertions (a), (b), (d).
Assertion (e) is a direct consequence of assertion (d).
To see assertion (c), observe that by definition
\[ \mathcal{G}_p \subset \bigcap_{\eps > 0} \exp_p ((1+\eps) \mathcal{D}^*_p \cap \mathcal{D}_p). \]
So
\[ \mathcal{G}_p \setminus \mathcal{G}^*_p \subset \exp_p \bigg( \bigcap_{\eps > 0} \big( (1+\eps) \mathcal{D}^*_p \setminus \mathcal{D}^*_p \big) \cap \mathcal{D}_p \bigg). \]
Using the fact that $\mathcal{D}^*_p$ is star-shaped and Fubini's Theorem, we obtain that the subset under the parentheses has measure zero.
So $\mathcal{G}_p \setminus \mathcal{G}^*_p$ has measure zero as well.

Next, we will show assertion (f).
Assume that $q \in \mathcal{G}^*_p$.
So there is a vector $lv \in \mathcal{D}^*_p$, $|v| =1$, $l \geq 0$ such that $q = \gamma_v(l)$.
We claim that there is an $\eps > 0$ such that
\[ d(\gamma_{-v} (\eps), p) + d(p, q) = d(\gamma_{-v} (\eps), q). \]
Assume not, i.e. that for all $i$ and $p'_i := \exp_{p} (- 1/i \cdot v)$ we have
\begin{equation} \label{eq:psipqlesspsiq}
 d(p'_i, p) + d(p, q) > d(p'_i, q),
\end{equation}
Using Lemma \ref{Lem:geodincomplete} we can find a sequence of arclength, minimizing geodesics $\sigma_i : [0, l_i ) \to M$ such that $\sigma_i (0) = p'_i$, $l_i \in (0, d(p'_i, q)]$ is chosen maximal and
\[ d(p'_i, \sigma_i (t)) + d(\sigma_i(t), q) = d(p,q) \textQQqq{for all} t \in [0, l_i). \]
After passing to a subsequence, we can find an $r > 0$ such that $l_i > r$ for all $i$ and such that the restrictions $\sigma_i |_{[0,r]}$ converge to an arclength, minimizing geodesic $\sigma : [0,r] \to M$ with $\sigma(0) = p$ and
\[ d(p, \sigma(t)) + d(\sigma(t), q) = d(p,q) \textQQqq{for all} t \in [0,r]. \]
Using Claim 2, we conclude that for all $t \in [0,r]$ there is a $t' \in [0,l]$ such that $\sigma(t) = \gamma_v(t')$, i.e. $\sigma ([0,r]) \subset \gamma_v ([0,l])$ and hence $\sigma = \gamma_v |_{[0,r]}$.
It follows that there is a sequence $v'_i \in T_{p'_i}$ with $v'_i \to v$ such that $\sigma_i = \gamma_{v'_i} |_{[0,l_i)}$.
Thus $l_i = d(p'_i, q)$ for large $i$ and $q = \gamma_{v'_i} (d(p'_i, q))$.
Also, $\gamma'_{v'_i} (d(p'_i, q)) \to \gamma'_v(l)$.
Next, observe that by assertion (d) the points $p,q$ are not conjugate along $\gamma$.
So the exponential map $\exp_q$ taken at $q$ is a local diffeomorphism at $- l \gamma'_v (l)$.
Since $\exp_q ( - d(p'_i, q) \gamma'_{v'_i} (d(p'_i, q))) = \exp_q ( - (d(p'_i, p) + d(p,q))\gamma'_v(l))$, it follows that $\gamma'_{v'_i} (d(p'_i, q)) = \gamma'_v(l)$ and $d(p'_i,q) = d(p'_i, p) + d(p,q)$ for large $i$.
This, however, contradicts (\ref{eq:psipqlesspsiq}) for large $i$, finishing the proof of assertion (f).

Assertion (g) follows from assertions (d) and (f) using the implicit function theorem.
\end{proof}

Next, we will derive a local version of the Bishop-Gromov volume comparison result.
For this purpose, we will introduce the following function for any $\kappa \in \IR$
\begin{equation} \label{eq:sndef}
 \sn_\kappa (t) = \begin{cases} \kappa^{-1/2} \sin (\sqrt{\kappa} t) & \text{if $\kappa > 0$} \\ t & \text{if $\kappa = 0$} \\ (-\kappa)^{-1/2} \sinh (\sqrt{-\kappa} t) & \text{if $\kappa < 0$} \end{cases}. 
\end{equation}
Recall that $\sn_{\kappa} (t)$ models the growth of Jacobi fields in the model space of constant sectional curvature $\kappa$.
In particular, the volume $v_\kappa (r)$ of a geodesic $r$-ball in the $n$-dimensional model space of constant sectional curvature $\kappa$ is can be computed as follows if $r \leq \pi \sqrt{\kappa}$ for $\kappa > 0$:
\begin{equation} \label{eq:vkappaformula}
 v_\kappa (r) = n \omega_n \int_0^r \big( \sn_\kappa (t) \big)^{n-1} dt. 
\end{equation}

\begin{Proposition}[monotonicity of the volume element] \label{Prop:Jnonincreasing}
Let $(M^n, g)$ be a (not necessarily complete) Riemannian manifold and assume that we have $\Ric \geq (n-1) \kappa$ on $M$ for some $\kappa \in \IR$.
Let $p \in M$ and consider $\mathcal{D}^*_p$ from Definition \ref{Def:expmap}.
Let $J_p : \mathcal{D}^*_p \to (0, \infty)$ be the Jacobian of the map $\exp_p |_{\mathcal{D}^*_p}$.
Then
\[ J_p (v) \cdot \frac{|v|^{n-1}}{ \big( \sn_\kappa (|v|) \big)^{n-1}}  \]
is non-increasing on radial lines.
\end{Proposition}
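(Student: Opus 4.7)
The plan is to parametrize a radial line as $s \mapsto sv_0$ with $|v_0|=1$ and $s \in [0, l]$, where $lv_0 \in \mathcal{D}^*_p$. By Proposition \ref{Prop:expincomplete}(b), the set $\mathcal{D}^*_p$ is star-shaped, so the entire segment lies inside $\mathcal{D}^*_p$; by Proposition \ref{Prop:expincomplete}(d), $\exp_p$ is a local diffeomorphism on this segment, so no conjugate points occur. This is the only place where the potential incompleteness of $M$ enters — the remainder of the argument then reduces to a standard Jacobi field computation, identical to the complete case.

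First I would express $J_p(sv_0)$ in terms of Jacobi fields. Fix an orthonormal basis $e_1,\ldots,e_{n-1}$ of $v_0^\perp \subset T_p M$ and let $Y_i(s)$ be the Jacobi field along $\gamma_{v_0}$ with $Y_i(0)=0$, $Y_i'(0)=e_i$. By Gauss's lemma and the standard identification $(d\exp_p)_{sv_0}(e_i) = s^{-1}Y_i(s)$, together with $(d\exp_p)_{sv_0}(v_0)=\gamma_{v_0}'(s)$, one obtains
\[
    J_p(sv_0) = \frac{\mathcal{J}(s)}{s^{n-1}}, \qquad \mathcal{J}(s) := \bigl|\det\bigl(Y_1(s),\ldots,Y_{n-1}(s)\bigr)\bigr|,
\]
the determinant being taken in a parallel orthonormal frame along the orthogonal complement of $\gamma_{v_0}'$. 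Thus the quantity to control is $\mathcal{J}(s)/\sn_\kappa(s)^{n-1}$.

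Next I would derive the Bishop comparison inequality. Since $p$ has no conjugate point on $\gamma_{v_0}|_{[0,l]}$, the matrix $Y(s)$ whose columns are the $Y_i(s)$ is invertible for $s \in (0,l]$, and the self-adjoint endomorphism $U(s) := Y'(s)Y(s)^{-1}$ satisfies the Riccati equation $U' + U^2 + R = 0$, where $R$ is the tidal operator $w \mapsto \Rm(w,\gamma_{v_0}')\gamma_{v_0}'$. Taking traces gives $(\log\mathcal{J})'(s) = \tr U(s)$, and then
\[
    (\log \mathcal{J})''(s) + \tr\bigl(U(s)^2\bigr) + \Ric\bigl(\gamma_{v_0}'(s),\gamma_{v_0}'(s)\bigr) = 0.
\]
Using the Cauchy-Schwarz bound $\tr(U^2) \geq (\tr U)^2/(n-1)$ together with the hypothesis $\Ric \geq (n-1)\kappa$, the function $u(s) := \mathcal{J}(s)^{1/(n-1)}$ satisfies the differential inequality $u''(s) + \kappa\, u(s) \leq 0$ with initial data $u(0)=0$, $u'(0)=1$.

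To finish, I would invoke standard Sturm comparison against the model solution $\sn_\kappa$, which satisfies $\sn_\kappa'' + \kappa \sn_\kappa = 0$ with the same initial conditions. The classical argument shows $u(s)/\sn_\kappa(s)$ is non-increasing on the interval where $\sn_\kappa > 0$, and raising to the power $n-1$ yields the monotonicity of
\[
    \frac{\mathcal{J}(s)}{\sn_\kappa(s)^{n-1}} \;=\; J_p(sv_0)\cdot \frac{s^{n-1}}{\sn_\kappa(s)^{n-1}},
\]
which is the claim. The only genuinely non-routine step is the initial observation that the star-shapedness of $\mathcal{D}^*_p$ and the absence of conjugate points on radial segments inside $\mathcal{D}^*_p$ make the Jacobi/Riccati analysis valid globally on $[0,l]$ even without completeness; once this is in hand, the proof is the classical Bishop calculation.
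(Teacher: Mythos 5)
Your proposal is correct and takes essentially the same route as the paper: the paper's proof simply cites the classical Bishop-Gromov computation (Petersen, Chp.~9, Lemma~34) and notes that completeness is not needed because the argument is a pointwise Jacobi/Riccati calculation along a minimizing geodesic, which is precisely what you have written out, with the star-shapedness of $\mathcal{D}^*_p$ and the no-conjugate-point property from Proposition \ref{Prop:expincomplete}(b), (d) supplying the (minor) adaptations to the incomplete setting.
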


\begin{proof}
This fact follows as in the classic Bishop-Gromov volume comparison.
The manifold $M$ does not need to be complete, as the proof is just a computation along minimizing geodesics.
See for example \cite[chp 9, Lemma 34]{MR2243772}.
\end{proof}

Finally, we define what we mean by a weak Laplacian.

\begin{Proposition}[existence of weak Laplacian] \label{Prop:weakLaplaciandistance}
Let $(M^n, g)$ be a (not necessarily complete) Riemannian manifold of dimension $n \geq 2$.
Consider a function $b : M \to \IR$ that is locally Lipschitz and whose Hessian is locally uniformly bounded from below in the barrier sense.
For example, the following two classes of functions (which we will consider in this paper) satisfy these properties:
\begin{enumerate}[label=(\arabic*)]
\item Let $F : (0, \infty) \to \IR$ be a smooth, non-increasing function, $p \in M$ a point and consider the function $b(x) := F(d(x,p))$.
Then $b$ satisfies these properties on $M \setminus \{ p \}$.
\item Let $(E, \langle \cdot, \cdot \rangle)$ be a Euclidean vector bundle over $M$ with a metric connection $\nabla^E$ and let $X \in C^2( M ; E)$ a section of $E$.
Then $b := |X|$ satisfies these properties on $M$
\end{enumerate}

Then the gradient $\nabla b$ exists almost everywhere and for any vector field $Z \in C^1_c (M ; TM)$ we have
\begin{equation} \label{eq:Zintbyparts}
 \int_M  \langle Z , \nabla b \rangle  dg = - \int_M (\DIV Z) b \, dg. 
\end{equation}

Moreover, there is a unique signed measure $\mu_{\triangle b}$ on $M$ of locally finite total variation such that for any compactly supported $\varphi \in C^2_c (M)$ we have
\begin{equation} \label{eq:intbypartsdist}
 \int_M b (x) \triangle \varphi (x) dg(x) = - \int_{M} \langle \nabla b (x) , \nabla \varphi (x) \rangle dg(x) = \int_{M}  \varphi (x) d \mu_{\triangle b} (x). 
\end{equation}
Also, $\mu_{\triangle b}$ can be expressed as the difference of its positive and negative part, $\mu_{\triangle b} = (\mu_{\triangle b})_+ - (\mu_{\triangle b})_- $.
The negative part $(\mu_{\triangle b})_-$ is absolutely continuous with respect to $dg$.
Lastly, $d\mu_{\triangle b} = (\triangle b) dg$ wherever $b$ is $C^2$.

If in example (1) the function $F$ is non-decreasing, then similar statements hold, which follow by replacing $F$ by $-F$.
\end{Proposition}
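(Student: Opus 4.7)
I would split the statement into three logical pieces and handle them in order: (i) the a.e.\ existence of $\nabla b$ and the divergence identity \eqref{eq:Zintbyparts}; (ii) the existence, uniqueness and sign decomposition of $\mu_{\triangle b}$ together with its behavior on the set where $b$ is $C^2$; and (iii) verification that the two sample classes of functions indeed satisfy a locally uniform barrier Hessian lower bound.

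For (i), since $b$ is locally Lipschitz, Rademacher's theorem gives the a.e.\ existence of $\nabla b$ and a local $L^\infty$ bound on it. The identity \eqref{eq:Zintbyparts} is then the standard divergence formula for Lipschitz functions: working in a chart, mollify $b$ to obtain a sequence $b_k \in C^\infty$ with $b_k \to b$ uniformly and $\nabla b_k \to \nabla b$ boundedly and a.e., apply the smooth divergence theorem to $b_k$ against a fixed $Z \in C^1_c(M;TM)$, and pass to the limit by dominated convergence.

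For (ii), the core step is to promote the barrier Hessian lower bound $\Hess b \geq -Kg$ (on a fixed relatively compact set) into a distributional lower Laplacian bound $\triangle b \geq -nK$. I would argue this in a normal coordinate ball $U$ around an arbitrary point: choose a smooth function $\Psi$ on $U$ with $\Hess \Psi \geq g$ (for instance $\Psi(x) = \tfrac{1}{2}|x|^2$ in normal coordinates, up to adjusting the constant by the metric distortion), and show that $u := b + K \Psi$ satisfies $\Hess u \geq 0$ in the barrier sense. By a standard mollification argument in normal coordinates (choosing the mollification radius small enough that the Riemannian Hessian differs from the Euclidean Hessian by an arbitrarily small amount), the distributional Laplacian $\triangle u$ is a non-negative distribution, hence by Schwartz's theorem a non-negative Radon measure $\nu_U$. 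Patching via a partition of unity produces a unique signed Radon measure $\mu_{\triangle b}$ on $M$ satisfying \eqref{eq:intbypartsdist}; uniqueness follows because $C^2_c$ is dense in $C_c$ in the sup norm and both sides of \eqref{eq:intbypartsdist} extend continuously, so Riesz representation applies. By construction the negative part is dominated on each coordinate ball by $nK \cdot dg$, hence absolutely continuous with respect to $dg$. Finally, where $b$ is $C^2$, classical integration by parts gives $\int b \triangle \varphi \, dg = \int (\triangle b) \varphi\, dg$ for test functions supported there, and uniqueness of $\mu_{\triangle b}$ forces $d\mu_{\triangle b} = (\triangle b)\, dg$ on that open set.

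For (iii), example (2) is the easier one: on $\{|X|>0\}$ the function $b = \sqrt{\langle X,X\rangle}$ is $C^2$ and its Hessian is locally bounded by standard calculus; at a zero $x_0$ of $X$, the constant function $\equiv 0$ is a $C^2$ lower barrier for $b$ with Hessian $0$, so the barrier lower bound holds trivially. Example (1) uses the Calabi upper-barrier trick for the distance function: on a compact set $K \subset M \setminus \{p\}$ sectional curvatures are bounded; given $x \in K$, choose a minimizing geodesic from $p$ to $x$ (existing locally, or invoking Lemma~\ref{Lem:geodincomplete}), shift the base point slightly forward along it to some $p'$, so that $d(\cdot, p')$ is smooth near $x$ and $d(\cdot, p) \leq d(\cdot, p') + d(p,p')$ with equality at $x$; the usual Hessian comparison theorem then bounds $\Hess d(\cdot, p')$ above on $K$, producing a smooth upper barrier for $d(\cdot,p)$ with Hessian uniformly bounded above on $K$. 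Composing with the smooth, non-increasing $F$ converts this into a smooth lower barrier for $b = F \circ d(\cdot, p)$ with $\Hess b$ bounded below on $K$, as desired. The remark about non-decreasing $F$ then follows by applying the statement to $-F$.

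The main obstacle is the mollification/barrier argument in step (ii): the barrier Hessian bound is a pointwise geometric statement with respect to $g$, whereas the distributional Laplacian bound one wants is a global analytic statement, and the passage between the two requires carefully controlling the deviation of the Euclidean Hessian (in a chart) from the Riemannian Hessian when one mollifies on a small scale. Once this local semiconvexity-type estimate is established, the representation of $\triangle b$ as a signed measure is essentially formal, and the remaining assertions of the proposition follow by routine measure-theoretic bookkeeping.
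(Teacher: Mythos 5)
Your overall route for the existence of $\mu_{\triangle b}$ is essentially the same as the paper's: work in a chart, mollify $b$ while preserving the barrier Hessian lower bound, deduce that the distribution $\varphi \mapsto \int_M b\,\triangle\varphi\,dg$ plus a fixed multiple of $dg$ is non-negative, and invoke Riesz--Markov. The paper phrases this with a uniform coordinate constant $C_2$ for $\nabla^2 b_\eps$ and a cutoff $\psi$ to produce the two-sided bound \eqref{eq:functionalbound2}; your auxiliary convex function $\Psi$ and the auxiliary $u = b + K\Psi$ is an equivalent device, and the Schwartz theorem you cite (non-negative distributions are measures) is the same mechanism. The absolute continuity of the negative part and the identification with $(\triangle b)\,dg$ where $b$ is $C^2$ then follow in both versions by the same bookkeeping. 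For example (1), both arguments are Calabi-type barrier constructions, but the paper places the auxiliary point $y$ at distance $r$ from $x$ (and uses Lemma \ref{Lem:geodincomplete} to find it), so that $d(\cdot,y)$ is automatically smooth near $x$ within the local injectivity radius; shifting the base near $p$, as you propose, additionally requires ruling out that $x$ is a cut or conjugate point of $p'$, which is avoidable but more delicate in the incomplete setting.

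The genuine gap is in your verification of example (2). You claim that on $\{|X|>0\}$ the Hessian of $b=|X|$ is ``locally bounded by standard calculus'' and handle zeros of $X$ with the constant lower barrier. That only produces, at each point, a barrier constant that a priori depends on the point; what must be verified is a locally \emph{uniform} lower bound. In fact the Hessian of $|X|$ is \emph{not} locally bounded near the zero set: writing
\[ \nabla^{2}_{v,v}|X| = \frac{\langle \nabla^{E,2}_{v,v} X, X\rangle + |\nabla^E_v X|^2}{|X|} - \frac{\langle \nabla^E_v X, X\rangle^2}{|X|^3} \]
on $\{|X|>0\}$, the middle term is unbounded above as $|X|\to 0$, so the two-sided bound fails, and the constant lower barrier at the isolated zeros does not propagate to a neighborhood. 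What is true, and what the paper uses, is that the last two terms above combine to something non-negative by Cauchy--Schwarz, giving the Kato-type lower bound $\nabla^{2}_{v,v}|X| \geq -|\nabla^{E,2}_{v,v}X|$, which is locally uniform. The paper establishes this uniformity cleanly by approximating $b$ by $\sqrt{|X|^2 + \eps}$ and observing that the lower Hessian bound \eqref{eq:KatoHessian} on the approximants is independent of $\eps$. Your sketch would need essentially this computation to close example (2); as written, the claim of local boundedness on $\{|X|>0\}$ is false, and the pointwise barrier at zeros does not yield the required uniform constant.
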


\begin{proof}
Let us first show that the two classes of examples fall into the desired category of functions.
Consider first example (1), i.e. $b(x) = F(d(x,p))$.
Obviously, $b$ is locally Lipschitz.
Let $U \subset M \setminus \{ p \}$ be an open subset that is relatively compact in $M \setminus \{ p \}$.
We will now show that the Hessian of $b$ is uniformly bounded from below on $U$.
To do this choose $r > 0$ small enough such that the tubular neighborhood $B (U, 3r)$ is still relatively compact in $M \setminus \{ p \}$, such that $|{\Rm}| < r^{-2} / 10$ and $\inj > 2r$ on $B (U, r)$.
Let $x \in U$.
By Lemma \ref{Lem:geodincomplete} there is a point $y \in M$ such that
\[ d(y,x) = r \textQQqq{and} d(x,p) = d(x,y) + d(y,p)  . \]
By the triangle inequality, we have for all $z \in M$
\[ b(z) = F(d(z, p)) \geq  F \big( d(z,y) + d(y,p) \big) =: b^*_y(z). \]
Moreover, $b^*_y (x) = b(x)$.
For $z$ close to $x$ we have $d(y,z) < 2r$ and hence the function $b^*_y(z)$ is a $C^2$ barrier for $b$ at $x$.
Its Hessian can be bounded from above using the sectional curvature bound on $B(U, 3r)$.
This shows that the desired fact.

To see that the functions from example (2), i.e. $b = |X|$, fall in the desired category, observe that any such $b$ can be expressed as a limit of $C^2$ functions:
\[ b = \lim_{\eps \to 0} \sqrt{|X|^2 + \eps}. \]
We can now check that
\[ \big| \nabla^E \sqrt{|X|^2 + \eps} \big| \leq \frac{|\nabla^E X | \cdot |X|}{\sqrt{|X|^2 + \eps}} \leq | \nabla^E X | \]
and that for any $v \in TM$
\begin{multline} \label{eq:KatoHessian}
 \nabla^{E,2}_{v,v} \sqrt{|X|^2 + \eps} = \frac{\langle \nabla^{E,2}_{v,v} X , X \rangle + |\nabla^E_v X |^2}{\sqrt{|X|^2 + \eps}} - \frac{\langle \nabla^E_v X , X \rangle^2}{(|X|^2 + \eps)^{3/2}} \\
 \geq - |\nabla_{v,v}^{E,2} X | + \frac{|\nabla_v^E X|^{2} |X|^2 - \langle \nabla^E_v X , X \rangle^2}{(|X|^2 + \eps)^{3/2}} \geq - |\nabla_{v,v}^{E,2} X|.
\end{multline}
So the approximators $\sqrt{|X|^2 + \eps}$ are locally uniformly Lipschitz and have local uniform lower bounds on the Hessian (in the barrier sense).
These properties pass to the limit.

We will now focus on the main part of the proof.
Let $U \subset M$ be an open subset that is relatively compact in $M$ and on which we can find coordinates $(x^1, \ldots, x^n) : U \to V \subset  \IR^n$.
Moreover let $U' \subset U$ be an open subsets such that $\ov{U'} \subset U$ and $U'' \subset U'$ an open subset such that $\ov{U''} \subset U'$.
We will first construct $\mu_{\triangle b}$ on $U''$ such that (\ref{eq:intbypartsdist}) holds for all $\varphi \in C^2_c (U'')$.

As $b$ is Lipschitz on $U$ and its Hessian is uniformly bounded from below in the barrier sense, we find a constant $C_1 < \infty$ such that if we express $b = b(x^1, \ldots, x^n)$ in terms of the coordinates, then $b$ is $C_1$-Lipschitz and its Hessian is bounded from below by $- C_1$ in the barrier sense.
Using the coordinate $(x^1, \ldots, x^n)$ and a convolution argument, we can find a family of smoothings $b_\eps : U' \to \IR$, $\eps > 0$ such that the following holds for some uniform $C_2 < \infty$:
\begin{enumerate}[label=(\roman*)]
\item $\lim_{\eps \to 0} b_\eps = b$ uniformly on $U'$,
\item $\nabla b_\eps \to \nabla b$ in $L^1 (U')$ as $\eps \to 0$.
\item $| \nabla b_\eps| < C_2$ and $\nabla^2 b_\eps > - C_2$ on $U'$ for all $\eps > 0$.
\end{enumerate}

We first conclude that for any $Z \in C^1_c (U' ; TM)$
\[
\int_M \langle Z , \nabla b \rangle dg 
= \lim_{\eps \to 0} \int_M \langle Z , \nabla b_\eps \rangle dg
= - \lim_{\eps \to 0} \int_M (\DIV Z) b_\eps \, dg
= - \int_M (\DIV Z) b \, dg.
\]
This shows (\ref{eq:Zintbyparts}) on $U'$ and by setting $Z = \nabla \varphi$, we obtain that the first part of (\ref{eq:intbypartsdist}) holds for all $\varphi \in C^2_c (U')$.

Next, observe that for any $\varphi \in C^2_c (U')$ we have
\begin{equation} \label{eq:btrianglevarphi}
  \int_M b(x)\triangle \varphi (x) dg(x) = \lim_{\eps \to 0} \int_M  b_\eps (x) \triangle \varphi(x) dg(x) = \lim_{\eps \to 0}\int_M  \triangle b_\eps (x)  \varphi(x) dg(x).  
\end{equation}
Using the lower bound on the Hessian of $b_\eps$ and (\ref{eq:btrianglevarphi}), we find that whenever $\varphi \geq 0$, we have
\begin{equation} \label{eq:functionalbound1}
  -\int_M b(x)\triangle \varphi (x) dg(x) \leq C_2 \int_{M} \varphi(x) dg(x) \leq C_2 |U| \cdot \max_{M} \varphi. 
\end{equation}
Choose and fix now a cutoff function $\psi \in C^2_c (U')$  such that $0 \leq \psi \leq 1$ everywhere and $\psi \equiv 1$ on $U''$.
Set
\[ C_3 := \bigg| \int_M b(x) \triangle \psi (x) dg(x) \bigg| . \]
Then, whenever $\varphi \in C^2_c (U'')$ and $\varphi \geq 0$, we have with $A := \max_M \varphi$
\begin{multline} \label{eq:functionalbound2}
 \int_M b(x) \triangle \varphi(x) dg(x) = A \int_M b(x) \triangle \psi(x) dg(x) - \int_M b(x) \triangle \big( A \psi(x) - \varphi(x) \big) dg(x) \\
 \leq A \int_M b(x) \triangle \psi(x) dg(x) + C_2 |U| \max_M  \big( A \psi - \varphi)  
 = \big( C_3 + C_2 |U| \big) \max_M \varphi.
\end{multline}
Combining (\ref{eq:functionalbound1}) and (\ref{eq:functionalbound2}) implies that the functional
\[ H : C^2_c (U'') \longrightarrow \IR, \qquad \varphi \longmapsto \int_M b(x) \triangle \varphi (x) dg(x) \]
satisfies 
\begin{equation} \label{eq:functionalequation}
| H(\varphi) | \leq C_4 \max_M \varphi
\end{equation}
for all $\varphi \in C^2_c (U'')$ with $\varphi \geq 0$, where $C_4 < \infty$ is a uniform constant.
By approximation, this implies that $H$ can be extended to $C^0_c(U'')$ and that (\ref{eq:functionalequation}) for all $\varphi \in C^0_c (U'')$.
So by the Riesz-Markov Theorem, there is a unique signed measure $\mu_{\triangle b}$ of finite total variation on $U''$ such that for all $\varphi \in C^2_c (U'')$ we have
\begin{equation} \label{eq:characterizationUprime}
 \int_{U''} b(x) \triangle \varphi(x) dg(x) = \int_{U''} \varphi d\mu_{\triangle b}. 
\end{equation}

The fact that $(\mu_{\triangle b})_-$ is absolutely continuous with respect to $dg$ can be seen as follows: By the first inequality of (\ref{eq:functionalbound1}) and (\ref{eq:characterizationUprime}), we get
\[ \int_{U'} \varphi d\mu_{\triangle b} \geq -C_2 \int_{U''} \varphi(x) dg(x). \]
So, again by Riesz-Markov, $ d\mu_{\triangle b} + C_2 dg$ is a non-negative measure.
It follows that $d(\mu_{\triangle b})_- \leq C_2 dg$.

Repeating the argument above on the elements $U''_i$ of an open cover $\mathcal{U} = \{ U''_i \}_{i \in I}$ of $M$ yields a singed measure $\mu_{\triangle b}$ of locally finite total variation on $M$ such that (\ref{eq:intbypartsdist}) holds for all $\varphi \in C_c^2 (U''_i)$.
Using a partition of unity, we can then establish (\ref{eq:Zintbyparts}) for all $Z \in C^1_c (M ; TM)$ and (\ref{eq:intbypartsdist}) for all $\varphi \in C_c^2 (M)$.
\end{proof}

\section{Convergence to a singular space and derived properties}
In this section we prove Theorem \ref{Thm:basicconvergence}.
In the first subsection we will show that the limit of any sequence of Riemannian manifolds that satisfy the properties of subsection \ref{subsec:characterlimit} is a singular space that has mild singularities of a certain codimension.
In the second and third subsection, we will then verify the $Y$-tameness and $Y$-regularity properties of the limit.

\subsection{Convergence to a singular space}
In this subsection we will prove the following result, which is a more detailed version of Theorem \ref{Thm:basicconvergence} minus assertions (b) and (c), which state the $Y$-tameness and $Y$-regularity properties.

\begin{Theorem} \label{Thm:detailedconvergence}
Let $\{ (M_i, g_i, q_i) \}_{i=1}^\infty$ be a sequence of pointed, complete Riemannian manifolds, each with bounded curvature that converge to a metric space $(X, \linebreak[1] d_X, \linebreak[1] q_\infty)$ in the pointed Gromov-Hausdorff sense.
Assume that $\{ (M_i, g_i, q_i) \}_{i=1}^\infty$ satisfies properties (A)--(C) of subsection \ref{subsec:characterlimit} for some constant $\mathbf{p}_0> 0$.
Then the limit space $(X, d_X)$ is part of a singular space $\XX = (X, d_X, \RR, g)$ in the sense of Definition \ref{Def:singlimitspace}, with singularities of codimension $\mathbf{p}_0$, in the sense of Definition~\ref{Def:codimensionsingularities}.
Moreover, after passing to a subsequence, we have convergence of $\{ (M_i, g_i, q_i) \}_{i=1}^\infty$ to $(\XX, q_\infty)$ according to some convergence scheme $\{ (U_i, V_i, \Phi_i ) \}_{i=1}^\infty$ (in the sense of Definition \ref{Def:convergencescheme}) such that:
\begin{enumerate}[label=(\alph*), start=1]
\item For any $x \in \RR$ and $r > 0$ we have
\begin{multline*}
\qquad\qquad \big| B^{X} (x, r) \cap \RR \big| \leq \liminf_{i \to \infty} \big| B^{M_i} (\Phi_i(x),  r) \big| \\
 \leq \limsup_{i \to \infty} \big| B^{M_i} (\Phi_i(x), r) \big| \leq  \big| \ov{B^{X} (x,r)} \cap \RR \big|. 
\end{multline*}
\item For any $x \in \RR$ we have
\begin{multline*}
 \qquad\qquad 0 < \liminf_{i \to \infty} \tdrrm^{M_i} (\Phi_i ( x)) = \limsup_{i \to \infty} \tdrrm^{M_i} (\Phi_i ( x)) \\
 \leq \rrm^\infty (x) \leq \liminf_{i \to \infty} \rrm^{M_i} (\Phi_i ( x)). 
\end{multline*}
Here $\rrm^\infty$ denotes the curvature radius on $\XX$.
\item For any $D < \infty$ and $\sigma > 0$ and sufficiently large $i$ (depending on $D$ and $\sigma$) we have
\[ \tdrrm < \sigma \textQQqq{on} B^{M_i} (q_i,  D) \setminus V_i \]
and
\[ \rrm^\infty < \sigma \textQQqq{on} B^{X} (q_\infty, D) \setminus U_i. \]
\end{enumerate}
If the sequence $\{ (M_i, g_i, q_i) \}_{i=1}^\infty$ additionally satisfies property (E) of subsection~\ref{subsec:characterlimit}, then $\XX$ has mild singularities in the sense of Definition \ref{Def:mild}.
\end{Theorem}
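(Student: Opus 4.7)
The plan is to construct the regular set $\RR$ as a diffeomorphic limit of a nested exhaustion of ``controlled'' regions in the $M_i$'s and then to identify it with an open subset of $X$ via the Gromov-Hausdorff convergence. For parameters $\sigma>0$ and $D<\infty$ set $W_{i,\sigma,D}:=B^{M_i}(q_i,D)\cap\{\tdrrm^{M_i}>\sigma\}$. On $W_{i,\sigma,D}$ the definition of $\tdrrm$ gives uniform bounds on $|\Rm|$, $|\nabla\Rm|$, $|\nabla^2\Rm|$; combining these with the lower volume bound in property (A) yields, via a standard Cheeger-type argument, a uniform injectivity-radius lower bound on a slightly shrunken region $W_{i,2\sigma,D-1}$. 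Cheeger-Gromov compactness applied to these regions, for a sequence of exhausting pairs $(\sigma_k,D_k)=(2^{-k},2^k)$ and a diagonal subsequence, produces a $C^3$-Riemannian manifold $(\RR,g)$ together with diffeomorphisms $\Phi_i:U_i\to V_i\subset M_i$ with $U_1\subset U_2\subset\dots$ exhausting $\RR$ and $\Phi_i^*g_i\to g$ in $C^3$ on compacta. The pointed Gromov-Hausdorff convergence $(M_i,d_{M_i},q_i)\to(X,d_X,q_\infty)$ then realizes $\RR$ as an open subset of $X$, and one checks (using uniqueness of Cheeger-Gromov limits together with the $\eps$-isometries coming from GH-convergence) that the embedding is well defined up to the ambient GH-isometry class.

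Next I would verify that $(X,d_X,\RR,g)$ is a singular space in the sense of Definition~\ref{Def:singlimitspace}. Items (1), (3) and (5) follow readily: $(X,d_X)$ is a complete length space (GH-limit of length spaces) and locally compact (by uniform total boundedness from (A)); $g$ is $C^3$ by the convergence statement above; the volume bounds in (5) follow from (A) combined with item (a) below. Openness of $\RR$ is built into the construction, while density of $\RR$ (part of (2)) uses property (B): it implies that the Minkowski codimension of the complement is at least $\mathbf{p}_0$, so in particular $X\setminus\RR$ has empty interior. The main nontrivial axiom is (4), identifying the length metric of $(\RR,g)$ with $d_X|_\RR$, and this is precisely what property (C) delivers: given $x,y\in\RR$ and $\eta>0$, choose $\sigma_0>0$ small so that $\tdrrm^{M_i}(\Phi_i(x)),\tdrrm^{M_i}(\Phi_i(y))>\sigma_0$ for large $i$; apply (C) to obtain curves $\gamma_i$ of length $<d_{M_i}(\Phi_i(x),\Phi_i(y))+\eta$ staying in $\{\tdrrm^{M_i}>\sigma\}\subset V_i$; pull back to curves $\Phi_i^{-1}\circ\gamma_i$ in $\RR$ and extract a subsequential $C^0$-limit (using the uniform injectivity-radius bound on $\{\tdrrm>\sigma\}$) to obtain a curve in $\RR$ whose length approximates $d_X(x,y)$. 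The singularities-of-codimension-$\mathbf{p}_0$ property then translates property (B) directly through the convergence.

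Claims (a)--(c) follow from the construction. Claim (a) is a consequence of $C^3$-convergence on compacta: the preimage of any open ball eventually contains the corresponding ball in $\RR$, and the closed ball bounds the limsup. Claim (b) uses that $\Phi_i^*g_i\to g$ in $C^3$ forces $|\Rm|$ and $|\nabla\Rm|$ to converge on compacta of $\RR$, whence $\tdrrm^{M_i}(\Phi_i(x))$ converges to a value $\leq\rrm^\infty(x)\leq\liminf\rrm^{M_i}(\Phi_i(x))$; the one-sided discrepancies come from the fact that the $C^3$-convergence controls $|\nabla^2\Rm|$ only in the limit through the weaker $\rrm^\infty$. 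Claim (c) is essentially tautological from the construction of $V_i$ as $\{\tdrrm^{M_i}>\sigma_{k(i)}\}\cap B^{M_i}(q_i,D_{k(i)})$. For mild singularities under property (E): fix $p\in\RR$, choose $\sigma_0$ with $\tdrrm^{M_i}(\Phi_i(p))>\sigma_0$ for large $i$, and for each $\delta,\eta>0$ and $D<\infty$ apply (E) to produce subsets $S_i\subset B^{M_i}(\Phi_i(p),D)$ of complement-measure $<\delta$ from which one can reach $\Phi_i(p)$ by curves staying in $\{\tdrrm^{M_i}>\sigma(\delta,\sigma_0,D)\}$. Passing to the limit yields a set $S_{\delta,D}\subset B^X(p,D)\cap\RR$ of complement-measure $\leq\delta$ whose points are joined to $p$ by curves in $\RR$ of length $<d_X(p,\cdot)+\eta$; diagonalizing over $\delta_k\to 0$, $\eta_k\to 0$ and $D_k\to\infty$ produces the required closed measure-zero set $Q_p\subset\RR$.

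The main technical obstacle will be making the diagonal Cheeger-Gromov construction compatible across scales, so that the $\Phi_{i,k}$'s assemble into maps $\Phi_i$ from a single intrinsic manifold $\RR$ whose length metric really agrees with $d_X|_\RR$. The delicate step is item (4): property (C) only gives length-minimizing curves in $M_i$, and converting them into curves in the limit that simultaneously approximate the GH-distance and stay in a region of uniform injectivity radius requires carefully combining the $\eta$-length bound from (C) with the $\eps$-isometry from the GH-convergence and the $C^3$-control from Cheeger-Gromov compactness. Everything else, including the mild-singularities addendum, is a limiting argument relying on these three compatible convergences.
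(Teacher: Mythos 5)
Your overall strategy is in the same family as the paper's (Cheeger--Gromov-type compactness on the good regions $\{\tdrrm>\sigma\}$, property (C) to identify $d_X|_\RR$ with the length metric of $(\RR,g)$, property (B) for density and codimension, property (E) for mildness), but the order of construction differs in a way that costs you an ingredient. The paper first applies Arzel\`a--Ascoli to the uniformly $1$-Lipschitz functions $\tdrrm^{M_i}(\Psi_i(\cdot))$ to produce, after passing to a subsequence, a $1$-Lipschitz limit $h:X\to[0,\infty]$, \emph{defines} $\RR:=h^{-1}((0,\infty))$, and only then puts a manifold structure and metric on $\RR$. You instead build $\RR$ abstractly by Cheeger--Gromov compactness on $W_{i,\sigma_k,D_k}$ and embed afterwards. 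That alternative ordering is workable (modulo the gluing-across-scales issue you flag), but you lose the function $h$, and that is exactly what your proof of assertion (b) is missing.

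The genuine gap is in your treatment of assertion (b): you claim that ``$\Phi_i^*g_i\to g$ in $C^3$ \dots whence $\tdrrm^{M_i}(\Phi_i(x))$ converges.'' This inference is false. $C^3$-convergence of $\Phi_i^*g_i$ on compacta of $\RR$ only controls $|\Rm|$, $|\nabla\Rm|$, $|\nabla^2\Rm|$ on compact subsets of $V_i=\Phi_i(U_i)$, whereas $\tdrrm^{M_i}(\Phi_i(x))$ is determined by the geometry of $M_i$ on a ball $B^{M_i}(\Phi_i(x),\tdrrm^{M_i}(\Phi_i(x)))$ that typically extends outside $V_i$, precisely where the curvature derivatives can be blowing up. So nothing in your construction forces the sequence $\tdrrm^{M_i}(\Phi_i(x))$ to converge; a priori only $\liminf$ and $\limsup$ exist, and the equality in (b) is unproved. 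What is needed is the Arzel\`a--Ascoli step applied \emph{before} building the convergence scheme: since each $\tdrrm^{M_i}$ is $1$-Lipschitz with respect to $d_{M_i}$, one may pass to a subsequence so that $\tdrrm^{M_i}(\Psi_i(\cdot))\to h$ locally uniformly for some $1$-Lipschitz $h$, and then the compatibility $d_{M_i}(\Phi_i(\cdot),\Psi_i(\cdot))\to0$ propagates this to $\tdrrm^{M_i}(\Phi_i(\cdot))\to h$. With $h$ in hand, one also obtains $\rrm^\infty\geq h$ cleanly (using that $h$ is $1$-Lipschitz, so $B^X(x,h(x))\subset\RR$ and the curvature bounds pass to the limit on it); without it, your claimed inequality $\leq\rrm^\infty(x)$ needs a separate argument. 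Relatedly, your density argument (``property (B) implies Minkowski codimension $\geq\mathbf{p}_0$, hence $X\setminus\RR$ has empty interior'') presupposes the codimension statement, but the paper's proof of codimension (Claim~9) itself uses $h$ and the fact that $\rrm^\infty\geq h$; the cleaner route is the paper's Claim~2, which contradicts properties (A) and (B) directly from $h\equiv0$ on a ball, again via the $1$-Lipschitz property of $\tdrrm^{M_i}$. The rest of your proposal --- especially the use of property (C) for axiom (4), the volume estimate (a) via the two ball inclusions, and the two-diagonalization scheme for mildness under property (E) --- matches the paper's argument.
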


\begin{proof}
By assumption the pointed metric spaces $(M_i, d_{M_i}, q_i)$ converge to $(X, \linebreak[1] d_X, \linebreak[1] q_\infty)$ in the Gromov-Hausdorff sense.
This means that there are sequences $\eps_i \to 0$, $R_i \to \infty$ and maps $\Psi_i : B^X (q_\infty, R_i) \to M_i$ such that the following holds:
\begin{enumerate}[label=(\arabic*)]
\item $d_{M_i} (\Psi_i(q_\infty), q_i) < \eps_i$.
\item For any $x, y \in B^X(q_\infty ,  R_i)$ we have
\[ \big| d_{M_i} (\Psi_i (x), \Psi_i(y)) - d_X (x,y) \big| < \eps_i. \]
\item For any $i$ and any $z \in B^{M_i} (q_i,  R_i)$ there is a $z^* \in B^X (q_\infty , R_i)$ such that $d_{M_i} (\Psi_i(z^*), z) < \eps_i$.
\end{enumerate}
We will now pass to a subsequence such that the functions $\tdrrm^{M_i} (\cdot )$ converge to a function $h$ on $X$.

\begin{Claim1}
We may pass to a subsequence such that one of the following is true:
Either $(X, d_X)$ is isometric to Euclidean space $(\IR^n, d_{\textnormal{eucl}})$ and $\lim_{i \to \infty} \tdrrm^{M_i} (q_i) = \infty$ or the limit $h(x) := \lim_{i \to \infty} \tdrrm^{M_i} (\Psi_i (x))$ exists for all $x \in X$.
Moreover, $h$ is $1$-Lipschitz.
\end{Claim1}

\begin{proof}
Since $\tdrrm^{M_i} (\cdot )$ is $1$-Lipschitz with respect to $d_{M_i}$ we have for all $x, y \in X$
\[ \big| \tdrrm^{M_i} ( \Psi_i (x) ) - \tdrrm^{M_i} (\Psi_i (y)) \big| \leq d_{M_i} (\Psi_i (x), \Psi_i (y) ) < d_X (x,y) + \eps_i. \]
So the assertion follows from an Arzela-Ascoli type argument.
\end{proof}

In the case in which $\lim_{i \to \infty} \tdrrm^{M_i} (q_i) = \infty$, we are done.
So assume in the following that $h : X \to [0, \infty)$ exists.
Then we define
\[ \RR := h^{-1} ( (0, \infty)). \]
By continuity of $h$ it follows that the subset $\RR$ is open.
Using the non-collapsing property (A) from subsection \ref{subsec:characterlimit}, we conclude that $\RR$ can be equipped with the structure of a differentiable manifold of regularity $C^4$, whose topology coincides with the topology induced by the metric $d_X$ and we can find a $C^3$-Riemannian metric $g$ on $\RR$ such that $d_X |_{\RR}$ is locally equal to the length metric of $(\RR, g)$.
More specifically, for any $p \in \RR$ there is an open neighborhood $p \in U \subset \RR$ such that the length metric of $(\RR, g)$ restricted to $U$ is equal to $d_X |_{U}$.
Note that this implies that for any $C^1$-curve $\gamma : [0,1] \to \RR$ we have $\ell_g (\gamma) \geq d_X (\gamma(0), \gamma(1))$, where $\ell_g (\gamma)$ denotes the length of $\gamma$ with respect to $g$.

\begin{Claim2}
$\RR$ is dense in $(X,d_X)$.
\end{Claim2}

\begin{proof}
Assume not and pick a point $p \in X$ and an $r > 0$ such that $h \equiv 0$ on $B^X (p,r) \subset X$.
Let $\eps > 0$ and choose an $\eps$-net $x_1, \ldots, x_N \in B^X (p,r)$.
It follows that for large $i$ the points $\Psi_i (x_1), \ldots, \Psi_i(x_n)$ form a $2\eps$-net of $B^{M_i} (\Psi_i (p),  r)$.
Moreover, for large $i$, we have $\tdrrm^{M_i} (\Psi_i(x_j)) < \eps$ for all $j = 1, \ldots, N$.
Since $\tdrrm^{M_i} (\cdot )$ is $1$-Lipschitz with respect to $d_{M_i}$, we find that $\tdrrm^{M_i} (\cdot ) < 3\eps$ on $B^{M_i} (\Psi_i(p), r)$ for large $i$.
Choosing $\eps$ small enough, this, however, contradicts properties (A) and (B) of subsection \ref{subsec:characterlimit}.
\end{proof}

We have shown that $\XX := (X, d_X , \RR, g)$ satisfies all properties (1)--(3) of the Definition of a singular space (see Definition \ref{Def:singlimitspace}).
Using a center of gravity construction, and by passing to another subsequence, it is possible to show the existence of a scheme $\{ (U_i, V_i, \Phi_i )\}_{i = 1}^\infty$ for the convergence $(M_i, g_i, q_i)$ to $(\XX, q_\infty)$ (in the sense of Definition \ref{Def:convergencescheme}).
We may also assume that this scheme approximates $\Psi_i$ in the following sense:
There is a sequence $\eps'_i \to 0$ such that for all $i$ and $x \in U_i \cap B^X(q_\infty, R_i)$ we have $d_{M_i} (\Psi_i(x), \Phi_i(x)) < \eps'_i$.
Note that, formally, the definition of a convergence scheme requires the limiting space to be a singular space in the sense of Definition \ref{Def:singlimitspace}, meaning that it has to satisfy properties (4) and (5) of this definition as well.
However, even without these properties, the definition of a convergence scheme still makes sense.

\begin{Claim3}
For any $r > 0$ we have
\[
 \sup_{x \in B^X(q_\infty , r) \cap U_i} \big| h (x) -  \tdrrm^{M_i} (\Phi_i(x)) \big|  \to 0 
\]
as $i \to \infty$.
\end{Claim3}

\begin{proof}
This follows from the fact that $\Phi_i$ approximates $\Psi_i$, that $\tdrrm^{M_i} (\cdot )$ is $1$-Lipschitz and that $\tdrrm^{M_i} (\Psi_i(\cdot))$ converges to $h$ uniformly on compact subsets of $X$.
\end{proof}

\begin{Claim4}
For any $r, \sigma > 0$ and for sufficiently large $i$ (depending on $r, \sigma$) we have
\[
 \tdrrm^{M_i} (\cdot ) < \sigma \textQQqq{on} B^{M_i} (q_i,  r) \setminus V_i .
\]
\end{Claim4}

\begin{proof}
Assume that this was wrong for some $r, \sigma > 0$.
Then, after passing to a subsequence, there is a sequence of points $x_i \in B^{M_i} (q_i,  r) \setminus V_i$ such that $\tdrrm^{M_i} (x_i) \geq \sigma$ for all $i$.
By the definition of a convergence scheme (see Definition \ref{Def:convergencescheme}(5)), we can pick a sequence $y_i \in V_i$ such that $d_{M_i} (x_i, y_i) \to 0$.
So $\tdrrm^{M_i} (y_i) > \sigma / 2$ for large $i$.
Choose $z_i \in U_i$ such that $\Phi_i (z_i ) = y_i$.
By local compactness, we can pass to a subsequence and assume that $z_i \to z_\infty \in X$.
By Claim 3, we have $h(z_i) > \sigma / 4$ for large $i$ and hence $h(z_\infty) \geq \sigma /4 > 0$.
Thus $z_\infty \in \RR$ and so there is a $\delta > 0$ such that $B^X (z_\infty, 2\delta) \subset U_i$ for large $i$.
As $\Phi_i^* g_i \to g$ in $C^3$ on $B^X (z_\infty, 2\delta)$, we find that
\[ B^{M_i} (\Phi_i (z_\infty),  \delta) \subset \Phi_i (B^X (z_\infty, 2\delta)) \subset \Phi_i (U_i) = V_i \]
for large $i$.
But $d_{M_i} (\Phi_i (z_\infty), x_i) \leq d_{M_i} (\Phi_i (z_\infty), \Phi_i (z_i)) + d_{M_i} (\Phi_i (z_i), x_i) \to 0$ as $i \to \infty$, which implies that for large $i$ we have $x_i \in B^{M_i} (\Phi_i (z_\infty), \delta) \subset V_i$, contradicting our assumptions.
\end{proof}

Note that assertion (c) of this theorem holds due to Claim 4 and a compactness argument.

\begin{Claim5}
For any $x, y \in \RR$ and $\eps > 0$ there is a $C^1$-curve $\gamma : [0,1] \to \RR$ between $x, y$, such that $\ell (\gamma ) < d (x,y) + \eps$.
In other words, $(X,d_X)$ is the completion of the length metric on $(\RR, g)$ and $\XX$ satisfies property (4) of Definition~\ref{Def:singlimitspace}.
\end{Claim5}

\begin{proof}
Let $x, y \in \RR$ and choose $0 < \sigma_0  < \min \{ h(x), h(y) \}$.
Let $x_i := \Phi_i (x), y_i := \Phi_i(y) \in M_i$ for large $i$.
Then $\tdrrm^{M_i} (x_i), \tdrrm^{M_i} (y_i) > \sigma_0$ and $d_{M_i} (x_i, y_i) < d_X (x,y) + \eps / 4$ for large $i$.
By property (C) from subsection \ref{subsec:characterlimit}, there is a uniform $\sigma > 0$ and a sequence of curves $\gamma_i : [0,1] \to M_i$ between $x_i, y_i$ such that for large $i$ we have $\ell_{g_i} (\gamma_i) < d_X (x, y) + \eps/2$ and such that $\tdrrm^{M_i} (\gamma_i(s)) > \sigma$ for all $[0,1]$.
By Claim 4, we have $\gamma_i ([0,1]) \subset V_i$ for large $i$.
So for large $i$, the curve $\Phi_i^{-1} \circ \gamma_i : [0,1] \to U_i \subset \RR$ between $x, y$ has length $< d_X (x,y) + \eps$.
This proves the claim.
\end{proof}

\begin{Claim6}
Assertion (b) of this theorem holds and we have $\rrm^\infty \geq h$.
\end{Claim6}

\begin{proof}
First note that if $x \in X \setminus \RR$, then by definition $h(x) = 0$ and $\rrm^\infty (x) = 0$.
So in this case, there is nothing to show.
Next, for any $x \in \RR$, the first inequality and the equality in assertion (b) holds because the $\liminf$ and $\limsup$ are equal to $h$.

We now show that $\rrm^\infty \geq h$, which implies the second inequality in assertion (b).
Pick $x \in \RR$ arbitrarily and observe that $ h(x) > 0$.
Since $h$ is $1$-Lipschitz, we have $B^X (x, h(x)) \subset \RR$.
Moreover, for all $y \in B^X (x, h(x))$ we have $d_{M_i} (\Phi_i(x), \Phi_i(y)) < \rrm^{M_i} (\Phi_i(x))$ for infinitely many $i$ and
\[ |{\Rm}|(y) = \lim_{i \to \infty} |{\Rm}|(\Phi_i(y)) \leq \liminf_{i \to \infty} \big( \rrm^{M_i} (\Phi_i(x)) \big)^{-2} = h^{ -2} (x). \]
Similarly, we obtain
\[ |{\nabla \Rm}|(y) \leq h^{-3} (x). \]
This implies that for all $x \in \RR$ we have $\rrm^\infty (x) \geq h(x)$.

Next let $h_* (x) := \liminf_{i \to \infty} \rrm^{M_i} (\Phi_i (x))$ for all $x \in \RR$.
It remains to prove that $\rrm^\infty \leq h_*$ on $\RR$.
Pick $x \in \RR$ arbitrarily and observe that $\rrm^\infty (x) > 0$.
Choose $0 < r < \rrm^\infty(x)$.
Then the closure of $B^X (x,r)$ is compact and contained in $\RR$ and we have $|{\Rm}| < r^{-2}$ on $\ov{B^X(x,r)}$.
So for large $i$ we have $\ov{B^X(x,r)} \subset U_i$ and for any $\delta > 0$ we have, for large $i$, that $B^{M_i} (\Phi_i(x),  r - \delta) \subset \Phi_i (B^X(x,r))$ and $|{\Rm}| < (r - \delta)^{-2}$ on $\Phi_i (B^X(x,r))$.
So for large $i$, we have $\rrm^{M_i} (\Phi_i(x)) \geq r - \delta$.
Passing to the limit and using Claim 3, implies that $h_* (x) \geq r - \delta$.
Letting $\delta \to 0$, proves the desired result.
\end{proof}

\begin{Claim7}
For any $x \in \RR$ and any $r > 0$ we have
\begin{multline*}
 \big| B^X (x, r) \cap \RR \big| \leq \liminf_{i \to \infty} \big| B^{M_i} (\Phi_i(x),  r) \big| \\
 \leq \limsup_{i \to \infty} \big| B^{M_i}(\Phi_i(x),  r) \big| \leq \big| \ov{B^X(x,r)} \cap \RR \big|. 
\end{multline*}
So assertion (a) of this theorem holds.
\end{Claim7}

\begin{proof}
We first show the first inequality.
By the fact that
\[  \big| B^X (x, r ) \cap \RR \big| = \lim_{\delta \to 0} \big| B^X (x, r - \delta) \cap \RR \big|, \]
it suffices to show that for all $\delta > 0$ we have
\begin{equation} \label{eq:uppervolbounddelta}
 \big| B^{X} (x, r - \delta) \cap \RR \big| \leq \liminf_{i \to \infty} \big| B^{M_i} (\Phi_i(x), 0, r) \big| . 
\end{equation}
Note that for large $i$ we have
\[ \Phi_i \big( U_i \cap B^X (x, r - \delta) \big) \subset B^{M_i} (\Phi_i(x),  r). \]
So
\[ \big| U_i \cap B^X (x,r-\delta) \big| \leq (1+\eta_i) \big|B^{M_i} (\Phi_i(x), r) \big| \]
for some $\eta_i \to 0$.
Since $\bigcup_{i=1}^\infty U_i = \RR$, we obtain (\ref{eq:uppervolbounddelta}).

Next, we show that $\limsup_{i \to \infty} |B^{M_i} (\Phi_i(x),  r)| \leq |\ov{B^X(x,r)} \cap \RR|$.
To do this, we use the fact that
\[ \ov{B^X(x, r )} \cap \RR = \bigcap_{\delta > 0} \big( B^X (x, r + \delta) \cap \RR \big), \]
and hence that
\[ \lim_{\delta \to 0} \big| B^X (x, r + \delta) \cap \RR \big| = \big| \ov{B^X(x, r )} \cap \RR \big|, \]
to see that it suffices to prove
\[
 \big| B^X (x, r + \delta) \cap \RR \big| \geq \limsup_{i \to \infty} \big| B^{M_i} (\Phi_i(x),  r) \big|. 
\]
Then, as before, we can conclude that for large $i$
\[ \big| U_i \cap B^X (x, r+\delta) \big| \geq (1-\eta_i) \big| V_i \cap B^{M_i} (\Phi_i(x),  r) \big| \]
for some $\eta_i \to 0$.
So we need to show that
\begin{equation} \label{eq:volgoesto0}
\big| B^{M_i} (\Phi_i (x), r) \setminus V_i \big| \to 0. 
\end{equation}
To see this, observe that by Claim 4, for any $\sigma > 0$ we have
\[ B^{M_i} (\Phi_i(x),  r) \setminus V_i  \subset \{ \tdrrm^{M_i}  (\cdot ) < \sigma \} \]
for sufficiently large $i$.
So (\ref{eq:volgoesto0}) follows using property (B) of subsection \ref{subsec:characterlimit}.
\end{proof}

Using Claim 7 and property (A) of subsection \ref{subsec:characterlimit}, we obtain:

\begin{Claim8}
For any $D < \infty$ there are constants $\kappa_1 = \kappa_1(D), \kappa_2 = \kappa_2(D) > 0$ such that for all $x \in B^X(q_\infty, D)$ and $0 < r < D$ we have
\[
 \kappa_1 r^n < \big| B^X (x,r) \cap \RR \big| < \kappa_2 r^n. 
\]
In other words, $\XX = (X, d_X, \RR, g)$ satisfies property (5) of Definition \ref{Def:singlimitspace}.
\end{Claim8}

It follows that $\XX := (X, d_X, \RR, g)$ is a singular space in the sense of Definition~\ref{Def:singlimitspace}.

\begin{Claim9}
For any $x \in X$ and $r_0 > 0$ there is an $\mathbf{E}_{x, r_0} < \infty$ such that for any $0 < r < r_0$ and $0 < s < 1$ we have
\[ \big| \{ \rrm^\infty < sr \} \cap B^X (x,r) \cap \RR \big| \leq \mathbf{E}_{x,r_0} s^{\mathbf{p}_0} r^n. \]
So $\XX$ has singularities of codimension $\mathbf{p}_0$.
\end{Claim9}

\begin{proof}
Fix $0 < r < r_0$, $0 < s < 1$ and let $\delta > 0$ be some small constant.
Then, by compactness, we have for some large $i$ that
\[ \{ \delta \leq \rrm^\infty \leq sr \} \cap \ov{B^X (x,r)}  \subset U_i. \]
By Claim 6, we find that for large $i$
\[ \tdrrm^{M_i} (\cdot ) < sr + \delta \textQq{on} \Phi_i \big( \{ \delta \leq \rrm^\infty \leq sr \} \cap \ov{B^X (x,r)}  \big). \]
So, using property (B) from subsection \ref{subsec:characterlimit}, we can find a constant $\mathbf{E}_{x, r_0} < \infty$ such that for large $i$
\[ \big|  \{ \delta < \rrm^\infty < sr \} \cap B^X (x,r) \big| \leq (1+\delta) \big| \{ \tdrrm^{M_i} (\cdot ) < sr + \delta \} \big| \leq \mathbf{E}_{x, r_0} (s + \delta r^{-1})^{\mathbf{p}_0} r^n. \]
Letting $\delta \to 0$ yields the desired result.
\end{proof}

It only remains to establish the mildness of the singularities of $\XX$ assuming that $\{ (M_i, g_i, q_i ) \}_{i=1}^\infty$ satisfies property (E) in subsection \ref{subsec:characterlimit}.
So assume for the remainder of the proof that property (E) holds.

\begin{Claim10}
Let $x \in \RR$ and $r, \delta > 0$.
Then there is a $\sigma = \sigma (x, r, \delta) > 0$ and a sequence of numbers $\eta_i \to 0$ and a sequence of open subsets $S_i \subset B^X (x, r) \cap \RR$ such that for large $i$ we have
\[ \big| (B^X (x,r) \cap \RR) \setminus S_i \big| < \delta \]
and such that for any $y \in S_i$ there is a $C^1$-curve $\gamma : [0,1] \to \RR$ between $x, y$, $\gamma(0) = x, \gamma(1) = y$ such that
\[ \ell_g (\gamma) < d_X (x,y) + \eta_i \]
and
\[ \rrm^\infty (\gamma(s)) > \sigma \textQQqq{for all} s \in [0,1]. \]
\end{Claim10}

\begin{proof}
This is a direct consequence of property (E) in subsection \ref{subsec:characterlimit}.
\end{proof}

\begin{Claim11}
Let $x \in \RR$ and $r, \delta > 0$.
Then there is a $\sigma = \sigma (x, r, \delta) > 0$ and a subset $S^*_{x,r, \delta} \subset B^X (x,r) \cap \RR$ such that
\begin{equation} \label{eq:Sstarvolumedelta}
 \big| (B^X (x,r) \cap \RR) \setminus S^*_{x,r,\delta} \big| \leq \delta 
\end{equation}
and such that for any $y \in S^*_{x,r,\delta}$ there is a sequence $\gamma_i : [0,1] \to \RR$ of $C^1$-curves between $x, y$, $\gamma_i(0) = x, \gamma_i(1) = y$, such that
\[ \liminf_{i \to \infty} \ell_g (\gamma_i) = d_X (x,y) \]
and such that for all $i$ we have
\[ \rrm^\infty (\gamma_i (s)) > \sigma \textQQqq{for all} s \in [0,1]. \]
\end{Claim11}

\begin{proof}
We can construct $S^*_{x,r,\delta}$ as follows:
Let $S_1, S_2, \ldots$ be the subsets from Claim~10 and let $S^*_{x,r,\delta}$ be the set of all points $y \in B^X(x,r) \cap \RR$ such that $y \in S_i$ for infinitely many $i$.
It remains to prove (\ref{eq:Sstarvolumedelta}).
To see this bound, observe that
\[ \big( B^X (x,r) \cap \RR \big) \setminus S^*_{x,r,\delta} = \bigcup_{j=1}^\infty \bigcap_{i=j}^\infty \big( \big( B^X (x,r) \cap \RR \big) \setminus S_i \big). \]
The subset
\[ \bigcap_{i=j}^\infty \big( \big( B^X (x,r) \cap \RR \big) \setminus S_i \big) \]
has measure bounded by $\delta$ for each $j$ and it is monotone in $j$.
So the union of these subsets has measure bounded by $\delta$ as well.
\end{proof}

We now claim that for any $y \in S^*_{x,r,\delta}$ there is in fact a minimizing geodesic $\gamma : [0,1] \to \RR$ between $x, y$ whose image is contained in $\RR$.
To see this, observe that by Claim 11 and compactness of $\{ \rrm^\infty \geq \sigma \} \cap \ov{B^X(x,r)}$, for any $0 = s_0 < s_1 < \ldots < s_m = 1$ there are points $z_0, \ldots, z_m \in \{ \rrm^\infty \geq \sigma \} \cap \ov{B^X(x,r)} \subset \RR$ such that
\[ d_X (x, z_1) + d_X (z_1,z_2) + \ldots + d_X (z_{m-1}, y) = d_X (x,y) \]
and such that
\begin{equation} \label{eq:dzkm1zkSstar}
 d_X (z_{k-1}, z_k) = (s_k - s_{k-1}) d_X (x,y) \textQQqq{for all} k =1, \ldots, m. 
\end{equation}
The parameters $s_1, \ldots, s_{m-1}$ can be chosen such that we have
\[ d_X (z_{k-1}, z_k) < \sigma \leq \rrm^\infty (z_{k-1}) \textQQqq{for all} k =1, \ldots, m. \]
So we can choose minimizing geodesics between $z_{k-1}, z_k$ for each such $k$.
Their concatenation is a minimizing geodesic between $x,y$ due to (\ref{eq:dzkm1zkSstar}), which proves our claim.

Consider now the Riemannian manifold $(\RR, g)$ and define $\mathcal{G}_x, \mathcal{G}^*_x \subset \RR$ as in Definition \ref{Def:expmap}.
We have shown that for any $r, \delta > 0$ we have
\[ S^*_{x,r, \delta} \subset \mathcal{G}_x. \]
It follows using (\ref{eq:Sstarvolumedelta}) that $\RR \setminus \mathcal{G}_x$ has zero measure.
By Proposition \ref{Prop:expincomplete}(c), the complement $\mathcal{G}_x \setminus \mathcal{G}^*_x$ has zero measure as well and by Proposition \ref{Prop:expincomplete}, the subset $\mathcal{G}^*_x$ is open.
So $Q_x := \RR \setminus \mathcal{G}^*_x = (\RR \setminus \mathcal{G}_x) \cup (\mathcal{G}_x \setminus \mathcal{G}^*_x)$ is closed and has measure zero.
This finishes the proof.
\end{proof}

\subsection{The tameness properties} \label{subsec:tame}
We will now present the proof of Theorem \ref{Thm:basicconvergence}(b), establishing the $Y$-tameness property of the limiting singular space.
More specifically, we will prove the following theorem:

\begin{Theorem} \label{Thm:tame}
For any $A > 0$ and $\mathbf{p}_0 > 1$ there are $c = c(A) > 0$ and $Y = Y(A, \mathbf{p}_0) < \infty$ such that the following holds: 

Consider the sequence $\{ (M_i, g_i, q_i  ) \}_{i = 1}^\infty$ and the pointed limiting singular space $(X, d_X, \RR, g, q_\infty)$ from Theorem \ref{Thm:detailedconvergence}.
Assume that $\{ (M_i, g_i, q_i  ) \}_{i = 1}^\infty$ satisfies the properties (A)--(F) from subsection \ref{subsec:characterlimit} for the constants $A, \mathbf{p}_0$ and for some constant $T > 0$.
Then $\XX$ is $Y$-tame at scale $c\sqrt{T}$.
\end{Theorem}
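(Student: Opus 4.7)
The plan is to verify each of the five tameness conditions in Definition \ref{Def:tameness} by transferring suitable objects (harmonic functions, cutoff functions, heat kernels) from the approximating manifolds $(M_i,g_i,q_i)$ to the singular limit $\XX$ via the convergence scheme $\{(U_i,V_i,\Phi_i)\}$ produced by Theorem \ref{Thm:detailedconvergence}. Throughout, the volume comparison bounds (property (A) together with Theorem \ref{Thm:detailedconvergence}(a)) immediately give condition (1) with $Y$ depending only on $A$, once we choose $c=c(A)$ small enough so that $c\sqrt{T}$ lies in the regime where (A) applies.

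For the cutoff functions in condition (4), I would start with the functions $\phi_i\in C^\infty_c(B^{M_i}(\Phi_i(p),r))$ supplied by property (F), pull them back via $\Phi_i^{-1}$ to obtain $C^2$-functions on $U_i\cap B^X(p,2r)\cap \RR$, and take $U_i$ in Definition \ref{Def:tameness}(4) to be the same exhausting sets of $\RR$ used in the convergence scheme. The $C^3$-convergence $\Phi_i^*g_i\to g$ on compact subsets of $\RR$ ensures the bounds on $|\nabla\phi_i|$ and $|\triangle\phi_i|$ pass to the limit with the same constant $A$ (up to a factor), and properties (4a)--(4c) follow from $\bigcup U_i=\RR$ together with Claim 4 of Theorem \ref{Thm:detailedconvergence}, which shows that the singular set is invisible to large balls. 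This part is essentially bookkeeping.

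The heart of the proof lies in conditions (2) and (3), which require solving Dirichlet problems on $\XX$ and matching them to distance-type functions in the sense of Cheeger--Colding. On each smooth manifold $M_i$, the function $b_i(x):=d_{M_i}(x,\Phi_i(q))$ admits the standard weak Laplacian bounds coming from $\Ric\geq -(n-1)\kappa$, and one solves $\triangle h_i=0$ on $B^{M_i}(\Phi_i(p),r)$ with boundary values $b_i$. The Cheeger--Colding identities
\[
\int_{B}|\nabla(h_i-b_i)|^2 \leq 2r\int_{\overline{B}}d|\mu_{\triangle b_i}|,
\qquad
\int_{B}|h_i-b_i|^2 \leq Cr^3\int_{\overline{B}}d|\mu_{\triangle b_i}|,
\]
follow from integration by parts against $h_i-b_i$ using the Laplacian bound and a Poincar\'e inequality. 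The Poincar\'e inequality in turn is obtained from the log-Sobolev inequality (property (D)) via the standard Davies semigroup argument. To pass to the limit on $\XX$, I would pull back $h_i$ via $\Phi_i$, extract a subsequential $C^3_{\mathrm{loc}}$-limit $h$ on $\RR\cap B(p,r)$ using elliptic regularity (justified on pre-compact subsets of $\RR$ where $\tdrrm$ is bounded below, by Theorem \ref{Thm:detailedconvergence}(b)), and note that $\Phi_i^*b_i\to b$ uniformly by the Gromov--Hausdorff convergence. The two integral inequalities then pass to the limit because the singular neighborhoods contribute negligibly in volume thanks to the codimension $\mathbf{p}_0>1$ bound; this is exactly where the assumption $\mathbf{p}_0>1$ enters, via the estimate $|\{\tdrrm<\sigma\}\cap B|\lesssim\sigma^{\mathbf{p}_0}$, which forces the integrals of $|\nabla b|$ and of $d|\mu_{\triangle b}|$ over $\sigma$-neighborhoods of the singular set to vanish as $\sigma\to 0$. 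Condition (3) is handled analogously, replacing $b$ with $d^{2-n}(\cdot,p)$ and using the annular Green's-function construction of Colding; again the crucial point is that Bishop--Gromov--type comparison (Proposition \ref{Prop:Jnonincreasing}) holds on the $M_i$ and the volume ratio terms on the right-hand side are preserved in the limit by Theorem \ref{Thm:detailedconvergence}(a).

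Finally, for the heat kernel in condition (5), I would take the smooth heat kernels $K_i$ on $M_i$, whose Gaussian upper bounds $K_i(x,y,t)\leq Yt^{-n/2}\exp(-d^2/(Yt))$ at scales $t<T$ are a direct consequence of the log-Sobolev inequality in property (D) via Davies' method (see also Grigoryan). A diagonal extraction using Schauder estimates on pre-compact subsets of $\RR\times\RR\times(0,a^2)$ yields a pointwise and $C^3_{\mathrm{loc}}$ limit $K$ on $\RR\times\RR\times(0,a^2)$; the Gaussian bound passes to the limit, which in particular guarantees that $K$ extends continuously to $X\times X\times(0,a^2)$ with the same bound. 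The symmetry (5a), the heat equation (5b) and the initial condition (5c) transfer directly from $K_i$; the stochastic completeness (5d) follows from the Gaussian bound combined with dominated convergence, using again that the singular set has zero measure and, by the codimension estimate, negligible heat content. The principal obstacle I anticipate is verifying (5d), the normalization $\int_\RR K(\cdot,y,t)dg=1$: this requires ruling out any ``escape of mass'' into the singular set in the limit, which I expect to handle by combining the Gaussian upper bound with the codimension estimate (hence the need $\mathbf{p}_0>1$) and a cutoff argument using the functions from (4). The constants $c$ and $Y$ are then read off from the various applications of (A)--(F).
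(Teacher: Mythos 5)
Your overall strategy matches the paper's: condition (1) from property (A) plus Theorem \ref{Thm:detailedconvergence}(a), condition (4) from property (F), conditions (2)--(3) by solving Dirichlet problems on $M_i$, deriving a Poincar\'e-type bound from the log-Sobolev inequality (D) via Davies, and passing to the limit, and condition (5) from Davies' Gaussian heat-kernel bounds. But two of the steps would fail as you have written them.

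For condition (4), directly pulling back the cutoff from property (F) cannot work: (F) only gives $\phi \equiv 1$ on $B^{M_i}(x, r/A)$, whereas Definition \ref{Def:tameness}(4c) demands $\phi_i \equiv 1$ on $U_i \cap B(p, r/2)$. Since $A$ is arbitrary, $r/A$ may be far smaller than $r/2$, and rescaling is blocked because the support must remain inside $B(p,r)$. What is needed --- and what the paper supplies --- is a covering argument: cover $B(x,0.6r)$ by a controlled number of balls of radius $\sim r/A$ (the number bounded in terms of $A$ by the volume bounds in (A)), sum the (F)-cutoffs on those balls, and compose with a smooth step function to restore the range $[0,1]$. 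Calling this part \emph{bookkeeping} understates it: without it (4c) simply fails.

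For condition (2), the limit passage of $\int_{\Omega_i}(h_i-b_i)\,d\mu_{\triangle b_i}$ does not follow from the codimension estimate alone. After localizing with a cutoff supported in $\{\rrm^\infty > \sigma\}\cap B(p,r-\nu)$, there remains a contribution supported in a thin annulus near $\partial B(p,r)$ lying entirely in the regular part, where the sublevel-set bound on $\tdrrm$ gives no smallness. To kill this term one must show $|A^X(p,r-O(\nu),r+O(\nu))\cap\RR|\to 0$ as $\nu\to 0$, which the paper obtains from mildness of singularities (a consequence of property (E), hence available under the hypotheses of Theorem \ref{Thm:tame}) combined with the Jacobian estimate from Proposition \ref{Prop:expincomplete}. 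Your proposal invokes only the volume decay near the singular set, so the boundary-annulus term is left uncontrolled. The same issue reappears in condition (3), where the corresponding total-variation integral of $\mu_{\triangle b^{2-n}}$ must be bounded via Lemma \ref{Lem:BGmutriangleb} and a minimizing-geodesic argument; simply citing Bishop--Gromov is not enough.
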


We will split the proof of Theorem \ref{Thm:tame} into several parts, in which we establish the assumptions in Definition \ref{Def:Yregularity} of tameness separately.
In each part we consider the setting of Theorem \ref{Thm:tame} and, in particular, we fix a convergence scheme $\{ (U_i, V_i , \Phi_i \}_{i=1}^\infty$.

\begin{proof}[Proof of item (1) of the $Y$-tameness properties in Theorem \ref{Thm:tame}]
This is a direct consequence of property (A) from subsection \ref{subsec:characterlimit} and Theorem~\ref{Thm:detailedconvergence}(a).
\end{proof}

In order to show items (2) and (3) of the $Y$-tameness properties, we need the following lemma.

\begin{Lemma} \label{Lem:mutriangleb}
There is a constant $C^* < \infty$ and for any $\mathbf{p}_0 > 1$ and $A, E < \infty$ there is a constant $C = C(\mathbf{p}_0, A,  E) < \infty$ such that the following holds:

Let $(M, g )$ be a complete, $n$-dimensional Riemannian manifold with bounded curvature, $p \in M$ a point and $a, D > 0$ numbers and assume that
\begin{enumerate}[label=(\roman*)]
\item For all $x \in B(p,2D)$ and $0 < r < 2a$ we have $A^{-1} r^n < |B(x,r)| < A r^n$.
\item For any $x \in B(p,2D)$, $0 < r < 2a$ and $0 < s < 1$ we have
\[ |\{ \tdrrm < s r \} \cap B(x,r) | < E s^{\mathbf{p}_0} r^n. \]
\end{enumerate}

Set $b(x) := d (x, p)$ and denote by $\mu_{\triangle b}$ the signed measure from Proposition \ref{Prop:weakLaplaciandistance}.
Then we have the following estimates:
\begin{enumerate}[label=(\alph*)]
\item In the weak sense, we have for all $x \in M \setminus \{ p \}$
\begin{equation} \label{eq:upperLaplacebound}
 d \mu_{ \triangle b} (x) < C^* \big( \min\{ \rrm (x), b(x) \} \big)^{-1} dg. 
\end{equation}
\item For any $0 < r \leq a$ and $y \in B(p,D)$ such that $d (p,y) > 2 r$ we have
\begin{equation} \label{eq:rnminusoneE}
 \int_{B(y,r)} d |\mu_{\triangle b} | < C r^{n-1} 
\end{equation}
and for any $0 < s < 1$ we have
\begin{equation} \label{eq:normmuoverrrmsmall}
 \int_{B(y,r) \cap \{ \tdrrm  < sr \}} d |\mu_{\triangle b} | < C s^{\mathbf{p}_0 - 1} r^{n-1} 
\end{equation}
\item Assume that $n \geq 3$ and denote by $d\mu_{\triangle b^{2-n}}$ the weak Laplacian of $b^{2-n}$, as introduced in Proposition \ref{Prop:weakLaplaciandistance}.
Let $0 < r < a$ and consider an open subset $U \subset B(p,r) \setminus \{ p \}$ such that along any minimizing geodesic from $p$ to any point in $U$, we have $\Ric \geq - (n-1) \kappa$ for some $\kappa \geq 0$ with $\kappa r^2 \leq 100$.
Then
\begin{equation} \label{eq:laplaceb2minusn}
 d\mu_{\triangle b^{2-n}} \geq - \frac{C^* \kappa}{b^{n-2}} dg \textQQqq{on} U \setminus \{ p \}. 
\end{equation}
Moreover
\begin{equation} \label{eq:laplaceb2minusnintegral}
 \int_{U } d (\mu_{\triangle b^{2-n}})_- \leq  C \kappa r^2. 
\end{equation}
\end{enumerate}
\end{Lemma}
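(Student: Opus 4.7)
The plan is to establish (a) via a barrier argument combined with classical Laplacian comparison, then derive the $L^1$-type bounds in (b) by integrating the resulting density estimate against the sublevel-set measure estimate (ii), and finally prove (c) through an explicit chain-rule computation.

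\textbf{Part (a).} Fix $x \in M \setminus \{p\}$ and set $\rho := \tfrac{1}{2}\min\{\rrm(x), b(x)\}$. By Lemma~\ref{Lem:geodincomplete} applied to $(p, x)$, there is an arclength minimizing geodesic from $p$ reaching (a neighborhood of) $x$; let $y$ be the point at distance $\rho$ from $x$ on this geodesic, towards $p$. The function $b_y(z) := d(z,y) + d(y,p)$ is a smooth upper barrier for $b$ at $x$: the triangle inequality gives $b \leq b_y$ with equality at $z = x$. Because $B(y, 2\rho) \subset B(x, \rrm(x))$, the bound $|{\Rm}| < (2\rho)^{-2}$, hence $\Ric \geq -(n-1)(2\rho)^{-2}$, holds on $B(y, 2\rho)$, and classical Laplacian comparison at $x$ (where $d(x, y) = \rho$) gives $\triangle b_y(x) \leq (n-1)(2\rho)^{-1}\coth(1/2) \leq C^*/\rho$. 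A symmetric argument, using the lower barrier $\underline{b}_{y'}(z) := d(y',p) - d(y',z)$ with $y'$ on a minimal extension of the geodesic past $x$, yields $\triangle b(x) \geq -C^*/\rho$. Promoting these pointwise barrier estimates to the weak Laplacian via the approximation scheme in the proof of Proposition~\ref{Prop:weakLaplaciandistance} produces (\ref{eq:upperLaplacebound}) together with the matching lower bound $d\mu_{\triangle b} \geq - C^*(\min\{\rrm, b\})^{-1} dg$.

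\textbf{Part (b).} Since $d(y,p) > 2r$, every $x \in B(y,r)$ satisfies $b(x) > r$, so the two bounds from (a) give
\begin{equation*}
d|\mu_{\triangle b}| \leq C \big(\min\{\tdrrm, r\}\big)^{-1}\, dg \quad \text{on } B(y,r),
\end{equation*}
using $\tdrrm \leq \rrm$. Split the ball into $\{\tdrrm \geq r\}$, which contributes $\leq C r^{-1} \cdot A r^n = C r^{n-1}$ by (i), and $\{\tdrrm < r\}$, on which the layer-cake formula
\begin{equation*}
\int_{\{\tdrrm < r\} \cap B(y,r)} \tdrrm^{-1}\, dg = \int_0^\infty \big| B(y,r) \cap \{\tdrrm < \min(r, 1/t)\} \big|\, dt
\end{equation*}
applies. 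Splitting the $t$-integral at $t = 1/r$ and invoking (ii) on each piece, the tail $\int_{1/r}^\infty E (1/(tr))^{\mathbf{p}_0} r^n\, dt$ converges precisely because $\mathbf{p}_0 > 1$ and the total is $C(\mathbf{p}_0, E)\, r^{n-1}$, proving (\ref{eq:rnminusoneE}). For (\ref{eq:normmuoverrrmsmall}) rerun the layer cake with the range $\{\tdrrm < sr\}$: the split point shifts to $t = 1/(sr)$ and both pieces evaluate to $C s^{\mathbf{p}_0 - 1} r^{n-1}$.

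\textbf{Part (c).} On the smooth locus of $U \setminus \{p\}$, the chain rule together with $|\nabla b|^2 = 1$ gives
\begin{equation*}
\triangle b^{2-n} = (n-1)(n-2)\, b^{-n} - (n-2)\, b^{1-n}\, \triangle b.
\end{equation*}
The hypothesis supplies $\Ric \geq -(n-1)\kappa$ along every minimizing geodesic from $p$ to $U$, which is exactly what classical Laplacian comparison needs to conclude $\triangle b \leq (n-1)\sqrt\kappa \coth(\sqrt\kappa\, b)$ pointwise on the smooth part; the singular part of $\mu_{\triangle b}$ is non-negative (Proposition~\ref{Prop:weakLaplaciandistance}), so this inequality passes to the weak setting. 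Substituting,
\begin{equation*}
d\mu_{\triangle b^{2-n}} \geq (n-1)(n-2)\, b^{-n}\bigl( 1 - b\sqrt\kappa \coth(b\sqrt\kappa) \bigr)\, dg \quad \text{on } U \setminus \{p\}.
\end{equation*}
Since $b \leq r$ and $\kappa r^2 \leq 100$, the argument $b\sqrt\kappa$ stays in $[0, 10]$, where the elementary inequality $|1 - x\coth x| \leq C x^2$ converts this into (\ref{eq:laplaceb2minusn}). Integrating the resulting density bound for $(\mu_{\triangle b^{2-n}})_-$ and estimating $\int_{B(p,r)} b^{2-n}\, dg \leq C r^2$ via dyadic annuli and (i) yields (\ref{eq:laplaceb2minusnintegral}).

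\textbf{Main obstacle.} The most delicate step is the lower barrier construction in (a): at points on the cut locus of $p$ there is no minimal extension of the geodesic past $x$, so the pointwise lower Hessian bound is only available almost everywhere. Since $(\mu_{\triangle b})_-$ is absolutely continuous (Proposition~\ref{Prop:weakLaplaciandistance}), an a.e.\ Hessian bound suffices in principle, but extracting the location-dependent density from the proof of that proposition (which invokes a uniform $C_2$) requires a small partition-of-unity refinement.
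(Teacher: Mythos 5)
The proposal goes wrong at the transition from (a) to (b). For $b(x) = d(x,p)$, Proposition~\ref{Prop:weakLaplaciandistance} (applied with the \emph{non-decreasing} $F = \mathrm{id}$, so after the flip $F \mapsto -F$) gives that the \emph{positive} part $(\mu_{\triangle b})_+$ is absolutely continuous with respect to $dg$ — not the negative part, as you assert in your final paragraph. The singular part of $\mu_{\triangle b}$ sits in $(\mu_{\triangle b})_-$ and is supported on the cut locus of $p$; this is the familiar fact that the distributional Laplacian of the distance function has a non-positive singular piece. Consequently, the ``matching lower bound'' $d\mu_{\triangle b} \geq -C^*(\min\{\rrm,b\})^{-1} dg$ that you invoke in (a), and the pointwise bound $d|\mu_{\triangle b}| \leq C(\min\{\tdrrm, r\})^{-1} dg$ on which your entire proof of (b) rests, are simply false: an a.e.\ lower bound on $\triangle b$ wherever $b$ is smooth says nothing about the singular cut-locus contribution to $(\mu_{\triangle b})_-$, which can have large total variation. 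Your ``main obstacle'' paragraph correctly senses that the lower barrier is the weak point, but the proposed remedy (refining the $C_2$ localization in Proposition~\ref{Prop:weakLaplaciandistance}) cannot repair it, because the quantity you want to bound pointwise does not have a pointwise bound.

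The paper avoids this by never attempting a pointwise bound on $d|\mu_{\triangle b}|$. Instead it bounds the \emph{net} integral $\big|\int_{B(y,1.5r)} \varphi\, d\mu_{\triangle b}\big| = \big|\int \nabla b \cdot \nabla\varphi\big| \leq Cr^{n-1}$ by integration by parts with a cutoff $\varphi$ — an estimate that is blind to the sign of the measure and therefore unaffected by the singular negative part. It then bounds $\int_{B(y,1.5r)} d(\mu_{\triangle b})_+$ by $C(E+1)r^{n-1}$ using the one-sided bound from (a) together with the dyadic/layer-cake decomposition over sublevel sets of $\tdrrm$ (this is where assumption~(ii) and $\mathbf{p}_0 > 1$ enter, as in your argument). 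Since $\int \varphi\, d|\mu_{\triangle b}|$ is controlled by the net integral plus the positive part, the two estimates combine to give (\ref{eq:rnminusoneE}); the refinement (\ref{eq:normmuoverrrmsmall}) then follows by a covering of $\{\tdrrm < sr\} \cap B(y,r)$ by balls of radius $sr$ and applying (\ref{eq:rnminusoneE}) on each, rather than by rerunning a layer cake. You should restructure (b) along these lines. Your parts (a) (upper bound only) and (c) are close to the paper's arguments, though in (c) the claim ``the singular part of $\mu_{\triangle b}$ is non-negative'' should be replaced by the observation that the singular part of $\mu_{\triangle b^{2-n}}$ is non-negative (here $F(t)=t^{2-n}$ is non-increasing, so $(\mu_{\triangle b^{2-n}})_-$ is absolutely continuous), which is what actually lets the pointwise lower bound on $\triangle b^{2-n}$ pass to the weak inequality (\ref{eq:laplaceb2minusn}).
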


\begin{proof}
For assertion (a) we argue as follows:
By Proposition \ref{Prop:weakLaplaciandistance}, the positive part $(\mu_{\triangle b})_+$ is absolutely continuous with respect to $dg$ and by Proposition \ref{Prop:expincomplete}, the function $b$ is smooth almost everywhere.
So it suffices to check (\ref{eq:upperLaplacebound}) wherever $b$ is smooth.
Fix $x \in M \setminus \{ p \}$ such that $b$ is smooth at $x$, let $z \in M$ be a point on a minimizing geodesic $\gamma  : [0,1] \to M$ between $p, x$ such that $d ( x,z) = \frac1{10} \min \{ \rrm (x), b(x) \}$ and set $b' (x') := d (x',z) + d (z,p)$ for any $x' \in M$.
Then $x, z$ are not conjugate to each other along $\gamma$ and by Laplace comparison we have $\triangle b' (x) < C' d^{-1}(x,z)$ for some universal constant $C' < \infty$.
By the triangle inequality, we have $b' \geq b$ and $b'(x) = b(x)$.
So also $\triangle b (x) < C' d^{-1}(x,z)$, which proves our claim.

To see assertion (b), we first compute the integral of $d \mu_{\triangle}$ over a slightly larger ball.
Let $\varphi \in C^\infty_c (B(y, 1.5 r))$ be a cutoff function taking values between $0$ and $1$ such that $\varphi \equiv 1$ on $B(y,  r)$ and $|\nabla \varphi | < 10 r^{-1}$ everywhere.
Then, by Proposition~\ref{Prop:weakLaplaciandistance}, we get
\begin{equation} \label{eq:absofintegral}
 \bigg| \int_{B(y, 1.5 r)} \varphi d\mu_{\triangle b} \bigg| = \bigg| \int_{B(y, 1.5 r)} \nabla b \cdot \nabla \varphi \bigg| \leq 10 r^{-1} | B(y,  1.5 r) | \leq C  r^{n-1}, 
\end{equation}
for some generic constant $C = C(A) < \infty$.
Consider now the positive part $( \mu_{\triangle b})_+$ of the signed measure $\mu_{\triangle b}$.
Then we can estimate, using (\ref{eq:upperLaplacebound}) and the fact that $\rrm \geq \tdrrm$,
\begin{align*}
 \int_{B(y,1.5 r)} & d( \mu_{\triangle b} )_+ \\
&= \int_{B(y,1.5 r) \cap \{ \tdrrm  \geq r \} } d( \mu_{\triangle b} )_+  + \sum_{k=0}^{\infty} \int_{B(y,1.5 r) \cap \{ 2^{-k-1} r \leq \tdrrm  < 2^{-k} r \} } d( \mu_{\triangle b} )_+ \displaybreak[1] \\
&\leq | B(y,1.5 r) | \cdot C^* \big( \min \{ r, 0.5 r \} \big)^{-1} \\
&\quad+  \sum_{k=0}^{\infty} \big| B(y,1.5 r) \cap \{ \tdrrm   < 2^{-k} r \} \big|  \cdot C^*  \big( \min \{ 2^{-k} r, 0.5 r \} \big)^{-1}  \displaybreak[1] \\
&\leq C r^{n-1} + C \sum_{k = 0}^\infty E 2^{- \mathbf{p}_0 k} r^n \cdot 2^k r^{-1} \\
&< C (E+1) r^{n-1}.
\end{align*}
Observe that for the above bound it is essential that $\mathbf{p}_0 > 1$.
Combining this bound with (\ref{eq:absofintegral}) yields
\begin{multline*}
 \int_{B(y,  r)}  d |\mu_{\triangle b} | \leq \int_{B(y, 1.5 r)} \varphi d |\mu_{\triangle b}| \\
\leq \bigg| \int_{B(y, 1.5 r)} \varphi d\mu_{\triangle b} \bigg| +  \int_{B(y,1.5 r)}  d( \mu_{\triangle b} )_+ < C (E+1) r^{n-1},
\end{multline*}
where $C$ depends only on $A$.
This shows (\ref{eq:rnminusoneE}).

To see (\ref{eq:normmuoverrrmsmall}), we choose points $z_1, \ldots, z_N \in B(y,  r) \cap \{ \tdrrm < sr \}$ where $N$ is maximal with the property that the balls $B(z_1,  sr / 2), \ldots, B(z_N,  sr / 2)$ are pairwise disjoint.
Since $\tdrrm (\cdot)$ is $1$-Lipschitz, we have
\[ B(z_1,  sr/2) \cup \ldots \cup B(z_N,  sr / 2) \subset B(y,  2r) \cap \{ \tdrrm  < 2sr \}. \]
Using assumption (i) and
\[ | B(y,  2r) \cap \{ \tdrrm  < 2sr \} | < E s^{\mathbf{p}_0} (2r)^n, \]
we can find a constants $c = c (A) > 0$ and $C_0 = C_0 (A, E) < \infty$ such that
\[ N < \frac{E s^{\mathbf{p}_0} (2r)^n}{c(sr/2)^n} \leq C_0 s^{\mathbf{p}_0-n} .  \]
By the maximality of $N$, we have
\[ B(z_1,  sr) \cup \ldots \cup B(z_N,  sr ) \supset B(y,  r) \cap \{ \tdrrm < sr \}. \]
So by (\ref{eq:rnminusoneE})
\begin{multline*}
 \int_{B(y,r) \cap \{ \tdrrm  < sr \}} d |\mu_{\triangle b} | \leq \sum_{i=1}^N \int_{B(z_i,sr) } d |\mu_{\triangle b} | \\
  \leq C_0 s^{\mathbf{p}_0 - n}  \cdot C (sr)^{n-1} = C_0 C s^{\mathbf{p}_0 - 1} r^{n-1}. 
\end{multline*}
This shows (\ref{eq:normmuoverrrmsmall}) and hence assertion (b).

For assertion (c), let $\delta$ be a constant whose value we will determine later.
By Laplace comparison, we find that wherever $b$ is smooth, we have
\[ \triangle b \leq (n-1) \frac{\sn'_{-\kappa} (b)}{\sn_{-\kappa} (b)} \textQQqq{on} U. \]
Here $\sn_{-\kappa}$ is defined in (\ref{eq:sndef}).
So on $U$, and wherever $b$ is smooth, we have
\begin{multline*}
 \triangle b^{2-n} \geq (2-n)(1-n)  b^{-n} + (2-n)(n-1) \frac{\sn'_{-\kappa} (b)}{\sn_{-\kappa} (b)} b^{1-n} \\
 = (n-2) (n-1) b^{2-n} \cdot \frac{ \sn_{-\kappa} (b) - b \cdot \sn'_{-\kappa} (b) }{b^2 \cdot \sn_{-\kappa} (b) }. 
\end{multline*}
Note that
\begin{multline*}
 \frac{ \sn_{-\kappa} (b) - b \cdot \sn'_{-\kappa} (b) }{b^2 \cdot \sn_{-\kappa} (b) } = \frac{ \kappa^{-1/2} \sinh (\kappa^{1/2} b) - b \cdot \cosh (\kappa^{1/2} b) }{b^2 \cdot \kappa^{-1/2} \sinh (\kappa^{1/2} b) } \\
 = \kappa \cdot \frac{  \sinh (\kappa^{1/2} b) - \kappa^{1/2} b \cdot \cosh (\kappa^{1/2} b) }{(\kappa^{1/2} b)^2 \cdot \sinh (\kappa^{1/2} b) }
\end{multline*}
and that the function $\sinh x - x(\cosh x)$ has a zero of multiplicity $3$ at $x = 0$.
So, as long as $\kappa b^2 \leq 10$, we have
\[ \triangle b^{2-n} \geq - C^* \kappa b^{2-n} \]
for some universal $C^* < \infty$.
This shows (\ref{eq:laplaceb2minusn}).
For (\ref{eq:laplaceb2minusnintegral}), observe that, using assumption (i),
\begin{multline*}
 \int_U d (\mu_{\triangle b^{2-n}})_- \leq \int_{B(p, r)} \frac{C^* \kappa}{b^{n-2}} \\ = C^* \kappa  \bigg( \int_{B(p,r)} \frac{1}{r^{n-2}}  + \int_0^r \int_{B(p,s)} \frac{(n-2)}{s^{n-1}} dg ds \bigg) \\
 \leq C^* \kappa \bigg( A r^2 +  \int_0^r A s ds \bigg) \leq C(A) \kappa r^2.
\end{multline*}
This finishes the proof.
\end{proof}

\begin{proof}[Proof of item (2) of the $Y$-tameness properties in Theorem \ref{Thm:tame}]
Consider the convergence scheme $\{ (U_i, V_i, \Phi_i) \}_{i =1}^\infty$.
Let $p \in X$, $q \in \RR$, $0 < r < \min \{ d_X (p, q ), c \sqrt{T} \}$, where $c = c(A) > 0$ is a constant that will be determined in the course of the proof.
Define $b(x) := d_X (x,q)$.
Choose a sequence of points $p'_i \in U_i$ such that $p'_i \to p$ and set $p_i := \Phi_i (p'_i)$.
Set furthermore $q^*_i := \Phi_i (q) \in M_i$ for large $i$ and $b_i (x) := d_{M_i} (x, q^*_i)$.
Then $b_i \circ \Phi_i \to b$ as $i \to \infty$ uniformly on compact subsets of $\RR$.

Let $\Omega_i \subset M_i$ be smooth, connected domains such that
\begin{equation} \label{eq:choiceOmegai}
 B^{M_i} (p_i,  r- \eps_i) \subset \Omega_i \subset B^{M_i} (p_i,  r) 
\end{equation}
for some sequence $\eps_i \to 0$.
For each $i$ find harmonic functions $h_i \in C^\infty (\Int \Omega_i) \cap C^0 (\Omega_i)$ that solve the Dirichlet problem
\[ \triangle h_i = 0 \textQq{on} \Int \Omega_i  \textQQqq{and} h_i = b_i \textQq{on} \partial \Omega_i. \]
Note that
\begin{equation} \label{eq:oscofhi}
 \osc_{\Omega_i} h_i \leq \osc_{\partial \Omega_i} b_i \leq 2 r \textQQqq{and} \max_{\Omega_i} |h_i - b_i| \leq 2r. 
\end{equation}

We now claim that for large $i$
\begin{equation} \label{eq:nablahibi}
 \int_{\Omega_i} |\nabla (h_i - b_i) |^2 dg_i =  \int_{\Omega_i} (h_i - b_i) d \mu_{\triangle b_i} . 
\end{equation}
To see this, fix some large $i$ and observe that
\[ \int_{\Omega_i} |\nabla (h_i - b_i) |^2 dg_i \leq 2 \int_{\Omega_i} |\nabla h_i |^2 dg_i + 2 \int_{\Omega_i} |\nabla b_i |^2 dg_i < \infty. \]
So since $h_i - b_i$ vanishes on $\partial \Omega_i$, we can find a sequence $\varphi_j \in C^2_c (\Omega_i)$ of compactly supported smoothings of $(h_i - b_i)$ that converge to $h_i - b_i$ in $C^0$ and in $W^{1,2}$.
Then, using Proposition \ref{Prop:weakLaplaciandistance}, we have
\[
 \int_{\Omega_i} \nabla (h_i - b_i)  \cdot \nabla \varphi_j dg_i = \int_{\Omega_i} \nabla h_i \cdot \nabla \varphi_j dg_i - \int_{\Omega_i} \nabla b_i \cdot \nabla \varphi_j dg_i 
 =   \int_{\Omega_i} \varphi_j d\mu_{\triangle b_i} .
\]
Taking the limit $j \to \infty$ on both sides yields (\ref{eq:nablahibi}).
 
We will now estimate the $L^2$-norm of $h_i - b_i$ using the log-Sobolev inequality from property (D) of subsection \ref{subsec:characterlimit}, which states that for any $i$, any $0 < \tau \leq 2T$ and for any $f \in C^1 (M_i)$ with the property that
 \[ \int_{M_i} (4\pi \tau)^{-n/2} e^{-f} dg_i = 1 \]
 we have the bound
 \[  \int_{M_i} ( \tau |\nabla f|^2  + f ) (4 \pi \tau)^{-n/2} e^{-f} dg_i > - A. \]
So if $v \in C^1 (M_i)$ is a positive function with
\[ \int_{M_i} v^2 dg_i = 1, \]
then choosing $f = - \frac{n}2 \log (4 \pi \tau) - \log (v^2)$ yields
\[ \int_{M_i} \Big(4 \tau  |\nabla \log v |^2 - \frac{n}2 \log (4 \pi \tau) - \log (v^2) \Big) v^2 dg_i > - A. \]
So
\begin{equation} \label{eq:vlogSobolev}
 4 \tau  \int_{M_i}  |\nabla  v |^2 dg_i - \frac{n}2 \log (4 \pi \tau) - \int_{M_i} \log (v^2) v^2 dg_i > - A. 
\end{equation}
By continuity, this inequality also holds for the case in which $v$ does not have a sign and has regularity $W^{1,2} \cap L^\infty$.
Choose now
\[ v = (h_i - b_i) \bigg( \int_{\Omega_i} (h_i - b_i) \bigg)^{-1/2} \chi_{\Omega_i}. \]
Then $\int_{M_i} v^2 dg_i = 1$ and hence $v$ satisfies (\ref{eq:vlogSobolev}).
Moreover, by Jensen's inequality
\[ \int_{M_i} \log (v^2) v^2 dg_i = \int_{\Omega_i} \log (v^2) v^2 dg_i \geq |\Omega_i| \cdot \log \big( |\Omega_i|^{-1} \big) |\Omega_i|^{-1} = - \log |\Omega_i| . \]
So
\[ 4 \tau  \int_{\Omega_i}  |\nabla  v |^2 - \frac{n}2 \log (4 \pi \tau) + \log |\Omega_i| \geq - A. \]
Note that by property (A) from subsection \ref{subsec:characterlimit} we have $|\Omega_i| \leq |B^{M_i} (p_i, r)| < A r^n$ for large $i$.
So
\[ 4 \tau \int_{\Omega_i} |\nabla v|^2 \geq - A + \frac{n}2 \log (4 \pi \tau) - \log (A r^n). \]
Set now $c = c(A) := ( A^{2/n} \exp (2/n (A+1)) / 4 \pi )^{-1/2}$ and
\[ \tau := \frac{A^{2/n} \exp (2/n (A+1)) }{4\pi} \cdot r^2 \leq  T. \]
It follows that
\[ \frac{A^{2/n} \exp (2/n (A+1)) }{\pi} \cdot r^2 \int_{\Omega_i} |\nabla v|^2 \geq 1. \]
Plugging back the definition of $v$ gives us that for some constant $C_{*} = C_{*} (A) < \infty$
\begin{equation} \label{eq:Poincareineq}
 C_{*} r^2 \int_{\Omega_i} |\nabla (h_i - b_i)|^2 dg_i \geq  \int_{\Omega_i} (h_i - b_i)^2 dg_i 
\end{equation}

Let us summarize our results so far.
We have found that there is a constant $C = C(A) < \infty$ such that for large $i$
\begin{align}
 \int_{\Omega_i} |\nabla (h_i - b_i)|^2 dg_i &\leq \int_{\Omega_i} (h_i - b_i) d \mu_{\triangle b_i}, \label{eq:hibibeforelimit1} \\
 \int_{\Omega_i} (h_i - b_i)^2 dg_i &\leq C r^2 \int_{\Omega_i} |\nabla (h_i - b_i)|^2 dg_i.  \label{eq:hibibeforelimit2}
\end{align}
We will now pass to the limit as $i \to \infty$ to obtain the desired $h \in C^3 (B^X (p, r) \cap \RR)$.
To do this, observe first that $\Vert h_i \Vert_{L^\infty (\Omega_i)}$ is uniformly bounded.
So by local elliptic regularity an the local bounds on $\tdrrm$, the first, second, third and fourth derivative of the sequence $h_i \circ \Phi_i$ are locally uniformly bounded.
So, after passing to a subsequence, the sequence $h_i \circ \Phi_i$ converges to some harmonic $h \in C^3 (B^X (p, r) \cap \RR)$ in $C^3$ on compact subsets of $B^X (p, r) \cap \RR$.
It is clear that
\begin{equation} \label{eq:L2dropinlimit}
 \int_{B^X (p,r) \cap \RR} (h - b)^2 dg \leq \liminf_{i \to \infty} \int_{\Omega_i} (h_i - b_i)^2 dg_i. 
\end{equation}
Next, we show that
\begin{equation} \label{eq:nabladropinlimit}
 \int_{B^X (p,r) \cap \RR} |\nabla (h-b)|^2 dg \leq \liminf_{i \to \infty} \int_{\Omega_i} |\nabla (h_i - b_i)|^2 dg_i. 
\end{equation}
To see this, assume that the left-hand side of (\ref{eq:nabladropinlimit}) is positive, fix some small $\delta > 0$ and choose a vector field $Z \in C^2_c (B(p,r) \cap \RR)$, approximating $\nabla (h-b)$ well enough such that the following inequality holds
\begin{multline*}
 \int_{B^X (p,r) \cap \RR} \langle \nabla (h-b) , Z \rangle dg \\
 \geq (1-\delta) \bigg( \int_{B^X (p,r) \cap \RR} |\nabla (h-b)|^2 dg \bigg)^{1/2}  \bigg( \int_{B^X (p,r) \cap \RR} |Z|^2 dg \bigg)^{1/2}. 
\end{multline*}
It follows, using Proposition \ref{Prop:weakLaplaciandistance}, that
\begin{multline*}
 \lim_{i \to \infty} \int_{\Omega_i} \langle \nabla (h_i -b_i) , ((\Phi_i)_* Z) \rangle dg_i = - \lim_{i \to \infty} \int_{\Omega_i}  (h_i -b_i) \cdot \DIV_{g_i} ((\Phi_i)_* Z) dg_i \\
 = - \int_{B^X(p,r) \cap \RR} (h-b) \DIV_g Z dg = \int_{B^X (p,r) \cap \RR} \langle \nabla (h-b) , Z \rangle dg \\
 \geq (1-\delta) \bigg( \int_{B^X (p,r) \cap \RR} |\nabla (h-b)|^2 dg \bigg)^{1/2}  \bigg( \int_{B^X (p,r) \cap \RR} |Z|^2 dg \bigg)^{1/2}.
\end{multline*}
Bounding the first term from above by Cauchy-Schwarz yields
\begin{multline*}
 \liminf_{i \to \infty} \bigg( \int_{\Omega_i} |\nabla (h_i - b_i) |^2 dg_i \bigg)^{1/2} \bigg( \int_{\Omega_i} |(\Phi_i)_* Z |^2 dg_i \bigg)^{1/2}  \\
 \geq (1-\delta) \bigg( \int_{B^X (p,r) \cap \RR} |\nabla (h-b)|^2 dg \bigg)^{1/2}  \bigg( \int_{B^X (p,r) \cap \RR} |Z|^2 dg \bigg)^{1/2}. 
\end{multline*}
The second term on the left-hand side converges to the second term on the right-hand side.
So
\[ \liminf_{i \to \infty} \bigg( \int_{\Omega_i} |\nabla (h_i - b_i) |^2 dg_i \bigg)^{1/2} \geq (1- \delta) \bigg( \int_{B^X (p,r) \cap \RR} |\nabla (h-b)|^2 dg \bigg)^{1/2}. \]
Letting $\delta \to 0$ implies (\ref{eq:nabladropinlimit}).

Combining (\ref{eq:oscofhi}), (\ref{eq:hibibeforelimit1}), (\ref{eq:hibibeforelimit2}), (\ref{eq:L2dropinlimit}) and (\ref{eq:nabladropinlimit}) yields that
\[ | h - b | \leq 2 r, \]
\begin{align*} 
\int_{B^X (p,r) \cap \RR} |\nabla (h-b)|^2 dg & \leq C r^2 \liminf_{i \to \infty} \int_{\Omega_i} (h_i - b_i) d \mu_{\triangle b_i}, \displaybreak[1] \\
\int_{B^X (p, r) \cap \RR} (h-b)^2 dg &\leq  \liminf_{i \to \infty} \int_{\Omega_i} (h_i - b_i) d \mu_{\triangle b_i}  .
\end{align*}
So to finish the proof, it remains to show that
\begin{equation} \label{eq:rhsconvergence}
 \liminf_{i \to \infty} \int_{\Omega_i} (h_i - b_i) d \mu_{\triangle b_i} \leq 2r \int_{\ov{B^X(p,r)} \cap \RR} d |\mu_{\triangle b}|. 
\end{equation}
For the proof of (\ref{eq:rhsconvergence}) let $\sigma, \nu > 0$ be small constants and choose a compactly supported cutoff function $\eta \in C^2_c (B^X (p, r) \cap \RR)$ such that $0 \leq \eta \leq 1$ everywhere and
\[ \eta \equiv 1 \textQQqq{on} B^X(p,r - \nu) \cap \{ \rrm^\infty > \sigma \}. \]
For large enough $i$ we have $\supp \eta \subset U_i$.
For such $i$ set $\eta_i := \eta \circ \Phi_i^{-1} \in C^2_c (V_i) \subset C^2_c (M_i)$.

As a first step towards (\ref{eq:rhsconvergence}), we show that
\begin{equation} \label{eq:continuousmeasureconvergence}
 \lim_{i \to \infty}  \int_{\Omega_i} \eta_i (h_i - b_i) d\mu_{\triangle b_i} = \int_{B^X (p,r) \cap \RR} \eta (h-b) d\mu_{\triangle b} \leq 2r \int_{B^X (p,r) \cap \RR}  d |\mu_{\triangle b}|.
\end{equation}
To see this, fix a small $\delta > 0$ and let $\varphi \in C^2_c (\RR)$ be a smoothening of $\eta (h-b)$ such that $| \varphi - \eta (h-b) | < \delta$.
Then
\begin{equation} \label{eq:limvarphitriangle}
 \lim_{i \to \infty} \int_{M_i} (\varphi \circ \Phi_i^{-1}) d\mu_{\triangle b_i} = \lim_{i \to \infty}  \int_{M_i} \triangle (\varphi \circ \Phi_i^{-1}) \cdot b_i dg_i = \int_{\RR} \triangle \varphi \cdot b dg = \int_{\RR} \varphi d\mu_{\triangle b}. 
\end{equation}
By Lemma \ref{Lem:mutriangleb}(b) there is a constant $C''_1 < \infty$, which does not depend on $i$, such that for large $i$
\begin{equation} \label{eq:sideone1}
 \bigg| \int_{\Omega_i} \eta_i (h_i - b_i) d\mu_{\triangle b_i} - \int_{\Omega_i} (\varphi \circ \Phi_i^{-1})  d\mu_{\triangle b_i} \bigg| \leq \delta \int_{B^{M_i}(p_i, r)} d|\mu_{\triangle b_i}| < C''_1 \delta.  
\end{equation}
Furthermore, since $\supp (\eta(h-b) - \varphi)$ is compact and $\mu_{\triangle b}$ has locally finite total variation, there is a $C''_2 < \infty$ such that
\begin{equation} \label{eq:sideone2}
 \bigg| \int_{\RR} \eta (h - b) d\mu_{\triangle b} - \int_\RR \varphi d\mu_{\triangle b} \bigg| \leq \delta \int_{\supp (\eta (h-b) - \varphi)} d |\mu_{\triangle b}| < C''_2 \delta. 
\end{equation}
Combining (\ref{eq:limvarphitriangle}), (\ref{eq:sideone1}) and (\ref{eq:sideone2}) and letting $\delta \to 0$ gives us (\ref{eq:continuousmeasureconvergence}).

We will now bound
\[ \limsup_{i \to \infty} \int_{\Omega_i} (1 - \eta_i) (h_i - b_i) d\mu_{\triangle b_i}. \]
To do this, observe that by Theorem \ref{Thm:detailedconvergence}(c), we have for large $i$ that
\begin{multline*}
  \Omega_i \cap \supp (1-\eta_i)  \subset \big( B^{M_i} (p_i, r) \cap \{ \tdrrm^{M_i}  < 2\sigma \} \big) \\ \cup \big( A^{M_i} (p_i,r- 2\nu, r) \cap \{ \tdrrm^{M_i}  \geq 2 \sigma \} \big). 
\end{multline*}
Here
\[ A^{M_i} (p_i,r- 2\nu, r) := \big\{ x \in M_i \;\; : \;\; r - 2 \nu < d_{M_i} (p_i, x) < r \big\}. \]
So, since by (\ref{eq:oscofhi}) we have $| h_i - b_i | \leq 2 r$ on $\Omega_i$, we find that for large $i$
\begin{multline} \label{eq:splitotherpart}
 \int_{\Omega_i} (1 - \eta_i) (h_i - b_i) d\mu_{\triangle b_i} \leq 2r \bigg( \int_{B^{M_i} (p_i, r) \cap \{ \tdrrm^{M_i} < 2\sigma \} } d |\mu_{\triangle b_i} | \\ + \int_{A^{M_i} (p_i,r-2\nu, r) \cap \{ \tdrrm^{M_i}  \geq 2 \sigma \} } d |\mu_{\triangle b_i} | \bigg). 
\end{multline}
The first term can be bounded using Lemma \ref{Lem:mutriangleb}(b):
There is a constant $C''_3 < \infty$, which may depend on $r$ but not on $i$, such that
\begin{equation} \label{eq:estimateonrrmsmall}
  \int_{B^{M_i} (p_i, r) \cap \{ \tdrrm^{M_i} < 2\sigma \} } d |\mu_{\triangle b_i} | < C''_3 \sigma^{\mathbf{p}_0 - 1}. 
\end{equation}
In order to bound the second term on the right-hand side in (\ref{eq:splitotherpart}), let $\psi \in C^2_c ( A^X (p,  r - 4 \nu, r + 2\nu ) \cap \RR )$ be a cutoff function such that $0 \leq \psi \leq 1$ everywhere and
\[ \psi \equiv 1 \textQQqq{on} A^X (p, r - 3 \nu, r + \nu ) \cap \{ \rrm^\infty > \sigma \} \]
and set $\psi_i := \psi \circ \Phi_i^{-1} \in C^2_c(V_i) \subset C^2_c (M_i)$ for large enough $i$.
Then, using Theorem \ref{Thm:detailedconvergence}(b), we obtain that for large $i$ we have $\psi_i \equiv 1$ on $A^{M_i} (p_i,  r - 2\nu, r ) \cap \{ \tdrrm^{M_i}  \geq 2 \sigma \}$ and $\supp \psi_i \subset A^{M_i} (p_i,  r - 5\nu, r + 3\nu ) \cap \{ \rrm^{M_i} \geq \sigma / 2 \}$.
Since $\rrm^{M_i}  \geq \sigma / 2$ on $\supp \psi_i$ for large $i$, we may use Lemma~\ref{Lem:mutriangleb}(a) and conclude that there is a constant $C''_4 < \infty$, which does not depend on $i$, such for large $i$ we have in the weak sense
\[ d |\mu_{\triangle b_i}| \leq 2d (\mu_{\triangle b_i})_+ - d \mu_{\triangle b_i} \leq C''_4 \sigma^{-1} dg^i_0 - d \mu_{\triangle b_i}. \]
This implies that for large $i$
\begin{multline*}
 \int_{A^{M_i} (p_i,r-2\nu, r) \cap \{ \tdrrm^{M_i}  \geq 2 \sigma \} } d |\mu_{\triangle b_i} | 
\leq \int_{M_i } \psi_i d |\mu_{\triangle b_i} | \\
\leq
 C''_4 \sigma^{-1} | \supp \psi_i  |  - \int_{M_i } \psi_i d \mu_{\triangle b_i}   
 \leq  C''_4 \sigma^{-1} | \supp \psi_i  | - \int_{M_i} \triangle \psi_i \cdot b_i dg_i .
\end{multline*}
Taking this inequality to the limit yields
\begin{multline} \label{eq:annuluspsi}
 \limsup_{i \to \infty}  \int_{A^{M_i} (p_i,r-2\nu, r) \cap \{ \tdrrm^{M_i}  \geq 2 \sigma \} } d |\mu_{\triangle b_i} | \\ \leq C''_4 \sigma^{-1} |\supp \psi | -  \int_{\RR} \triangle \psi \cdot b dg = C''_4 \sigma^{-1} |\supp \psi | - \int_{\RR} \psi d\mu_{\triangle b}.
\end{multline}
Combining (\ref{eq:splitotherpart}), (\ref{eq:estimateonrrmsmall}) and (\ref{eq:annuluspsi}) yields
\begin{multline*}
 \limsup_{i \to \infty}  \int_{\Omega_i} (1 - \eta_i) (h_i - b_i) d\mu_{\triangle b_i} < 2 C''_3 r \sigma^{\mathbf{p}_0-1} \\ +  2 C''_4 r \sigma^{-1} \big| A^X (p, r - 4 \nu, r + 2 \nu)  \cap \RR \big| 
 + 2r \int_{A^{X} (p, r - 4 \nu, r + 2 \nu) \cap \RR} d |\mu_{\triangle b} |. 
\end{multline*}
Combining this inequality with (\ref{eq:continuousmeasureconvergence}) yields
\begin{multline} \label{eq:limsupAXrnu}
 \limsup_{i \to \infty} \int_{\Omega_i} (h_i - b_i) d\mu_{\triangle b_i} \leq 2r \int_{B^X (p, r) \cap \RR}  d|\mu_{\triangle b}| + 2 C''_3 r \sigma^{\mathbf{p}_0 - 1} \\ + 2 C''_4 r \sigma^{-1} \big| A^X (p, r - 4 \nu, r + 2 \nu)  \cap \RR \big| 
 + 2r \int_{A^{X} (p, r - 4 \nu, r + 2 \nu) \cap \RR} d |\mu_{\triangle b} |  .
\end{multline}
Since $\XX$ has mild singularities (in the sense of Definition \ref{Def:mild}) by Theorem \ref{Thm:detailedconvergence}, we have by Proposition \ref{Prop:expincomplete}
\begin{multline*}
 \big| A^X (p, r - 4 \nu, r + 2 \nu)  \cap \RR \big| = \big| A^X (p, r - 4 \nu, r + 2 \nu)  \cap (\RR \setminus Q_p) \big| \\
 \leq \int_{A^{T_p \RR} (0_p, r-4 \nu, r + 2 \nu) \cap \mathcal{G}_p} J_p (v) dv. 
\end{multline*}
It follows that
\[ \lim_{\nu \to 0}  \big| A^X (p, r - 4 \nu, r + 2 \nu)  \cap \RR \big| = 0. \]
So letting $\nu \to 0$ in (\ref{eq:limsupAXrnu}) gives us
\begin{multline*}
 \limsup_{i \to \infty}  \int_{\Omega_i}  (h_i - b_i) d\mu_{\triangle b_i} \leq 2r \int_{B^X (p, r) \cap \RR} d |\mu_{\triangle b}| + 2r \int_{\partial B^X (p, r) \cap \RR} d |\mu_{\triangle b}| \\
 +2 C''_3r \sigma^{\mathbf{p}_0-1} . 
\end{multline*}
Finally, letting $\sigma \to 0$ yields (\ref{eq:rhsconvergence}) and hence finishes the proof.
\end{proof}

As a preparation for part (3) of the tameness properties, we prove

\begin{Lemma} \label{Lem:BGmutriangleb}
For any $A < \infty$ there is a constant $C(A) < \infty$ such that the following holds:

Let $(M,g)$ be a complete Riemannian manifold of dimension $n \geq 3$, let $p \in M$ be a point, $0 < r_1 < r_2$ and $0 \leq \kappa \leq r_1^{-2}$.
Assume that $|B(p, r_1 ) | \leq A r_1^n$.

Then for $b (x) := d^{2-n}(x,p)$ we have
\[
 \int_{A(p,r_1, r_2)} d |\mu_{\triangle b} |  \leq  C \bigg( \frac{|B(p,r_1)|}{v_{-\kappa}(r_1)} - \frac{|A(p,r_2,  2r_2)|}{v_{-\kappa} (2r_2) - v_{-\kappa}(r_2)} + \kappa r_1^2 \bigg) 
 + 2 \int_{B(p,2r_2) \setminus \{p \}} d (\mu_{\triangle b}  )_-. 
\]
\end{Lemma}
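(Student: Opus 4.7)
My plan is to split $|\mu_{\triangle b}| = \mu_{\triangle b} + 2(\mu_{\triangle b})_-$ and handle the two pieces differently. The contribution of $(\mu_{\triangle b})_-$ to $\int_{A(p,r_1,r_2)} d|\mu_{\triangle b}|$ is bounded immediately by $2\int_{B(p,2r_2)\setminus\{p\}} d(\mu_{\triangle b})_-$ since $A(p,r_1,r_2) \subset B(p,2r_2)\setminus\{p\}$, giving the last term on the right-hand side. It then remains to bound the signed integral $\int_{A(p,r_1,r_2)} d\mu_{\triangle b}$ by the Bishop-Gromov gap plus the $\kappa r_1^2$ error.

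To compute the signed integral, I will establish the weak divergence identity
\[
\int_{A(p,r_1,r_2)} d\mu_{\triangle b} = (n-2)\bigl[r_1^{1-n}|\partial B(p,r_1)| - r_2^{1-n}|\partial B(p,r_2)|\bigr]
\]
for almost every pair $(r_1,r_2)$ by substituting a Lipschitz radial test function $\varphi_\delta(x) := \phi_\delta(d(x,p))$ into the integration-by-parts formulas~(\ref{eq:Zintbyparts})--(\ref{eq:intbypartsdist}) of Proposition~\ref{Prop:weakLaplaciandistance}, where $\phi_\delta$ is a piecewise linear approximation of $\chi_{[r_1,r_2]}$ with transition windows of width $\delta$. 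Using $\nabla b = (2-n)\,d^{1-n}\nabla d$ and $|\nabla d|=1$ almost everywhere, the coarea formula yields
\[
\int \phi_\delta(d)\, d\mu_{\triangle b} = (n-2)\int_0^\infty \phi_\delta'(t)\, t^{1-n}|\partial B(p,t)|\, dt,
\]
and letting $\delta \to 0$ together with Lebesgue differentiation of $t \mapsto t^{1-n}|\partial B(p,t)|$ produces the identity at Lebesgue points; the extension to all $(r_1,r_2)$ is obtained by approximating by slightly larger annuli and absorbing the discrepancy into the $(\mu_{\triangle b})_-$ term.

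To bound the right-hand side of the divergence identity, I apply Bishop-Gromov comparison (Proposition~\ref{Prop:Jnonincreasing}). Writing $r^{1-n}|\partial B(p,r)| = n\omega_n\,(\sn_{-\kappa}(r)/r)^{n-1}\,G(r)$ with $G(r) := |\partial B(p,r)|/(n\omega_n\, \sn_{-\kappa}^{n-1}(r))$ non-increasing, a weighted-average argument based on the coarea identities $|B(p,r_1)| = \int_0^{r_1}|\partial B(p,t)|\,dt$ and $|A(p,r_2,2r_2)| = \int_{r_2}^{2r_2}|\partial B(p,t)|\,dt$ gives
\[
G(r_1) \leq \frac{|B(p,r_1)|}{v_{-\kappa}(r_1)} \quad\text{and}\quad G(r_2) \geq \frac{|A(p,r_2,2r_2)|}{v_{-\kappa}(2r_2)-v_{-\kappa}(r_2)}.
\]
Combined with $(\sn_{-\kappa}(r_1)/r_1)^{n-1} \leq 1 + C\kappa r_1^2$ (from $\kappa r_1^2 \leq 1$), $(\sn_{-\kappa}(r_2)/r_2)^{n-1} \geq 1$, and $|B(p,r_1)|/v_{-\kappa}(r_1) \leq C(A)$ (from $|B(p,r_1)| \leq Ar_1^n$ and $v_{-\kappa}(r_1) \geq \omega_n r_1^n$), the $\sn_{-\kappa}/\mathrm{id}$ correction is absorbed into an error of order $\kappa r_1^2$, yielding the claimed bound. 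The main technical obstacle is justifying the divergence identity across the cut locus of $p$, where $b$ and the spheres $\partial B(p,r)$ are not smooth; this forces a restriction to Lebesgue points and a subsequent monotone extension, which must be carried out carefully so that any extra mass is absorbed into the already-present $(\mu_{\triangle b})_-$ term.
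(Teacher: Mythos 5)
Your decomposition $|\mu_{\triangle b}| = \mu_{\triangle b} + 2(\mu_{\triangle b})_-$, the use of weak integration by parts, and the appeal to Bishop--Gromov comparison all coincide with the paper's strategy, and your final reduction to the claimed bound is correct. Where you diverge is the choice of radial test function and the intermediate quantity you control. You mollify $\chi_{[r_1,r_2]}$ to extract a boundary identity
\[
\int_{A(p,r_1,r_2)} d\mu_{\triangle b} = (n-2)\bigl[r_1^{1-n}|\partial B(p,r_1)| - r_2^{1-n}|\partial B(p,r_2)|\bigr]
\]
and then separately convert boundary-area ratios $G(r)$ into bulk-volume ratios by monotone averaging. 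The paper instead plugs in a \emph{single} Lipschitz function $f = H(d(\cdot,p))$ where $H$ is built from $v_{-\kappa}$ (equal to $v_{-\kappa}(r)/v_{-\kappa}(r_1)$ on $[0,r_1]$, constant on $[r_1,r_2]$, and a $v_{-\kappa}$-weighted decay on $[r_2,2r_2]$). Because $H'$ is a multiple of $\sn_{-\kappa}^{n-1}$, the integration-by-parts computation produces the bulk integrals $\int_{B(p,r_1)}(\sn_{-\kappa}(d)/d)^{n-1}$ and $\int_{A(p,r_2,2r_2)}(\sn_{-\kappa}(d)/d)^{n-1}$ directly, which are controlled by $|B(p,r_1)|$, $|A(p,r_2,2r_2)|$ and a $\kappa r_1^2$ error. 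This sidesteps the boundary measures $|\partial B(p,r)|$ entirely, so one never has to worry about whether they exist, are lower semicontinuous, or whether the divergence identity holds only for almost-every pair.

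Your route is viable but the extra technical load is nontrivial: you need (i) the coarea interpretation of $|\partial B(p,r)|$ in an incomplete manifold (which uses that the cut locus has measure zero, Proposition \ref{Prop:expincomplete}(c)), (ii) Lebesgue-point restrictions, and (iii) a limiting argument to pass from a.e.\ pairs to all pairs. Point (iii) does work the way you indicate --- taking $r_1'\nearrow r_1$, $r_2'\searrow r_2$ along the a.e.\ set and using that your \emph{upper bound} $|B(p,r_1')|/v_{-\kappa}(r_1')$ is continuous in $r_1'$, plus absolute continuity of $(\mu_{\triangle b})_-$ --- but you should state it explicitly rather than gesturing at "absorbing into the $(\mu_{\triangle b})_-$ term," since the boundary contribution is governed by the possibly-singular positive part, not the negative one. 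The paper's test-function choice buys you a two-line verification where your version needs a paragraph of measure theory.
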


\begin{proof}
Recall that
\begin{equation} \label{eq:vkformula}
 v_{-\kappa} (r) = n \omega_n \int_0^r \big({ \sn_{-\kappa} (t) }\big)^{n-1} dt. 
\end{equation}
Let
\[ H(r) := \begin{cases} \frac{v_{-\kappa} (r)}{v_{-\kappa} (r_1)} & \text{if $0 \leq r \leq r_1$} \\ 1 & \text{if $r_1 \leq r \leq r_2$} \\ \frac{v_{-\kappa} (2r_2) - v_{-\kappa} (r)  }{ v_{-\kappa} (2r_2) - v_{-\kappa} (r_2) }  & \text{if $r_2 \leq r \leq 2 r_2$} \\ 0 & \text{if $2r_2 \leq r$} \end{cases} \]
and note that $H : [0, \infty) \to [0,1]$ is continuous.
Define $f (x) := H(d(x,p))$.
We can then compute that, using (\ref{eq:vkformula}) and Proposition \ref{Prop:weakLaplaciandistance},
\begin{align*}
 \int_{B(p, 2 r_2)} f  d \mu_{\triangle b} &= - \int_{B(p, 2 r_2)} \langle \nabla f , \nabla b \rangle dg \\
 &=  \int_{B(p, 2r_2)} \frac{n-2}{(d(x,p))^{n-1}} \cdot H' (d(x,p)) dg(x) \\
 &= \frac{n(n-2) \omega_n}{v_{-\kappa} (r_1)} \int_{B(p,r_1)} \frac{(\sn_{-\kappa} (d(x,p)))^{n-1}}{(d(x,p))^{n-1}} dg(x) \\
 &\qquad
 - \frac{n(n-2)\omega_n}{v_{-\kappa} (2r_2) - v_{-\kappa} (r_2)} \int_{A(p, r_2, 2r_2)}  \frac{(\sn_{-\kappa} (d(x,p)))^{n-1}}{(d(x,p))^{n-1}} dg(x)  
\end{align*}
Now note that $\sn_{-\kappa} (r) \geq r$ and that there is some $C' < \infty$ such that for all $0 < r \leq r_1$ we have
\[ \frac{\sn_{-\kappa} (r)}{r} \leq \frac{\kappa^{-1/2} \sinh (\kappa^{1/2} r)}{r} \leq \frac{r + C' \kappa r^3 }{r} = 1 + C' \kappa r^2. \]
Therefore, using the bound $|B(p,r_1)| \leq 2 v_{-\kappa} (r_1)$,
\begin{multline*}
  \int_{B(p, 2 r_2)} f  d \mu_{\triangle b}  \leq n (n-2) \omega_n \bigg( (1 + C' \kappa r_1^2 ) \frac{|B(p,r_1)|}{v_{-\kappa} (r_1)} - \frac{|A(p, r_2, 2r_2)|}{v_{-\kappa} (2r_2) - v_{-\kappa} (r_2)} \bigg) \\
  \leq n (n-2) \omega_n \bigg(  \frac{A}{v_{-1} (1)} \cdot \kappa r_1^2 + \kappa r_1^2 + \frac{|B(p,r_1)|}{v_{-\kappa} (r_1)} - \frac{|A(p, r_2, 2r_2)|}{v_{-\kappa} (2r_2) - v_{-\kappa} (r_2)} \bigg)
\end{multline*}
On the other hand
\[  \int_{A(p, r_1, r_2)}  d |\mu_{ \triangle b}|  \leq \int_{B(p, 2 r_2) \setminus \{ p \}} f d \mu_{ \triangle b} + 2\int_{B(p,2r_2) \setminus \{ p \}} d ( \mu_{ \triangle b})_-. \]
This concludes the proof.
\end{proof}

\begin{proof}[Proof of item (2) of the $Y$-tameness properties in Theorem \ref{Thm:tame}] 
Fix $A, T > 0$ and $\mathbf{p}_0 > 1$ and assume that $n \geq 3$, $0 < r_1 < r_2 < c \sqrt{T}$ for some $c = c(A) > 0$ whose value we will determine in the course of this proof.
Let $p \in \RR$ and assume moreover that $\Ric \geq - (n-1) \kappa$ on $B^X (p, 4 r_2) \cap \RR$ for some $0 \leq \kappa \leq r_2^{-2}$.

Set $p_i := \Phi_i (p) \in M_i$ for large $i$.
Moreover, we let $b(x) := (d_X (x,p) )^{2-n}$ and $b_i (x) := (d_{M_i} (x, p_i))^{2-n}$.
Then we have uniform convergence $b_i \circ \Phi_i \to b$ on compact subsets of $\RR \setminus \{ p \}$.
By assertion (a) of Theorem \ref{Thm:detailedconvergence} we have
\begin{multline*}
 \limsup_{i \to \infty}  \bigg( \frac{|B^{M_i} (p_i,  r_1)|}{v_{-\kappa} (r_1)} - \frac{|A^{M_i} (p_i,  r_2, 2r_2)|}{v_{-\kappa} (2r_2) - v_{-\kappa} (r_2)} \bigg) \\
 = \frac{|\ov{B^X (p, r_1)} \cap \RR|}{v_{-\kappa} (r_1)} - \frac{|A^X (p, r_2, 2r_2) \cap \RR|}{v_{-\kappa} (2r_2) - v_{-\kappa} (r_2)}  =: H.
\end{multline*}
Using Lemma \ref{Lem:BGmutriangleb}, we conclude that
\begin{equation} \label{eq:totalmassbeforelimit}
 \limsup_{i \to \infty} \int_{A^{M_i} (p_i,  r_1, r_2)} d |\mu_{\triangle b_i}| \leq C (H+\kappa r_1^2) + 2  \limsup_{i \to \infty} \int_{B^{M_i} (p_i,  2r_2) \setminus \{ p_i \}} d (\mu_{\triangle b_i} )_-. 
\end{equation}

We will now show that the limit on the right-hand side is bounded by $C \kappa r_2^2$ for some constant $C = C(A, \mathbf{p}_0 ) < \infty$.
To see this, fix some compact subset $K \subset  (B^X(p, 3r_2) \cap \RR) \setminus Q_p$.
Then $K \subset U_i$ for large $i$.
We first show:

\begin{Claim}
There is a constant $\sigma = \sigma(K) > 0$ such that for large $i$ the following is true:
For any $x  \in K$, the image of any minimizing geodesic $\gamma : [0,l] \to M_i$ between $p_i$ and $\Phi_i (x)$ is contained in $\{ \tdrrm^{M_i}  > \sigma \} \cap B^{M_i} (p_i, 3 r_2)$.
\end{Claim}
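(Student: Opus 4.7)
The plan is to argue by contradiction, combining the mild singularities of $\XX$ (supplied by Theorem \ref{Thm:detailedconvergence}) with smooth dependence of Riemannian minimizing geodesics on their endpoints (Proposition \ref{Prop:expincomplete}) and the rigidity statement Claim~2 from the proof of Proposition \ref{Prop:expincomplete}.

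First I would build a compact tubular region in $\RR$ covering all the geodesics of interest. Since $\XX$ has mild singularities, $K \subset \RR \setminus Q_p \subset \mathcal{G}^*_p$, and Proposition \ref{Prop:expincomplete}(d),(g) supplies a unique arclength minimizing geodesic $\gamma_x := \gamma_{p,x} : [0, d_X(p,x)] \to \RR$ for each $x \in K$, depending smoothly on $x$. Define
\[ \td{K} := \bigl\{ \gamma_x(s \, d_X(p, x)) : x \in K, \ s \in [0,1] \bigr\}, \]
the continuous image of the compact set $K \times [0,1]$, hence compact in $\RR$. Since $h$ is continuous and positive on $\RR$, we have $h \geq h_0 > 0$ on $\td{K}$ for some $h_0$, and by Claim~3 in the proof of Theorem \ref{Thm:detailedconvergence}, $\tdrrm^{M_i} \circ \Phi_i \to h$ uniformly on $\td{K}$, so $\tdrrm^{M_i} > h_0/2$ on $\Phi_i(\td{K})$ for large $i$. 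Also $\sup_{x \in K} d_X(p, x) < 3r_2$, and Definition \ref{Def:convergencescheme} yields uniform convergence of $d_{M_i}(p_i, \Phi_i(x))$ to $d_X(p, x)$ in $x \in K$, so any unit-speed minimizing geodesic from $p_i$ to $\Phi_i(x)$ has image in $B^{M_i}(p_i, 3r_2)$ for large $i$; only the lower bound on $\tdrrm^{M_i}$ along such geodesics then remains.

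Assume, for contradiction, that after passing to a subsequence there exist $x_i \in K$, unit-speed minimizing geodesics $\gamma_i : [0, l_i] \to M_i$ from $p_i$ to $\Phi_i(x_i)$, and times $s_i \in [0, l_i]$ with $\tdrrm^{M_i}(\gamma_i(s_i)) \to 0$. Passing to a further subsequence, $x_i \to x_\infty \in K$; using the competitor curves $\Phi_i \circ \gamma_{x_i}$, whose $g_i$-lengths tend to $d_X(p, x_\infty)$ by $C^3$-convergence of the metrics on $\td{K}$, we obtain $l_i \to d_X(p, x_\infty)$ and, after a further subsequence, $s_i \to s_\infty$. A standard pointed Gromov--Hausdorff Arzela--Ascoli extraction then yields a $1$-Lipschitz limit curve $\gamma_\infty : [0, d_X(p, x_\infty)] \to X$ with $\gamma_\infty(0) = p$ and $\gamma_\infty(d_X(p,x_\infty)) = x_\infty$; since these endpoints realize the distance, $\gamma_\infty$ is unit-speed and minimizing. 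Setting $z_\infty := \gamma_\infty(s_\infty)$, the Gromov--Hausdorff convergence $\gamma_i(s_i) \to z_\infty$ combined with the $1$-Lipschitz property of $\tdrrm^{M_i}$ forces $h(z_\infty) = 0$, so $z_\infty \in X \setminus \RR$.

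The final step, and the main obstacle, is to rule out that $\gamma_\infty$ meets $X \setminus \RR$. Let $t_0 := \sup\bigl\{ t \in [0, d_X(p,x_\infty)] : \gamma_\infty([0,t]) \subset \RR \bigr\}$; since $\RR$ is open and $p \in \RR$, we have $t_0 > 0$. For any $t \in [0, t_0)$, the point $y := \gamma_\infty(t)$ lies in $\RR$ and satisfies $d_X(p, y) + d_X(y, x_\infty) = d_X(p, x_\infty)$. Because $d_X |_\RR$ is precisely the length metric of $(\RR, g)$ by Definition \ref{Def:singlimitspace}(4), I may apply Claim~2 in the proof of Proposition \ref{Prop:expincomplete} in the incomplete Riemannian manifold $(\RR, g)$, using $x_\infty \in \mathcal{G}^*_p$, to conclude that $y$ lies on the image of $\gamma_{x_\infty}$; the unit-speed parametrization then forces $\gamma_\infty(t) = \gamma_{x_\infty}(t)$. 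Taking $t \to t_0^-$ gives $\gamma_\infty(t_0) = \gamma_{x_\infty}(t_0)$. If $t_0 < d_X(p, x_\infty)$, then $\gamma_{x_\infty}(t_0) \in \RR$, but openness of $\RR$ and continuity of $\gamma_\infty$ would extend $\{t : \gamma_\infty([0,t]) \subset \RR\}$ strictly past $t_0$, contradicting the maximality of $t_0$. Hence $t_0 = d_X(p, x_\infty)$ and the image of $\gamma_\infty$ coincides with that of $\gamma_{x_\infty}$, placing $z_\infty \in \RR$ and contradicting $z_\infty \in X \setminus \RR$. The delicate point is precisely this last step: a GH limit of minimizing paths could a priori thread through the singular stratum, and it is Claim~2 together with the length-metric identification of Definition \ref{Def:singlimitspace}(4) that excludes this.
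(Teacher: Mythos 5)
Your argument is correct, and it is genuinely different from the one in the paper. Bamler does not take a global Gromov--Hausdorff limit of the $\gamma_i$; instead he isolates only the ``regular tail'' $\gamma_i |_{[l_i-a_i, l_i]}$ (the maximal terminal segment staying in $\{\tdrrm^{M_i}\ge\sigma_i\}\subset V_i$), pulls this back via $\Phi_i^{-1}$, and lets it converge inside the fixed manifold $\RR$ with the $C^3$-convergent metrics. He then shows the entry time $a_\infty$ is strictly positive and strictly less than $l_\infty$, that the entry point $\lim_{s\searrow l_\infty-a_\infty}\gamma_\infty(s)$ lies outside $\RR$, and derives a contradiction by using $x_\infty\notin Q_p$ to produce a \emph{competitor} minimizing geodesic $\gamma^*$ from $p$ to $\gamma_\infty(l_\infty-b)$ entirely inside $\RR$; uniqueness of geodesics then forces $\gamma^*=\gamma_\infty$ on the overlap and pushes the entry point back into $\RR$. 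You instead extract a global $1$-Lipschitz limit curve $\gamma_\infty$ in $X$ by a pointed GH Arzel\`a--Ascoli argument, observe it must be a unit-speed minimizing geodesic passing through the singular point $z_\infty$, and then invoke Claim~2 from the proof of Proposition~\ref{Prop:expincomplete} (applied in the incomplete manifold $(\RR,g)$ with length metric $d_X|_\RR$) to show that wherever $\gamma_\infty$ is regular it must coincide with the \emph{a priori} regular geodesic $\gamma_{x_\infty}$; openness of $\RR$ then propagates this to all of $[0,L]$, contradicting $z_\infty\notin\RR$. Both arguments hinge on $x_\infty\in\mathcal{G}^*_p$ (equivalently $x_\infty\notin Q_p$), but the paper converts this into an auxiliary geodesic $\gamma^*$ and a concatenation/smoothness argument, while you convert it into a uniqueness statement via Claim~2. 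Your route is conceptually cleaner but requires the global GH Arzel\`a--Ascoli extraction (which the paper deliberately avoids by only taking limits inside $V_i$), and the justification that the hypothesis of Claim~2 holds requires using Definition~\ref{Def:singlimitspace}(4) exactly as you do. Two minor remarks: the bound $\tdrrm^{M_i}>h_0/2$ on $\Phi_i(\td K)$ that you derive in the first paragraph is never used again and can be dropped; and the claim $l_i\to d_X(p,x_\infty)$ follows already from Definition~\ref{Def:convergencescheme}(5) without the competitor-curve detour, so the $C^3$-convergence on $\td K$ is also dispensable there.
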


\begin{proof}
Note first, that by Theorem \ref{Thm:detailedconvergence}(c) there is a sequence $\sigma_i \to 0$ such that for large $i$
\[ B^{M_i} (p_i,  3r_2) \cap \{ \tdrrm^{M_i} \geq \sigma_i \} \subset V_i  \]
Fix this sequence for the rest of the proof.
For large $i$ we have $\Phi_i (K) \subset B^{M_i} (p_i,  3r_2)$, so any minimizing geodesic between $p_i$ and a point in $\Phi_i (K)$ is contained in $B^{M_i} (p_i,  3r_2)$.

Assume now that the claim was wrong.
Then, after passing to a subsequence, we can find a sequence $x_i \in K$ of points and a sequence of minimizing arclength geodesics $\gamma_i : [0,l_i] \to M_i$ between $p_i$ and $\Phi_i (x_i)$ that are each not fully contained in $\{ \tdrrm^{M_i} > \sigma_i \}$.
By passing to a subsequence, we may assume that $x_i \to x_\infty \in K$ and that $l_\infty := \lim_{i \to \infty} l_i$ exists.
Let $0 < a_i \leq l_i$ each be maximal with the property that we have $\gamma_i ([l_i-a_i, l_i]) \subset \{ \tdrrm^{M_i}  \geq \sigma_i \}$.
So $\tdrrm^{M_i} (\gamma_i (l_i - a)) = \sigma_i$.
Since $\tdrrm^{M_i} (\cdot)$ is $1$-Lipschitz, it follows that for any $\eps > 0$
\begin{equation} \label{eq:rrmtozeroeps}
 \limsup_{i \to \infty} \tdrrm^{M_i} ( \gamma_{i} (l_i - a_i + \eps)) \leq \lim_{i \to \infty} \sigma_i + \eps = \eps. 
\end{equation}
After passing to another subsequence, we may assume that $a_\infty := \lim_{i \to \infty} a_i \in [0, l_\infty]$ exists and that we have uniform convergence of the curves $\Phi_i^{-1} \circ \gamma_i |_{[l_i - a_i , l_i]}$ to some continuous curve $\gamma_\infty : (l_\infty - a_\infty, l_\infty] \to \RR$ with $\gamma_\infty (l_\infty) = x_\infty$.
Note that $a_\infty > 0$ since by Theorem \ref{Thm:detailedconvergence}(b)
\begin{multline*}
 \liminf_{i \to \infty} \tdrrm^{M_i} (\Phi_i (x_i)) \geq \liminf_{i \to \infty} \big( \tdrrm^{M_i} (\Phi_i (x_\infty)) - d_{M_i} (\Phi_i (x_i), \Phi_i (x_\infty)) \big) \\
 =  \liminf_{i \to \infty}  \tdrrm^{M_i} (\Phi_i (x_\infty)) > 0. 
\end{multline*}
For any $s_1, s_2 \in (l_\infty - a_\infty, l_\infty]$, we have
\[ d_X (\gamma_\infty (s_1), \gamma_\infty (s_2)) = \lim_{i \to \infty} d_{M_i} (\gamma_i (s_1), \gamma_i (s_2)) = |s_1 - s_2|. \]
So $\gamma_\infty$ is a minimizing geodesic.
Similarly, for any $s \in (l_\infty - a_\infty , l_\infty]$, we have
\[ d_X (p, \gamma_\infty (s)) + d_X (\gamma_\infty (s), x_\infty) = d(p, x_\infty). \]

Next, we show that $\lim_{s \searrow l_\infty - a_\infty} \gamma_\infty (s) \not\in \RR$ and thus $a_\infty < l_\infty$.
Assume that it did and denote this limit by $z \in \RR$.
Then, using Theorem \ref{Thm:detailedconvergence}(b) and (\ref{eq:rrmtozeroeps}), we get that for all $\eps > 0$
\begin{multline*}
 0 < \limsup_{i \to \infty} \tdrrm^{M_i} (\Phi_i (z)) \\ \leq 
 \limsup_{i \to \infty} \big( \tdrrm^{M_i} (\gamma_i (l_i - a_i + \eps)) + d_{M_i} (\gamma_i (l_i - a_i + \eps), \Phi_i (z)) \big) 
 \leq 2\eps .
\end{multline*}
This gives us the desired contradiction.

Since $x_\infty \in K$ is not in $Q_p \cup \{ p \}$, and $Q_p \cup \{ p \}$ is closed, we can pick some $0 < b < a_\infty$ such that $\gamma_\infty (l_\infty - b) \in \RR \setminus (Q_p \cup \{ p \})$.
So there is a minimizing arclength geodesic $\gamma^* : [0, l_\infty - b] \to \RR$ between $p$ and $\gamma_\infty (l_\infty - b)$ whose image is contained in $\RR$.
It follows that
\[ \ell_g (\gamma^*) + \ell_g (\gamma_\infty |_{[l_\infty - b, l_\infty]}) = d_X (p, \gamma_\infty (l_\infty - b)) + d_X ( \gamma_\infty (l_\infty - b), x_\infty ) = d_X (p, x_\infty). \]
So the concatenation of $\gamma^*$ and $\gamma_\infty |_{[l_\infty - b, l_\infty]}$ is a minimizing geodesic between $p$ and $x_\infty$ and hence smooth.
It follows that $\gamma^* = \gamma_\infty$ on $(l_\infty - a_\infty, l_\infty - b]$, which contradicts $\lim_{s \searrow l_\infty - a_\infty} \gamma_\infty (s) \not\in \RR$.
\end{proof}

Fix now $\sigma > 0$ from the Claim and let $\kappa' > \kappa$ be some constant that is slightly larger than $\kappa$.
Then, since by assumption we have $\Ric \geq - (n-1) \kappa$ on $B(p, 4r_2) \cap \RR$, we must have $\Ric \geq -(n-1) \kappa'$ on $\{ \tdrrm^{M_i} > \sigma \} \cap B^X (p, 3r_2)$ for large $i$.
By the Claim, property (A) from subsection \ref{subsec:characterlimit} and Lemma \ref{Lem:mutriangleb}(c), this implies that for some $C = C(A) < \infty$ and large $i$ we have
\[ \int_{\Phi_i (K) \setminus \{ p_i \}} d (\mu_{\triangle b_i} )_- \leq C \kappa' (3r_2)^2. \]
This implies
\begin{equation} \label{eq:convto0onW}
 \limsup_{i \to \infty} \int_{\Phi_i(K) \setminus \{ p_i \}} d(\mu_{\triangle b_i} )_- \leq C \kappa r_2^2. 
\end{equation}

Next we will show that
\[ \limsup_{i \to \infty} \int_{B^{M_i} (p_i, 2r_2) \setminus \Phi_i (K)} d(\mu_{\triangle b_i} )_- \]
can be made arbitrarily small if $K$ is chosen to approximate $(B^X (p, 3r_2) \cap \RR) \setminus Q_p$ well enough.
To see this, observe first that wherever $d_{M_i} (\cdot, p_i)$ is $C^2$, we have
\[  \triangle b_i  = (n-2)(n-1) \big( d_{M_i} (\cdot, p_i) \big)^{-n} - (n-2) \triangle d_{M_i} (\cdot, p_i) \big( d_{M_i} (\cdot, p_i) \big)^{1-n}. \]
So, as $d(\mu_{\triangle b_i})_-$ is absolutely continuous with respect to $dg_i$, we have
\begin{equation} \label{eq:dmutrianglebinminus2}
 d (\mu_{\triangle b_i})_- \leq (n-2) \big( d_{M_i} (\cdot, p_i) \big)^{1-n} d(\mu_{\triangle d_{M_i} (\cdot, p_i)})_+. 
\end{equation}
Using Lemma \ref{Lem:mutriangleb}(b), we can conclude from this that there is some constant $C'_1 < \infty$, which is independent of $i$, such that for all $\nu, \sigma_0 > 0$ and all $i$ we have
\begin{equation} \label{eq:smallArrm1}
 \int_{A^{M_i} (p_i,  \nu, 2r_2 ) \cap \{ \tdrrm^{M_i}  \leq \sigma_0 \}} d (\mu_{\triangle b_i})_- \leq C'_1 \nu^{1-n} \sigma_0^{\mathbf{p}_0-1}. 
\end{equation}
On the other hand, (\ref{eq:dmutrianglebinminus2}) and the pointwise bound of Lemma \ref{Lem:mutriangleb}(a) implies that there is a constant $C'_2 < \infty$, which is independent of $K$ and $i$, such that
\begin{multline} \label{eq:smallArrm2}
 \int_{(A^{M_i} (p_i,  \nu, 2r_2) \cap \{ \tdrrm^{M_i} > \sigma_0 \} ) \setminus \Phi_i (K)} d (\mu_{\triangle b_i} )_- \\ \leq C'_2 \nu^{1-n}  (\nu^{-1} + \sigma_0^{-1}) \cdot \big| \big( A^{M_i} (p_i, \nu, 2r_2) \cap \{ \tdrrm^{M_i}  > \sigma_0 \} \big) \setminus \Phi_i (K) \big|. 
\end{multline}
Choose now $\nu > 0$ small enough such that $B^X (p, 3\nu) \subset (B^X (p, 3r_2) \cap \RR) \setminus Q_p$.
So $B^{M_i} (p_i, 0, 2\nu) \subset \Phi_i (B^X (p, 3\nu))$ for large $i$.
Combining (\ref{eq:smallArrm1}) and (\ref{eq:smallArrm2}), we find that for any compact subset
\[ B^X (p, 3\nu) \subset K \subset B^X (p, 3r_2) \cap \RR \]
we have
\begin{multline*}
 \limsup_{i \to \infty} \int_{B^{M_i} (p_i, 2r_2) \setminus \Phi_i (K)} d(\mu_{\triangle b_i} )_- \\
  \leq \limsup_{i \to \infty}  C'_2 \nu^{1-n}  (\nu^{-1} + \sigma_0^{-1}) \cdot \big| \big( A^{M_i} (p_i, \nu, 2r_2) \cap \{ \tdrrm^{M_i}  > \sigma_0 \} \big) \setminus \Phi_i (K) \big| \\
 \leq C'_2 \nu^{1-n}  (\nu^{-1} + \sigma_0^{-1}) \cdot \big| \big( B^X (p_i, \nu, 3r_2) \cap \RR \big) \setminus K \big|.
\end{multline*}
Combining this with (\ref{eq:convto0onW}) yields
\begin{multline} \label{eq:dmutrianglebminusbound}
 \limsup_{i \to \infty} \int_{B^{M_i} (p_i,  2r_2) \setminus \{p_i \}} d (\mu_{\triangle b_i})_- \\
 \leq C \kappa r_2^2 + C'_1 \nu^{1-n} \sigma_0^{\mathbf{p}_0-1} + C'_2 \nu^{1-n}  (\nu^{-1} 
 + \sigma_0^{-1}) \big| \big( B^X (p, 3r_2) \cap \RR \big) \setminus K \big|.
\end{multline}
We now claim that we can make the sum of the last two terms on the right-hand side of this inequality arbitrarily small.
To see this, choose $\sigma_0 > 0$ such that $C'_1 \nu^{1-n} \sigma_0^{\mathbf{p}_0-1}$ is small.
Next, since $K \subset  (B^X(p, 4r_2) \cap \RR) \setminus Q_p$ can be chosen arbitrarily and $Q_p$ has measure $0$, we may choose $K$ such that $B^X (p, 0, 3\nu ) \subset K$ and such that $| ( B^X (p, 3r_2) \cap \RR ) \setminus K |$ is arbitrarily small.
This makes the second term on the right-hand side of (\ref{eq:dmutrianglebminusbound}) small.
So we conclude that
\[
 \limsup_{i \to \infty} \int_{B^{M_i} (p_i, 3r_2)} d (\mu_{\triangle b_i} )_- \leq C \kappa r_2^2
\]
and thus, by (\ref{eq:totalmassbeforelimit}), we have
\begin{equation} \label{eq:totmassboundbeforelimitbetter}
 \limsup_{i \to \infty} \int_{A^{M_i} (p_i, 0, r_1, r_2)} d |\mu_{\triangle b_i}| \leq C ( \kappa r_2^2 + H) . 
\end{equation}

The rest of the proof now follows along the lines of the proof of item (2) of the tameness assumptions.
We will now go through the main steps of this proof and point out the major differences.
Similarly to (\ref{eq:choiceOmegai}) let $\Omega_i \subset M_i$ be smooth and connected domains such that for some $\eps_i \to 0$ we have
\[ A^{M_i} (p_i,  r_1 + \eps_i, r_2 - \eps_i) \subset \Omega_i \subset A^{M_i} (p_i,  r_1 - \eps_i, r_2 + \eps_i) \]
and let $h_i \in C^\infty (\Int \Omega_i) \cap C^0 (\Omega_i)$ be solutions to the Dirichlet problem
\[ \triangle h_i = 0 \textQq{on} \Omega_i  \textQQqq{and} h_i = b_i \textQq{on} \partial \Omega_i. \]
Then, as in (\ref{eq:oscofhi}),
\[ (r_2 + \eps_i) ^{2-n} \leq \min_{\partial \Omega_i} b_i \leq \min_{\Omega_i} h_i \leq \max_{\Omega_i} h_i \leq \max_{\partial \Omega_i} b_i \leq  (r_1 - \eps_i)^{2-n}. \]
As in the proof of (\ref{eq:nablahibi}) we find that
\begin{multline} \label{eq:nablahibiagain}
 \int_{\Omega_i} |\nabla (h_i - b_i)|^2 dg_i = \int_{\Omega_i} (h_i - b_i ) d\mu_{\triangle b_i} \\
 \leq 2 \big( (r_2- \eps_i)^{2-n} - (r_1+ \eps_i)^{2-n} \big) \int_{\Omega_i} d |\mu_{\triangle b_i}|. 
\end{multline}
Also, as in the proof of (\ref{eq:Poincareineq}), we find that there is a constant $C_{**} = C_{**} (A) < \infty$ such that for sufficiently small $c = c(A) > 0$
\begin{equation} \label{eq:hibiagain}
 \int_{\Omega_i} ( h_i - b_i )^2 dg_i \leq C_{**} r_2^2 \int_{\Omega_i} |\nabla (h_i - b_i)|^2 dg_i. 
\end{equation}
The harmonic function $h \in C^3 ( A^X (p, r_1, r_2) \cap \RR)$ can now be constructed by passing to the limit, as in the proof of part (2) of the $Y$-tameness properties.
The bounds (\ref{eq:part2Ytame1}) and (\ref{eq:part2Ytame2}) follow from (\ref{eq:nablahibiagain}), (\ref{eq:hibiagain}) via (\ref{eq:totmassboundbeforelimitbetter}) using a similar limit argument as in that proof.
Note that (\ref{eq:rhsconvergence}) in this proof is replaced by (\ref{eq:totmassboundbeforelimitbetter}).
\end{proof}

\begin{proof}[Proof of item (4) of the $Y$-tameness properties in Theorem \ref{Thm:tame}.]
We first estab\-lish the following improvement of property (F) from subsection \ref{subsec:characterlimit}:
There is a constant $C = C(A) < \infty$ such that for all $D < \infty$ and sufficiently large $i$ (depending on $D$) the following holds:
For all $0 < r < \sqrt{T}$ and all $x \in B^{M_i}(q_i,D)$ there is a compactly supported function $\phi \in C^\infty_c (B^{M_i} (x, 0.9 r))$ that only takes values in $[0,1]$, that satisfies $\phi \equiv 1$ on $B^{M_i} (x, 0.6 r)$ and that satisfies the bounds
\[ |\nabla \phi | < C r^{-1} \textQQqq{and} |\triangle \phi | < C r^{-2}. \]
To see this, assume without loss of generality that $A > 10$ and choose a maximal number of points $z_1, \ldots, z_N \in B^{M_i} (x, 0.6r)$ such that the balls $B^{M_i} (z_1, \frac1{8} r /  A), \linebreak[1] \ldots, \linebreak[1] B^{M_i} (z_N, \tfrac1{8} r / A)$ are pairwise disjoint.
Then, using property (A) of subsection~\ref{subsec:characterlimit} for sufficiently large $i$, we have
\[ N \leq \frac{|B^{M_i} (x,  r)|}{A^{-1} (\tfrac18 r /A)^n} \leq  \frac{A}{A^{-1} (\tfrac18 / A)^n}. \]
Moreover, we have
\[ B^{M_i} (x, 0.6 r) \subset B^{M_i} (z_1, \tfrac14 r / A) \cup \ldots \cup B^{M_i} (z_N, \tfrac14 r/ A). \]
Choose $\phi_i \in C^\infty_c ( B^{M_i} (z_i, \tfrac14 r) )$ according to property (F) in subsection \ref{subsec:characterlimit} and set
\[ \phi^* := \phi_1 + \ldots + \phi_N \]
Then $\phi^* \geq 1$ on $B^{M_i} (x, 0.6 r)$ and $\supp \phi^* \subset B^{M_i} (x, 0.8 r)$ and
\[ |\nabla \phi^*| < A N \big( \tfrac14 r \big)^{-1} \textQQqq{and} |\triangle \phi^* | < AN \big( \tfrac14 r \big)^{-2}. \]
Let now $F : [0, \infty) \to [0,1]$ be a smooth function such that $F \equiv 1$ on $[1, \infty)$ and $F \equiv 0$ on $[0, 1/2]$.
Then $\phi := F \circ \phi^*$ satisfies the desired properties for a proper choice of $C$.

Item (4) of the $Y$-tameness properties is a direct consequence of this improved version of property (F).
\end{proof}

For item (5) of the tameness properties, we need the following heat kernel bound:

\begin{Lemma} \label{Lem:DaviesGaussian}
For any $A < \infty$ and $\delta > 0$ there is a constant $C_\delta = C_\delta (A) < \infty$ such that the following holds:

Let $(M, g)$ be a complete $n$-dimensional Riemannian manifold with bounded curvature, let $T > 0$ and assume that for any $0 < \tau \leq T$ and any function $f \in C^\infty (M)$ with 
\[ \int_M (4 \pi \tau)^{-n/2} e^{-f} dg = 1 \]
we have
\[ \int_M \big( \tau |\nabla f |^2 + f \big) (4 \pi \tau)^{-n/2} e^{-f} dg > - A. \]
Let $K(x,y,t)$ be the heat kernel on $(M,g)$.
Then for all $x, y \in M$ and all $0 < t \leq \delta T$ we have
\[ K(x,y,t) < \frac{C_\delta}{t^{n/2}} \exp \Big( - \frac{d^2(x,y)}{(1+\delta) t} \Big). \]
\end{Lemma}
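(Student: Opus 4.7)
The conclusion is the classical Davies Gaussian upper bound on the heat kernel of a complete Riemannian manifold carrying a logarithmic Sobolev inequality, and I would prove it by following the method of \cite{Davies_1987} essentially verbatim. The bounded-curvature hypothesis ensures that the heat kernel is smooth and stochastically complete, so no geometric input beyond the log-Sobolev is required. The first step is to put the hypothesis into Gross form. Substituting $u^2 = (4\pi\tau)^{-n/2} e^{-f}$ (so that $\int_M u^2\,dg = 1$, $\nabla f = -2\nabla u / u$, and $|\nabla f|^2 u^2 = 4|\nabla u|^2$) recasts the assumption as
\[
  \int_M u^2 \log u^2 \, dg \;\leq\; 4\tau \int_M |\nabla u|^2 \, dg + \tfrac{n}{2}\log(4\pi\tau) + A
\]
for every $u \in W^{1,2}(M)$ with $\|u\|_{L^2} = 1$ and every $0 < \tau \leq T$.

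The second step is the Nash--Davies iteration: differentiate $\|P_s u\|_{p(s)}^{p(s)}$ along an increasing exponent $p(s)$ going from $2$ at $s = 0$ to $\infty$ at $s = t$, use Stroock's identity for this derivative, and apply the Gross log-Sobolev with $\tau$ adapted to $s$ to control the resulting entropy term. Integrating from $0$ to $t$, while keeping $\tau \in (0,T]$ throughout, yields the on-diagonal ultracontractive bound $K(x,x,t) \leq C(A,n,\delta)\, t^{-n/2}$ for $0 < t \leq \delta T$. To upgrade this to a Gaussian estimate I would apply the Davies perturbation: for a smooth bounded $1$-Lipschitz function $\psi$ and a parameter $\alpha > 0$, the conjugated semigroup $P_t^\alpha u := e^{-\alpha\psi} P_t(e^{\alpha\psi} u)$ is generated by $L_\alpha = \triangle + 2\alpha\langle \nabla\psi, \nabla\cdot\rangle + \alpha^2 |\nabla\psi|^2 + \alpha\triangle\psi$. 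Rerunning the Nash--Davies argument with $L_\alpha$ in place of $\triangle$ picks up an extra zero-order contribution of size $\leq \alpha^2$, while the first-order drift is antisymmetric and contributes nothing to the energy identity; this produces a perturbed bound $\|P_t^\alpha\|_{L^1 \to L^\infty} \leq C_\delta\, t^{-n/2} \exp(c_\delta \alpha^2 t)$ for $0 < t \leq \delta T$, with $c_\delta$ determined by the free parameters.

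To finish, I would choose $\psi$ to be a smooth bounded truncation of $z \mapsto -d(z,y)$, so that $\psi(x) - \psi(y) = -d(x,y)$ after the truncation radius is sent to infinity; unwinding the conjugation gives
\[
  K(x,y,t) \;\leq\; \frac{C_\delta}{t^{n/2}} \exp\bigl( -\alpha\, d(x,y) + c_\delta\, \alpha^2 t \bigr),
\]
and minimizing in $\alpha = d(x,y)/(2 c_\delta t)$ produces the Gaussian exponent $-d^2(x,y)/(4 c_\delta t)$. The main technical obstacle---really the only nontrivial one---is the careful bookkeeping of the free parameters so that the auxiliary variable $\tau$ in the log-Sobolev stays inside its valid range throughout the Nash iteration (this is precisely what forces the restriction $t \leq \delta T$ and causes $C_\delta$ to blow up as $\delta$ approaches its threshold), and so that the constant $4 c_\delta$ can be made as close to the target denominator $1+\delta$ as desired, at the cost of enlarging $C_\delta$. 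Beyond this bookkeeping, the proof is classical Davies theory and uses none of the singular-space machinery developed elsewhere in the paper.
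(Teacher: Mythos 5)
Your approach is essentially the same as the paper's: put the hypothesis into Gross form and then run Davies' theory, with the sole new input being the bookkeeping of where the restriction $0 < \tau \leq T$ enters and forces $t \leq \delta T$; the paper simply cites \cite[Theorem 5]{Davies_1987} at the corresponding step instead of re-deriving the Nash/Davies iteration and perturbation argument. Two small imprecisions in your sketch, neither of which is structural: the constant in your Gross-form log-Sobolev should be $-\tfrac{n}{2}\log(4\pi\tau) + A$, not $+\tfrac{n}{2}\log(4\pi\tau) + A$ (it must blow up as $\tau \to 0$, as one sees from $f = -\tfrac{n}{2}\log(4\pi\tau) - \log u^2$ under your substitution), and the first-order drift in $L_\alpha$ cancels against the $\alpha\triangle\psi$ zero-order term exactly only in the $L^2$ energy identity — in the $L^{p(s)}$ differential inequality driving the iteration it leaves a residual $\triangle\psi$ contribution that Davies absorbs via Young's inequality rather than by antisymmetry.
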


\begin{proof}
The statement follows essentially from \cite{Davies_1987}, with a few modifications.
First we derive a log-Sobolev inequality as in \cite[Equation (1.4)]{Davies_1987} for small scales.
Consider a scale $0 < \tau \leq T$ and a positive function $u \in C^\infty (M)$, $u > 0$ with $H := \int_M u^2 dg < \infty$ and choose $f \in C^\infty (M)$ such that
\[ \frac{u}{\sqrt{H}} =  (4 \pi \tau)^{-n/4} e^{-f / 2}. \]
Then
\[
 \int_M (4 \pi \tau)^{-n/2} e^{-f} dg = 1 
\]
and thus
\[ \int_M \big( \tau |\nabla f|^2 + f \big) (4 \pi \tau)^{-n/2} e^{-f} dg > - A. \]
Expressing this equation in terms of $u$ yields
\[ \int_M \bigg( 4 \tau \frac{|\nabla u |^2}{H} + \Big( - 2 \log \frac{u}{\sqrt{H}} - \frac{n}2 \log \tau - \frac{n}2 \log (4 \pi) \Big) \frac{u^2}{H} \bigg) dg \geq - A, \]
which implies
\[ \frac1H \int_M \Big( 4 \tau |\nabla u |^2 - 2 ( \log u ) u^2 \Big) dg + \log H  - \frac{n}2 \log \tau    \geq - A + \frac{n}2 \log (4 \pi) =: -2A'. \]
So for all positive $u \in C^\infty (M)$, $u > 0$ with $\int_M u^2 < \infty$ and all $0 < \tau \leq T$ we have the following log-Sobolev inequality
\[
 \int_M u^2 \log u  \leq 2\tau \int_M |\nabla u |^2   + \Big( A' - \frac{n}4 \log \tau \Big) \int_M u^2  + \frac12 \Big( \int_M u^2 \Big) \log \Big( \int_M u^2  \Big)  
\]
Note that this inequality implies \cite[Equation (1.4)]{Davies_1987} with $\eps = 2\tau$ and $\beta (\eps) = A' + \frac{n}4 \log 2 - \frac{n}4 \log \eps$ whenever $0 < \eps \leq 2T$.

The desired Gaussian bound now follows from \cite[Theorem 5]{Davies_1987}.
The restriction that \cite[Equation (1.4)]{Davies_1987} only holds for $0 < \eps \leq 2T$ does not create any issues as we explain in the following:
The log-Sobolev inequality is only used in \cite[Lemma 2]{Davies_1987}.
The extra restriction implies that this lemma only holds for $0 < \eps \leq 4T$.
In the proof of \cite[Theorem 3]{Davies_1987}, this lemma is applied for all $0 < \eps \leq \lambda t$, where $\frac{\lambda}{\lambda - 1} = 1+\delta$.
So we can only ensure that \cite[Theorem 3]{Davies_1987} holds for $t \leq 4 \lambda^{-1} T$.
Hence, \cite[Corollary 4]{Davies_1987}, and consequently \cite[Theorem 5]{Davies_1987}, can only be ensured for all $0 < t \leq 8 \lambda^{-1} T$.
Since we may assume $\delta < 0.01$, \cite[Theorem 5]{Davies_1987} holds for all $t$ with $0 < t \leq \delta T < 8 \lambda^{-1} T$.
\end{proof}

\begin{proof}[Proof of item (4) of the $Y$-tameness properties in Theorem \ref{Thm:tame}.]
$K$ is the \linebreak[1] li\-mit of the kernels of the heat equation with respect to the metrics $g_i$.
Properties (5a) and (5b) follow immediately.
The Gaussian bound, property (5e), follows from Lemma \ref{Lem:DaviesGaussian}.
For property (5d) observe that $\int_{\RR} K(\cdot, y, t) dg \leq 1$ is obvious.
To see (5c) and the reverse inequality, observe that, by Lemma \ref{Lem:DaviesGaussian} and by property (A) in subsection \ref{subsec:characterlimit} and a similar reasoning as in \cite[Lemma 2.1]{Bamler-Zhang-Part2}, we have for any $D < \infty$ and for sufficiently large $i$:
\[ \big| B^{M_i} (x, r) \big| < C_0 e^{C_0 r} \]
for all $x \in B^{M_i}(q_i, D)$ and $0 < r < D$, where $C_0 = C_0 (A) < \infty$ is a constant, which is independent of $i$.
So, using Lemma \ref{Lem:DaviesGaussian} and Fubini's Theorem, we find that for all $D < \infty$ we have the following estimate for sufficiently large $i$:
For all $0 < t \leq T$, $1 < r < D$ and $x \in B^{M_i}(q_i, D)$
\begin{align*}
 \int_{M_i \setminus B^{M_i} (x, r)} K(\cdot, x, t) dg_i &\leq \frac{C_2}{t^{n/2}} \int_{M_i \setminus B^{M_i} (x, r)} \exp \Big( { - \frac{d_{M_i}^2 (x, \cdot)}{2 t} }\Big) dg_i \\
& =  \frac{C_2}{t^{n/2}} \int_{M_i \setminus B^{M_i} (x, r)} \int_{d_{M_i} (y, x)}^\infty  \frac{s}{t} \exp \Big( { - \frac{s^2}{2t} } \Big) ds dg_i(y) \\
 &=  \frac{C_2}{t^{n/2}} \int_r^\infty \int_{B^{M_i} (x, s)}  \frac{s}{t} \exp \Big( { - \frac{s^2}{2t} } \Big)  dg_i(y) ds \\
 &\leq \frac{C_2A}{t^{n/2+1}} \int_r^\infty \exp \Big( { - \frac{s^2}{2t} } \Big)  \cdot C_0 e^{C_0 s}  ds \\
 &\leq \frac{C_2A}{t^{n/2+1}} \exp \Big( { - \frac{r^2}{4t} } \Big) \int_r^\infty \exp \Big( { - \frac{s^2}{4t} } \Big)  \cdot C_0 e^{C_0 s}  ds \\
 & \leq \frac{C'}{t^{n/2+1}} \exp \Big( { - \frac{r^2}{4t} }\Big),
\end{align*}
for some $C' = C'(A, T) < \infty$.
Passing this bound to the limit, yields (5c).

It remains to show that $\int_{\RR} K(\cdot, y, t) dg \geq 1$.
So for all $y \in \RR$, $r > 0$, $0 < t \leq T$ and for sufficiently large $i$ we have
\[ \int_{ B^{M_i}(\Phi_i (y), r)} K(\cdot, \Phi_i (y), t) dg_i > 1 - \frac{C'}{t^{n/2+1}} \exp \Big({ - \frac{r^2}{4t} }\Big). \]
By the pointwise bound on $K$ and property (B) from subsection \ref{subsec:characterlimit}, we obtain furthermore that for any $\sigma > 0$ we have for sufficiently large $i$
\[ \int_{ B^{M_i}(\Phi_i (y), r) \cap \{ \rrm^{M_i}  > \sigma \} } K(\cdot, \Phi_i (y), t) dg_i > 1 -  \frac{C'}{t^{n/2+1}} \exp \Big({ - \frac{r^2}{4t} }\Big) - C''  t^{-n/2} \sigma^{\mathbf{p}_0}, \]
for some $C'' < \infty$, which does not depend on $i$ or $t$.
Since the domain of this integral is contained in $V_i$ for large $i$, we find that by passing to the limit
\[ \int_\RR K(\cdot, y, t) dg > 1 -  \frac{C'}{t^{n/2+1}} \exp \Big({ - \frac{r^2}{4t} }\Big) - 2 C''  t^{-n/2} \sigma^{\mathbf{p}_0}. \]
Letting $\sigma \to 0$ and $r \to \infty$ yields the desired result.
\end{proof}

\subsection{The regularity property}
We will now establish assertion (c) of Theorem \ref{Thm:basicconvergence}, namely that under the assumption of property (G) of subsection \ref{subsec:characterlimit} the limit is $Y$-regular at scale $\sqrt{T}$ for some $Y = Y(A)$.

\begin{Theorem} \label{Thm:regular}
For any $A > 0$ there is a $Y = Y(A ) < \infty$ such that the following holds: 

Consider the sequence $\{ (M_i, g_i, q_i  ) \}_{i = 1}^\infty$ and the pointed limiting singular space $(X, d_X, \RR, g, q_\infty)$ from Theorem \ref{Thm:detailedconvergence}.
Assume that $\{ (M_i, g_i, q_i  ) \}_{i = 1}^\infty$ satisfies the properties (A)--(C) and property (G) from subsection \ref{subsec:characterlimit} for the constant $A$ and for some constant $T > 0$.
Then $\XX$ is $Y$-regular at scale $\sqrt{T}$.
\end{Theorem}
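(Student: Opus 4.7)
My plan is to set $Y = 2A$ and derive the required regularity by lifting the volume hypothesis to the sequence $(M_i, g_i)$, applying property (G) there, and then pushing the resulting curvature radius bound down through the identity $\rrm^\infty \geq h$ established in the proof of Theorem~\ref{Thm:detailedconvergence}. Fix a point $p \in X$ and a radius $0 < r < \sqrt{T}$ with $|B^X(p, r) \cap \RR| > (\omega_n - Y^{-1}) r^n$, and let $p_i := \Psi_i(p) \in M_i$, where $\Psi_i$ denotes the Gromov-Hausdorff approximations from the proof of Theorem~\ref{Thm:detailedconvergence}.

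The first step is to establish the lower semicontinuity
\[ \liminf_{i \to \infty} \big| B^{M_i}(p_i, r) \big| \;\geq\; \big| B^X(p, r) \cap \RR \big|. \]
For this, fix $r' < r$ and $\eta > 0$, and choose a compact set $K \subset B^X(p, r') \cap \RR$ with $|K|_g > |B^X(p, r') \cap \RR| - \eta$. Since $\bigcup_i U_i = \RR$, we have $K \subset U_i$ for large $i$, and combining the Gromov-Hausdorff control on $\Psi_i$ with the bound $d_{M_i}(\Psi_i(\cdot), \Phi_i(\cdot)) < \eps'_i$ from the proof of Theorem~\ref{Thm:detailedconvergence} shows that $\Phi_i(K) \subset B^{M_i}(p_i, r)$ for all sufficiently large $i$. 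Since $\Phi_i^* g_i \to g$ in $C^3$ on $K$, the Riemannian volume $|\Phi_i(K)|_{g_i}$ converges to $|K|_g$. Letting first $\eta \to 0$ and then $r' \nearrow r$ yields the claim.

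Having this, the choice $Y = 2A$ guarantees that $|B^{M_i}(p_i, r)| > (\omega_n - A^{-1}) r^n$ for all sufficiently large $i$, since $(2A)^{-1} < A^{-1}$ leaves room to absorb the small approximation error. Property (G) then forces $\tdrrm^{M_i}(p_i) = \tdrrm^{M_i}(\Psi_i(p)) > A^{-1} r$ for such $i$. Passing to the limit in the definition of $h$ from Claim~1 of the proof of Theorem~\ref{Thm:detailedconvergence} gives $h(p) \geq A^{-1} r > 0$, which simultaneously shows that $p \in \RR = h^{-1}((0, \infty))$ and, combined with Claim~6 of the same proof, yields $\rrm^\infty(p) \geq h(p) \geq A^{-1} r > Y^{-1} r$, as required for $Y$-regularity at scale $\sqrt{T}$.

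The main obstacle I anticipate is the very first step, the lower semicontinuity of volume. The direct statement of Theorem~\ref{Thm:detailedconvergence}(a) applies only to basepoints in $\RR$, whereas here $p$ is an arbitrary point of $X$ that could well be singular, so one must argue from scratch using the inner regularity of the Riemannian measure on $B^X(p, r) \cap \RR$ together with the definition of the convergence scheme. Everything else is a routine passage to the limit relying only on the $1$-Lipschitz continuity of $\tdrrm^{M_i}$ and the relation between $h$ and $\rrm^\infty$ already established in Section~3.1, and neither properties~(B) nor~(D)--(F) are invoked, so the constant $Y$ can indeed be chosen depending only on $A$.
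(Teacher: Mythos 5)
Your proof is correct and follows the same overall strategy as the paper: lift the volume hypothesis to the approximating manifolds, apply property~(G) to obtain $\tdrrm^{M_i}(p_i) > A^{-1} r$ for large $i$, and push this down to $\rrm^\infty(p) \geq h(p) \geq A^{-1} r$ via Claims~1 and~6 of the proof of Theorem~\ref{Thm:detailedconvergence}. The one place you genuinely diverge is in handling the possibility that $p$ is singular: the paper perturbs $p$ into $\RR$ at the cost of slightly enlarging $r$ and then invokes Theorem~\ref{Thm:detailedconvergence}(a), which is stated only for basepoints in $\RR$, whereas you re-derive the lower semicontinuity $\liminf_i |B^{M_i}(\Psi_i(p), r)| \geq |B^X(p,r) \cap \RR|$ directly for arbitrary $p \in X$ by exhausting $B^X(p,r) \cap \RR$ by compacta and using the convergence scheme. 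Your route avoids the informal perturbation step and also makes the constant bookkeeping transparent: the explicit choice $Y = 2A$ is sound, and incidentally it highlights that the paper's phrase ``$Y$-regularity for any $Y<A$'' has the inequality reversed --- the argument actually establishes $Y$-regularity for $Y$ slightly larger than $A$, since one needs $Y^{-1} < A^{-1}$ both to leave room for the volume approximation error and to have $\rrm^\infty(p) \geq A^{-1} r > Y^{-1} r$ at the end.
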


\begin{proof}
We claim that we have $Y$-regularity for any $Y < A$.
Let $p \in X$ and $0 < r < \sqrt{T}$ and assume that
\[ \big| B^X (p, r) \cap \RR \big| > (\omega_n - A^{-1} ) r^n. \]
By increasing $r$ slightly, we may perturb $p$ and assume that $p \in \RR$.
By Theorem \ref{Thm:detailedconvergence}(a) we then have
\[ \liminf_{i \to \infty} \big| B^{M_i} (\Phi_i (p), r) \big| \geq \big| B^X (p, r) \cap \RR \big| > (\omega_n - 2^{-1} ) r^n. \]
So for large $i$ we have
\[ \big| B^{M_i} (\Phi_i (p), r) \big| > (\omega_n - 2^{-1} ) r^n. \]
Using property (G) from subsection \ref{subsec:characterlimit}, we get that $\tdrrm^{M_i} (\Phi_i (p)) > A^{-1} r$ for large $i$.
So, using Theorem \ref{Thm:detailedconvergence}(b), it follows that $\rrm^\infty (p) \geq A^{-1} r$.
\end{proof}

\section{Volume comparison}
We now show that the volume comparison result of Bishop and Gromov also holds for singular spaces with mild singularities.

\begin{Proposition}[volume comparison] \label{Prop:volumecomparison}
Let $\XX$ be a singular space with mild singularities that satisfies $\Ric \geq (n-1)\kappa$ on $\RR$ for some $\kappa \in \IR$ and let $p \in X$ be a point.
Then $|B(p,r) \cap \RR|$ depends continuously on $p$ and $r$.
Moreover, the quantity
\begin{equation} \label{eq:volquotient}
   \frac{|B(p,r) \cap \RR|}{v_\kappa (r)}
\end{equation}
is non-increasing in $r$ (as long as $r \leq \pi \sqrt{\kappa}$ if $\kappa > 0$) and
\[ |B(p,r) \cap \RR| \leq v_\kappa (r). \]
Finally, for any $0 \leq r_1 \leq r_2$ and $0 \leq r'_1 \leq r'_2$ with $r_1 \leq r'_1$ and $r_2 \leq r'_2$ ($r_2 \leq \pi \sqrt{\kappa}$ if $\kappa > 0$) we have
\begin{equation} \label{eq:volquotientA}
 \frac{|A(p,r_1, r_2) \cap \RR|}{v_\kappa (r_2) - v_\kappa (r_1) } \geq \frac{|A(p,r'_1, r'_2) \cap \RR|}{v_\kappa (r'_2) - v_\kappa (r'_1)}. 
\end{equation}
\end{Proposition}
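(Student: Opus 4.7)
The plan is to first establish the bounds for a regular basepoint $p \in \RR$ using the mild singularities hypothesis together with the Jacobian monotonicity of Proposition~\ref{Prop:Jnonincreasing}, and then extend to arbitrary $p \in X$ via density of $\RR$ and a continuity argument.

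Fix $p \in \RR$ and apply mildness: there is a closed subset $Q_p \subset \RR$ of measure zero such that every $x \in \RR \setminus Q_p$ is joined to $p$ by a minimizing geodesic lying in $\RR$. In the terminology of Definition~\ref{Def:expmap} applied to the incomplete Riemannian manifold $(\RR, g)$, this says $\RR \setminus \mathcal{G}_p$ has measure zero, and by Proposition~\ref{Prop:expincomplete}(c) so does $\mathcal{G}_p \setminus \mathcal{G}^*_p$. Hence $|B(p,r) \cap \RR| = |B(p,r) \cap \mathcal{G}^*_p|$. Pulling back via the diffeomorphism $\exp_p \colon \mathcal{D}^*_p \to \mathcal{G}^*_p$ from Proposition~\ref{Prop:expincomplete}(d) and writing $v = tu$ in polar coordinates with $u$ ranging over the Euclidean unit sphere $S \subset T_p \RR$, one obtains
\[ |B(p,r) \cap \RR| = \int_{S} \int_0^{\min(r,\, t_*(u))} J_p(tu)\, t^{n-1}\, \dd t \, \dd \sigma(u), \]
where $t_*(u) := \sup\{t > 0 : tu \in \mathcal{D}^*_p\} \in (0,\infty]$. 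By Proposition~\ref{Prop:Jnonincreasing}, for each direction $u$ the function $t \mapsto J_p(tu)\, t^{n-1}/\sn_\kappa(t)^{n-1}$ is non-increasing on $[0, t_*(u))$. Applying the classical mediant inequality to $f(t) := J_p(tu)\, t^{n-1} \cdot \mathbf{1}_{[0, t_*(u))}(t)$ and $g(t) := \sn_\kappa(t)^{n-1}$, we obtain both the monotonicity of $r \mapsto \int_0^r f / \int_0^r g$ and the more general fact that for $r_1 \leq r'_1$, $r_2 \leq r'_2$ (with $r_2 \leq \pi/\sqrt{\kappa}$ when $\kappa > 0$) one has $\int_{r_1}^{r_2} f / \int_{r_1}^{r_2} g \geq \int_{r'_1}^{r'_2} f / \int_{r'_1}^{r'_2} g$. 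Integrating over $u \in S$ and invoking the formula (\ref{eq:vkappaformula}) for $v_\kappa$ yields, for $p \in \RR$, the monotonicity of (\ref{eq:volquotient}) and the annulus inequality (\ref{eq:volquotientA}); the absolute bound $|B(p,r) \cap \RR| \leq v_\kappa(r)$ follows by taking $r \to 0^+$, since $J_p(0) = 1$.

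For continuity of $(p,r) \mapsto |B(p,r) \cap \RR|$, observe that $r \mapsto |B(p,r) \cap \RR|$ is manifestly left-continuous. At a regular basepoint, passing $\eps \to 0^+$ in $|B(p, r+\eps) \cap \RR|/v_\kappa(r+\eps) \leq |B(p,r) \cap \RR|/v_\kappa(r)$ forces $|\ov{B(p,r)} \cap \RR| = |B(p,r) \cap \RR|$, so the geodesic sphere carries no Riemannian measure and right-continuity in $r$ follows. Continuity in $p$ then comes from the triangle-inequality sandwich
\[ B(p, r-\eps) \cap \RR \;\subset\; B(p_n, r) \cap \RR \;\subset\; B(p, r+\eps) \cap \RR \qquad \text{whenever } d(p_n, p) < \eps, \]
combined with continuity in $r$. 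Since $\RR$ is dense in $X$ by Definition~\ref{Def:singlimitspace}(2), the same sandwich extends continuity, the monotonicity, the upper bound and the annulus inequality to every $p \in X$ by approximating $p$ by a sequence $\RR \ni p_n \to p$.

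The main obstacle I foresee is the transition from regular to singular basepoints: we have no a~priori control on geodesics emanating from $p \in X \setminus \RR$, so we cannot run the polar-coordinate argument there directly. The passage to the limit via density is what makes the statement work, but it is only clean because the vanishing of $|\partial B(p,r) \cap \RR|$ comes out as a free consequence of the monotonicity established at regular points rather than as a separate Morse-Sard-type input. The remainder of the argument is a direct polar-coordinate Bishop-Gromov computation that goes through verbatim once mildness is used to identify $B(p,r) \cap \RR$ with $\exp_p(\mathcal{D}^*_p \cap \ov{B(0,r)})$ up to a null set.
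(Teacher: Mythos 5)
Your proposal is correct and follows essentially the same route as the paper: reduce to $p \in \RR$ via mildness and Proposition~\ref{Prop:expincomplete}(c), pass to polar coordinates through $\exp_p|_{\mathcal{D}^*_p}$, extend the Jacobian by zero outside $\mathcal{D}^*_p$ (using star-shapedness), apply the monotonicity from Proposition~\ref{Prop:Jnonincreasing} to obtain both the ball and annulus comparisons, and then extend to singular basepoints by a sandwich argument using density of $\RR$. The only cosmetic differences are that you invoke the mediant inequality where the paper differentiates the weighted quotient explicitly, and you extract $|\partial B(p,r)\cap\RR| = 0$ (at regular points) as an intermediate right-continuity step, whereas the paper sandwiches $|B(p_2,r)\cap\RR|$ directly between $\tfrac{v_\kappa(r\mp d)}{v_\kappa(r)}|B(p_1,r)\cap\RR|$; neither change alters the substance of the argument.
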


\begin{proof}
We first consider the case in which $p \in \RR$ and show (\ref{eq:volquotientA}).
This will then imply (\ref{eq:volquotient}) for $p \in \RR$.

Consider the Riemannian manifold $(\RR, g)$ and recall the subsets $\mathcal{G}_p^* \subset \mathcal{G}_p \subset \RR$ and $\mathcal{D}_p^* \subset \mathcal{D}_p \subset T_p\RR$ from Definition \ref{Def:expmap}.
Note that by the mildness assumption we have $Q_p \cup \mathcal{G}_p = \RR$ and we know that $U := \RR \setminus Q_p \subset \mathcal{G}_p$ is open.
Using Proposition \ref{Prop:expincomplete}(c), we find that $\RR \setminus (U \cap \mathcal{G}^*_p) = ( \mathcal{G}_p \setminus \mathcal{G}^*_p) \cup Q_p$ has measure zero and Proposition \ref{Prop:expincomplete}(b), (d) implies that
\[ \exp_p |_{U'} : U' := \exp_p^{-1} ( U \cap \mathcal{G}^*_p) \subset \mathcal{D}^*_p \longrightarrow U \cap \mathcal{G}^*_p \]
is a (bijective) diffeomorphism.
In particular, the complement $\mathcal{D}^*_p \setminus U' = \exp_p^{-1} ( (\mathcal{G}_p \setminus \mathcal{G}^*_p ) \cup Q_p )$ also has measure zero.

Let $J_p : \mathcal{D}^*_p \to \IR$ be the Jacobian of $\exp_p$.
Set $J^*_p := J_p \chi_{\mathcal{D}^*_p} : T_p \RR \to \IR$.
It is easy to verify, using Proposition \ref{Prop:Jnonincreasing}, that the quantity
\[ J^*_p(v) \cdot \frac{|v|^{n-1}}{\big( \sn_\kappa (|v|) \big)^{n-1}} \]
is non-increasing along radial lines.
For any $r > 0$ we have
\begin{multline*}
 |A(p,r_1,r_2) \cap \RR|  = |A(p,r_1,r_2) \cap (U \cap \mathcal{G}^*_p)| \\
 = \int_{A(0,r_1,r_2) \cap U'} J_p (v) dv = \int_{A(0,r_1,r_2) \cap \mathcal{D}^*_p} J_p (v) dv = \int_{A(0,r_1,r_2)} J^*_p (v) dv. 
\end{multline*}
So, in order to verify the monotonicity of (\ref{eq:volquotient}), it suffices to check that
\[ \frac1{v_\kappa (r_2)- v_\kappa (r_1)} \int_{A(0,r_1,r_2)} J^*_p (v) dv \]
is non-increasing in $r_1$ and $r_2$.
For this, it is enough to show that for any $v \in T_p \RR$, $|v| = 1$, the quantity
\[ \frac1{v_\kappa (r_2) - v_\kappa (r_1)} \int_{r_1}^{r_2} J^*_p (t v) t^{n-1} dt \]
is non-increasing in $r_1$ and $r_2$.
The rest follows via Fubini's theorem.
To see the monotonicity of this quantity in $r_1$, observe that its derivative in $r_1$ can be bounded as follows:
\begin{multline*}
 - \frac{1}{v_\kappa (r_2) - v_\kappa (r_1)}  J^*_p (r_1 v) r_1^{n-1}  
 + \frac{v'_\kappa (r_1)}{(v_\kappa (r_2) - v_\kappa (r_1))^2} \int_{r_1}^{r_2} J^*_p (tv) t^{n-1} dt \\
 = \frac{n \omega_n}{(v_\kappa (r_2) -  v_\kappa (r_1))^2}  \int_{r_1}^{r_2} \Big( - \big( \sn_{\kappa} (t) \big)^{n-1} J^*_p (r_1 v ) r_1^{n-1} \\
 + \big( \sn_\kappa (r_1) \big)^{n-1} J^*_p (tv) t^{n-1} \Big)  \leq 0.
\end{multline*}
Similarly, the derivative in $r_2$ can be bounded as follows
\begin{multline*}
  \frac{1}{v_\kappa (r_2) - v_\kappa (r_1)}  J^*_p (r_2 v) r_2^{n-1}  
 - \frac{v'_\kappa (r_2)}{(v_\kappa (r_2) - v_\kappa (r_1))^2} \int_{r_1}^{r_2} J^*_p (tv) t^{n-1} dt \\
 = \frac{n \omega_n}{(v_\kappa (r_2) - v_\kappa (r_1))^2}  \int_{r_1}^{r_2} \Big( \big( \sn_{\kappa} (t) \big)^{n-1} J^*_p (r_2 v ) r_2^{n-1} \\
 + \big( \sn_\kappa (r_2) \big)^{n-1} J^*_p (tv) t^{n-1} \Big)  \leq 0.
\end{multline*}
This shows the monotonicity of (\ref{eq:volquotientA}) and hence (\ref{eq:volquotient}) in the case $p \in \RR$.

We will now show that $|B(p,r) \cap \RR|$ depends continuously on $p \in X$, as $r > 0$ is kept fixed.
Let $p_1, p_2 \in X$ and set $d := d(p_1, p_2)$.
Then $B(p_1, r-d) \subset B(p_2, r) \subset B(p_1, r+d)$.
Assume first that $p_1 \in \RR$.
Then, by our previous conclusion
\begin{multline*}
  \frac{v_\kappa (r-d)}{v_\kappa (r)} |B(p_1, r) \cap \RR|  \leq |B(p_1, r-d) \cap \RR | \leq |B(p_2, r) \cap \RR | \\
  \leq |B(p_1, r+d) \cap \RR| \leq \frac{v_\kappa (r+d)}{v_\kappa (r)}  |B(p_1, r) \cap \RR|.
\end{multline*}
Letting $p_1 \to p_2$ yields the continuity of $|B(p,r) \cap \RR|$ in $p$.
The continuity in $r$ follows from volume comparison.
The continuity of the volume $|A(p,r_1, r_2) \cap \RR|$ of annuli in $p, r_1, r_2$ follows similarly.

Finally, the monotonicity of (\ref{eq:volquotientA}) and (\ref{eq:volquotient}) for arbitrary $p$ follows from the continuity of the volume.
\end{proof}

Next, we establish the Laplacian comparison theorem for distance functions on singular spaces with mild singularities.
Note in the following that the comparison still holds in the weak sense at points that cannot be connected to the basepoint by a regular geodesic.

\begin{Proposition} \label{Prop:LaplacecomparisonXX}
Let $\XX = (X, d, \RR, g)$ be a singular space with mild singularities and let $p \in \RR$ be a regular point.
Assume that $\Ric \geq (n-1)\kappa$ on $\RR$.
Then the function $b(x) := d (x,p)$ satisfies
\begin{equation} \label{eq:dmunm1b}
 d\mu_{\triangle b}  \leq (n-1) \frac{\sn'_{\kappa}(b)}{\sn_\kappa (b)} dg  
\end{equation}
on $\RR \setminus \{ p \}$.
\end{Proposition}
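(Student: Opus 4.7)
The plan is to leverage the classical pointwise Laplacian comparison on the open, full-measure set $W := \mathcal{G}^*_p \setminus \{p\}$ where $b$ is smooth, and to rule out positive singular mass of $\mu_{\triangle b}$ by invoking the absolute continuity of $(\mu_{\triangle b})_+$ that follows from the semi-concavity of distance functions.

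First I will verify that $\RR \setminus W$ is negligible and that a pointwise bound holds on $W$. Proposition \ref{Prop:expincomplete}(b) shows $W$ is open in $\RR$, and parts (d)--(e) give that $b$ is smooth on $W$ with $|\nabla b| = 1$. Mildness provides a closed measure-zero set $Q_p \subset \RR$ with $\RR \setminus Q_p \subset \mathcal{G}_p$, so $\RR \setminus \mathcal{G}_p$ has measure zero; combined with Proposition \ref{Prop:expincomplete}(c), $\RR \setminus W$ also has measure zero. For any $x \in W$, Proposition \ref{Prop:expincomplete}(d) furnishes a unique minimizing geodesic from $p$ to $x$ with image inside $\mathcal{G}^*_p \subset \RR$, so the assumption $\Ric \geq (n-1)\kappa$ holds along it. The standard Riccati computation (the same one underlying Proposition \ref{Prop:Jnonincreasing}) then delivers the pointwise inequality
\[ \triangle b(x) \leq (n-1)\frac{\sn'_\kappa(b(x))}{\sn_\kappa(b(x))} \quad \text{for every } x \in W. \]

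Next I will exploit the addendum at the end of Proposition \ref{Prop:weakLaplaciandistance}. Applying example (1) of that proposition to the non-increasing function $-F(t) := -t$ shows that $\mu_{\triangle(-b)}$ is a locally finite signed measure whose negative part $(\mu_{\triangle(-b)})_-$ is absolutely continuous with respect to $dg$. Since $\mu_{\triangle b} = -\mu_{\triangle(-b)}$, this is equivalent to the \emph{positive} part $(\mu_{\triangle b})_+$ being absolutely continuous. Writing the Lebesgue decomposition $\mu_{\triangle b} = f\, dg + \mu_s$, the singular component $\mu_s$ therefore receives no positive contribution, so $\mu_s \leq 0$. By the last assertion of Proposition \ref{Prop:weakLaplaciandistance}, $d\mu_{\triangle b} = (\triangle b)\,dg$ on $W$, so $f = \triangle b$ a.e.\ on $W$; combined with the pointwise bound and the negligibility of $\RR \setminus W$, we get $f \leq (n-1)\sn'_\kappa(b)/\sn_\kappa(b)$ a.e.\ on $\RR \setminus \{p\}$, and adding $\mu_s \leq 0$ preserves this as a measure inequality on $\RR \setminus \{p\}$.

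The main obstacle, and the reason the argument is subtler than a pointwise computation on $\mathcal{G}^*_p$, is controlling the singular behavior of $\mu_{\triangle b}$ on the cut/conjugate locus and on the measure-zero set $\RR \setminus \mathcal{G}^*_p$, whose points need not admit any minimizing geodesic to $p$ inside $\RR$. A direct upper-barrier construction at such points is genuinely delicate in the singular mild setting, so instead I use the $-F$-version of Proposition \ref{Prop:weakLaplaciandistance} to obtain the absolute continuity of $(\mu_{\triangle b})_+$ for free; this converts the problem into an a.e.\ pointwise bound on the dense open set $W$ plus a harmless non-positive singular correction.
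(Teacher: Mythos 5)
Your proof is correct and takes essentially the same route as the paper: both rely on the absolute continuity of $(\mu_{\triangle b})_+$ (from the non-decreasing variant of Proposition~\ref{Prop:weakLaplaciandistance}), the negligibility of $(\RR \setminus \mathcal{G}^*_p) \cup Q_p$ supplied by mildness and Proposition~\ref{Prop:expincomplete}(c), and the classical pointwise Laplacian comparison on $\mathcal{G}^*_p$. You merely spell out the Lebesgue/Jordan decomposition bookkeeping that the paper leaves implicit.
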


\begin{proof}
By Proposition \ref{Prop:weakLaplaciandistance}, the positive part $(\mu_{\triangle b})_+$ is absolutely continuous with respect to $dg$.
As such, it suffices to show that (\ref{eq:dmunm1b}) holds away from sets of measure $0$.
As $(\RR \setminus \mathcal{G}^*_p) \cup Q_p$ has measure zero, it suffices to verify (\ref{eq:dmunm1b}) on $\mathcal{G}^*_p$, where $b$ is $C^2$.
This can be achieved using standard Riemannian geometry.
\end{proof}

\section{Integration by parts}
In this section we establish two results that will allow us to perform integration by parts on singular spaces.
We first need the following technical lemma.

\begin{Lemma} \label{Lem:finddomains}
Let $\XX = (X, d, \RR, g)$ be a singular space with singularities of codimension $\mathbf{p}_0 > 1$, let $x \in X$ be a point, $r > 0$ a radius and $1 < \mathbf{p} < \mathbf{p}_0$.
Then there is a sequence of positive numbers $s_i \to 0$ and an increasing sequence of subsets
\[ U_1 \subset U_2 \subset \ldots \subset B(x,r) \cap \RR \]
such that the following holds:
\begin{enumerate}[label=(\alph*)]
\item Each $U_i$ is closed in $B(x,r)$ and the boundary portion $\partial U_i \cap B(x,r)$ of $U_i$ that is contained in $B(x,r)$ is $C^2$.
\item We have
\[ \bigcup_{i = 1}^\infty \Int U_i = B(x,r) \cap \RR \]
\item We have
\[ \rrm > s_i \textQQqq{on}  U_i \]
\item There is a constant $C < \infty$, which is independent of $i$ but which may depend on $x$ and $r$, such that for all $i$
\[ \mathcal{H}^{n-1} (\partial U_i \cap B(x,r)  ) < C s_i^{\mathbf{p}-1} \]
Here $\mathcal{H}^{n-1}$ denotes the $n-1$ dimensional Hausdorff measure.
\end{enumerate}
\end{Lemma}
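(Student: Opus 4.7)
The plan is to construct $U_i$ as nested superlevel sets of a smoothed version of $\rrm$. First I would build a $C^2$-function $\phi$ on an open neighborhood of $\overline{B(x,r)}\cap\RR$ in $\RR$ satisfying
\begin{equation*}
\tfrac12 \rrm \leq \phi \leq 2\rrm \quad\text{and}\quad |\nabla\phi|\leq 2.
\end{equation*}
Since $\rrm$ is $1$-Lipschitz and strictly positive on the $C^4$-manifold $\RR$, such a $\phi$ is produced by standard mollification using a locally finite cover by geodesic charts and a subordinate partition of unity, where the mollification radius at a point $y\in\RR$ is chosen strictly smaller than $\rrm(y)$ (say $\rrm(y)/100$). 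This guarantees that the construction remains valid arbitrarily close to the singular set. Moreover, $\rrm(y')\to 0$ whenever $y'\in\RR$ approaches a singular point $y\in X\setminus\RR$, since for $y'$ within distance $\delta$ of $y$ the ball $B^\RR(y',2\delta)$ fails to be relatively compact in $\RR$; hence $\phi\to 0$ approaching the singular part as well.

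Next I would combine the coarea formula with the codimension-$\mathbf{p}_0$ hypothesis. The inclusion $\{\phi<s\}\cap\RR\subset\{\rrm<2s\}\cap\RR$ and the bound $|\nabla\phi|\leq 2$, together with Definition \ref{Def:codimensionsingularities} applied with the chosen exponent $\mathbf{p}<\mathbf{p}_0$, yield for all sufficiently small $s>0$
\begin{equation*}
\int_0^s \mathcal{H}^{n-1}\bigl(\{\phi=t\}\cap B(x,r)\bigr)\,dt \;\leq\; 2\bigl|\{\phi<s\}\cap B(x,r)\cap\RR\bigr| \;\leq\; C_1\, s^{\mathbf{p}},
\end{equation*}
with $C_1 = C_1(x,r,\mathbf{p})<\infty$.

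Now pick any decreasing sequence $\sigma_i\searrow 0$ with $\sigma_{i+1}\leq\sigma_i/2$. Averaging the inequality above over $[\sigma_i/2,\sigma_i]$ shows that a positive-measure set of $t\in[\sigma_i/2,\sigma_i]$ satisfies $\mathcal{H}^{n-1}(\{\phi=t\}\cap B(x,r))\leq 4C_1\sigma_i^{\mathbf{p}-1}$. Because Sard's theorem makes the regular values of $\phi$ a set of full measure, I can arrange such a $t_i$ that is simultaneously a regular value. Set $U_i:=\{\phi\geq t_i\}\cap B(x,r)$ and $s_i := t_i/4$. Then (c) holds because on $U_i$ we have $\rrm\geq\phi/2\geq t_i/2>s_i$; (d) is the bound chosen for $t_i$ (with $C=4C_1$); the nesting in (b) follows from $t_{i+1}\leq\sigma_{i+1}\leq\sigma_i/2\leq t_i$, and the exhaustion $\bigcup_i \Int U_i = B(x,r)\cap\RR$ from $t_i\to 0$ together with $\phi>0$ on $\RR$; for (a), $\partial U_i\cap B(x,r)=\{\phi=t_i\}\cap B(x,r)$ is a $C^2$ hypersurface by Sard, and any singular point $y\in B(x,r)\setminus\RR$ satisfies $\phi<t_i$ throughout a small neighborhood (since $\rrm\to 0$ near $y$), placing the entire singular portion of $B(x,r)$ in the open complement $B(x,r)\setminus U_i$.

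The main technical obstacle is the construction of $\phi$: the mollification radii must be chosen as a decreasing function of $\rrm$ itself in order to remain valid up to the singular boundary, while simultaneously preserving the $1$-Lipschitz estimate up to a fixed multiplicative factor. Once $\phi$ is in hand, the argument reduces to a standard coarea plus Sard extraction, with the codimension hypothesis providing exactly the $s^{\mathbf{p}}$ volume bound needed to generate the $s^{\mathbf{p}-1}$ Hausdorff bound on level sets.
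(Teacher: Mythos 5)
Your proposal follows essentially the same route as the paper's proof: smooth $\rrm$ to a function $\phi$ with $\tfrac12\rrm<\phi<2\rrm$ and $|\nabla\phi|<2$, apply the coarea formula together with the codimension-$\mathbf{p}_0$ estimate to bound $\int_0^s\mathcal{H}^{n-1}(\{\phi=t\}\cap B(x,r))\,dt$ by $Cs^{\mathbf{p}}$, then use averaging (plus Sard) to extract a sequence of regular values $t_i\to 0$ whose level sets have $\mathcal{H}^{n-1}$-measure $\lesssim t_i^{\mathbf{p}-1}$, and finally take $U_i$ to be the corresponding superlevel sets. The only differences are presentational: you spell out the mollification with a partition of unity and a pointwise-adapted mollification radius, explicitly invoke Sard's theorem (the paper leaves the existence of a $C^2$ level value implicit), and explicitly verify closedness of $U_i$ in $B(x,r)$ using the fact that $\rrm\to 0$ at the singular set; these are all correct and consistent with the paper's intent.
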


\begin{proof}
By definition the function $\rrm : \RR \to (0, \infty)$ is $1$-Lipschitz.
Let $f : \RR \to (0, \infty)$ be a smoothing of $\rrm |_{\RR}$ such that
\[ \tfrac12 \rrm < f < 2 \rrm \textQq{and} |\nabla f | < 2 \textQQqq{on} \RR. \]
By the coarea formula and Definition \ref{Def:codimensionsingularities} we have for any $0 < s < 1$
\begin{multline*}
 \int_{s}^{2s} \mathcal{H}^{n-1} (\{ f = s'\} \cap B(x,r) \cap \RR ) ds' = \int_{ \{ s < f < 2s \} \cap B(x,r) \cap \RR} |\nabla f | dg \\
 \leq 2 \big| \{ s < f < 2s \} \cap B(x,r) \cap \RR \big| \\
  \leq 2 \big| \{ \tfrac12 s < \rrm < 4s \} \cap B(x,r) \cap \RR \big| 
 \leq 2 \cdot 4^{\mathbf{p}} \mathbf{E}_{\mathbf{p},x,r} s^{\mathbf{p}}.
\end{multline*}
So we can choose $s' \in (s, 2s)$ such that $\{ f = s' \} \subset \RR$ is $C^2$ and such that
\[ | \{ f = s' \} \cap B(x,r) \cap \RR | \leq 4^{\mathbf{p}+1} \mathbf{E}_{\mathbf{p},x,r} s^{\mathbf{p}-1} < 4^{\mathbf{p}+1} \mathbf{E}_{\mathbf{p},x,r} (s')^{\mathbf{p}-1}. \]
It follows that we can find a constant $C < \infty$ and a sequence $s_i \to 0$ such that
\[ | \{ f = 2s_i \} \cap B(x,r) \cap \RR | < C s_i^{\mathbf{p}-1}. \]
So if we set $U_i :=  \{ f \geq 2 s_i \} \cap B(x,r) \cap \RR$, then assertions (a)--(d) hold.
\end{proof}

We can now state the first main result of this section.

\begin{Proposition} \label{Prop:integrationbyparts}
Let $\XX = (X,d, \RR, g)$ be a singular space with singularities of codimension $\mathbf{p}_0 > 2$ and $Z$ a continuously differentiable vector field on $\RR$ that vanishes on $\RR \setminus B(x,r)$ for some large $r > 0$.
Assume that there is a constant $C < \infty$ such that
\[ |Z| < C \rrm^{-1} \textQq{and} |{\DIV Z}| < C \rrm^{-2} \textQQqq{on} B(x,r) \cap \RR. \]
Then
\[ \int_{\RR} (\DIV Z) dg = 0. \]

Next, assume that $f : \RR \to \IR$ is a continuously differentiable function and $Z$ is a continuously differentiable vector field on $\RR$ such that $fZ$ vanishes on $\RR \setminus B(x,r)$ for some large $r > 0$.
Assume that $|f|$ is uniformly bounded and that there is some constant $C < \infty$ such that
\[ |Z|, |\nabla f| < C \rrm^{-1} \textQq{and} |{\DIV Z}| < C \rrm^{-2} \textQQqq{on} B(x,r) \cap \RR. \]
Then
\[ \int_{\RR} \langle \nabla f, Z \rangle dg = - \int_{\RR} (f \DIV Z) dg. \]
\end{Proposition}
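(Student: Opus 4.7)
The plan is to reduce both identities to the classical divergence theorem on the smooth (incomplete) Riemannian manifold $\RR$ via a family of $C^1$-cutoffs $\eta_i$ that approximate $1$ on $\RR$ and vanish near the singular set. Fix $\mathbf{p} \in (2, \mathbf{p}_0)$; the codimension hypothesis (Definition~\ref{Def:codimensionsingularities}) gives $|\{\rrm < u\} \cap B(x,r) \cap \RR| \leq \mathbf{E}_{\mathbf{p},x,r}\, u^{\mathbf{p}} r^{n-\mathbf{p}}$ for $0 < u < r$, and a layer-cake estimate then yields $\int_{B(x,r) \cap \RR} \rrm^{-2}\,dg < \infty$ precisely because $\mathbf{p} > 2$. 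In particular the hypothesis $|\DIV Z| < C\rrm^{-2}$ forces $\DIV Z \in L^1(\RR, dg)$.

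To construct the cutoffs, I would copy the smoothing step from the proof of Lemma~\ref{Lem:finddomains}: fix $\varrho \in C^1(\RR;(0,\infty))$ with $\tfrac12 \rrm < \varrho < 2 \rrm$ and $|\nabla \varrho| < 2$, pick once and for all $\psi \in C^1(\IR;[0,1])$ with $\psi \equiv 0$ on $(-\infty,1]$, $\psi \equiv 1$ on $[2,\infty)$, $|\psi'| \leq 2$, choose a sequence $s_i \searrow 0$, and set $\eta_i := \psi(\varrho/s_i)$. Then $\eta_i \to 1$ pointwise on $\RR$, $\supp \eta_i \subset \{\rrm > s_i/2\}$, $\nabla \eta_i$ is supported in $\{s_i/2 < \rrm \leq 4 s_i\}$, and $|\nabla \eta_i| \leq 4/s_i$. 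The support of $\eta_i Z$ is contained in $\overline{B(x,r)}^X \cap \{\rrm \geq s_i/2\}$, which is compact in $\RR$: closed balls in $X$ are compact by Hopf--Rinow--Cohn-Vossen applied to the complete locally compact length space $(X,d)$, and $\{\rrm \geq s_i/2\}$ is closed in $X$ because $\rrm$, extended by $0$ across $X \setminus \RR$, is $1$-Lipschitz. Consequently $\eta_i Z$ is a $C^1$ vector field of compact support on the open manifold $\RR$, and the classical divergence theorem gives $\int_\RR \DIV(\eta_i Z)\,dg = 0$, i.e.\
\[
 \int_\RR \eta_i (\DIV Z)\,dg \;=\; -\int_\RR \langle \nabla \eta_i, Z\rangle\,dg.
\]

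Passing $i \to \infty$, the left-hand side tends to $\int_\RR \DIV Z\,dg$ by dominated convergence (bound $|\DIV Z|\in L^1$, $\eta_i\to 1$ pointwise), while the right is estimated by $(4/s_i)\cdot(2C/s_i)\cdot \mathbf{E}_{\mathbf{p},x,r}(4s_i/r)^{\mathbf{p}} r^n \leq C' s_i^{\mathbf{p}-2}\to 0$, using $|Z| \leq 2C/s_i$ on $\supp \nabla \eta_i$ together with the codimension volume bound. This proves the first identity, and the second follows by applying it to $\tilde Z := fZ$: the hypotheses transfer at once ($|\tilde Z| \leq \|f\|_\infty |Z| < C'\rrm^{-1}$ and $|\DIV \tilde Z| \leq |\nabla f|\,|Z| + \|f\|_\infty |\DIV Z| < C''\rrm^{-2}$), and expanding $\DIV(fZ) = \langle \nabla f,Z\rangle + f\DIV Z$ yields the stated equality. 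The heart of the argument is the balance in the error estimate above: $|\nabla \eta_i|\,|Z|$ blows up of order $s_i^{-2}$ on a transition region of measure $O(s_i^{\mathbf{p}})$, so the boundary-type error is $O(s_i^{\mathbf{p}-2})$, which vanishes precisely when $\mathbf{p} > 2$. This is the only non-routine point and is exactly where the hypothesis $\mathbf{p}_0 > 2$ is used.
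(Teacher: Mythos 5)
Your argument is correct, and it reaches the same scaling $s_i^{\mathbf{p}-2}\to 0$ for the error term that the paper does, but the technical implementation is genuinely different. The paper invokes Lemma~\ref{Lem:finddomains}: via the coarea formula it selects level sets $\partial U_i$ of a smoothing of $\rrm$ with $\mathcal H^{n-1}(\partial U_i)\lesssim s_i^{\mathbf{p}-1}$, applies the divergence theorem on the domains $U_i$ to convert $\int_{U_i}\DIV Z$ into a boundary flux, and bounds that flux by $|Z|\lesssim s_i^{-1}$ times the boundary measure. You instead bypass the level-set selection entirely: you smooth $\rrm$ and build a $C^1$ cutoff $\eta_i$ directly, integrate by parts with $\eta_i$, and estimate the cutoff error $\int|\nabla\eta_i|\,|Z|$ using $|\nabla\eta_i|\lesssim s_i^{-1}$, $|Z|\lesssim s_i^{-1}$, and the \emph{volume} of the transition annulus $\{s_i/2<\rrm\leq 4s_i\}\lesssim s_i^{\mathbf p}$ straight from Definition~\ref{Def:codimensionsingularities}. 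That avoids the coarea step and the need to produce domains with $C^2$ boundary; the trade-off is that you must explicitly verify $\eta_i Z$ has compact support in $\RR$ (so that the divergence theorem applies), which you do correctly via Hopf--Rinow and the $1$-Lipschitz extension of $\rrm$ by zero. Both approaches rely on $\mathbf p>2$ at exactly the same place, and the reduction of the second identity to the first via $\tilde Z := fZ$ is verbatim what the paper does. In short, your cutoff-function variant is a slightly more elementary and self-contained route that substitutes a direct volume estimate for the paper's boundary-measure estimate; it is the kind of proof one would use if one did not want to (or could not) establish a good-level-set lemma, whereas the paper's approach is designed to reuse Lemma~\ref{Lem:finddomains}, which it also needs for Proposition~\ref{Prop:Uiphii}.
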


\begin{proof}
Let us first prove the first part.
Consider the numbers $s_i$ and the subsets $U_i$ from Lemma \ref{Lem:finddomains} applied to the ball $B(x,2r)$ for some $2 < \mathbf{p} < \mathbf{p}_0$.
Then, for some generic constant $C_* < \infty$, which may depend on $x$, $r$ and $\mathbf{p}$, we have
\begin{multline*}
 \int_{\{ 0 < \rrm < 1 \}} |{\DIV Z}| = \sum_{k = 1}^\infty \int_{\{ 2^{-k}  < \rrm < 2^{-k+1} \}} |{\DIV Z}| \\
 < \sum_{k=1}^\infty C_* C (2^{-k})^{-2} |\{ 2^{-k}  < \rrm < 2^{-k+1} \}| < \sum_{k=1}^\infty C_* C (2^{-k})^{-2} (2^{-k+1})^{\mathbf{p}} < \infty. 
\end{multline*}
So
\[ \int_{\RR} \DIV Z = \lim_{i \to \infty} \int_{U_i} \DIV Z = \lim_{i \to \infty} \int_{\partial U_i} Z_i d\nu = \lim_{i \to \infty} \int_{B(x,2r) \cap \partial U_i} Z_i d\nu. \]
It follows that 
\[ \bigg| \int_{\RR} \DIV Z \bigg| \leq \lim_{i \to \infty} C_* C s_i^{-1} \mathcal{H}^{n-1} (B(x,2r) \cap \partial U_i) \leq C_* \lim_{i \to \infty} s_i^{\mathbf{p} -1 - 1} = 0. \]
This establishes the first part.
The second part follows from the first part using the identity $\DIV (f Z) = f \DIV Z + \langle \nabla f, Z \rangle$.
\end{proof}

Sometimes we will have to carry out the limiting process in the proof of Proposition \ref{Prop:integrationbyparts} by hand in order to have better control on cutoff functions.
We will then use the second main result of this section.

\begin{Proposition} \label{Prop:Uiphii}
Let $\XX = (X, d, \RR, g)$ be a singular space with singularities of codimension $\mathbf{p}_0 > 1$ and let $x \in X$, $r > 0$ and $1 < \mathbf{p} < \mathbf{p}_0$.
Assume moreover that is $Y$-tame at scale $r$ for some $Y > 0$.
Then there is a sequence of positive numbers $s_i \to 0$, a sequence of subsets $U_i \subset B(x,r) \cap \RR$ and a sequence of maps $\phi_i : U_i \to [0,1]$ such that the following holds:
\begin{enumerate}[label=(\alph*)]
\item Each $U_i$ is closed in $B(x,r)$ and $\partial_s U_i :=  \partial U_i \cap B(x,r)$ is $C^2$.
\item $U_1 \subset U_2 \subset \ldots$.
\item $\bigcup_{i=1}^\infty \Int U_i = B(x,r) \cap \RR$.
\item $\rrm > s_i$ on $U_i$.
\item $\mathcal{H}^{n-1} (\partial_s U_i) < C s_i^{\mathbf{p}-1}$ for some constant $C < \infty$, which is independent of $i$, but which may depend on $x, r$.
Here $\mathcal{H}^{n-1}$ denotes the $n-1$ dimensional Hausdorff measure.
\item Each $\phi_i : U_i \to [0,1]$ is $C^2$ up to the boundary $\partial_s U_i$ and has compact support in $U_i$.
\item $\phi_i \equiv 1$ on $B(x,r/2) \cap U_i$ for all $i$.
\item $|\nabla \phi_i | < Y r^{-1}$ and $|\triangle \phi_i | < Y r^{-2}$ for all $i$.
\end{enumerate}
\end{Proposition}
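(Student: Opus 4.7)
The plan is to combine the subsets produced by Lemma~\ref{Lem:finddomains} with the cutoff functions produced by property~(4) of $Y$-tameness (Definition~\ref{Def:tameness}), so that a single index choice marries the two. First, I apply Lemma~\ref{Lem:finddomains} to the ball $B(x,r)$ with exponent $\mathbf{p}$ to obtain the sequence $s_i \to 0$ and the subsets $U_i \subset B(x,r) \cap \RR$ satisfying items (a)--(e) of the proposition directly. These will serve as the $U_i$ in the conclusion.

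Next, I invoke property~(4) of $Y$-tameness at the point $x$ and scale $r$. This yields an increasing exhausting family of open subsets $\widetilde U_1 \subset \widetilde U_2 \subset \cdots$ of $B(x,2r) \cap \RR$ and $C^2$ cutoff functions $\widetilde\phi_j : \widetilde U_j \to [0,1]$, each of which vanishes on a neighborhood of $\widetilde U_j \setminus B(x,r)$, equals $1$ on $\widetilde U_j \cap B(x,r/2)$, and satisfies the gradient and Laplacian bounds in (h). After extension by zero, each $\widetilde\phi_j$ may be viewed as a $C^2$ function on all of $\RR$ with support contained in $\overline{B(x,r)}$.

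The key step is to select, for each $i$, an index $j = j(i)$ such that the closure of $U_i$ in $X$ is contained in $\widetilde U_{j(i)}$. To justify this, I observe that $\overline{U_i}$ is a compact subset of $B(x,2r) \cap \RR$: local compactness of $(X,d)$ gives compactness of $\overline{B(x,r)}$, while the pointwise lower bound $\rrm > s_i$ on $U_i$, together with the $1$-Lipschitz behavior of $\rrm$ on $\RR$, prevents $\overline{U_i}$ from accumulating in the singular part. Since $\{\widetilde U_j\}$ is an open cover of $B(x,2r) \cap \RR$, compactness of $\overline{U_i}$ lets me pick $j(i)$ with $\overline{U_i} \subset \widetilde U_{j(i)}$; I then set $\phi_i := \widetilde\phi_{j(i)}\big|_{U_i}$. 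Properties (f)--(h) are now immediate from the corresponding properties of $\widetilde\phi_{j(i)}$: $\phi_i$ is the restriction of a $C^2$ function on an open neighborhood of $U_i$, hence $C^2$ up to $\partial_s U_i$; the gradient and Laplacian bounds, the value $1$ on $U_i \cap B(x,r/2)$, and the range $[0,1]$ all descend directly. Compact support of $\phi_i$ in $U_i$ follows by combining the compactness of $\overline{U_i}$ with the vanishing of $\widetilde\phi_{j(i)}$ near $\partial B(x,r)$, so that $\supp\phi_i$ is a closed subset of $\overline{U_i}$ that stays in a compact piece of $U_i$.

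The main obstacle I anticipate is the compactness claim $\overline{U_i} \subset \RR$. If the ambient singular space does not automatically enjoy a semi-continuity-type property for $\rrm$ at its singular points, one can bypass the difficulty by shrinking $U_i$ slightly---for instance, intersecting with $\{d(\cdot,x) \leq r_i\}$ for some $r_i \nearrow r$ chosen via Sard's theorem on the distance function to $x$, which preserves the $C^2$-boundary regularity in (a) and the Hausdorff estimate in (e) while making $\overline{U_i}$ manifestly compact inside $\RR$.
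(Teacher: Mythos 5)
Your argument is correct and follows essentially the same route as the paper: take the domains $U_i$ from Lemma~\ref{Lem:finddomains}, take the exhaustion $\widetilde U_j$ and cutoffs $\widetilde\phi_j$ from tameness property~(4), and use compactness of $\ov{U_i}$ (which lies in $\RR$ because $\rrm \geq s_i > 0$ there and $\rrm$ is $1$-Lipschitz) to nest $\ov{U_i} \subset \widetilde U_{j(i)}$ and restrict. The only thing you spell out beyond the paper's one-line proof is the justification of $\ov{U_i} \subset \RR$; the Sard-theorem fallback at the end is unnecessary, since the Lipschitz bound on $\rrm$ already makes the closure manifestly compact inside $\RR$.
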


\begin{proof}
Consider the subsets $U_i$ from Lemma \ref{Lem:finddomains} and the open subsets $U_i$ and $\phi_i : U_i \to [0,1]$ from Definition \ref{Def:tameness}(4), which we will denote by $U'_i$ and $\phi'_i$, in order to avoid confusion.
By compactness we can find for each $i = 1, 2, \ldots$ some $j_i = 1, 2, \ldots$ such that $\ov{U}_i \subset U'_{j_i}$.
If we now set $\phi_i := \phi'_{j_i} |_{U_{i}}$, then the proposition follows.
\end{proof}

\section{Cheng-Yau gradient estimate}
In this section we derive gradient estimates for bounded functions that are harmonic or satisfy the heat equation on the regular part $\RR$ of a singular space $\XX$ on which a lower Ricci curvature bound holds.
These gradient estimates will imply, in particular, that such functions are also continuous at the singular part of $\XX$.
In order to obtain our estimates, we will impose several conditions on the singular space $\XX$.
For example, we will assume that $\XX$ is $Y$-tame and that $\XX$ has singularities of codimension $\mathbf{p}_0 > 3$.

Note that the standard proof by Cheng and Yau (see \cite{Cheng-Yau-gradest}), via maximum principles, cannot be applied here as the maximum may be attained on the singular set $X \setminus \RR$ of $\XX$, where we have no information on the functions under investigation.
Even worse, we don't even assume a priori that we have continuity at the singular set.
We therefore carry out an integral version of the Cheng and Yau's proof.
This proof is particularly adapted to the singular setting.
In the non-singular setting, the integral version of Cheng and Yau's proof can be simplified significantly.

We remark that Corollary \ref{Cor:ChengYau} is similar to \cite[Proposition 2.14]{Chen-Wang-II}.

\begin{Lemma} \label{Lem:preCYbound}
There is a universal constant $C < \infty$ such that the following holds:

Let $\XX$ be a singular space with singularities of codimension $\mathbf{p}_0 > 2$, $x \in X$, $r > 0$ and consider a $C^2$ function $h : (B(x,r) \cap \RR) \times [-r^2, 0] \to \IR$.
Assume that $h$ that is uniformly bounded from above and below and that $\partial_t h = \triangle h$ on $(B(x,r) \cap \RR) \times [-r^2,0]$.
Then
\[ \int_{-r^2/2}^0 \int_{B(x,r/2) \cap \RR} |\nabla h |^2 (y,t) dg(y) dt \leq C |B(x,r) \cap \RR|  \Big( \osc_{(B(x,r) \cap \RR) \times [-r^2,0]} h \Big)^2 \]
\end{Lemma}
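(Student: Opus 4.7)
The plan is to carry out the classical Caccioppoli energy computation for the heat equation, adapted to the singular setting by exhausting $B(x,r)\cap\RR$ from inside by the smooth domains produced by Lemma~\ref{Lem:finddomains}. The chief new difficulty is to control the boundary flux produced by integration by parts on these subdomains.

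To set up, fix any $\mathbf{p}\in(2,\mathbf{p}_0)$ and apply Lemma~\ref{Lem:finddomains} (to the ball $B(x,r)$) to obtain a sequence $s_i\searrow 0$ and closed domains $U_1\subset U_2\subset\cdots\subset B(x,r)\cap\RR$ whose boundary portions $\partial_s U_i:=\partial U_i\cap B(x,r)$ are $C^2$, with $\rrm>s_i$ on $U_i$, $\bigcup_i\Int U_i=B(x,r)\cap\RR$, and $\mathcal{H}^{n-1}(\partial_s U_i)\le Cs_i^{\mathbf{p}-1}$. Choose a Lipschitz spatial cutoff $\eta(y):=\rho(d(y,x)/r)$ for a smooth profile $\rho$ with $\rho\equiv 1$ on $[0,1/2]$ and $\supp\rho\subset[0,3/4]$, so that $\eta\equiv 1$ on $B(x,r/2)$ and $|\nabla\eta|\le C/r$ almost everywhere on $\RR$; and a smooth time cutoff $\tau:[-r^2,0]\to[0,1]$ with $\tau(-r^2)=0$, $\tau\equiv 1$ on $[-r^2/2,0]$, $\supp\tau\subset[-3r^2/4,0]$, and $|\tau'|\le C/r^2$. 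Finally, set $m:=\tfrac12(\sup h+\inf h)$, so that $|h-m|\le\tfrac12\osc h$.

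Next, multiply the identity $\partial_t h=\triangle h$ by $\eta^2\tau(h-m)$ and integrate over $U_i\times[-r^2,0]$. Integration by parts in time on the left-hand side (discarding the non-positive boundary term at $t=0$ and using $\tau(-r^2)=0$) and in space on $U_i$ on the right-hand side, followed by a Cauchy--Schwarz absorption $2|\eta\tau(h-m)\nabla\eta\cdot\nabla h|\le\tfrac12\eta^2\tau|\nabla h|^2+2\tau(h-m)^2|\nabla\eta|^2$, yield
\[
\tfrac12\int_{-r^2}^{0}\!\int_{U_i}\eta^2\tau|\nabla h|^2\,dg\,dt\;\le\;C(\osc h)^2\,|B(x,r)\cap\RR|\;+\;|B_i|,
\]
where only $\nabla\eta$ and $\tau'$ enter (not $\triangle\eta$), and the boundary flux is
\[
B_i\;=\;\int_{-r^2}^{0}\!\int_{\partial_s U_i}\eta^2\tau(h-m)(\nabla h\cdot\nu)\,d\sigma\,dt.
\]
The part of $\partial U_i$ on $\partial B(x,r)$ contributes nothing because $\eta$ vanishes there.

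The main obstacle is to show $B_i\to 0$. For this I use that $\rrm>s_i$ on $U_i$ gives $|\Rm|<s_i^{-2}$ on $B(y,s_i)\subset\RR$ for every $y\in U_i$, so a standard interior Bernstein-type gradient estimate for bounded solutions of the heat equation on $B(y,s_i)\times[t-s_i^2,t]$ yields $|\nabla h|(y,t)\le C\osc(h)/s_i$ for all $y\in U_i$ and $t\in\supp\tau\subset[-3r^2/4,0]$ (the backward parabolic window lies inside $[-r^2,0]$ once $s_i\le r/2$). Combined with $|h-m|\le\tfrac12\osc h$ and $\mathcal{H}^{n-1}(\partial_s U_i)\le Cs_i^{\mathbf{p}-1}$, this gives
\[
|B_i|\;\le\;Cr^2(\osc h)^2\,s_i^{\mathbf{p}-2}\;\longrightarrow\;0,
\]
since $\mathbf{p}>2$. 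Passing $i\to\infty$, using monotone convergence together with $\bigcup_i\Int U_i=B(x,r)\cap\RR$ and the fact that $\eta^2\tau\equiv 1$ on $B(x,r/2)\times[-r^2/2,0]$, gives the desired inequality.
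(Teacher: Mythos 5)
Your proof is correct and follows essentially the same approach as the paper's: a Caccioppoli energy estimate in which integration by parts is justified by exhausting $B(x,r)\cap\RR$ via Lemma~\ref{Lem:finddomains} and the boundary flux is killed using the interior parabolic gradient estimate $|\nabla h|\lesssim \osc(h)/s_i$ together with $\mathcal{H}^{n-1}(\partial_s U_i)\lesssim s_i^{\mathbf{p}-1}$ and $\mathbf{p}>2$. The only difference is cosmetic: the paper packages the exhaustion and boundary-term estimate inside Proposition~\ref{Prop:integrationbyparts} (and normalizes $h$ at the very end), whereas you carry out the flux bound by hand and subtract the mean $m$ from the start.
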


\begin{proof}
By standard parabolic regularity, there is a constant $C_* < \infty$, which depends on the $L^\infty$-bound of $h$, such that for all $(y,t) \in (B(x, 3r/4) \cap \RR) \times [-3r^2/4,0]$ we have
\begin{equation} \label{eq:stdestimate12}
 |\nabla h|(y,t) < C_* \big( \min \{ \rrm (y), r \} \big)^{-1} \textQq{and} |\nabla^2 h|(y,t) < C_* \big( \min \{ \rrm (y), r \} \big)^{-2}. 
\end{equation}
Note that for the second bound we have used the fact that by Definition \ref{Def:curvradius}, of the curvature radius, $|{\nabla \Rm}| < \rrm^{-3} (y)$ on $B(y, \rrm (y))$.

Choose now $2 < \mathbf{p} < \mathbf{p}_0$.
Using the bounds in (\ref{eq:stdestimate12}), we get that for any $t \in [-3r^2/4, 0]$ and for some generic constant $C_{**} < \infty$, which may depend on $\XX$, $h$ and $r$,
\begin{align*}
 \int_{B(x,3r/4) \cap \RR} |\nabla h|^2 (\cdot, t) 
  &\leq C_*^2 r^{-2} | B(x,3r/4) \cap \RR |  \\
  &\qquad +  2 C^2_* \int_0^{r} s^{-3} |B(x,3r/4) \cap \{ \rrm < s \} \cap \RR | ds  \\
 &\leq C_{**} r^{n-2} + C_{**} \int_0^r s^{-3} (s/r)^{\mathbf{p}} r^n ds < C_{**} < \infty. 
\end{align*}
Similarly, we get that for any $t \in [-3r^2/4]$
\begin{equation} \label{eq:nab2hL1}
 \int_{B(x,3r/4) \cap \RR} |\nabla^2 h| (\cdot, t) < C_{**} < \infty.
\end{equation}

Fix for the moment $t \in [-3r^2/4]$.
By smoothing the composition of the radial distance function $d(x, \cdot)$ with a suitable cutoff function, we can construct a function $\phi : X \to [0,2]$ with support in $B(x,3r/4)$ such that $\phi \equiv 1$ on $B(x,r/2)$, $\phi |_{\RR}$ is $C^1$ and $|\nabla \phi | < 10 r^{-1}$ on $\RR$.
Using Proposition \ref{Prop:integrationbyparts} and writing $h = h(\cdot, t)$, we find that for some uniform, generic constant $C < \infty$
\begin{multline*} 
  \int_{B(x,r) \cap \RR} \bigg( \frac{d}{dt} h^2(\cdot, t) \bigg) \phi^2 = 2 \int_{B(x,r) \cap \RR} h(\cdot, t) \triangle h (\cdot, t) \phi^2 \\
 =  \int_{B(x,r) \cap \RR} \Big( - 2|\nabla h|^2 \phi - 4 h \nabla h \phi \nabla \phi \Big) 
 \leq   \int_{B(x,r) \cap \RR} \Big( - |\nabla h|^2 \phi^2 + 4 h^2 |\nabla \phi|^2  \Big) \\
 \leq -  \int_{B(x,r/2) \cap \RR} |\nabla h|^2 (\cdot, t) + C r^{-2} |B(x,r) \cap \RR| \Big( \sup_{B(x,r) \cap \RR} h^2(\cdot, t) \Big).
\end{multline*}
Let now $\eta : [-r^2, 0] \to [0,1]$ be a cutoff function such that $\eta \equiv 0$ on $[-r^2, -3r^2/4]$, $\eta \equiv 1$ on $[-r^2/2, 0]$ and $|\eta' | < 10 r^{-2}$.
Then, by Fubini's Theorem,
\begin{align*}
- \int_{-r^2}^0 \eta'(t) & \int_{B(x,r) \cap \RR} h^2(y,t) \phi^2(y) dg(y) dt  
+ \int_{B(x,r) \cap \RR} h^2 (y, 0) \phi^2 (y, 0) dg(y) \displaybreak[2]  \\
&=  \int_{B(x,r) \cap \RR}  \int_{-r^2}^0 \eta(t)  \bigg( \frac{d}{dt}h^2(y,t) \bigg) \phi^2(y) dt  dg(y) \displaybreak[2] \\
 &\leq -  \int_{-r^2/2}^0 \int_{B(x,r/2) \cap \RR} |\nabla h|^2 \\
 &\qquad\qquad\qquad + r^2 \cdot C r^{-2} |B(x,r) \cap \RR| \Big( \sup_{(B(x,r) \cap \RR) \times [-r^2, 0]} h^2 \Big). 
\end{align*}
It follows that
\begin{multline*}
 \int_{-r^2/2}^0 \int_{B(x,r/2) \cap \RR} |\nabla h|^2 \leq C r^{-2} \int_{-r^2}^0 \int_{B(x,r) \cap \RR} h^2  \\ + C |B(x,r) \cap \RR| \Big( \sup_{(B(x,r) \cap \RR) \times [-r^2, 0]} h^2 \Big) 
  \leq C |B(x,r) \cap \RR| \Big( \sup_{(B(x,r) \cap \RR) \times [-r^2, 0]} h^2 \Big).
\end{multline*}
The desired inequality now follows by replacing $h$ by $h - \frac12 (\sup h + \inf h)$.
\end{proof}

We can now improve this $L^2$-bound on $\nabla h$ and derive a pointwise bound.

\begin{Proposition} \label{Prop:ChengYau-heatequation}
For any $\mathbf{p}_0 > 3$ and $Y < \infty$ there is a $C = C (\mathbf{p}_0, Y) < \infty$ such that the following holds:

Let $\XX$ be a singular space with singularities of codimension $\mathbf{p}_0$, $x \in X$, $r > 0$ and assume that $\Ric \geq - (n-1) r^{-2}$ on $B(x,r) \cap \RR$.
Assume moreover that $\XX$ is $Y$-tame at scale $r$.
Consider a $C^3$ function $h : (B(x,r) \cap \RR) \times [-r^2, 0] \to \IR$ that is uniformly bounded from above and below and that satisfies $\partial_t h = \triangle h$ on $(B(x,r) \cap \RR) \times [-r^2,0]$.
Then
\begin{multline} \label{eq:CYinequparabolic}
 |\nabla h |^2 \leq C r^{-n-2} \int_{-r^2/2}^0 \int_{B(x, r / 2) \cap \RR} |\nabla h |^2 (y, t) dg(y) dt \\
 \leq C^2 r^{-2} \Big( \osc_{(B(x,r) \cap \RR) \times [-r^2, 0]} h \Big)^2 \\
  \textQQqq{on} \big( B(x, r/2) \cap \RR \big) \times [-r^2/2,0]. 
\end{multline}
In particular, $h$ is locally Lipschitz on $B(x,r) \times (-r^2,0]$.
\end{Proposition}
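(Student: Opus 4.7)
The second inequality of (\ref{eq:CYinequparabolic}) is immediate from Lemma \ref{Lem:preCYbound} applied on a slightly larger scale, so the task reduces to proving the first inequality, which is a parabolic $L^1$--$L^\infty$ mean value estimate for $u:=|\nabla h|^2$. Wherever $h$ is $C^3$ on $\RR$, the Bochner formula combined with the hypothesis $\Ric \geq -(n-1)r^{-2}$ yields
\[ (\partial_t - \triangle) u = -2|\nabla^2 h|^2 - 2\Ric(\nabla h,\nabla h) \leq 2(n-1)r^{-2}\, u, \]
so that $v:=e^{-2(n-1)(t+r^2)/r^2}\,u$ is a classical nonnegative subsolution of the heat equation on $(B(x,r)\cap\RR)\times[-r^2,0]$. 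I also retain the crude pointwise bound $u \leq C/\rrm^{\,2}$ on any fixed interior subregion, coming from standard parabolic regularity on the smooth part of $\XX$ together with the bounded oscillation of $h$.

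The main analytic ingredient I would then establish is a scale-invariant local Sobolev inequality on $B(x,r)\cap\RR$,
\[ \Bigl(\int_\RR |f|^{2\nu}\,dg\Bigr)^{1/\nu} \leq C(Y)\int_\RR\bigl(|\nabla f|^2 + r^{-2} f^2\bigr)\,dg, \qquad \nu := \tfrac{n}{n-2}, \]
for all $C^1$ functions $f$ compactly supported in $B(x,r)\cap\RR$. This follows from the Gaussian upper bound on the heat kernel $K$ supplied by tameness property (5e) together with the volume bounds of property (1), via the classical ultracontractivity-to-Sobolev equivalence.

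With this Sobolev inequality in hand, I would run the usual Moser iteration for the parabolic subsolution $v$ on a sequence of shrinking concentric cylinders $Q_k := (B(y_0,r_k)\cap\RR)\times[t_0-\tau_k,t_0]$ with $r_k\searrow r/4$, $\tau_k\searrow r^2/16$, centred at a fixed $(y_0,t_0)\in(B(x,r/2)\cap\RR)\times[-r^2/2,0]$. At each stage I would multiply the differential inequality by $v^{p-1}\eta_k^2$, where $\eta_k$ is a space-time cutoff built from the spatial cutoffs of Proposition \ref{Prop:Uiphii} satisfying $|\nabla\eta_k|\leq C 2^k/r$ and $|\triangle\eta_k|\leq C 2^{2k}/r^2$, and then integrate by parts. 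To keep the integration by parts classical, I would first restrict to the smooth subdomain $U^{(k)}_j\subset\{\rrm>s^{(k)}_j\}$ provided by Proposition \ref{Prop:Uiphii}, on which $v$ is bounded and on whose boundary $\eta_k$ vanishes in a neighbourhood, and then send $j\to\infty$, using $\mathcal{H}^{n-1}(\partial U^{(k)}_j\cap B(y_0,r_k))\leq Cs_j^{\mathbf{p}-1}$ for some $1<\mathbf{p}<\mathbf{p}_0$ and the crude bound $v\leq C/\rrm^{\,2}$ to see that all boundary contributions vanish. The result is the standard Caccioppoli--Sobolev step
\[ \Bigl(\iint_{Q_{k+1}} v^{p\nu}\Bigr)^{1/\nu} \leq C(p)\,(2^k/r)^2 \iint_{Q_k} v^p, \]
which, iterating with $p_k := \nu^k$ in the classical manner, gives $\sup_{Q_\infty} v \leq C(\mathbf{p}_0,Y) r^{-n-2}\iint_{Q_0} v$. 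This is precisely the first inequality of (\ref{eq:CYinequparabolic}) at $(y_0,t_0)$, and local Lipschitzness then follows from the spatial gradient bound together with $\partial_t h=\triangle h$ and parabolic Schauder estimates on $\RR$.

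The main obstacle is justifying the iteration rigorously on a singular space. A priori, integrals of $v^p$ for large $p$ need not be finite on $B(y_0,r_k)\cap\RR$, since $v$ may blow up like $\rrm^{-2}$ near the singular set; the hypothesis $\mathbf{p}_0>3$ only secures integrability of modest powers. The route sketched above circumvents this by restricting to the smooth approximating subdomains $U^{(k)}_j$ at each iteration step, where $\rrm\geq s^{(k)}_j$ makes every integrand bounded, and then using the codimension-$\mathbf{p}_0$ estimate on $\mathcal{H}^{n-1}(\partial U^{(k)}_j)$ to discard boundary terms in the limit. The delicate bookkeeping is choosing the approximations $U^{(k)}_j$ compatibly across iteration levels and verifying that the $p$-dependence of the constants at each step still allows the Moser iteration to close into the uniform bound $C(\mathbf{p}_0,Y)$.
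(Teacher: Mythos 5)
Your proposal has a genuine gap, and it is exactly the point you flag at the end as ``delicate bookkeeping'': the Moser iteration cannot be closed on a singular space of codimension $\mathbf{p}_0 > 3$. At each Caccioppoli step you test the subsolution inequality against $v^{p-1}\eta_k^2$ and integrate by parts on the smooth exhaustion $U_j$. The interior boundary $\partial_s U_j$ sits at $\rrm \approx s_j$, where $v \sim \rrm^{-2}$ and $|\nabla v| \sim \rrm^{-3}$, so the boundary flux term scales like $\mathcal{H}^{n-1}(\partial_s U_j)\cdot s_j^{-2(p-1)}\cdot s_j^{-3} \lesssim s_j^{\mathbf{p}-2-2p}$ for any fixed $1<\mathbf{p}<\mathbf{p}_0$. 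This tends to $0$ only when $p < (\mathbf{p}-2)/2$, and the Moser exponents $p_k = \nu^k \to \infty$ leave this range after finitely many steps; past that point the boundary contribution does not vanish and the $L^{p\nu}$--$L^p$ inequality is never obtained. (Your remark that $\eta_k$ ``vanishes in a neighbourhood'' of $\partial_s U_j$ does not rescue this: the cutoffs of Proposition \ref{Prop:Uiphii} need not vanish on $\partial_s U_j$ --- the paper's own proofs explicitly carry boundary integrals there --- and if you instead force $\eta_k$ to vanish near $\{\rrm\approx s_j\}$ you cannot simultaneously have $\eta_k\equiv 1$ on the inner cylinder, and the additional interior error $\int |\nabla\eta_k| v^p$ over $\{s_j<\rrm<8s_j\}$ is again of order $s_j^{\mathbf{p}-1-2p}$, failing for the same $p$.) In short, $\mathbf{p}_0 > 3$ buys integrability of only modest powers of $v$, and Moser iteration needs all of them.

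The paper takes a route designed precisely to sidestep this. Instead of iterating, it runs a single heat-kernel representation for the \emph{sub-quadratic} quantity $f := (|\nabla h|^2+\beta)^{\frac12+\alpha}$, with $\alpha>0$ chosen so small that $3+2\alpha<\mathbf{p}<\mathbf{p}_0$. Then $f \lesssim \rrm^{-1-2\alpha}$ and $|\nabla f| \lesssim \rrm^{-2-2\alpha}$, so the boundary terms on $\partial_s U_i$ scale like $s_i^{\mathbf{p}-3-2\alpha}\to 0$: the sub-quadratic exponent is exactly what makes them integrable in the limit. Integrating $\frac{d}{dt}\int_{U_i}K(x,\cdot,t)f(\cdot,-t)\phi_i^2$ in $t$, letting $i\to\infty$, then $\beta\to 0$, and boosting the resulting $L^{1+2\alpha}$ control on $|\nabla h|$ to $L^2$ by H\"older gives the claimed estimate in one stroke. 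Combined with Lemma \ref{Lem:preCYbound} for the oscillation bound (which you correctly identified), this is the content of the proof. Your Sobolev-via-ultracontractivity observation is fine but ends up unnecessary, and the essential missing idea in your sketch is the replacement of iteration by a single weighted heat-kernel estimate at a carefully tuned exponent below $1$.
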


In the case in which $h$ is constant in time $t$, Proposition \ref{Prop:ChengYau-heatequation} implies the gradient bound of Cheng and Yau for harmonic functions.

\begin{Corollary} \label{Cor:ChengYau}
For any $\mathbf{p}_0 > 3$ and $Y < \infty$ there is a $C = C (\mathbf{p}_0, Y) < \infty$ such that the following holds:

Let $\XX$ be a singular space with singularities of codimension $\mathbf{p}_0$, $x \in X$, $r > 0$ and assume that $\Ric \geq - (n-1) r^{-2}$ on $B(x,r) \cap \RR$.
Assume moreover that $\XX$ is $Y$-tame at scale $r$.
Consider a $C^3$ function $h : B(x,r) \cap \RR \to \IR$ that is uniformly bounded from above and below and that satisfies $\triangle h = 0$ on $B(x,r) \cap \RR$.
Then
\begin{multline*}
 |\nabla h |^2 \leq C r^{-n} \int_{B(x,r/2) \cap \RR} |\nabla h |^2 (y,t) dg(y) dt \\
  \leq  C^2 r^{-2} \Big( \osc_{B(x,r) \cap \RR} h \Big)^2 \textQQqq{on} B(x, r/2) \cap \RR. 
\end{multline*}
\end{Corollary}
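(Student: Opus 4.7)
The plan is to deduce this elliptic estimate directly from the parabolic estimate of Proposition~\ref{Prop:ChengYau-heatequation} by extending $h$ trivially in the time variable. Define $\td{h} : (B(x,r)\cap\RR)\times[-r^2,0] \to \IR$ by $\td{h}(y,t) := h(y)$. Since $h$ is $C^3$ and bounded, so is $\td{h}$; and since $\partial_t \td{h} \equiv 0$ while $\triangle \td{h} = \triangle h = 0$, the function $\td{h}$ satisfies the heat equation on its domain. The remaining hypotheses of Proposition~\ref{Prop:ChengYau-heatequation}, namely singularities of codimension $\mathbf{p}_0 > 3$, $Y$-tameness at scale $r$, and $\Ric \geq -(n-1)r^{-2}$ on $B(x,r)\cap\RR$, are exactly the hypotheses of the corollary.

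Applying inequality (\ref{eq:CYinequparabolic}) to $\td h$ on $(B(x,r/2)\cap\RR)\times[-r^2/2,0]$ and noting that $|\nabla \td h|(y,t) = |\nabla h|(y)$ is independent of $t$, the time integral produces an overall factor of $r^2/2$; likewise the space-time oscillation of $\td h$ equals $\osc_{B(x,r)\cap\RR} h$. This yields
\[
 |\nabla h|^2(y) \;\leq\; \tfrac{C_0}{2}\, r^{-n} \int_{B(x,r/2)\cap\RR} |\nabla h|^2 \, dg \;\leq\; C_0^2\, r^{-2} \bigl( \osc_{B(x,r)\cap\RR} h \bigr)^2
\]
for every $y \in B(x,r/2)\cap\RR$, where $C_0 = C_0(\mathbf{p}_0, Y)$ is the constant furnished by the proposition. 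Replacing $C_0$ by the slightly larger constant $C := 2 C_0$ gives the double inequality in the form stated in the corollary.

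There is no serious obstacle here: all of the substantive analytic work, namely the $L^2$-gradient bound of Lemma~\ref{Lem:preCYbound} and its pointwise upgrade via the integral version of the Cheng--Yau argument, is already packaged into Proposition~\ref{Prop:ChengYau-heatequation}. The corollary is a formal specialization of that result to the time-constant case, exactly as the remark preceding the corollary anticipates. The only bookkeeping is the verification that $\td h$ satisfies the parabolic hypotheses, which is immediate, and the trivial rescaling of constants to pass from the space-time $L^2$-norm to the spatial $L^2$-norm.
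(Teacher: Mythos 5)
Your proposal is correct and matches the paper's intended derivation exactly: the remark preceding Corollary~\ref{Cor:ChengYau} ("In the case in which $h$ is constant in time $t$, Proposition~\ref{Prop:ChengYau-heatequation} implies the gradient bound of Cheng and Yau for harmonic functions") is precisely the time-independent specialization you carry out, and your bookkeeping of the factor $r^2/2$ from the time integral and the constant replacement $C := 2C_0$ is accurate.
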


We will hence use the following terminology for the remainder of this paper:

\begin{Definition}[harmonic function on singular Ricci flat space] \label{Def:XXharmonic}
Let $\XX$ be a singular space and $U \subset X$ an open subset.
A function $h : U \to \IR$ is called \emph{harmonic} if it is continuous on $U$ and $C^3$ on $U \cap \RR$ and satisfies $\triangle h = 0$ on $U \cap \RR$.
\end{Definition}

A direct consequence of Corollary \ref{Cor:ChengYau} is

\begin{Corollary} \label{Cor:harmonic}
Let $\XX$ be a singular space with singularities of codimension $\mathbf{p}_0 > 3$ that is $Y$-tame at scale $r$ for some $Y, r > 0$ and let $U \subset X$ be an open subset.
Assume that $\Ric \geq - C$ on $U$ for some $C < \infty$.
Consider a locally bounded function $h : U \to \IR$ such that $h$ is $C^3$ on $U \cap \RR$ and satisfies $\triangle h = 0$ there.
Then $h$ is harmonic in the sense of Definition \ref{Def:XXharmonic} and even locally Lipschitz.
\end{Corollary}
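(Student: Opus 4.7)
The plan is to apply Corollary \ref{Cor:ChengYau} locally, convert the pointwise gradient estimate it provides into a Lipschitz bound with respect to $d$ by integrating along regular connecting curves, and then extend by density across the singular set.

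Fix $p \in U$. First I would choose $\rho > 0$ small enough that $\ov{B(p,2\rho)} \subset U$, that $2\rho \leq r$ (so $\XX$ is still $Y$-tame at scale $2\rho$, the tameness properties being clearly preserved under reduction of scale), that $(n-1)(2\rho)^{-2} \geq C$ (so the Ricci hypothesis of Corollary \ref{Cor:ChengYau} is met on $B(p,2\rho) \cap \RR$), and that $h$ is bounded on $B(p,2\rho)$, which is possible by local boundedness of $h$. Corollary \ref{Cor:ChengYau} then furnishes a constant $L<\infty$, depending on $\rho$, $\osc_{B(p,2\rho) \cap \RR} h$, $\mathbf{p}_0$ and $Y$, such that $|\nabla h| \leq L$ on $B(p,\rho) \cap \RR$.

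Next I would upgrade this to a $d$-Lipschitz bound on $B(p,\rho/4) \cap \RR$. For any two points $y_1, y_2 \in B(p, \rho/4) \cap \RR$, the fact that $d$ is the completion of the length metric of $(\RR,g)$ (Definition \ref{Def:singlimitspace}(4)) allows us to find, for each $\eps \in (0, \rho/4)$, a $C^1$-curve $\gamma : [0,l] \to \RR$ from $y_1$ to $y_2$ with $l < d(y_1,y_2) + \eps < \rho$. Every point of $\gamma$ has $d$-distance to $p$ at most $\rho/4 + l < \rho$, so $\gamma$ remains inside $B(p,\rho)$ where the gradient estimate is valid. Integration then yields $|h(y_1) - h(y_2)| \leq L \cdot l$, and letting $\eps \to 0$ gives $|h(y_1) - h(y_2)| \leq L \, d(y_1,y_2)$.

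Finally, density of $\RR$ in $X$ combined with the Lipschitz bound implies that $h|_{B(p,\rho/4) \cap \RR}$ extends uniquely to an $L$-Lipschitz function on $B(p,\rho/4)$; identifying $h$ with this continuous extension (which only modifies $h$ on the measure-zero singular set) exhibits $h$ as locally Lipschitz on $U$ and continuous there, hence harmonic in the sense of Definition \ref{Def:XXharmonic}. The only delicate point is the containment argument in the previous paragraph --- ensuring that near-minimizing regular curves do not escape the ball on which the gradient bound holds --- and this amounts to routine radius bookkeeping (the gap between $\rho/4$ and $\rho$), so I do not expect any real obstacle.
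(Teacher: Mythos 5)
Your proof is correct and matches the paper's intended route: the paper states this corollary as a direct consequence of Corollary \ref{Cor:ChengYau}, and the steps you supply (choosing $\rho$ to satisfy the hypotheses locally, integrating the gradient bound along near-minimizing $C^1$ curves in $\RR$ using Definition \ref{Def:singlimitspace}(4), and extending by density across the measure-zero singular set) are exactly the details being left to the reader. The radius bookkeeping you flag as the only delicate point is handled correctly.
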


\begin{proof}[Proof of Proposition \ref{Prop:ChengYau-heatequation}]
Note that by restricting our attention to a smaller parabolic neighborhood, it suffices to consider the case $x \in \RR$ and to show that (\ref{eq:CYinequparabolic}) holds at $(x,0)$.
For the remainder of this proof fix a constant $3 < \mathbf{p} < \mathbf{p}_0$, a constant $0 < \alpha < 1/10$ such that $3 + 2 \alpha < \mathbf{p}$ and a constant $\beta > 0$ and set
\[ f := ( |\nabla h|^2 + \beta)^{\frac12 + \alpha}. \]
For the remainder of the proof we will fix $\alpha, \mathbf{p}$ and $\beta$, except for the very end, where we let $\beta \to 0$.
In the following, generic constants are allowed to depend on $\alpha$ and $\mathbf{p}$, but not on $\beta$.

Using Bochner's identity and the bound $\Ric \geq - (n-1)r^{-1}$ on $\RR$, we find
\[ \partial_t f = (1 + 2\alpha) \frac{\langle  \nabla \partial_t h, \nabla h \rangle}{(|\nabla h|^2 + \beta )^{\frac12 - \alpha}} \leq (1 + 2\alpha) \frac{\langle \triangle \nabla h, \nabla h \rangle + (n-1) r^{-2} |\nabla h|^2}{(|\nabla h|^2 + \beta )^{\frac12 - \alpha}}. \]
Also,
\[ |\nabla f| = \big( \tfrac12 + \alpha \big) \frac{|\nabla |\nabla h|^2|}{( |\nabla h|^2 + \beta)^{\frac12 - \alpha}} = ( 1 + 2\alpha ) \frac{|\nabla h | |\nabla^2 h|}{( |\nabla h|^2 + \beta)^{\frac12 - \alpha}} \]
and
\begin{align*}
 \triangle f - \partial_t f &=  \big( \tfrac12 + \alpha \big) \frac{\triangle |\nabla h|^2 -  2 \langle \triangle \nabla h, \nabla h \rangle - 2 (n-1) r^{-2} |\nabla h|^2}{( |\nabla h|^2 + \beta)^{\frac12 - \alpha}} \\
 &\qquad -  \big( \tfrac14 - \alpha^2 \big) \frac{|\nabla |\nabla h|^2|^2}{( |\nabla h|^2 + \beta)^{\frac32 - \alpha}} \\
 &\geq ( 1 + 2 \alpha ) \frac{ |\nabla^2 h|^2 - (n-1) r^{-2} |\nabla h|^2}{( |\nabla h|^2 + \beta)^{\frac12 - \alpha}} -  ( 1 - 4\alpha^2 ) \frac{|\nabla h |^2  |\nabla^2 h|^2}{( |\nabla h|^2 + \beta)^{\frac32 - \alpha}} \\
 &\geq ( 1 + 2 \alpha ) \frac{ |\nabla^2 h|^2 - (n-1) r^{-2} |\nabla h|^2}{( |\nabla h|^2 + \beta)^{\frac12 - \alpha}} -  ( 1 - 4\alpha^2 ) \frac{  |\nabla^2 h|^2}{( |\nabla h|^2 + \beta)^{\frac12 - \alpha}} \\
 &\geq (2 \alpha + 4 \alpha^2 )  \frac{ |\nabla^2 h|^2}{( |\nabla h|^2 + \beta)^{\frac12 - \alpha}} - (1+2\alpha) (n-1) r^{-2} f .
\end{align*}

Consider now the sequence $s_i \to 0$, the subsets $U_i$ and the functions $\phi_i$ from Proposition \ref{Prop:Uiphii} applied to the ball $B(x,r/2)$.
We can then compute that for any $i$ and $0 < t \leq r^2/2$
\begin{align}
 \frac{d}{dt} \int_{U_i} &K (x, \cdot, t) f(\cdot, -t) \phi^2_i = \int_{U_i} \Big( \triangle K(x, \cdot,t) f(\cdot, -t) \phi_i^2 - K (x, \cdot, t) \partial_t f(\cdot, -t) \phi_i^2 \Big) \notag \displaybreak[1] \\
 &= \int_{\partial_s U_i} \nabla K (x, \cdot, t) f(\cdot, -t) \phi_i^2 d\nu  + \int_{U_i} \Big(- \nabla K (x,\cdot, t) \nabla f(\cdot, -t) \phi_i^2 \notag \\
 &\qquad\qquad\qquad\qquad\qquad - \nabla K(x,\cdot, t) f(\cdot, -t) \nabla \phi_i^2 - K(x, \cdot, t) \partial_t f(\cdot, -t) \phi_i^2 \Big) \notag \displaybreak[1] \\
  &= \int_{\partial_s U_i} \Big( \nabla K (x, \cdot, t) f(\cdot, -t) \phi_i^2 - K(x,\cdot,t) \nabla f (\cdot, -t) \phi_i^2 \notag \\
  &\qquad\qquad\qquad\qquad\qquad\qquad\qquad\qquad\qquad\qquad - K(x,\cdot,t) f(\cdot, -t) \nabla \phi_i^2 \Big) d\nu \notag \\
   &\qquad + \int_{U_i} \Big( K (x,\cdot, t) \big( \triangle f(\cdot, -t)- \partial_t f(\cdot, -t) \big) \phi_i^2  \notag \\
   &\qquad\qquad\qquad +  K (x,\cdot, t)  f(\cdot, -t) \triangle \phi_i^2 + 4  K (x,\cdot, t) \nabla  f(\cdot, -t) \phi_i \nabla \phi_i  \Big). \label{eq:firstparttimeder}
 \end{align}
Next, set $\rrrm := \min \{ \rrm , r \}$ and $H := \Vert h \Vert^2_{L^\infty (B(x,r) \times [-r^2,0])}$.
By local parabolic regularity we can find some generic constant $C_* < \infty$ such that for any $(y,t) \in (B(x,r/2) \cap \RR) \times [-r^2/2,0]$
\begin{equation} \label{eq:fless}
 f(y,t) \leq C_* \big( \rrrm^{-2}(y) H^2  + \beta \big)^{\frac12 + \alpha} 
\end{equation}
 and
\begin{multline}
 |\nabla f |(y,t) \leq (1 + 2\alpha ) \frac{|\nabla h| |\nabla^2 h|}{( |\nabla h|^2 + \beta)^{\frac12 - \alpha}} (y,t) \\
\leq (1 + 2\alpha ) |\nabla h|^{2\alpha}(y,t) |\nabla^2 h| (y,t)
 \leq C_* \rrrm^{-2 - 2\alpha} (y) H^{1+\alpha}. \label{eq:derfless}
\end{multline}
By local parabolic regularity applied to $K(x, \cdot, \cdot)$ restricted to the (regular) parabolic neighborhood
\[ P(y,t) := B(y, \min\{ \rrm(y), (t/2)^{1/2}, r/4 \}) \times [t - \min \{ \rrm^2 (y), t/2, r^2/4 \}, t], \]
we obtain that for any $y \in B(x,r/2) \cap \RR$ and $0 < t \leq r^2$
\begin{multline*}
 |\nabla_y K (x, y, t) | \leq C_* \big( \rrm^{-1} (y) + t^{-1/2} ) \Vert K(x, \cdot, \cdot) \Vert_{L^\infty( P(y,t))} \\
 \leq C_* \big( \min \{ \rrm (y), t^{1/2} \} \big)^{-1} \sup_{(z,s) \in P(y,t)} \frac{Y}{s^{n/2}} \exp \bigg( {- \frac{d^2(x,z)}{Ys} }\bigg).
 \end{multline*}
Note that there is a constant $b > 0$ such that for large $i$ we have $B(x,b) \subset U_i$.
So we can find a constant $C'_{b,r} < \infty$, which may depend on $b$ and $r$, but not on $i$ or $t$ such that for all $y \in \partial U_i$ and $0 < t \leq r^2$ we have
\begin{equation} \label{eq:pderpless}
 K(x,y,t) \leq C'_{b,r} \textQQqq{and} |\nabla_y K(x,y,t) | \leq C'_{b,r} \rrrm^{-1}(y) \leq C'_{b,r} (s_i^{-1} + r^{-1}).
\end{equation}
So using (\ref{eq:firstparttimeder}), (\ref{eq:fless}), (\ref{eq:derfless}), (\ref{eq:pderpless}) and H\"older's inequality, we get that for large $i$ and for some generic $C_{**} < \infty$, which may depend on $\XX$ or $r$, but not on $i$ or $t$
\begin{align*}
\frac{d}{dt} \int_{U_i} & K (x, \cdot, t) f(\cdot, -t) \phi_i^2 \\
   &\geq - C_{**} |\partial_s U_i| \Big( C'_{b,r} (s_i^{-1} + r^{-1} ) \big( s_i^{-2} H^2 + \beta \big)^{\frac12 + \alpha}  \\
   &\qquad\qquad\qquad\qquad + C'_{b,r} s_i^{-2- 2\alpha} H^{1+2\alpha} + C'_{b,r} \big( s_i^{-2} H^2 + \beta \big)^{\frac12 + \alpha} \Big) \\
  &\qquad +  \int_{U_i} \Big( (2 \alpha + 4 \alpha^2)   K(x,\cdot, t)   \frac{ |\nabla^2 h|^2}{( |\nabla h|^2 + 1)^{\frac12 - \alpha}} (\cdot, -t) \phi_i^2 \\
  &\qquad\qquad\qquad - 2(n-1) r^{-2} K(x, \cdot, t) f( \cdot, - t) \phi_i^2 \\
  &\qquad\qquad\qquad +  K(x,\cdot,t) f(\cdot, -t) |\triangle \phi_i^2| \\
   &\qquad\qquad\qquad - 4 (1+ 2\alpha)   K(x,\cdot, t) \frac{ |\nabla h| |\nabla^2 h|}{(|\nabla h|^2 + \beta)^{\frac12 - \alpha}} (\cdot, -t) |\nabla \phi_i| \phi_i  \Big)  \\
   & \geq -  C_{**}  s_i^{\mathbf{p}-1} \Big(  s_i^{-1} \cdot s_i^{-1-2\alpha} H^{1+2\alpha}   + s_i^{-2- 2\alpha} H^{1+2\alpha}  \Big)  \\
   &\quad - 2 (n-1) r^{-2} \int_{U_i}   K(x, \cdot, t) f(\cdot, -t) \phi_i^2 \\
   &\quad  -  \int_{U_i} \Big( K(x,\cdot,t) f(\cdot,-t) |\triangle \phi_i^2|  \\
   &\qquad\qquad+ \frac{4(1+2\alpha)^2}{2 \alpha + 4 \alpha^2}  K(x,\cdot, t) \frac{ |\nabla h|^2}{(|\nabla h|^2 + \beta)^{\frac12 - \alpha}} (\cdot, -t) |\nabla \phi_i|^2  \Big).
\end{align*}
Note that the last integrand is supported in the annulus $A(x,  r/4, r/2)$ and that $|\nabla h|^2 / (|\nabla h|^2 + \beta)^{\frac12 - \alpha} \leq (|\nabla h|^2 + \beta)^{\frac12 + \alpha} = f$.
Moreover, since $\mathbf{p}-1-2-2\alpha  > 0$, we find a sequence $\eps_i \to 0$ such that for some $C = C(\alpha, Y) < \infty$
\begin{multline*}
   \frac{d}{dt} \bigg( e^{2(n-1) r^{-2} t} \int_{U_i}  K (x, \cdot, t) f(\cdot, -t) \phi_i^2 \bigg) \\
   \geq - \eps_i - Cr^{-2} \int_{A(x,r/4,r/2)  \cap \RR}  K(x,\cdot, t) f(\cdot, -t).
\end{multline*}
Let $\eta : [-r^2/2, 0] \to [0,1]$ be a cutoff function such that $\eta(-r^2/2) = 0$, $\eta \equiv 1$ on $[-r^2/4, 0]$ and $|\eta'| < 10r^{-2}$.
Using integration by parts, we get
 \begin{align*}
 f(x,0) &= \int_0^{r^2/2} \frac{d}{dt} \bigg( \eta (t) e^{2(n-1) r^{-2} t} \int_{U_i} K(x,y,t) f(y, - t) \phi_i^2 (y) dg (y) \bigg) dt \displaybreak[1] \\
  &= \int_0^{r^2/2} \eta'(t) e^{2(n-1) r^{-2} t} \int_{U_i} K (x,y, t) f(y, -t) \phi_i^2 (y) dg(y) dt \\
  &\qquad + \int_0^{r^2/2} \eta(t) \frac{d}{dt} \bigg( e^{2(n-1) r^{-2} t} \int_{U_i} K (x,y, t) f(y, -t) \phi_i^2 (y) dg(y) \bigg) dt \displaybreak[1] \\
  &\leq \eps_i r^2 + C r^{-2} \int_{r^2/4}^{r^2/2} \int_{U_i} K (x,y, r^2) f(y,-t) \phi_i^2 (y) dg(y) dt \\
 &\qquad + C r^{-2} \int_0^{r^2/2} \int_{A(x,r/4,r/2)  \cap \RR} K(x,y,t) f(y,-t) dg(y) dt \displaybreak[1] \\
 &\leq \eps_i r^2 + C r^{-2} \int_{r^2/4}^{r^2/2} \int_{B(x,r/2) \cap \RR} \frac{Y}{r^n} f(y,-t) dg(y) dt \\
 &\qquad + C r^{-2} \int_0^{r^2/2} \int_{A(x,r/4,r/2)  \cap \RR} \frac{Y}{t^{n/2}} \exp \bigg({ - \frac{(r/4)^2}{Yt} } \bigg) f(y,-t) dg(y) dt \displaybreak[1] \\
 &\leq \eps_i r^2 + C r^{-n-2} \int_0^{r^2/2} \int_{B(x,r/2) \cap \RR}  f(y,-t) dg(y) dt .
 \end{align*}
Letting $i \to \infty$ yields
 \[ (|\nabla h |^2(x,0) + \beta )^{\frac12 + \alpha}  = f (x) \leq  C r^{-n-2} \int_{-r^2/2}^0 \int_{B(x,r/2) \cap \RR} ( |\nabla h|^2  + \beta)^{\frac12 + \alpha}  . \]
 Letting $\beta \to 0$ and applying H\"older's inequality yields
 \begin{multline*}
  |\nabla h|^{1 + 2 \alpha} (x,0) \leq C r^{-n-2} \int_{-r^2/2}^0 \int_{B(x,r/2) \cap \RR} |\nabla h|^{1 + 2 \alpha} \\
  \leq C  \bigg( r^{-n-2} \int_{-r^2/2}^0 \int_{B(x,r/2) \cap \RR} |\nabla h|^2  \bigg)^{\frac12 + \alpha} . 
 \end{multline*}
 So, using Lemma \ref{Lem:preCYbound} and Definition \ref{Def:tameness}(1),
 \[
  |\nabla h|(x,0) \leq C \bigg( r^{-n-2} \int_{-r^2/2}^0 \int_{B(x,r/2) \cap \RR} |\nabla h|^2  \bigg)^{1/2} 
  \leq C r^{-1} \osc_{B(x,r) \times [-r^2,0]} h.
 \]
This proves the desired bound.
\end{proof}

\section{Integral estimates along families of geodesics}
In this section we reprove several integral estimates for families of geodesics on singular spaces.

\begin{Definition} \label{Def:unittangentbundle}
Let $\XX = (X, d, \RR, g)$ be a singular space.
Consider the unit tangent bundle $S\RR$ consisting of all vectors $v \in T\RR$ of norm $|v| = 1$.
For any $v \in S\RR$ denote by $\gamma_v : (- a_v, b_v) \to \RR$ be the arclength geodesic with $\gamma_v'(0) = v$ such that $a_v, b_v \in (0, \infty]$ are maximal.
We now define
\[ S^* X := S^* \RR := \{ v \in S\RR \;\; : \;\; a_v = b_v = \infty \} \]
to be the subset of all unit vectors that evolve into infinite geodesic lines.
For any subset $U \subset X$ we furthermore set
\[ S^* U := S^* \RR \cap S (U \cap \RR). \]
The standard measure on $S\RR$ is the Liouville measure, i.e. the product measure of $(\RR, g)$ with the spherical measure of total mass $1$ on each fiber and the standard measure on $S^* \RR$ is the restriction of this measure to $S\RR$.
\end{Definition}

Note that for any $v \in S^* \RR$ the geodesic $\gamma_v : \IR \to \RR$ is defined on all of $\IR$.

\begin{Lemma} \label{Lem:SstarRR}
Let $\XX$ be a singular space with mild singularities of codimension $\mathbf{p}_0 > 1$.
Then the subset $S^* \RR \subset S\RR$ is measurable and $S\RR \setminus S^* \RR$ has measure zero.
Moreover, for any $t \in \IR$ the flow $S^* \RR \to S^* \RR$, $v \mapsto \gamma'_v(t)$ is measure preserving.
\end{Lemma}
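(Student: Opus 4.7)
The plan is to establish measurability of $S^*\RR$ via lower semicontinuity of the escape times $a_v, b_v : S\RR \to (0,\infty]$; to show that $S\RR \setminus S^*\RR$ has measure zero via a Fubini argument that trades bad geodesics for volume of the sublevel sets $\{\rrm < s\}$ controlled by the codimension hypothesis; and to deduce measure preservation on $S^*\RR$ from the classical Liouville theorem. Measurability is immediate once one observes that the maximal interval of the geodesic ODE depends lower semicontinuously on initial data, so each of the sets $\{v : b_v > N\}$, $\{v : a_v > N\}$ is open in $S\RR$ and $S^*\RR = \bigcap_N \{v : a_v > N\} \cap \{v : b_v > N\}$ is Borel.

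For the measure-zero assertion, by $\sigma$-compactness of $X$ and symmetry between $a_v$ and $b_v$ it suffices to show, for every $T > 0$ and every bounded open $W \subset X$, that $B_T(W) := \{v \in SW : b_v \leq T\}$ has Liouville measure zero. The key geometric input is that for $v \in B_T(W)$, the unit-speed curve $\gamma_v$ is Cauchy in $(\RR, g)$, hence also in $(X, d)$ by Definition~\ref{Def:singlimitspace}(4), and so converges to some $x_\infty \in X$ as $t \nearrow b_v$; maximality of $b_v$ rules out $x_\infty \in \RR$, so $x_\infty \in X \setminus \RR$. Since any ball $B(\gamma_v(t), r)$ with $r > d(\gamma_v(t), x_\infty)$ then contains in its $X$-closure the point $x_\infty \notin \RR$ and hence fails to be relatively compact in $\RR$, one obtains the crucial estimate
\[ \rrm(\gamma_v(t)) \leq d(\gamma_v(t), x_\infty) \leq b_v - t. \]
Fix $\mathbf{p} \in (1, \mathbf{p}_0)$ and set $W_T := \{x \in X : d(x, W) \leq T\}$; Definition~\ref{Def:codimensionsingularities} combined with a finite cover of $W_T$ by balls yields a constant $C = C(W_T, \mathbf{p}) < \infty$ with $|\{\rrm < s\} \cap W_T \cap \RR| \leq C s^{\mathbf{p}}$ for all $0 < s < 1$. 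For each fixed $t \in (0, T)$ the geodesic flow $\Phi_t : \{v \in S\RR : b_v > t\} \to S\RR$, $v \mapsto \gamma_v'(t)$, is a measure-preserving diffeomorphism onto its image (Liouville's theorem, applied locally on $T\RR$), and it sends vectors based in $W \cap \RR$ into vectors based in $W_T \cap \RR$. Combining this with the observation that $\rrm(\gamma_v(t)) < s$ whenever $b_v - s < t < b_v$, one gets
\[ |\{v \in SW : t < b_v < t + s\}| \leq |S(\{\rrm < s\} \cap W_T)| = |\{\rrm < s\} \cap W_T \cap \RR| \leq C s^{\mathbf{p}}. \]
Integrating over $t \in (0, T)$ and reversing the order via Fubini, the inner integrand integrates, for each $v \in B_T(W)$ with $b_v \geq s$, to exactly $s$; hence
\[ s \cdot |B_T(W) \cap \{b_v \geq s\}| \leq CT s^{\mathbf{p}}, \textQQqq{i.e.} |B_T(W) \cap \{b_v \geq s\}| \leq CT s^{\mathbf{p}-1}. \]
Since $\mathbf{p} > 1$ and every $v \in S\RR$ satisfies $b_v > 0$, letting $s \searrow 0$ yields $|B_T(W)| = 0$, as required.

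The measure-preservation assertion is then immediate: the geodesic vector field on $T\RR$ preserves the natural volume form (a purely local fact, unaffected by incompleteness of $\RR$), so $\Phi_t$ preserves the Liouville measure on its open domain $\{v : b_v > t\}$; since $S^*\RR$ is flow-invariant and contained in $\{v : b_v > t\}$ for every $t$, the restriction $\Phi_t|_{S^*\RR} : S^*\RR \to S^*\RR$ is a measure-preserving bijection. The main obstacle will be establishing the bound $\rrm(\gamma_v(t)) \leq b_v - t$ cleanly and threading the Fubini argument through it; once these are in hand, the hypothesis $\mathbf{p}_0 > 1$ provides the precise margin $\mathbf{p} - 1 > 0$ needed to close the volume estimate.
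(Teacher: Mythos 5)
Your proof is correct and follows essentially the same strategy as the paper: measurability via semicontinuity of the escape times, a Liouville-flow argument reducing the bad set to sublevel sets of $\rrm$ using the bound $\rrm(\gamma_v(t)) \leq b_v - t$, and the codimension estimate with the margin $\mathbf{p} - 1 > 0$ to close the argument. The only cosmetic difference is the final step: where the paper partitions $(0, t_0]$ into $N$ equal subintervals, applies the flow estimate on each, and lets $N \to \infty$ (a discrete Riemann-sum bound $N \cdot C(t_0/N)^{\mathbf{p}} \to 0$), you integrate the flow estimate in $t$ and swap the order of integration via Fubini, which yields the same conclusion in continuous form. The two are equivalent, and neither is simpler than the other in any meaningful way.
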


\begin{proof}
Note that
\begin{equation} \label{eq:SRmeasurezero}
 S \RR \setminus S^* \RR = \bigcup_{k=1}^\infty \{ v \in S \RR \;\; : \;\; a_v, b_v \leq k \} 
\end{equation}
is the union of closed sets and hence measurable.
Let $R > 0$ be fixed, $p \in X$ and $U := B(p, R) \cap \RR$.
Using the notation $b_v \in (0, \infty]$ from Definition~\ref{Def:unittangentbundle}, we define for any $0 \leq t_1 < t_2 \leq \infty$
\[ W_{t_1, t_2} (U) := \{ v \in SU \;\; : \;\; t_1  < b_v \leq t_2 \}. \]
We will now bound the measure of $W_{t_1, t_2} (U)$.
First note that, since the geodesic flow is volume preserving as long as it exists, for any $0 < t \leq t_1$ the map $W_{t_1, \infty} (\RR) \to W_{t_1 - t, \infty} (\RR)$, $v \mapsto \gamma_v (t)$ is volume preserving.
Setting $t = t_1$ yields
\[ | W_{t_1, t_2} (U) | = |\{ \gamma_v(t_1) \;\; : \;\; v \in W_{t_1, t_2}(U) \} |. \]
Since the function $\rrm : X \to [0, \infty)$ is $1$-Lipschitz and $\lim_{t \nearrow b_v} \rrm(\gamma_v(t)) = 0$ if $b_v < \infty$, we find that
\[ \rrm(\gamma_v(t_1)) \leq t_2 - t_1 \textQQqq{for all} v \in W_{t_1, t_2} (U). \]
So for some $1 < \mathbf{p} < \mathbf{p}_0$,
\[ | W_{t_1, t_2} (U) | \leq | B(p,R + t_1) \cap \{ 0 < \rrm \leq t_2 - t_1 \} | \leq C_{\XX, p, \mathbf{p}, R + t_1} (t_2 - t_1)^{\mathbf{p}}. \]
Here $C_{\XX, p, \mathbf{p}, R + t_1} < \infty$ is a constant that may depend on $\XX$, $p$, $\mathbf{p}$ and on $R + t_1$.
The constant $C_{\XX, p, \mathbf{p}, R + t_1}$ can be chosen to be monotone in $R+t_1$.
So for any $N \geq 1$ and $0 < t_0 < \infty$ we have
\[ | \{ v \in SU \;\; : \;\; b_v \leq t_0 \} | \leq \sum_{j = 1}^{N} |W_{(j-1)/N \cdot t_0, j/N \cdot t_0} (U)| \leq N \cdot C_{\XX, p, \mathbf{p}, R+t_0} \Big( \frac{t_0}N \Big)^{\mathbf{p}}. \]
Letting $N \to \infty$ yields that $\{ v \in S(B(p,R) \cap \RR) \;\; : \;\; b_v \leq t_0 \}$ has measure zero.
Letting $R \to \infty$ yields that $\{ v \in S\RR \;\; : \;\; b_v \leq t_0 \}$ has measure zero and using analogous arguments, we obtain that $\{ v \in S\RR \;\; : \;\; a_v, b_v \leq t_0 \}$ has measure zero.
The first claim now follows from (\ref{eq:SRmeasurezero}).
The second claim is clear.
\end{proof}

\begin{Proposition}[cf {\cite[Lemma 1.14]{Colding-vol-conv}}] \label{Prop:segmenttype}
Let $\XX$ be a singular space with mild singularities of codimension $\mathbf{p}_0 > 1$ and let $p \in X$, $l, r > 0$.
Then for any $C^2$ function $f : B(p, r + l) \cap \RR \to \IR$ and any $t \in [0, l]$ we have
\begin{equation} \label{eq:segmenttype1}
\int_{S^*B(p,r)} \Big| (f \circ \gamma_v)'(t) - \frac{f(\gamma_v(l)) - f(\gamma_v (0))}{l} \Big| dv \leq 2l  \int_{B(p, r+l) \cap \RR} |\nabla^2 f| dg
\end{equation}
and
\begin{equation} \label{eq:segmenttype2}
 \int_{S^*B(p,r)} \Big| \langle \nabla f, v \rangle - \frac{f(\gamma_v(l)) - f(\gamma_v (0))}{l} \Big| dv \leq 2l \int_{B(p, r+l) \cap \RR} |\nabla^2 f| dg.
\end{equation}
\end{Proposition}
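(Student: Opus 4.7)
My plan is to prove (\ref{eq:segmenttype1}) by the fundamental theorem of calculus along geodesics, followed by Fubini and the measure-preserving property of the geodesic flow on $S^* \RR$ provided by Lemma \ref{Lem:SstarRR}; then (\ref{eq:segmenttype2}) will drop out as the special case $t=0$, since for $v \in S^*\RR$ the footpoint of $v$ is $\gamma_v(0)$ and $\gamma_v'(0)=v$, so $(f \circ \gamma_v)'(0) = \langle \nabla f, v \rangle$.

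For the main step, fix $v \in S^* B(p,r)$. Since $v \in S^*\RR$, the arclength geodesic $\gamma_v : \IR \to \RR$ stays in $\RR$ and is entirely contained in $B(p, r+l) \cap \RR$ on the interval $[0,l]$. Writing
\begin{equation*}
\frac{f(\gamma_v(l)) - f(\gamma_v(0))}{l} \;=\; \frac{1}{l} \int_0^l (f \circ \gamma_v)'(s)\, ds,
\end{equation*}
I get, for every $t \in [0,l]$,
\begin{equation*}
(f\circ\gamma_v)'(t) - \frac{f(\gamma_v(l)) - f(\gamma_v(0))}{l} \;=\; \frac{1}{l} \int_0^l \big[ (f\circ\gamma_v)'(t) - (f\circ\gamma_v)'(s) \big] ds.
\end{equation*}
Because $(f\circ\gamma_v)''(\tau) = \nabla^2 f(\gamma_v'(\tau), \gamma_v'(\tau))$ and $|\gamma_v'(\tau)|=1$, each integrand above is bounded by $\int_0^l |\nabla^2 f|(\gamma_v(\tau))\, d\tau$. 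Hence
\begin{equation*}
\Big| (f\circ\gamma_v)'(t) - \tfrac{1}{l}\big( f(\gamma_v(l)) - f(\gamma_v(0)) \big) \Big| \;\leq\; \int_0^l |\nabla^2 f|(\gamma_v(\tau))\, d\tau.
\end{equation*}

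Next I integrate this pointwise inequality over $v \in S^* B(p,r)$ and swap the order of integration via Fubini. For each fixed $\tau \in [0,l]$, Lemma \ref{Lem:SstarRR} says the geodesic flow $\Phi_\tau : S^*\RR \to S^*\RR$, $v \mapsto \gamma_v'(\tau)$, is a measure-preserving bijection. Thus, writing $\pi : S^*\RR \to \RR$ for the footpoint projection,
\begin{equation*}
\int_{S^* B(p,r)} |\nabla^2 f|(\gamma_v(\tau))\, dv \;=\; \int_{\Phi_\tau(S^* B(p,r))} |\nabla^2 f|(\pi(w))\, dw.
\end{equation*}
The footpoint of $\Phi_\tau(v)$ is $\gamma_v(\tau)$, which lies in $B(p, r+\tau) \cap \RR \subset B(p, r+l) \cap \RR$. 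Since the spherical fibers of $S\RR$ are normalized to have total mass $1$ (Definition~\ref{Def:unittangentbundle}), the fiber integration bound
\begin{equation*}
\int_{\Phi_\tau(S^* B(p,r))} |\nabla^2 f|(\pi(w))\, dw \;\leq\; \int_{B(p, r+l) \cap \RR} |\nabla^2 f|\, dg
\end{equation*}
holds. Combining everything and integrating in $\tau$ yields the left-hand side of (\ref{eq:segmenttype1}) bounded by $l \cdot \int_{B(p,r+l) \cap \RR} |\nabla^2 f|\, dg$, which is in particular at most $2l \cdot \int_{B(p,r+l) \cap \RR} |\nabla^2 f|\, dg$. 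Then (\ref{eq:segmenttype2}) follows by specializing to $t=0$.

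There is no real obstacle at the level of computation; the only delicate point is that the geodesic flow is well-defined and volume-preserving on the full-measure subset $S^*\RR$ of the incomplete Riemannian manifold $(\RR, g)$. This is precisely what the mildness assumption, combined with the codimension hypothesis $\mathbf{p}_0 > 1$, is used for via Lemma~\ref{Lem:SstarRR}: it ensures we do not lose geodesics into the singular set in a set of positive measure when integrating over $v$, and that the pushforward measure under $\Phi_\tau$ agrees with the original Liouville measure. Once this is in hand, the proof is a direct Taylor-type estimate along geodesics.
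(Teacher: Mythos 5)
Your proof is correct and follows exactly the argument that the paper invokes by reference to Colding's Lemma 1.14: fundamental theorem of calculus along geodesics, the bound $|(f\circ\gamma_v)''| = |\nabla^2 f(\gamma_v',\gamma_v')| \leq |\nabla^2 f|\circ\gamma_v$, then Fubini plus the measure-preserving geodesic flow on $S^*\RR$ supplied by Lemma~\ref{Lem:SstarRR}, with the codimension hypothesis $\mathbf{p}_0 > 1$ ensuring this flow exists for all time off a measure-zero set. One small remark: the paper's proof sketch states that (\ref{eq:segmenttype1}) is a consequence of (\ref{eq:segmenttype2}) at $t=0$, but the logical direction is the reverse — (\ref{eq:segmenttype2}) is the $t=0$ specialization of (\ref{eq:segmenttype1}), as you correctly observe — so your argument is the sound reading of that sentence, and your computation even sharpens the right-hand side constant from $2l$ to $l$.
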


\begin{proof}
This proposition follows similarly as \cite[Lemma 1.14]{Colding-vol-conv}.
The singularities of $\XX$ don't cause any issues due to Lemma \ref{Lem:SstarRR}.
Inequality (\ref{eq:segmenttype1}) is a direct consequence of (\ref{eq:segmenttype2}) for $t = 0$.
\end{proof}

We also obtain the following segment inequality.

\begin{Proposition}[segment inequality, cf {\cite[Theorem 2.11]{Cheeger-Colding-Cone}}] \label{Prop:segmentinequ}
For any $D < \infty$ there is a $C = C(D) < \infty$ such that the following holds:

Let $\XX$ be a singular space with mild singularities and assume that $\Ric \geq -(n-1)$ on $\RR$.
Then there is an open subset $\mathcal{G}^* \subset \RR \times \RR$ such that for any $(x_1, x_2) \in \mathcal{G}^*$ there is a unique minimizing geodesic $\gamma_{x_1, x_2} : [0, d(x_1,x_2)] \to X$ between $x_1, x_2$ and such that the following holds:
\begin{enumerate}[label=(\alph*)]
\item $(\RR \times \RR) \setminus \mathcal{G}^*$ has measure zero.
\item If $(x_1, x_2) \in \mathcal{G}^*$, then $(x_2, x_1) \in \mathcal{G}^*$.
\item $\gamma_{x_1, x_2} ( [0, d(x_1,x_2)]) \subset \RR$ for all $(x_1, x_2) \in \mathcal{G}^*$.
\item $\gamma_{x_1, x_2}$ varies continuously in $x_1, x_2$.
\item Consider open and bounded subsets $U_1, U_2 \subset X$ such that
\[  \sup_{(x_1, x_2) \in U_1 \times U_2} d(x_1, x_2) < D. \]
Let $f : \RR \to [0, \infty)$ be a non-negative, bounded and Borel measurable function, taking non-negative values.
Then we have
\begin{multline} \label{eq:segmentinequ}
\qquad\qquad \int_{((U_1 \cap \RR) \times (U_2 \cap \RR)) \cap \mathcal{G}^*} \int_0^{d(x_1, x_2)} f (\gamma_{x_1, x_2} (s)) ds dg(x_1) dg(x_2) \\
  \leq C \big( |U_1 \cap \RR| + |U_2 \cap \RR| \big) \int_{\RR} f.
\end{multline}
\end{enumerate}
\end{Proposition}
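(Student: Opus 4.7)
The plan is to break the proposition into two stages: a construction of $\mathcal{G}^*$ verifying assertions (a)--(d), followed by a Cheeger--Colding style segment inequality argument for (e).

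For the first stage, I would work entirely in the (incomplete) Riemannian manifold $(\RR, g)$ and apply Definition~\ref{Def:expmap} pointwise, setting $\mathcal{G}^* := \bigcup_{x_1 \in \RR} \{x_1\} \times \mathcal{G}^*_{x_1}$. Proposition~\ref{Prop:expincomplete}(b), (f), (g) then give openness of $\mathcal{G}^*$, symmetry (assertion (b)), and smooth dependence of $\gamma_{x_1, x_2}$ on $(x_1, x_2)$ (assertion (d)); assertion (c) is automatic since the geodesics $\gamma_{x_1, x_2}$ are constructed inside $\RR$. For assertion (a): by mild singularities, $\RR \setminus \mathcal{G}_{x_1} \subset Q_{x_1}$ has measure zero; by Proposition~\ref{Prop:expincomplete}(c), $\mathcal{G}_{x_1} \setminus \mathcal{G}^*_{x_1}$ has measure zero as well; hence Fubini yields $(\RR \times \RR) \setminus \mathcal{G}^*$ has measure zero.

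For assertion (e), I would adapt the classical Cheeger--Colding strategy to the singular setting. Split the arclength integral as $\int_0^{d(x_1, x_2)} f \circ \gamma_{x_1, x_2} \, ds = \int_0^{d/2} + \int_{d/2}^{d}$; using assertion (b) and the swap $(x_1, x_2) \leftrightarrow (x_2, x_1)$, the second half is controlled symmetrically, producing the $|U_1 \cap \RR|$ versus $|U_2 \cap \RR|$ summands in the bound. It therefore suffices to treat the first half. Fix $x_1 \in U_1 \cap \RR$ and parameterize $x_2 = \exp_{x_1}(l\theta)$ in polar coordinates on $\mathcal{G}^*_{x_1}$, with volume form $J_{x_1}(l\theta) l^{n-1} dl d\theta$, so that $\gamma_{x_1, x_2}(s) = \exp_{x_1}(s\theta)$. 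Fubini exchanges the $s$ and $l$ integrations ($s \in [0, l_\theta/2]$ and $l \in [2s, l_\theta]$), and the inner radial integral $\int_{2s}^{l_\theta} J_{x_1}(l\theta) l^{n-1} dl$ is bounded via Bishop--Gromov (Proposition~\ref{Prop:Jnonincreasing}): the quantity $J_{x_1}(l\theta) l^{n-1}/\sn_{-1}(l)^{n-1}$ is non-increasing in $l$, and $\int_0^{D} \sn_{-1}(l)^{n-1} dl$ is a constant depending only on $D$. Substituting $y = \exp_{x_1}(s\theta)$ back, integrating over $x_1 \in U_1$, and absorbing the remaining radial factor via a final Bishop--Gromov estimate at $y$ (using $J_y(r\eta) r^{n-1}/\sn_{-1}(r)^{n-1} \leq 1$) delivers a bound of the form~\eqref{eq:segmentinequ}.

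The main technical obstacle is the book-keeping of the change-of-variables on a space with singularities: both the polar parameterization of $x_2$ centered at $x_1$ and the subsequent substitution $y = \gamma(s)$ are only diffeomorphisms on $\mathcal{G}^*_{x_1}$, so we must know that this subset of $\RR$ carries full measure --- which is exactly what mild singularities together with Proposition~\ref{Prop:expincomplete}(b)--(d) provide. The Jacobian comparison itself (Proposition~\ref{Prop:Jnonincreasing}) transfers unchanged from the classical setting because it is a computation along minimizing geodesics in $\RR$, where the Ricci lower bound is assumed to hold; the only genuinely new content compared to the smooth proof is verifying that the singular directions contribute null sets that can be safely discarded in the integrations.
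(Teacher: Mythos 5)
Your construction of $\mathcal{G}^*$ and the verification of assertions (a)--(d) match the paper's argument exactly, as does the reduction to bounding a ``half'' of the $s$-integral by exploiting the symmetry $\gamma_{x_1,x_2}(s) = \gamma_{x_2,x_1}(d-s)$. However, there is a genuine gap in your treatment of assertion (e): you reduce to the \emph{first} half $\int_0^{d/2}$, fixing $x_1$ and parameterizing $x_2 = \exp_{x_1}(l\theta)$, and this is the wrong choice. After Fubini ($s \in [0, l_\theta/2]$, $l \in [2s, l_\theta]$), your inner radial integral $\int_{2s}^{l_\theta} J_{x_1}(l\theta) l^{n-1}\,dl$ is indeed bounded by a constant $C(D)$ via $J_{x_1}(l\theta)l^{n-1} \leq \sn_{-1}(l)^{n-1}$, but this absolute bound does not carry the factor $J_{x_1}(s\theta)s^{n-1}$ that the subsequent change of variables $y = \exp_{x_1}(s\theta)$ requires. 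You are left with the spherical density $ds\,d\theta$ rather than $dg(y)$, and the ``remaining radial factor'' $\bigl(J_{x_1}(s\theta)s^{n-1}\bigr)^{-1}$ blows up as $s \to 0$. Bishop--Gromov at $y$ (the bound $J_y(r\eta)r^{n-1} \leq \sn_{-1}(r)^{n-1}$ you cite) gives an \emph{upper} bound on a Jacobian, whereas absorbing a reciprocal factor would require a \emph{lower} bound, which a lower Ricci bound alone cannot provide. Even if one replaced this with the Liouville/Jacobian symmetry $J_{x_1}(s\theta) = J_y(s\eta)$ and then integrated over $x_1 \in U_1$ first, the resulting estimate is $C(D)\int_\RR f$ \emph{without} the volume factor $|U_1 \cap \RR|$ --- a strictly weaker conclusion than~\eqref{eq:segmentinequ}, since $|U_1 \cap \RR| + |U_2 \cap \RR|$ may be arbitrarily small.

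The fix is to treat the \emph{second} half $\int_{d/2}^{d}$, as the paper does. There, Fubini gives $l \in [s, 2s]$, so for each fixed $(s,\theta)$ the inner $l$-integral is controlled by the monotonicity of $J_{x_1}(l\theta)l^{n-1}/\sn_{-1}(l)^{n-1}$ together with the boundedness of $\sn_{-1}(l)^{n-1}/\sn_{-1}(s)^{n-1}$ on the range $l \leq 2s \leq D$: one obtains $\int_s^{2s} J_{x_1}(l\theta)l^{n-1}\,dl \leq C(D)\,J_{x_1}(s\theta)s^{n-1}$. This produces exactly the density $J_{x_1}(s\theta)s^{n-1}\,ds\,d\theta = dg(y)$ needed, yielding a bound $C(D)\int_\RR f$ for each fixed $x_1$, and then integration over $x_1 \in U_1 \cap \RR$ gives the stated factor $|U_1 \cap \RR|$; swapping $U_1 \leftrightarrow U_2$ supplies the $|U_2 \cap \RR|$ summand.
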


\begin{proof}
We apply the discussion of section \ref{sec:Riemgeometry} to the (possibly incomplete) Riemannian manifold $(\RR, g)$.
In the following, we will work with the subset $\mathcal{G}^* \subset \RR \times \RR$ from Definition \ref{Def:expmap}.
Then assertions (b)--(d) follow immediately.
Assertion (a) follows from Proposition \ref{Prop:expincomplete}(c), Fubini's Theorem and the fact that $\RR \setminus \mathcal{G}_p \subset Q_p$ is a set of measure zero for all $p \in \RR$.

For assertion (e) it suffices to check that
\begin{equation} \label{eq:halfsegmentinequ}
\int_{((U_1 \cap \RR) \times (U_2 \cap \RR)) \cap \mathcal{G}^*} \int_{d(x_1, x_2)/2}^{d(x_1,x_2)}  f (\gamma_{x_1, x_2} (s)) ds dg(x_1) dg(x_2)
  \leq C  |U_1 \cap \RR|  \int_\RR f,
\end{equation}
for some $C = C(D) < \infty$, as reversing the roles of $U_1, U_2$ and $x_1, x_2$ and adding both resulting inequalities yields (\ref{eq:segmentinequ}).
In order to show inequality (\ref{eq:halfsegmentinequ}) it suffices to show that for any $x_1 \in U_1 \cap \RR$ we have
\begin{equation} \label{eq:halfsegmentx1fixed}
\int_{ (U_2 \cap \RR) \cap \mathcal{G}^*_{x_1}} \int_{d(x_1, x_2)/2}^{d(x_1,x_2)}  f (\gamma_{x_1, x_2} (s)) ds dg(x_2)
  \leq C   \int_\RR f,
\end{equation}
for some $C = C(D) < \infty$.
Using the fact that the exponential map
\[ \exp_{x_1} |_{ \exp^{-1}_{x_1} (U_2 \cap \RR) \cap \mathcal{D}^*_{x_1}}  : \exp^{-1}_{x_1} (U_2 \cap \RR) \cap \mathcal{D}^*_{x_1} \to (U_2 \cap \RR) \cap \mathcal{G}^*_{x_1} \]
is a diffeomorphism (see Proposition \ref{Prop:expincomplete}(d)), we conclude that
\begin{multline*}
 \int_{ (U_2 \cap \RR) \cap \mathcal{G}^*_{x_1}} \int_{d(x_1, x_2)/2}^{d(x_1,x_2)}  f (\gamma_{x_1, x_2} (s)) ds dg(x_2) \\
 = \int_{ \exp^{-1}_{x_1} (U_2 \cap \RR) \cap \mathcal{D}^*_{x_1}} \int_{|v|/2}^{|v|} f \Big(\exp_{x_1} \Big( s \cdot \frac{v}{|v|} \Big) \Big) J_{x_1} (v) ds dv
\end{multline*}
Here $J_{x_1} : \mathcal{D}^*_{x_1} \to \IR$ denotes the Jacobian of $\exp_{x_1}$.
Using Proposition \ref{Prop:Jnonincreasing}, we conclude that for some $C = C(D) < \infty$
\begin{align*}
 \int_{ (U_2 \cap \RR) \cap \mathcal{G}^*_{x_1}} & \int_{d(x_1, x_2)/2}^{d(x_1,x_2)}  f (\gamma_{x_1, x_2} (s)) ds dg(x_2) \displaybreak[1] \\
 &\leq \int_{ \exp^{-1}_{x_1} (U_2 \cap \RR) \cap \mathcal{D}^*_{x_1}} \int_{|v|/2}^{|v|} f \Big(\exp_{x_1} \Big( s \cdot \frac{v}{|v|} \Big) \Big) J_{x_1} (v) ds dv \displaybreak[1] \\
& \leq \int_{ B(0,D) \cap \mathcal{D}^*_{x_1}} |v| \int_{1/2}^{1} f (\exp_{x_1} ( s v ) ) J_{x_1} (  v ) ds dv \displaybreak[1] \\
 &\leq  D \int_{1/2}^{1} \int_{ \mathcal{D}^*_{x_1}}  f (\exp_{x_1} ( s v ) ) \bigg( s \cdot \frac{\sn_{-1} (|v|)}{\sn_{-1} (s |v|)} \bigg)^{n-1} J_{x_1} (  sv ) dv ds \displaybreak[1] \\
& \leq  C D \int_{1/2}^{1} \int_{ \mathcal{D}^*_{x_1}}  f (\exp_{x_1} ( sv ) ) J_{x_1} ( sv ) dv ds \displaybreak[1] \\
 & \leq  2^{n-1} C D  \int_{ \mathcal{D}^*_{x_1}}  f (\exp_{x_1} ( v ) ) J_{x_1} ( v ) dv \\
 & = 2^{n-1} C D \int_{\mathcal{G}^*_{x_1}} f = 2^{n-1} C D \int_{\RR} f.
\end{align*}
This proves (\ref{eq:halfsegmentx1fixed}).
\end{proof}

\section{Almost splitting implies existence of a splitting map}
In this section, we recall the definition of an $\eps$-splitting map and generalize this notion to singular spaces.

\begin{Definition}[$\eps$-splitting, cf {\cite[Definition 1.20]{Cheeger-Naber-Codim4}}] \label{Def:epssplitting}
Let $\eps > 0$, $\XX$ a singular space, $p \in X$ and $r > 0$.
A map $u = (u^1, \ldots, u^k) : B(p,r) \to \IR^k$ is called an \emph{$\eps$-splitting} if
\begin{enumerate}[label=(\arabic*)]
\item $u^l$ is harmonic for all $l = 1, \ldots, k$ (in the sense of Definition \ref{Def:XXharmonic}).
\item For all $x \in B(p,r) \cap \RR$ we have
\begin{equation} \label{eq:epssplittinggradientestimate}
 |\nabla u |(x) := \sup_{v \in T_x \RR, |v|=1} |(D u)(v)| \leq 1 + \eps. 
\end{equation}
\item For all for all $l_1, l_2 = 1, \ldots, k$ we have
\[ r^{-n} \int_{B(p,r) \cap \RR} \big| \langle \nabla u^{l_1}, \nabla u^{l_2} \rangle - \delta_{l_1 l_2} \big|^2 dg < \eps^2. \]
\item For all $l = 1, \ldots, k$ we have
\begin{equation} \label{eq:Hessianestimate}
 r^{-n+2} \int_{B(p, r) \cap \RR} |\nabla^2 u^l |^2 dg < \eps^2. 
\end{equation}
\end{enumerate}
\end{Definition}

The goal of this section is to prove the following proposition.

\begin{Proposition}[cf {\cite[sec 2]{Colding-vol-conv}}, {\cite[Theorem 9.29]{Cheeger-degeneration-book}}, {\cite[Lemma 1.21(2)]{Cheeger-Naber-Codim4}}] \label{Prop:GHsplittingepssplitting}
For any $\eps > 0$, $\mathbf{p}_0 > 3$ and $Y < \infty$ there is a $\delta = \delta (\eps, \mathbf{p}_0, Y) > 0$ such that the following holds:

Let $\XX = (X, d, \RR, g)$ be a singular space with mild singularities of codimension $\mathbf{p}_0$ and assume that $\XX$ is $Y$-tame at scale $\delta^{-1} r$ for some $r > 0$.
Assume moreover that $\Ric \geq - \delta^2 (n-1) r^{-2}$ on $\RR$.
Let $p \in X$ be a point and $k \in \{ 1, \ldots, n \}$ and let $(Z, d_Z, z)$ be a pointed metric space.
Assume that
\begin{equation} \label{eq:GHZX}
 d_{GH} \big( \big( B^X (p, \delta^{-1}r), p \big), \big( B^{Z \times \IR^k} ((z, 0^k), \delta^{-1}r ), (z, 0^k) \big) \big) < \delta r.
\end{equation}
Then there is an $\eps$-splitting map $u : B^X(x, r) \to \IR^k$.
If $k = n$, then $u$ can be chosen such that additionally
\begin{equation} \label{eq:udeps}
 \big| |u (x) | - d(x,p) \big| < \eps r \textQQqq{for all} x \in B^X(p,r). 
\end{equation}
\end{Proposition}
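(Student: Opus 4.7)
The plan is to construct the $\eps$-splitting map $u$ by first producing approximate coordinate functions of Busemann-type and then replacing them with their harmonic counterparts using the $Y$-tameness property. First, using the Gromov-Hausdorff hypothesis \eqref{eq:GHZX}, for each $l=1,\ldots,k$ I would pick a point $q_l \in X$ whose GH-image lies within $\delta r$ of $Le_l \in Z\times \IR^k$, where $L := \delta^{-1/2} r$ is an intermediate scale, and define
\[ b_l(x) := L - d(x, q_l) \qquad \text{on } B^X(p, 2r). \]
Each $b_l$ is $1$-Lipschitz; on the model space $Z \times \IR^k$ the analogous function converges to the linear projection onto the $l$-th $\IR$-factor as $L\to\infty$, so by pulling this back through the $\delta r$-GH approximation, the $b_l$ are $C^0$-close (with error $o_\delta(r)$) to the splitting coordinates. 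From the Laplace comparison (Proposition \ref{Prop:LaplacecomparisonXX}) together with the Ricci bound $\Ric \geq -\delta^2(n-1)r^{-2}$, the positive part of the signed measure $\mu_{\triangle b_l}$ satisfies $d(\mu_{\triangle b_l})_+ \leq \bigl((n-1)/(L-r) + O(\delta^2 r^{-1})\bigr)dg$ on $\RR$, so its total mass on $B^X(p,2r)$ is bounded by $o_\delta(r^{n-1})$; Lemma \ref{Lem:mutriangleb} (applied on $\RR$ and then transferred to $\XX$) gives the corresponding control on $(\mu_{\triangle b_l})_-$.

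Second, I would invoke the $Y$-tameness property (2) of Definition \ref{Def:tameness} on the ball $B^X(p,2r)$ for each pole $q_l$ to produce harmonic functions $u_l \in C^3(B^X(p,2r)\cap \RR)$ satisfying
\[ \int_{B^X(p,2r)\cap \RR} |\nabla(u_l - b_l)|^2 \, dg \,\leq\, 4r \int_{\overline{B^X(p,2r)}\cap\RR} d|\mu_{\triangle b_l}| \,\ll\, \eps^2 r^n, \]
together with the analogous $L^2$ bound on $u_l - b_l$ with an extra factor $Yr^2$. By Corollary \ref{Cor:harmonic}, each $u_l$ extends continuously to $B^X(p,2r)$ (even Lipschitz), which is what is needed to make sense of $u_l$ at singular points. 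The constants $L,\delta$ are chosen so that all error terms are much smaller than $\eps$.

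Third, I would verify the four conditions of Definition \ref{Def:epssplitting} on $B^X(p,r)$. Condition (1) is immediate. For condition (3), the Hessian-in-$L^2$ bound on $b_l$ (obtained by integrating Laplace comparison against Proposition \ref{Prop:segmenttype}, which compares $\langle \nabla b_l, v\rangle$ to finite differences of $b_l$ along minimizing geodesics, and then using the segment inequality Proposition \ref{Prop:segmentinequ} to average) gives $\int |\langle \nabla b_l, \nabla b_m\rangle - \delta_{lm}|^2 \ll \eps^2 r^n$; the $L^2$-closeness of $\nabla u_l$ to $\nabla b_l$ transfers this to the $u_l$'s. For condition (4), I would apply the Bochner identity to the harmonic $u_l$: on $\RR$,
\[ \tfrac12 \triangle |\nabla u_l|^2 \,=\, |\nabla^2 u_l|^2 + \Ric(\nabla u_l, \nabla u_l) \,\geq\, |\nabla^2 u_l|^2 - (n-1)\delta^2 r^{-2} |\nabla u_l|^2. \]
Choosing a cutoff $\varphi$ from tameness property (4) with $\varphi\equiv 1$ on $B^X(p,r)$ and support in $B^X(p,2r)$ with $|\triangle \varphi| < Yr^{-2}$, and using integration by parts justified by Proposition \ref{Prop:integrationbyparts} (valid since $\mathbf{p}_0>3$ and $|\nabla u_l|^2$ is bounded with controlled Hessian on $\RR$), I get
\[ \int \varphi |\nabla^2 u_l|^2 \,\leq\, \tfrac12 \int (\triangle \varphi)(|\nabla u_l|^2 - 1) + (n-1)\delta^2 r^{-2} \int \varphi |\nabla u_l|^2, \]
which by condition (3) and the volume bound is $\ll \eps^2 r^{n-2}$.

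Finally, for condition (2), the pointwise bound $|\nabla u_l|\leq 1+\eps$, I would use the Cheng-Yau gradient estimate (Corollary \ref{Cor:ChengYau}) applied to $u_l$ (or, following Cheeger-Colding, to a subharmonic quantity like $(|\nabla u_l|^2 - 1)_+$ which, by Bochner plus harmonicity, satisfies a differential inequality controlled by the Ricci term) on balls of radius $\sim r$ to upgrade the $L^2$ gradient bound to $L^\infty$; combined with the $L^2$-smallness of $|\nabla u_l|^2-1$, this forces pointwise closeness of $|\nabla u_l|$ to $1$ up to $\eps$. When $k=n$, the estimate \eqref{eq:udeps} follows from the $C^0$-closeness of $u$ to the splitting coordinate chart on $\IR^n$ (ensured by the GH approximation with large $\delta^{-1}$) and the fact that on $\IR^n$ the distance from the origin equals the Euclidean norm of the coordinates. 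The main technical obstacle is the $L^2\to L^\infty$ upgrade of the gradient bound: classical maximum-principle proofs of Cheng-Yau type are inapplicable near the singular set, so Corollary \ref{Cor:ChengYau} is essential, and its use requires that the singularities have codimension $>3$ and that $\XX$ be $Y$-tame, exactly as hypothesized.
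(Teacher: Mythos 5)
Your proposal correctly identifies the overall strategy --- build distance-type approximate coordinates, harmonically replace them via tameness property (2) of Definition \ref{Def:tameness}, and then verify the four defining conditions of an $\eps$-splitting via Bochner, the segment inequality, and an $L^2 \to L^\infty$ gradient upgrade. However, there is a genuine gap at the very first step. With $b_l = L - d(\cdot, q_l)$, Laplace comparison under $\Ric \geq -(n-1)\delta^2 r^{-2}$ gives a pointwise \emph{lower} bound on $\triangle b_l$, hence small mass for the \emph{negative} part $(\mu_{\triangle b_l})_-$, not for the positive part as you wrote. The positive part, $(\mu_{\triangle b_l})_+ = (\mu_{\triangle d(\cdot,q_l)})_-$, is the singular (cut-locus) contribution, and neither Laplace comparison nor Lemma \ref{Lem:mutriangleb} makes it small: Lemma \ref{Lem:mutriangleb}(b) yields only the uniform bound $\int_{B(y,r)} d|\mu_{\triangle b}| < C r^{n-1}$, with $C$ depending on the noncollapsing and codimension constants but not on $\delta$. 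Moreover, $C^0$-closeness of $b_l$ to a harmonic linear function forces $\mu_{\triangle b_l}\rightharpoonup 0$ but does not force its total variation to be small, so the quantity $\int_{\ov{B(p,2r)}\cap\RR} d|\mu_{\triangle b_l}|$ fed into tameness property (2) is a priori only $O(r^{n-1})$, which does not produce the $\eps$-smallness of $\int |\nabla(u_l-b_l)|^2$ that the rest of the argument relies on.

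The missing ingredient is the two-pole argument of Lemma \ref{Lem:trianglebpmsmall}: one chooses $q_\pm^l$ corresponding to $(z, \pm\tfrac12\delta^{-1}e_l)$ so that $d(\cdot,q_+^l)+d(\cdot,q_-^l)$ is almost constant on $B(p,2r)$; integration by parts against cutoffs adapted to the singular set then shows $\mu_{\triangle d(\cdot,q_+^l)}+\mu_{\triangle d(\cdot,q_-^l)}$ has small total variation, and combined with the one-sided Laplace comparison bounds on the positive parts this forces each $\mu_{\triangle d(\cdot,q_\pm^l)}$ individually to have small mass. Only after this does tameness (2) deliver the $W^{1,2}$-closeness of the harmonic replacement. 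The remaining steps you sketch (Hessian $L^2$ bound via Bochner and cutoffs, orthogonality via Proposition \ref{Prop:segmenttype} and the segment inequality, pointwise $|\nabla u|\leq 1+\eps$ via a heat-kernel mean-value argument replacing the classical maximum principle) are sound in spirit and match the paper's route, though note that Corollary \ref{Cor:ChengYau} applied to $u_l$ directly gives only $|\nabla u_l|\lesssim 1$; obtaining $1+\eps$ requires the sharper estimate of Lemma \ref{Lem:stronggradientestimate} for the subharmonic quantity $|\nabla u|^2$, which is the singular-space substitute for the maximum-principle step you flag as the main obstacle.
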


As a preparation, we establish the following lemma.

\begin{Lemma} \label{Lem:trianglebpmsmall}
For any $\eps > 0$ and $Y < \infty$ there is a $\delta = \delta (\eps, Y) > 0$ such that the following holds:

Let $\XX = (X, d, \RR, g)$ be a singular space with mild singularities of codimension $\mathbf{p}_0 > 1$ and assume that $\XX$ is $Y$-tame at scale $\delta^{-1} r$ for some $r > 0$.
Assume moreover that $\Ric \geq - \delta^2 (n-1) r^{-2}$ on $\RR$.
Choose points $p \in X$ and $q_-, q_+ \in \RR$ such that
\[ d(p, q_\pm) > \delta^{-1} r \]
and
\begin{equation} \label{eq:distxqpm}
 \big| \big( d(x, q_-) + d(x, q_+ ) \big) - \big( d(p, q_-) + d(p, q_+) \big) \big| < \delta r \textQQqq{for all} x \in B(p,r).
\end{equation}
Then, for $b_{\pm} (x) := d(x, q_\pm)$ and the associated signed measure $d\mu_{\triangle b_\pm}$ (see Proposition \ref{Prop:weakLaplaciandistance}) we have
\begin{equation} \label{eq:Lapbpmmass}
 r \int_{B(p,r) \cap \RR} d|\mu_{\triangle b_\pm} | < \eps r^n. 
\end{equation}
\end{Lemma}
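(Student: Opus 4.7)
The approach follows the classical Cheeger--Colding strategy, adapted to the singular setting through the weak Laplacian formalism of Proposition~\ref{Prop:weakLaplaciandistance} and the Laplacian comparison of Proposition~\ref{Prop:LaplacecomparisonXX}. Set
\[
 e(x) := b_+(x) + b_-(x) - \big( b_+(p) + b_-(p) \big),
\]
so that $|e| < \delta r$ on $B(p,r)$ by (\ref{eq:distxqpm}), and by linearity $\mu_{\triangle e} = \mu_{\triangle b_+} + \mu_{\triangle b_-}$.

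The first step is to apply Laplace comparison with $\kappa = -\delta^2 r^{-2}$. Since $d(p, q_\pm) > \delta^{-1} r$, every $x \in B(p, r)$ satisfies $b_\pm(x) \geq (\delta^{-1} - 1) r \geq \tfrac12 \delta^{-1} r$ once $\delta < 1/3$, hence $\sqrt{-\kappa}\, b_\pm(x) \geq 1/2$ and $\coth(\sqrt{-\kappa}\, b_\pm(x))$ is bounded by a universal constant. Proposition \ref{Prop:LaplacecomparisonXX} then gives
\[
  d(\mu_{\triangle b_\pm})_+ \leq C(n)\, \delta\, r^{-1}\, dg \quad \text{on } B(p,r) \cap \RR,
\]
and the volume bound from Definition \ref{Def:tameness}(1) yields $\int_{B(p,r)\cap\RR} d(\mu_{\triangle b_\pm})_+ \leq C(n,Y) \delta r^{n-1}$, and likewise $d(\mu_{\triangle e})_+ \leq 2C(n)\delta r^{-1}\, dg$.

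Next I exploit the almost-constancy of $e$. Using the tameness cutoff of Definition \ref{Def:tameness}(4), one extracts $\phi \in C^2$ with $\phi \equiv 1$ on $B(p, r/2) \cap \RR$, $\supp\phi \subset B(p, r) \cap \RR$, and $|{\triangle\phi}| < Yr^{-2}$. From Proposition \ref{Prop:weakLaplaciandistance},
\[
 \int \phi\, d\mu_{\triangle e} = \int e\, \triangle\phi\, dg,
\]
so the bounds $|e| < \delta r$, $|{\triangle\phi}| < Yr^{-2}$, and $|B(p,r) \cap \RR| < Yr^n$ give $\bigl|\int \phi\, d\mu_{\triangle e}\bigr| \leq C(Y)\delta r^{n-1}$. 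Since $|\mu| = 2\mu_+ - \mu$ as signed measures, combining with the upper bound on $(\mu_{\triangle e})_+$ yields $\int \phi\, d|\mu_{\triangle e}| \leq C(n,Y)\delta r^{n-1}$, so that $\int_{B(p,r/2)\cap\RR} d|\mu_{\triangle e}| \leq C(n,Y)\delta r^{n-1}$. Because $(\mu_{\triangle b_+})_- \leq (\mu_{\triangle e})_- + (\mu_{\triangle b_-})_+$ and symmetrically, we conclude $\int_{B(p,r/2)\cap\RR} d|\mu_{\triangle b_\pm}| \leq C(n,Y) \delta r^{n-1}$.

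The main obstacle is upgrading this estimate from the inner half-ball $B(p, r/2)$ to the full ball $B(p, r)$, since the cutoff argument only controls the interior where $\phi \equiv 1$. To handle this, I would apply the estimate just proved at every interior scale: for each $x \in B(p, r)$ set $s_x := \tfrac{1}{10}(r - d(x,p))$, so that $B(x, s_x) \subset B(p, r)$, and observe that the hypotheses of the lemma hold on $B(x, s_x)$ with a rescaled parameter $\delta_x := C\delta r/s_x$ (one checks that $d(x, q_\pm) > \delta_x^{-1} s_x$ provided $\delta$ is small). The inner-ball conclusion at scale $s_x$ then yields $\int_{B(x, s_x/2)\cap\RR} d|\mu_{\triangle b_\pm}| \leq C(n,Y) \delta_x s_x^{n-1}$. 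Assembling these local bounds by a scale-adapted Vitali cover of $B(p, r-\eta)$, using Bishop--Gromov volume comparison (Proposition \ref{Prop:volumecomparison}) to bound the multiplicity and the summation, produces a bound on $B(p, r-\eta)\cap\RR$ uniform in $\eta > 0$; monotone convergence as $\eta \to 0$ gives the bound on $B(p, r)$. Choosing $\delta = \delta(\eps, Y) > 0$ sufficiently small to absorb all accumulated constants completes the proof. The delicate point is ensuring that the scale-dependent factor $\delta_x$ does not spoil the Vitali summation, which requires care in how the cover is organized by scale near $\partial B(p,r)$.
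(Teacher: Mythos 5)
Your high-level strategy (Laplace comparison for the positive part, integration by parts against a cutoff for the excess $e = b_+ + b_- - \mathrm{const}$, then combine via $|\mu| = 2\mu_+ - \mu$) is the same as the paper's. But there is a genuine gap in the cutoff step. You write that Definition~\ref{Def:tameness}(4) hands you a single $\phi \in C^2$ with $\phi \equiv 1$ on $B(p,r/2)\cap\RR$ and $\supp\phi \subset B(p,r)\cap\RR$, and then you apply Proposition~\ref{Prop:weakLaplaciandistance} to get $\int\phi\,d\mu_{\triangle e} = \int e\,\triangle\phi\,dg$. No such $\phi$ exists: the tameness property only gives a \emph{sequence} $\phi_i\colon U_i \to [0,1]$ defined on open sets $U_i$ exhausting $B(p,2r)\cap\RR$, and any function that is $\equiv 1$ on all of $B(p,r/2)\cap\RR$ necessarily has support accumulating on the singular set, so it is not compactly supported in the (incomplete) manifold $\RR$, which is required for Proposition~\ref{Prop:weakLaplaciandistance}. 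The paper handles this by multiplying the tameness cutoff with a second cutoff $\eta_i$ that excises $\{\rrm < 8s_i\}$, so that $\eta_i\phi_i \in C^2_c(\RR)$, and then lets $i\to\infty$; the extra boundary terms $\nabla\eta_i\nabla\phi_i$ and $\nabla\eta_i\,\phi_i\,\nabla e$ are of size $s_i^{-1}|\{\rrm < 8s_i\}| \lesssim s_i^{\mathbf{p}-1}$, which vanishes \emph{precisely because} $\mathbf{p}_0 > 1$. Notice that your proof never uses the hypothesis that the singularities have codimension $\mathbf{p}_0 > 1$ — that is the giveaway that the delicate part has been skipped.

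The Vitali covering argument you propose for upgrading $B(p,r/2)$ to $B(p,r)$ is unnecessary and, as set up, does not converge. At dyadic scale $s_k \sim r\,2^{-k}$ near $\partial B(p,r)$ you place $\sim (r/s_k)^{n-1}$ balls, each contributing $\sim \delta_{x}\,s_k^{n-1}$ with $\delta_x \sim \delta r/s_k$; the scale-$k$ contribution is therefore $\sim \delta r^n/s_k = \delta r^{n-1} 2^k$, which diverges when summed over $k$. The paper sidesteps the whole issue by applying Proposition~\ref{Prop:Uiphii} to $B(p,2r)$ rather than $B(p,r)$, so that $\phi_i \equiv 1$ on $B(p,r)\cap U_i$; the only cost is that one needs the distance-excess hypothesis on a ball slightly larger than $B(p,r)$ (or, equivalently, a conclusion on a slightly smaller ball), which is harmless in the applications.
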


\begin{proof}
Without loss of generality, we may assume that $r = 1$.
Set $b_\pm (x) := d(x, q_\pm)$ and set $b_0 := b_- (p) + b_+ (p)$.
Then by (\ref{eq:distxqpm}), we have
\[ |b_+ + b_- - b_0 | < \delta \textQQqq{on} B(p,1). \]
Moreover, using Laplace comparison (see Proposition \ref{Prop:LaplacecomparisonXX}), we have for small $\delta$
\begin{equation} \label{eq:bpmLapcomp}
 d \mu_{\triangle b_\pm} \leq (n-1) \frac{\cosh (\delta b(\cdot))}{\delta^{-1} \sinh (\delta b(\cdot))} dg \leq 2 (n-1) \delta dg \textQQqq{on} B(p,2) \cap \RR.
\end{equation}
Now apply Proposition \ref{Prop:Uiphii} to $B(p,2)$ to obtain the sequence $s_i \to 0$ the subsets $U_i$ and the cutoff functions $\phi_i : U_i \to [0,1]$.
Next recall that the function $\rrm : X \to [0, \infty)$ is $1$-Lipschitz.
As in the proof of Lemma \ref{Lem:finddomains}, we can find a $C^1$ function $f \in C^1 (\RR)$ such that
\[ \tfrac12 \rrm < f < 2 \rrm \textQQqq{and} |\nabla f | < 2. \]
Let $F : [0, \infty) \to [0,1]$ be a smooth function such that $F \equiv 0$ on $[0,2]$ and $F \equiv 1$ on $[4, \infty)$.
For every $i = 1, 2, \ldots$, we define $\eta_i \in C^2 (\RR)$ by
\[ \eta_i(x) := F(s_i^{-1} f(x)) \]
Then $\supp \eta_i \cap B(p,2)  \subset \{ \rrm > s_i \} \cap B(p,2)  \subset U_i$, $\eta_i \equiv 1$ on $\{ \rrm > 8 s_i \}$ and $|\nabla \eta_i | < C s_i^{-1}$.
It follows that $\eta_i \phi_i \in C^2_c (U_i)$ and $\eta_i \phi_i \to 1$ pointwise on $B(p, 1) \cap \RR$ as $i \to \infty$.
So, by Proposition \ref{Prop:weakLaplaciandistance} we have
\begin{align*}
 \int_{\RR} \eta_i \phi_i & (d\mu_{\triangle  b_- } + d\mu_{\triangle b_+} )  = - \int_{\RR} \nabla (\eta_i \phi_i) \nabla (b_- + b_+ - b_0) \\
 &= - \int_{\RR} (\eta_i \nabla \phi_i) \nabla (b_- + b_+ - b_0) - \int_{\RR} (\nabla \eta_i) \phi_i \nabla (b_- + b_+ - b_0) \\
 &=  \int_{\RR} (\nabla \eta_i \nabla \phi_i + \eta_i \triangle \phi_i)  (b_- + b_+ - b_0) - \int_{\RR} (\nabla \eta_i) \phi_i \nabla (b_- + b_+ - b_0).
\end{align*}
Fix some $1 < \mathbf{p} < \mathbf{p}_0$.
So for a generic constant $C_* < \infty$ that may depend on $\XX$ or $p$, but not on $i$, and some uniform $C = C(Y) < \infty$ we have
\begin{multline*}
 \bigg| \int_{\RR} \eta_i \phi_i (d\mu_{\triangle  b_- } + d\mu_{\triangle b_+} ) \bigg| \leq C_* s_i^{-1} |\{ \rrm \leq 8s_i \} \cap \RR| \cdot \sup_{B(p,2)} |b_- + b_+ - b_0| \\ + C  \sup_{B(p,2)} |b_- + b_+ - b_0| + C_* s_i^{-1}  |\{ \rrm \leq 8s_i \} \cap \RR| \\
 \leq C_* s_i^{-1 + \mathbf{p}} \delta + C  \delta  + C_* s_i^{-1+\mathbf{p}}.
\end{multline*}
It follows that
\[ \limsup_{i \to \infty} \bigg| \int_{\RR}  \eta_i \phi_i (d\mu_{\triangle  b_- } + d\mu_{\triangle b_+} ) \bigg| \leq C \delta. \]
In combination with (\ref{eq:bpmLapcomp}), this yields
\[ \limsup_{i \to \infty} \int_{\RR} \eta_i \phi_i d \big| \mu_{\triangle  b_- } + \mu_{\triangle b_+} \big| \leq C \delta. \]
Since $\eta_i \phi_i \to 1$ pointwise on $B(p,1) \cap \RR$, we obtain by Fatou's Lemma that
\[ \int_{B(p,1) \cap \RR} d \big| \mu_{\triangle  b_- } + \mu_{\triangle b_+}  \big| \leq C \delta. \]
The bound (\ref{eq:Lapbpmmass}) now follows from the fact that
\[ d|\mu_{\triangle b_\pm} | \leq  d|\mu_{\triangle  b_- } + \mu_{\triangle b_+} |+ d (\mu_{\triangle b_+} )_+ + d (\mu_{\triangle b_-} )_+ \]
and (\ref{eq:bpmLapcomp}) for sufficiently small $\delta$.
Here the last two terms denote the positive parts of the measures $\mu_{\triangle b_\pm}$.
\end{proof}

We can now derive a partial version of Proposition \ref{Prop:GHsplittingepssplitting}.

\begin{Lemma} \label{Lem:exsplittingver1}
For any $\eps > 0$, $\mathbf{p}_0 > 3$ and $Y < \infty$ there is a $\delta = \delta (\eps, \mathbf{p}_0, Y) > 0$ such that the following holds:

Let $\XX = (X, d, \RR, g)$ be a singular space with mild singularities of codimension $\mathbf{p}_0$ and assume that $\XX$ is $Y$-tame at scale $\delta^{-1} r$ for some $r > 0$.
Assume moreover that $\Ric \geq - \delta^2 (n-1) r^{-2}$ on $\RR$.
Let $p \in X$ be a point, let $k \in \{ 1, \ldots, n \}$, let $(Z, d_Z, z)$ be a pointed metric space and assume that (\ref{eq:GHZX}) holds.
Then there is a continuous map $u = (u^1, \ldots, u^k) : B^X(p, r) \to \IR^k$ such that the following holds:
\begin{enumerate}[label=(\alph*)]
\item There is a map $\zeta : B^X (p,r) \to Z$ with $d(\zeta (p), z) < \eps r$ such that the map $(\zeta, u^1, \ldots, u^k) : B^X(p,r) \to Z \times \IR^k$ is an $\eps r$-Gromov-Hausdorff approximation between $B^X(p,r)$ and $B^{Z \times \IR^k}((z,u(p)), r)$.
\item $u^l$ is harmonic for each $l = 1, \ldots, k$.
\item For each $l = 1, \ldots, k$, we have
\[  r^{-n} \int_{B^X(p,r) \cap \RR} \big| |\nabla u^l |^2 - 1 | < \eps^2. \]
\end{enumerate}
\end{Lemma}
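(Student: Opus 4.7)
The plan is to follow Colding's original splitting argument, adapted to the singular setting via the harmonic approximation built into the $Y$-tameness assumption. First, using the $\delta r$-Gromov-Hausdorff approximation $\Psi \colon B^X(p, \delta^{-1} r) \to B^{Z \times \IR^k}((z, 0^k), \delta^{-1} r)$, pick for each $l = 1, \ldots, k$ two regular points $q_{\pm}^l \in \RR$ that are approximate preimages under $\Psi$ of $(z, \pm R e_l)$ with $R$ slightly less than $\delta^{-1} r$; density of $\RR$ allows us to perturb into $\RR$ without cost. The triangle inequality combined with the GH-approximation gives, for all $x \in B(p, r)$,
\[ \bigl|\bigl(d(x, q_-^l) + d(x, q_+^l)\bigr) - \bigl(d(p, q_-^l) + d(p, q_+^l)\bigr)\bigr| = O(\delta r), \]
so Lemma \ref{Lem:trianglebpmsmall} yields $\int_{B(p,r) \cap \RR} d|\mu_{\triangle b_{\pm}^l}| = o_\delta(r^{n-1})$, where $b_{\pm}^l(x) := d(x, q_{\pm}^l)$. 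Apply item (2) of the $Y$-tameness properties to obtain harmonic functions $h_{\pm}^l$ with $\int |\nabla(h_{\pm}^l - b_{\pm}^l)|^2 = o_\delta(r^n)$ and $\int |h_{\pm}^l - b_{\pm}^l|^2 = o_\delta(r^{n+2})$. Set $u^l := \tfrac12(h_-^l - h_+^l)$ and $\tilde b^l := \tfrac12(b_-^l - b_+^l)$. Harmonicity of $u^l$ on $B(p, r)$ in the sense of Definition \ref{Def:XXharmonic} follows from Corollary \ref{Cor:harmonic}, giving (b).

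For property (c), note first that since $b_{\pm}^l$ are distance functions and $\XX$ has mild singularities, $|\nabla b_{\pm}^l| = 1$ almost everywhere on $\RR$ by Proposition \ref{Prop:expincomplete}(e). Integration by parts against a cutoff function $\phi$ from Proposition \ref{Prop:Uiphii}, using the small total variation of $\mu_{\triangle(b_-^l + b_+^l)}$ and the near-constancy of $b_-^l + b_+^l$, gives
\[ \int_{B(p,r) \cap \RR} |\nabla b_-^l + \nabla b_+^l|^2 \phi \, dg = o_\delta(r^n), \]
which forces $\langle \nabla b_-^l, \nabla b_+^l \rangle$ to be close to $-1$ in $L^1$-average, so $|\nabla \tilde b^l|^2$ is close to $1$ in $L^1$-average. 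The polarization identity $|\nabla u^l|^2 - |\nabla \tilde b^l|^2 = \langle \nabla(u^l - \tilde b^l), \nabla(u^l + \tilde b^l)\rangle$ combined with the Cauchy–Schwarz bound on the $L^2$-norms of both factors (the second factor controlled since $\tilde b^l$ is $1$-Lipschitz and $u^l$ has bounded gradient by Cheng–Yau, Corollary \ref{Cor:ChengYau}) then yields (c).

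For property (a), take $\zeta$ to be the $Z$-component of $\Psi$. Unpacking the GH estimate, one checks directly that $d(x, q_{\pm}^l) = R \mp \pi_l \Psi(x) + O(\delta r)$ for the $\IR^k$-coordinates $\pi_l \Psi$, so $\tilde b^l$ is $O(\delta r)$-close to $\pi_l \Psi$ pointwise on $B(p, r)$. Thus $(\zeta, u)$ will be the desired GH approximation once we upgrade the $L^2$-closeness of $u^l$ to $\tilde b^l$ to a $C^0$-closeness on a slightly smaller ball — and this is the main obstacle, since in the singular setting we cannot invoke maximum principle arguments that reach into $X \setminus \RR$. I would handle this by applying Corollary \ref{Cor:ChengYau} to bound $|\nabla u^l|$ uniformly by a constant depending only on $Y$ and the oscillation of $u^l$ (which is controlled a priori by that of $\tilde b^l$), making $u^l - \tilde b^l$ uniformly Lipschitz on the regular part; a standard covering/segment-inequality argument via Proposition \ref{Prop:segmentinequ} then converts the $L^2$-smallness of $u^l - \tilde b^l$ into pointwise smallness on a slightly smaller ball, since a Lipschitz function that exceeded $\eta r$ somewhere would exceed $\eta r / 2$ on a ball of definite volume, violating the $L^2$ estimate. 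Finally, in the case $k = n$, after replacing each $u^l$ by $u^l - u^l(p_*)$ for some basepoint $p_* \in \RR$ near $p$, the GHA property of $(\zeta, u)$ combined with the dimension constraint (which forces the $Z$-factor to be essentially trivial at this scale by volume comparison, Proposition \ref{Prop:volumecomparison}) yields $\bigl||u(x)| - d(x, p)\bigr| < \eps r$.
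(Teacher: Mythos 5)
Your proposal follows the same skeleton as the paper's proof: pick approximate preimages $q_\pm^l \in \RR$ of the coordinate points on the boundary of the large ball, invoke Lemma~\ref{Lem:trianglebpmsmall} to get smallness of $\int d|\mu_{\triangle b_\pm^l}|$, use item (2) of $Y$-tameness to produce harmonic approximations, verify (c) from the $W^{1,2}$-closeness, and then upgrade $L^2$-closeness to $C^0$-closeness via the Cheng--Yau gradient bound plus a Lipschitz-and-volume argument to get (a).

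The one substantive deviation is your choice $u^l := \tfrac12(h_-^l - h_+^l)$, which symmetrizes between the two distance functions. The paper simply sets $u^l := h_+^l$, the harmonic approximation of $b_+^l$ alone. This is genuinely simpler for item (c): since $|\nabla b_+^l| = 1$ almost everywhere on $\RR$ (Proposition~\ref{Prop:expincomplete}(e)), the identity $|\nabla u^l|^2 - 1 = |\nabla(u^l - b_+^l)|^2 + 2\,\nabla b_+^l \cdot \nabla(u^l - b_+^l)$ together with Cauchy--Schwarz gives (c) directly, with no need for any statement about $|\nabla \tilde b^l|$. Your route instead requires establishing $\int_{B(p,r)\cap\RR} |\nabla(b_-^l + b_+^l)|^2 \phi \, dg = o_\delta(r^n)$, which you justify by an integration by parts against a cutoff from Proposition~\ref{Prop:Uiphii}. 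This can be made to work, but since $b_-^l + b_+^l$ is only Lipschitz, the test function $\phi\,(b-c)$ is not $C^2$ and the vector field $\phi\,\nabla b$ is not $C^1_c$, so neither half of Proposition~\ref{Prop:weakLaplaciandistance} applies directly; one has to go through a smoothing argument (or, alternatively, observe that $h_-^l + h_+^l$ is harmonic with small oscillation, hence has small gradient by Corollary~\ref{Cor:ChengYau}, and subtract). Both your version and the paper's give the same result, but the paper's choice of $u^l$ cleanly sidesteps this extra cancellation step; yours parallels Colding's original argument more closely at the cost of a technical burden that the singular setting makes slightly delicate. Two small remarks: your reference to the segment inequality (Proposition~\ref{Prop:segmentinequ}) in the $L^2$-to-$C^0$ upgrade is not needed --- the Lipschitz-plus-volume-lower-bound argument you then describe is exactly what the paper uses and is sufficient on its own --- and your final sentence about $k = n$ and the bound $\bigl||u(x)| - d(x,p)\bigr| < \eps r$ addresses (\ref{eq:udeps}), which belongs to Proposition~\ref{Prop:GHsplittingepssplitting}, not to this lemma.
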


\begin{proof}
Without loss of generality, we may assume that $r = 1$.
The constant $\delta > 0$ will be determined in the course of the proof.
Choose points $q_{\pm}^1, \ldots, q_{\pm}^k \in \RR$ that correspond to points $(z,(\pm \frac12 \delta^{-1}, 0, \ldots, 0)), \ldots, (z, (0, \ldots, 0, \pm \frac12 \delta^{-1}))$ in a $\delta$-Gromov-Hausdorff approximation of $B^X(x, \delta^{-1})$ as in (\ref{eq:GHZX}) and set
\[ b_\pm^l (x) := d(x, q_{\pm}^l ) \]
for each $l = 1, \ldots, k$.
By Lemma \ref{Lem:trianglebpmsmall}, we have that
\[ \int_{\ov{B(p,10)} \cap \RR} d | \mu_{\triangle b^l_\pm}| < \Psi( \delta | Y), \]
where $\Psi (\delta | Y)$ denotes a constant whose value goes to $0$ as $\delta \to 0$ if $Y$ is kept fixed.
Using the second tameness property in Definition \ref{Def:tameness} and Corollary \ref{Cor:harmonic}, we find a harmonic $u : B^X (p, 10) \to \IR^k$ such that for all $l = 1, \ldots, k$
\begin{align}
 \int_{B(p,10) \cap \RR} |\nabla (u^l - b_+^l) |^2 &< \Psi (\delta | Y), \notag \\
  \int_{B(p,10) \cap \RR} |u^l - b_+^l |^2 &< \Psi (\delta | Y). \label{eq:ulbplclose}
\end{align}
Assertion (b) follows immediately.
For assertion (c) observe that $| \nabla b_+^l | = 1$ away from a set of measure zero.
So
\begin{multline*}
 \int_{B(p,1) \cap \RR} \big| |\nabla u^l |^2 - 1 \big| \leq \int_{B(p,1) \cap \RR} | \nabla u^l - \nabla b^l_+ |^2 + 2 \int_{B(p,1) \cap \RR} \big| \nabla b^l_+ \cdot \nabla (u^l - b^l_+) \big| \\
 \leq \Psi (\delta | Y) + C \bigg( \int_{B(p,1) \cap \RR} \big| \nabla (u^l - b^l_+) \big|^2 \bigg)^{1/2} \leq \Psi (\delta |Y)
\end{multline*}

It remains to show assertion (a).
For this, observe that by Corollary \ref{Cor:ChengYau} there is a uniform constant $C < \infty$ such that  $|\nabla u^l | < C$ on $B(p,4) \cap \RR$.
As $(X,d)$ is the metric completion of $(\RR, g)$, this implies that $u^l$ is $C$-Lipschitz on $B(p,2)$ and thus that $u^l - b^l_+$ is $(C+1)$-Lipschitz on $B(p,2)$.
It follows that if $|u^l - b^l_+| (y) > a$ for some $y \in B(p,1)$, then $|u^l - b^l_+|  > a/2$ on $B(p, \frac12 a (C+1)^{-1})$.
So, using (\ref{eq:ulbplclose}), this shows that
\[ |u^l - b^l_+| < \Psi (\delta | Y) \textQQqq{on} B^X(p,1). \]
This finishes the proof of assertion (a).
\end{proof}

In order to obtain the $L^2$-Hessian estimate (\ref{eq:Hessianestimate}), we will use the following lemma.

\begin{Lemma} \label{Lem:Hessianbound}
For any $Y < \infty$ there is a constant $C = C(Y) < \infty$ such that the following holds:

Let $\XX = (X, d, \RR, g)$ be a singular space with singularities of codimension $\mathbf{p}_0 > 2$ and assume that $\XX$ is $Y$-tame at scale $2 r$ for some $r > 0$.
Assume moreover that $\Ric \geq - (n-1) \kappa r^{-2}$ on $\RR$ for some $0 \leq \kappa \leq 1$.
Let $p \in X$ be a point and consider a harmonic function $u : B(p, 2r) \to \IR$.
Then
\[ r^{2-n} \int_{B(p,r) \cap \RR} | \nabla^2 u |^2 \leq C \inf_{a \in \IR} \bigg( r^{-n} \int_{B(p,2r) \cap \RR} \big| |\nabla u|^2 - a \big| + C \kappa a  \bigg). \]
\end{Lemma}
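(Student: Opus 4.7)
The natural approach is the standard Bochner identity combined with integration by parts against a $Y$-tame cutoff. On $\RR$, since $u$ is harmonic ($\triangle u = 0$) and $\Ric \geq -(n-1)\kappa r^{-2}$, Bochner's formula gives
\[
|\nabla^2 u|^2 \;=\; \tfrac12 \triangle |\nabla u|^2 - \Ric(\nabla u, \nabla u) \;\leq\; \tfrac12 \triangle\bigl(|\nabla u|^2 - a\bigr) + (n-1)\kappa r^{-2} |\nabla u|^2,
\]
for any constant $a \in \IR$, using that $\triangle a = 0$. The presence of the free parameter $a$ is precisely what allows us to replace $|\nabla u|^2$ by its oscillation around $a$ after integration.

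Next, I would choose cutoff functions $\phi_i : U_i \to [0,1]$ from Definition~\ref{Def:tameness}(4) applied at scale $2r$ (legitimate since $\XX$ is $Y$-tame at scale $2r$), so that $\phi_i$ is supported in $U_i \cap B(p, 2r)$, equals $1$ on $U_i \cap B(p,r)$, and satisfies $|\nabla \phi_i| < Y(2r)^{-1}$, $|\triangle \phi_i| < Y(2r)^{-2}$, with $U_1 \subset U_2 \subset \cdots \subset B(p, 4r)\cap\RR$ exhausting $B(p,4r) \cap \RR$. Since $u \in C^3(\RR)$, the function $|\nabla u|^2$ is $C^2$ on $\RR$, and $\phi_i^2$ extends by zero to a compactly supported $C^2$ function on $\RR$; classical integration by parts on the open manifold $(\RR, g)$ (two iterations) gives
\[
\int_{\RR} \phi_i^2 \cdot \triangle\bigl(|\nabla u|^2 - a\bigr)\, dg \;=\; \int_{\RR} \triangle(\phi_i^2) \cdot \bigl(|\nabla u|^2 - a\bigr)\, dg.
\]
Multiplying the Bochner inequality by $\phi_i^2$, integrating, and plugging in $|\triangle(\phi_i^2)| \leq 2\phi_i|\triangle \phi_i| + 2|\nabla \phi_i|^2 \leq C(Y) r^{-2}$, together with $|\nabla u|^2 \leq \bigl||\nabla u|^2 - a\bigr| + a$ in the Ricci term, yields
\[
\int_{\RR} \phi_i^2 |\nabla^2 u|^2 \,dg \;\leq\; C(Y) r^{-2} \!\int_{B(p,2r)\cap\RR} \!\bigl||\nabla u|^2 - a\bigr|\, dg + (n-1)\kappa r^{-2}\! \int_{B(p,2r)\cap\RR} \!\!\phi_i^2\bigl(\bigl||\nabla u|^2 - a\bigr| + a\bigr) dg.
\]
The $\kappa \cdot \bigl||\nabla u|^2 - a\bigr|$ contribution is absorbed into the first term (since $\kappa \leq 1$), while $\kappa \cdot a$ is bounded via the volume estimate $|B(p, 2r)\cap \RR| \leq Y(2r)^n$ from Definition~\ref{Def:tameness}(1), giving a term of the form $C(Y) \kappa a r^{n-2}$.

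Finally, sending $i \to \infty$, monotone convergence on the left (using $\phi_i \nearrow \chi_{B(p,r)\cap\RR}$ on $B(p,r)\cap\RR$ by $(4a)$ and $(4c)$) yields
\[
\int_{B(p,r)\cap\RR} |\nabla^2 u|^2 \,dg \;\leq\; C(Y) r^{-2} \int_{B(p,2r)\cap\RR} \bigl||\nabla u|^2 - a\bigr| dg + C(Y) \kappa a r^{n-2},
\]
and multiplying through by $r^{2-n}$ and taking the infimum over $a$ gives the claim. The main technical obstacle is the integration by parts with $\phi_i^2$: one must verify that $|\nabla u|^2$ is genuinely $C^2$ where $\phi_i^2$ is supported (which follows from the $C^3$-regularity of $u$ on $\RR$ and the fact that $\supp \phi_i$ is compact in $\RR$ by $(4b)$), so that no boundary terms arise from the singular set; this is precisely what the $Y$-tame cutoff construction is designed to handle.
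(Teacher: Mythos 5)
There is a genuine gap in the integration-by-parts step. You assert that the cutoff functions from Definition~\ref{Def:tameness}(4) satisfy ``$\supp \phi_i$ is compact in $\RR$ by (4b),'' and use this to justify two integrations by parts with no boundary terms. But (4b) only says that $\phi_i$ vanishes on a neighborhood of $U_i \setminus B(p,r)$ (after rescaling), i.e.\ near the \emph{outer} boundary $\partial B(p,r)$; it says nothing about the behavior of $\phi_i$ near the part of $\partial U_i$ lying \emph{inside} $B(p,r)$, which can accumulate on the singular set $X \setminus \RR$. Indeed, (4c) forces $\phi_i \equiv 1$ on $U_i \cap B(p,r/2)$, so if a sequence $y_k \in U_i \cap B(p,r/2)$ converges to a singular point $z \in X \setminus \RR$ (which in general happens, since the $U_i$ exhaust $B(p,2r) \cap \RR$), then $\phi_i(y_k) = 1 \not\to 0$, and $z$ belongs to the closure of $\{\phi_i \neq 0\}$ in $X$. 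Hence $\supp \phi_i$ is \emph{not} contained in $\RR$, $\phi_i^2$ does not extend by zero to a $C^2$ function on $\RR$, and the divergence theorem cannot be applied over $\RR$ without boundary contributions.

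The paper handles this by invoking Proposition~\ref{Prop:Uiphii} instead of Definition~\ref{Def:tameness}(4) directly. That proposition, which combines the tameness cutoffs with the exhaustion from Lemma~\ref{Lem:finddomains}, produces domains $U_i$ that are closed in $B(p,2r)$ with $\rrm > s_i$ on $U_i$, $C^2$ boundary $\partial_s U_i$, and the crucial bound $\mathcal{H}^{n-1}(\partial_s U_i) < C s_i^{\mathbf{p}-1}$. One then does a \emph{single} integration by parts against $\phi_i$ (not $\phi_i^2$) over the compact domain $U_i$, accepting boundary terms on $\partial_s U_i$. These are controlled using local elliptic estimates: after shrinking the ball slightly, $|\nabla u|, |\nabla u|^2 - a$ are bounded and $|\nabla^2 u| \leq C_* \rrm^{-1} \leq C_* s_i^{-1}$ on $U_i$, so the boundary integrals are $O(s_i^{\mathbf{p}-1}(s_i^{-1}+1)) = O(s_i^{\mathbf{p}-2}) \to 0$ precisely because $\mathbf{p}_0 > 2$. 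The rest of your outline — Bochner's identity, the free constant $a$, the bound $|\triangle(\phi_i^2)| \leq C(Y)r^{-2}$, absorbing $\kappa ||\nabla u|^2 - a|$ into the oscillation term and bounding $\kappa a$ by the volume estimate — all matches the paper and is correct; only the justification of integration by parts is missing. Your proof can be repaired either by switching to Proposition~\ref{Prop:Uiphii} and tracking boundary terms, or by verifying the hypotheses of Proposition~\ref{Prop:integrationbyparts} for the vector field $Z = \phi_i^2 \nabla(|\nabla u|^2) - (|\nabla u|^2 - a)\nabla(\phi_i^2)$, but in the latter case $Z$ must first be defined on all of $\RR$, which again requires the controlled-boundary construction.
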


\begin{proof}
Without loss of generality, we may assume that $r = 1$.
By passing to a slightly smaller ball, we may assume that $|u|, |\nabla u| < C_*$ on $B(p,2) \cap \RR$ for some $C_* < \infty$.
The derivative bound can be assumed since by Corollary \ref{Cor:harmonic}, the function $u$ is locally Lipschitz.
By local elliptic regularity and by passing to a yet slightly smaller ball, we may also assume that $|\nabla^2 u | < C_* \rrm^{-1}$ on $B(p,2) \cap \RR$.

Invoke Proposition \ref{Prop:Uiphii} for the ball $B(p, 2)$ and consider the sequence $s_i \to 0$, the subsets $U_i \subset B(p,2) \cap \RR$ and the cutoff functions $\phi_i : X \to [0,1]$.
Then, using Bochner's identity,
\begin{equation} \label{eq:Bochner}
 \triangle \big( |\nabla u|^2 - a \big) = 2 |\nabla^2 u |^2 + 2 \Ric (\nabla u, \nabla u) \geq 2 |\nabla^2 u|^2 - 2(n-1) \kappa |\nabla u|^2, 
\end{equation}
we obtain for any $a \in \IR$ that for some uniform constant $C = C(Y) < \infty$ and some constant $C_{**} < \infty$, which may depend on $C_*$ and $a$, but not on $i$, and some $2 < \mathbf{p} < \mathbf{p}_0$.

\begin{align*}
2 \int_{U_i} & \big( |\nabla^2 u |^2 - (n-1) \kappa |\nabla u|^2 \big) \phi_i \leq \int_{U_i} \triangle \big( |\nabla u |^2 - a \big) \phi_i \\
&\leq \int_{\partial_s U_i} |\nabla^2 u| |\nabla u| \phi_i   - \int_{U_i}  \nabla \big( |\nabla u |^2 - a \big) \nabla \phi_i \\
&\leq \int_{\partial_s U_i} \Big( |\nabla^2 u| |\nabla u| \phi_i + \big| |\nabla u|^2 - a \big| |\nabla \phi_i| \Big)  + \int_{U_i} \big( |\nabla u |^2 - a \big) \triangle \phi_i \\
&\leq (C^2_* + (C^2_* + a)Y) \mathcal{H}^{n-1} (\partial_s U_i) (s_i^{-1}+1) + C \int_{B(p, 2) \cap \RR} \big| |\nabla u |^2 - a \big| \\
&\leq C_{**} s_i^{\mathbf{p}-1} (s_i^{-1}+1) + C \int_{B(p, 2) \cap \RR} \big| |\nabla u |^2 - a \big|.
\end{align*}
Since $\phi_i \to 1$ on $B(p,1) \cap \RR$, we obtain as $i \to \infty$
\begin{multline*}
 \int_{B(p,1) \cap \RR} |\nabla^2 u |^2 \leq C \int_{B(p,2) \cap \RR} \big| |\nabla u|^2 - a \big| + C \kappa \int_{B(p,2) \cap \RR} |\nabla u |^2  \\
 \leq C \int_{B(p,2) \cap \RR} \big| |\nabla u|^2 - a \big| + C \kappa a.
\end{multline*}
This finishes the proof.
\end{proof}

For the gradient estimate (\ref{eq:epssplittinggradientestimate}), we will use the following lemma.

\begin{Lemma} \label{Lem:stronggradientestimate}
For any $\mathbf{p}_0 > 2$ and $Y < \infty$ there is a constant $C = C(\mathbf{p}_0, Y) < \infty$ such that the following holds:

Let $\XX = (X, d, \RR, g)$ be a singular space with singularities of codimension $\mathbf{p}_0$ and assume that $\XX$ is $Y$-tame at some scale $r > 0$.
Assume moreover that $\Ric \geq - (n-1) \kappa r^{-2}$ on $\RR$ for some $0 \leq \kappa \leq 1$.
Let $p \in X$ be a point and consider a harmonic function $u : B(p, 2r) \to \IR$.
Then for any $a  \geq 0$ and $y \in B(p,r) \cap \RR$ we have
\begin{multline*}
 |\nabla u |^2 (y) \leq a + C \kappa a + C  a^{1/2} \bigg( r^{2-n} \int_{B(p,2r) \cap \RR} | \nabla^2 u |^2  \bigg)^{1/2} \\
 +  C r^{2-n} \int_{B(p,2r) \cap \RR} | \nabla^2 u |^2 + C r^{-n} \int_{B(p,2r) \cap \RR} \big| |\nabla u|^2 - a \big|. 
\end{multline*}
\end{Lemma}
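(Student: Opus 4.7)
The plan is to combine the Bochner identity with a Duhamel-type representation against the heat kernel $K$ provided by $Y$-tameness property (5). Without loss of generality normalize $r = 1$. Set $v := |\nabla u|^2$. Because $u$ is harmonic on $\RR$, Bochner and the Ricci lower bound give
\[
\triangle v = 2|\nabla^2 u|^2 + 2\Ric(\nabla u, \nabla u) \geq 2|\nabla^2 u|^2 - 2(n-1)\kappa v \quad \text{on } \RR.
\]
Fix a cutoff $\phi$ from $Y$-tameness property (4) supported on $B(p,2)$ and equal to $1$ on $B(p, 3/2)$, with $|\nabla\phi|, |\triangle\phi| \leq Y$. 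For $y \in B(p,1) \cap \RR$, consider
\[
V(t) := \int_{\RR} K(y, x, t)\, \phi(x)\, v(x)\, dg(x),
\]
which tends to $v(y)$ as $t \to 0^+$ (using the concentration of $K$ together with the local Lipschitz regularity of $v$ near $y$ supplied by Corollary \ref{Cor:ChengYau}).

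I would then compute $V'(t)$ by integration by parts against $K$, expanding
\[
\triangle(\phi v) = \phi\triangle v + 2\nabla\phi\cdot\nabla v + v\triangle\phi,
\]
and using the Bochner lower bound on $\phi\triangle v$. Integrating from $0$ to $T = 1$ and discarding the resulting non-negative $2\int\!\int K\phi|\nabla^2 u|^2$ term yields
\[
v(y) \leq V(1) + 2(n-1)\kappa\!\int_0^1\!\int K\phi v - 2\!\int_0^1\!\int K\nabla\phi\cdot\nabla v - \!\int_0^1\!\int K v\triangle\phi.
\]
Each of the four terms is then estimated using the Gaussian bound in property (5e): $V(1) \leq a + C\int_{B(p,2)\cap\RR}|v-a|$ by writing $v = a + (v-a)$ and using $K(y,\cdot,1) \leq Y$ together with $\int K \leq 1$; the Ricci correction contributes $C\kappa a$ from the constant part $v = a$ (since $\int_0^1\!\int K\phi \leq 1$) and is absorbable on the variance part since $\kappa \leq 1$; the annulus terms involving $\nabla\phi$ and $\triangle\phi$ are supported at distance $\geq 1/2$ from $y$, so the Gaussian factor $e^{-d^2/(Ys)}$ makes $\int_0^1 K(y,x,s)\,ds$ uniformly bounded there. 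For the cross term, Kato's inequality gives $|\nabla v| \leq 2\sqrt{v}|\nabla^2 u|$, and Cauchy--Schwarz followed by Young's inequality produces
\[
\Bigl|2\!\int_0^1\!\!\int K\nabla\phi\cdot\nabla v\Bigr| \;\leq\; C\Bigl(\textstyle\int_{A} v\Bigr)^{1/2}\Bigl(\int_{A}|\nabla^2 u|^2\Bigr)^{1/2} \;\leq\; Ca^{1/2}H^{1/2} + CH + C\!\!\int|v-a|,
\]
where $A = A(p, 3/2, 2)$ and $H = \int_{B(p,2)\cap\RR}|\nabla^2 u|^2 = r^{2-n}\int|\nabla^2 u|^2$ in the normalized scale; here the split $\int_A v \leq Ca + \int|v-a|$ generates the exact combination demanded by the conclusion.

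The main obstacle will be the rigorous justification of integration by parts on $\RR$: formulas such as $V'(t) = \int K\triangle(\phi v)\,dg$ involve second derivatives of $\phi v$ on the regular part and must be established by approximation. I would carry this out by replacing the integration domain with the closed subdomains $U_i$ from Proposition \ref{Prop:Uiphii}, inserting the singularity cutoffs $\phi_i$, and showing that the boundary contributions on $\partial_s U_i$ vanish as $i \to \infty$. This uses the measure bound $\mathcal{H}^{n-1}(\partial_s U_i) \leq C s_i^{\mathbf{p}-1}$ with $2 < \mathbf{p} < \mathbf{p}_0$, together with the local upper bounds on $|\nabla K|$ (from parabolic regularity applied to $K$ on regular parabolic neighborhoods, as done in the proof of Proposition \ref{Prop:ChengYau-heatequation}), which control the boundary integrand by $s_i^{-O(1)}$. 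The secondary technical point is that $\int_0^1 K(y,x,s)\,ds$ is singular like $d(x,y)^{2-n}$ near $x = y$; this only affects the Ricci correction, where the singular mass is integrated against $\phi v$, and the divergent $s^{-n/2}$ contribution at short times is cancelled by the $L^1$ normalization $\int K\,dg \leq 1$ after Fubini, so the final contribution is $\leq C\kappa a + C\kappa\int|v-a|$ as desired.
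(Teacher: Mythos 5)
Your approach is essentially the same as the paper's: represent $|\nabla u|^2(y)$ via the heat kernel $K$ from the $Y$-tameness properties, apply the Bochner identity inside a Duhamel-type calculation, handle the cross terms involving $\nabla\phi$ and $\triangle\phi$ with the Gaussian bound and a Cauchy--Schwarz/Young split of $\int v \leq Ca + \int|v-a|$, and justify the integrations by parts by exhausting the regular set with the $U_i$, $\phi_i$ from Proposition~\ref{Prop:Uiphii}.

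However, there is one genuine gap: your treatment of the Ricci term. After integrating $V'(t) \geq -2(n-1)\kappa V(t) - g(t)$ naively over $t\in[0,1]$, you are left with the term $2(n-1)\kappa\int_0^1\!\int K\phi\,v$, and after the split $v = a+(v-a)$ you still need to control $\kappa\int_0^1\!\int K(y,\cdot,t)\phi\,(v-a)\,dg\,dt$. Your proposed argument — that the short-time divergence $t^{-n/2}$ is ``cancelled by the $L^1$ normalization $\int K\,dg\leq 1$ after Fubini'', yielding a bound $\leq C\kappa a + C\kappa\int|v-a|$ — does not work. The difficulty is that $\int K(y,\cdot,t)\,dg\leq 1$ controls the integral in space for each $t$, but what you need is a bound of $\int K(y,\cdot,t)\phi|v-a|\,dg$ by $\int|v-a|$, and this fails near $x=y$ where $K(y,x,t)\sim t^{-n/2}$ and you have no pointwise control on $|v-a|$ (short of invoking local elliptic regularity, which would introduce a factor $\rrm^{-1}(y)$ and destroy uniformity). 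Equivalently, after Fubini the kernel $\int_0^1 K(y,x,t)\,dt \sim d(y,x)^{2-n}$ is unbounded, so it cannot be absorbed against an $L^1$ bound on $v-a$. The paper avoids this entirely: once the differential inequality $V'(t) \geq -2(n-1)\kappa V(t) - g(t)$ is in hand, one multiplies by the integrating factor $e^{2(n-1)\kappa t}$ (a Gronwall argument) before integrating in $t$; since $\kappa\leq 1$ and $t\in[0,1]$, this factor is bounded by $e^{2(n-1)}$ and contributes only a harmless multiplicative constant, eliminating the need to estimate $\int_0^1 V(t)\,dt$ at all. That one-line repair makes your argument complete and brings it in line with the paper's proof.
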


\begin{proof}
Without loss of generality, we may assume that $r = 1$ and, as in the proof of Lemma \ref{Lem:Hessianbound}, we may assume that $|u|, |\nabla u|$ are uniformly bounded.
Also fix some $2 < \mathbf{p} < \mathbf{p}_0$.

Let $y \in B(p,1) \cap \RR$ and apply Proposition \ref{Prop:Uiphii} to $B(y, 1/2)$ to obtain the sequence $s_i \to 0$, the subsets $U_i \subset B(y,1/2) \cap \RR$ and the cutoff functions $\phi_i$.
We can again deduce an estimate of the form $|\nabla^2 u | < C_* \rrm^{-1}$ on $B(p, 2) \cap \RR$ and a similar local gradient estimate for the heat kernel $K (y, \cdot, t)$ on $\partial_s U_i$ for large $i$.
By Bochner's identity (\ref{eq:Bochner}), we have for some uniform generic constant $C = C(Y) < \infty$ and some generic constant $C_{**} < \infty$, which may depend on $\XX$, $p$, $\mathbf{p}$ and $C_*$, that for large $i$
\begin{align*}
 \frac{d}{dt} \int_{U_i} & K(y, \cdot, t) \big( |\nabla u |^2 - a \big) \phi_i  = \int_{U_i} \triangle K(y, \cdot, t) \big( |\nabla u |^2 - a \big) \phi_i  \\
 &\geq - \int_{\partial_s U_i} |\nabla K(y, \cdot, t) | \big| |\nabla u |^2 - a \big| \phi_i 
  - \int_{U_i} \nabla K(y, \cdot, t) \nabla \big( \big( |\nabla u |^2 - a \big) \phi_i \big) \displaybreak[1] \\
  &\geq - C_{**} \mathcal{H}^{n-1} (\partial_s U_i) s_i^{-1} - \int_{\partial_s U_i} K(y, \cdot, t)  \big|\nabla \big( \big( |\nabla u |^2 - a \big) \phi_i \big) \big| \displaybreak[1] \\
  &\qquad + \int_{U_i} K(y, \cdot, t) \triangle \big( \big( |\nabla u |^2 - a \big) \phi_i \big) \displaybreak[1] \\
  &\geq - C_{**} s_i^{\mathbf{p}-2} - C_{**} \mathcal{H}^{n-1} (\partial_s U_i) s_i^{-1} +  \int_{U_i} K(y, \cdot, t) \Big( \triangle \big( |\nabla u |^2 - a \big) \phi_i \displaybreak[1] \\
  &\qquad\qquad\qquad\qquad\qquad\qquad + 2 \nabla \big( |\nabla u |^2 - a \big) \nabla \phi_i +  \big( |\nabla u |^2 - a \big) \triangle \phi_i \Big) \displaybreak[1] \\
  &\geq - C_{**} s_i^{\mathbf{p}-2}  +  \int_{U_i} K(y, \cdot, t) \Big( 2 |\nabla^2 u|^2 \phi_i - 2(n-1) \kappa |\nabla u|^2 \phi_i  \displaybreak[1] \\
  &\qquad\qquad\qquad\qquad\qquad\qquad  - 4 | \nabla^2 u | \cdot |\nabla u | \cdot |\nabla \phi_i| -  \big| |\nabla u |^2 - a \big| \cdot | \triangle \phi_i | \Big) \displaybreak[1] \\
 &\geq - C_{**} s_i^{\mathbf{p}-2} - 2(n-1) \kappa  \int_{U_i} K(y, \cdot, t) \big( |\nabla u |^2 - a \big) \phi_i - 2 (n-1) \kappa \cdot a \\
 &\qquad\qquad - \frac{CY}{t^{n/2}} \exp \bigg({ - \frac{1}{16Yt} }\bigg) \int_{A(y, 1/4, 1/2) \cap \RR} \Big(  |\nabla u | \cdot |\nabla^2 u|   + \big| |\nabla u |^2 - a \big| \Big) \displaybreak[1] \\
 &\geq - C_{**} s_i^{\mathbf{p}-2} - 2(n-1) \kappa  \int_{U_i} K(y, \cdot, t) \big( |\nabla u |^2 - a \big) \phi_i - 2 (n-1) \kappa \cdot a \displaybreak[1] \\
 &\qquad\qquad  -  C \bigg(  \int_{B(p,2) \cap \RR} | \nabla u |^2  \bigg)^{1/2} \bigg(  \int_{B(p,2) \cap \RR} | \nabla^2 u |^2  \bigg)^{1/2} \\
 &\qquad\qquad - C  \int_{B(p,2) \cap \RR}\big| |\nabla u |^2 - a \big|  \displaybreak[1]   \\
 &\geq - C_{**} s_i^{\mathbf{p}-2} - 2(n-1) \kappa  \int_{U_i} K(y, \cdot, t) \big( |\nabla u |^2 - a \big) \phi_i - 2 (n-1) \kappa \cdot a \\
 &\qquad\qquad -  C \bigg( a  +  \int_{B(p,2) \cap \RR} \big| | \nabla u |^2 - a \big| \bigg)^{1/2} \bigg(  \int_{B(p,2) \cap \RR} | \nabla^2 u |^2  \bigg)^{1/2}  \\
 &\qquad\qquad  - C  \int_{B(p,2) \cap \RR}\big| |\nabla u |^2 - a \big|   \displaybreak[1]   \\
  &\geq - C_{**} s_i^{\mathbf{p}-2} - 2(n-1) \kappa  \int_{U_i} K(y, \cdot, t) \big( |\nabla u |^2 - a \big) \phi_i - C \kappa  a \\
  & \qquad\qquad - C a^{1/2} \bigg(  \int_{B(p,2) \cap \RR} | \nabla^2 u |^2  \bigg)^{1/2} \\
  &\qquad\qquad - C  \int_{B(p,2) \cap \RR}\Big( |\nabla^2 u|^2 + \big| |\nabla u |^2 - a \big| \Big) .
\end{align*}
So
\begin{multline*}
 \frac{d}{dt} \bigg( e^{2(n-1) \kappa t} \int_{U_i}  K(y, \cdot, t) \big( |\nabla u |^2 - a \big) \phi_i \bigg) \geq - C_{**} s_i^{\mathbf{p}-2}  - C \kappa  a \\
    - C a^{1/2} \bigg(  \int_{B(p,2) \cap \RR} | \nabla^2 u |^2  \bigg)^{1/2} 
  - C  \int_{B(p,2) \cap \RR}\Big( |\nabla^2 u|^2 + \big| |\nabla u |^2 - a \big| \Big).
\end{multline*}
Integration over $t$ from $0$ to $1$ and applying H\"older's inequality yields
\begin{multline*}
  \big( |\nabla u |^2 - a \big)(y) - e^{2(n-1) \kappa} \int_{U_i} K(y, \cdot, 1) \big( |\nabla u |^2 - a \big) \phi_i  
 \leq C_{**} s_i^{\mathbf{p}-2} + C \kappa a \\
  + C a^{1/2} \bigg(  \int_{B(p,2) \cap \RR} | \nabla^2 u |^2  \bigg)^{1/2} + C \int_{B(p,2) \cap \RR} \Big( |\nabla^2 u |^2 + \big| |\nabla u |^2 - a \big| \Big).
\end{multline*}
The claim now follows using the fact that $K(y, \cdot, 1) \leq Y$ and letting $i \to \infty$.
\end{proof}

\begin{Lemma} \label{Lem:gradienboundalldirections}
For any $\mathbf{p}_0 > 2$ and $Y < \infty$ there is a constant $C= C(\mathbf{p}_0, Y) < \infty$ such that the following holds:

Let $\XX = (X, d, \RR, g)$ be a singular space with singularities of codimension $\mathbf{p}_0$ and assume that $\XX$ is $Y$-tame at some scale $r > 0$.
Assume moreover that $\Ric \geq - (n-1) \kappa r^{-2}$ on $\RR$ for some $0 \leq \kappa \leq 1$.
Let $p \in X$ be a point and consider a vector valued function $u = (u^1, \ldots, u^k) : B(p,2r) \to \IR^k$ whose component functions $u^1, \ldots, u^k$ are harmonic.
Then for any $y \in B(p, r) \cap \RR$ we have
\begin{multline} \label{eq:gradienboundalldirections}
 |\nabla u|^2 (y) \leq 1  + C  \sum_{l=1}^k \bigg( \bigg( r^{2-n} \int_{B(p,2r) \cap \RR} |\nabla^2 u^l|^2 \bigg)^{1/2} +  r^{2-n} \int_{B(p,2r) \cap \RR} |\nabla^2 u^l|^2  \bigg) \\
  + C r^{-n} \sum_{i,j=1}^k \int_{B(p,2r) \cap \RR} \big| \langle \nabla u^i, \nabla u^j \rangle - \delta_{ij} \big| + C \kappa. 
\end{multline}
Here $|\nabla u| (y) := \max_{v \in T_y \RR, |v| =1} |(\nabla u)(v)|$.
\end{Lemma}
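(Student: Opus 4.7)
The plan is to reduce this vector-valued statement to the scalar Lemma \ref{Lem:stronggradientestimate} by considering arbitrary unit-vector linear combinations of the components $u^l$. Specifically, for every $c = (c_1, \dots, c_k) \in \IR^k$ with $|c| = 1$, define
\[
 f_c := \sum_{l=1}^k c_l u^l : B(p, 2r) \to \IR.
\]
Each $f_c$ is harmonic on $B(p,2r) \cap \RR$ (as a linear combination of harmonic functions), and by the singular value characterization of the operator norm of the matrix $A(y) = (\partial_j u^l(y))$ one has the identity
\[
 |\nabla u|^2(y) = \max_{|v|=1} \sum_{l=1}^k \langle \nabla u^l, v\rangle^2(y) = \max_{|c|=1} |\nabla f_c|^2(y).
\]

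So it suffices to bound $|\nabla f_c|^2(y)$ uniformly in $c$. Apply Lemma \ref{Lem:stronggradientestimate} to $f_c$ with $a = 1$: for $y \in B(p,r) \cap \RR$,
\[
 |\nabla f_c|^2(y) \leq 1 + C\kappa + C\bigl(I_c\bigr)^{1/2} + C I_c + C r^{-n}\int_{B(p,2r) \cap \RR} \bigl||\nabla f_c|^2 - 1\bigr|,
\]
where $I_c := r^{2-n}\int_{B(p,2r) \cap \RR} |\nabla^2 f_c|^2$. Two elementary estimates then convert everything into the $u^l$-quantities appearing on the right side of (\ref{eq:gradienboundalldirections}). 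First, since $|c| = 1$ and $\nabla^2 f_c = \sum_l c_l \nabla^2 u^l$, Cauchy–Schwarz gives $|\nabla^2 f_c|^2 \leq k \sum_l |\nabla^2 u^l|^2$, so
\[
 I_c \leq k \sum_{l=1}^k r^{2-n}\int_{B(p,2r)\cap\RR} |\nabla^2 u^l|^2, \qquad I_c^{1/2} \leq \sqrt{k}\sum_{l=1}^k \Bigl(r^{2-n}\int_{B(p,2r)\cap\RR}|\nabla^2 u^l|^2\Bigr)^{1/2}.
\]
Second, expanding
\[
 |\nabla f_c|^2 - 1 = \sum_{i,j=1}^k c_i c_j \bigl(\langle \nabla u^i, \nabla u^j\rangle - \delta_{ij}\bigr),
\]
and using $|c_i|, |c_j| \leq 1$ yields $||\nabla f_c|^2 - 1| \leq \sum_{i,j} |\langle \nabla u^i, \nabla u^j\rangle - \delta_{ij}|$ pointwise.

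Taking the supremum of the resulting inequality over $|c| = 1$ and absorbing the factors of $k \leq n$ into the universal constant $C$ (dimensional constants are suppressed by the paper's convention) produces precisely (\ref{eq:gradienboundalldirections}). There is no real obstacle: the only nontrivial structural observation is the duality identity $|\nabla u|^2(y) = \max_{|c|=1}|\nabla f_c|^2(y)$, after which the proof is a mechanical application of Lemma \ref{Lem:stronggradientestimate} together with two Cauchy–Schwarz steps.
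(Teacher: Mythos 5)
Your proof is correct and follows essentially the same strategy as the paper: reduce to scalar linear combinations $f_c = \langle c, u \rangle$, apply Lemma~\ref{Lem:stronggradientestimate} with $a=1$, and bound the Hessian and gradient-defect terms by Cauchy--Schwarz. The only cosmetic difference is that you make explicit the operator-norm duality identity $|\nabla u|^2(y) = \max_{|c|=1}|\nabla f_c|^2(y)$, which the paper leaves implicit in its opening ``It suffices to show\dots'' reduction.
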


\begin{proof}
It suffices to show that for any $w \in \IR^k$, $|w| = 1$, setting $u^* := \langle w, u \rangle = w^1 u^1 + \ldots + w^k u^k$, the quantity $|\nabla u^* |^2 (y)$ is bounded by the right-hand side of (\ref{eq:gradienboundalldirections}).
Observe that $u^*$ is harmonic,
\[ |\nabla^2 u^* | \leq \sum_{l=1}^k |\nabla^2 u^l | \]
and
\begin{multline*}
 \big| |\nabla u^*|^2 - 1 \big| = \Big| \sum_{i,j=1}^k w_i w_j \langle \nabla u^i, \nabla u^j \rangle - 1 \Big| = \Big| \sum_{i,j=1}^k w_i w_j \big( \langle \nabla u^i, \nabla u^j \rangle - \delta_{ij} \big) \Big| \\
 \leq \sum_{i,j=1}^k \big| \langle \nabla u^i, \nabla u^j \rangle - \delta_{ij} \big|.
\end{multline*}
So the desired bound follows from Lemma \ref{Lem:stronggradientestimate}.
\end{proof}

We can finally prove Proposition \ref{Prop:GHsplittingepssplitting}.

\begin{proof}[Proof of Proposition \ref{Prop:GHsplittingepssplitting}]
Without loss of generality, we may assume that $r =1$.
Apply Lemma \ref{Lem:exsplittingver1} for $r \leftarrow 16$ to obtain $u = (u^1, \ldots, u^k) : B(p, 16) \to \IR^k$.
Replacing $u$ by $u - u(p)$, we may assume that $u(p) = 0^k$.
Using assertion (c) of Lemma \ref{Lem:exsplittingver1} and Lemma \ref{Lem:Hessianbound}, we find that
\begin{equation} \label{eq:Hessianboundinproof}
 \int_{B(p,8) \cap \RR} |\nabla^2 u |^2 < \Psi (\delta | Y). 
\end{equation}
So for sufficiently small $\delta$, this implies item (4) of Definition \ref{Def:epssplitting} for the restriction $u |_{B(p,1)}$.
It remains to verify items (2) and (3).
Next, we will show that for any $l_1, l_2 = 1, \ldots, k$, $l_1 \neq l_2$
\begin{equation} \label{eq:almostorthogonal}
 \int_{B(p,4) \cap \RR} \big| \langle \nabla u^{l_1}, \nabla u^{l_2} \rangle \big|^2 < \Psi (\delta | Y). 
\end{equation}
This bound together with Lemma \ref{Lem:exsplittingver1}(c) will then imply item (3) of Definition \ref{Def:epssplitting} and, using Lemma \ref{Lem:gradienboundalldirections}, item (2).
Thus we will find that $u |_{B(p,1)}$ is an $\eps$-splitting map for sufficiently small $\delta$.
The bound (\ref{eq:udeps}) will be verified at the end of the proof.

We follow the lines of the proof of \cite[Lemma 2.9]{Colding-vol-conv} to show (\ref{eq:almostorthogonal}).
Fix $l_1, l_2 \in \{ 1, \ldots, k \}$, $l_1 \neq l_2$.
Note first that by Proposition \ref{Prop:segmenttype}, (\ref{eq:Hessianboundinproof}) and H\"older's inequality we have for $i = 1,2$
\[ \int_{S^* B(p,4)} \Big| \langle \nabla u^{l_i} , v \rangle - \big( u^{l_i} (\gamma_v(1)) - u^{l_i} ( \gamma_v (0)) \big) \Big| < \Psi (\delta | Y). \]
Let $\theta > 0$ be a parameter whose value we will determine later and set
\[ C^*_\theta := \{ v \in S^* B(p,4) \;\; : \;\; \angle (v, \nabla u^{l_1} (\gamma_v(0)) ) < \theta \}. \]
Then also
\[ \int_{C^*_\theta} \Big| \langle \nabla u^{l_i} , v \rangle - \big( u^{l_i} (\gamma_v(1)) - u^{l_i} ( \gamma_v (0)) \big) \Big| < \Psi (\delta | Y). \]
Note that, by Lemma \ref{Lem:stronggradientestimate}, Lemma \ref{Lem:exsplittingver1}(c) and (\ref{eq:Hessianboundinproof}), the two terms in the difference in this integrand are bounded by a constant that only depends on $Y$.
So
\begin{equation} \label{eq:almost1onCtheta}
 \int_{C^*_\theta} \Big|  \langle \nabla u^{l_i} , v \rangle^2 - \big( u^{l_i} (\gamma_v(1)) - u^{l_i} ( \gamma_v (0)) \big)^2 \Big| <  \Psi (\delta  | Y).
\end{equation}
On the other hand, using the Pythagorean Theorem and Lemma \ref{Lem:exsplittingver1}(a), we have for any $y_1, y_2 \in B(p,2)$ that
\[ \big( u^{l_1} (y_1) - u^{l_1} (y_2) \big)^2 + \big( u^{l_2} (y_1) - u^{l_2} (y_2) \big)^2 <  d^2 (y_1, y_2) + \Psi (\delta | Y). \]
It follows that
\[ \int_{C^*_\theta}  \Big( \big( u^{l_1} (\gamma_v(1)) - u^{l_1} ( \gamma_v (0)) \big)^2 + \big( u^{l_2} (\gamma_v(1)) - u^{l_2} ( \gamma_v (0)) \big)^2 \Big) < |C^*_\theta| + \Psi(\delta | Y). \]
Combining this with (\ref{eq:almost1onCtheta}) yields
\[  \int_{C^*_\theta}  \big( \langle \nabla u^{l_1} , v \rangle^2 + \langle \nabla u^{l_2} , v \rangle^2 \big)  < |C^*_\theta| +  \Psi(\delta  | Y). \]
So we obtain, using $\langle \nabla u^{l_1}, v \rangle > |\nabla u^{l_1} | \cos \theta$ for $v \in C^*_\theta$ and Lemma \ref{Lem:exsplittingver1}(c) that
\begin{equation} \label{eq:Cstarnabl2}
 \int_{C^*_\theta} \langle \nabla u^{l_2} , v \rangle^2 < \Psi(\theta) |C^*_\theta| + \Psi(\delta | Y).
\end{equation}
Set now
\[ C_\theta := \{ v \in S B(p,4) \;\; : \;\; \angle (v, \nabla u^{l_1} (\gamma_v(0)) ) < \theta \}. \]
By Lemma \ref{Lem:SstarRR} the subset $C_\theta \setminus C^*_\theta$ has measure zero.
So by (\ref{eq:Cstarnabl2}) we have
\[ \int_{C_\theta} \langle \nabla u^{l_2} , v \rangle^2 < \Psi(\theta) |C_\theta| + \Psi(\delta | Y). \]
So if we choose first $\theta$ and then $\delta$ sufficiently small, we obtain (\ref{eq:almostorthogonal}).

Finally, we need to verify (\ref{eq:udeps}) in the case in which $k = n$ and $\delta$ is sufficiently small.
To see this, let $x \in B(p, 1) \cap \RR$, set $d := d(x, p)$ and assume that $\eps < 1$.
By the segment inequality, Proposition \ref{Prop:segmentinequ}, (\ref{eq:Hessianboundinproof}), (\ref{eq:almostorthogonal}) and Lemma \ref{Lem:exsplittingver1}(c), we can find points $p' \in B(p,\eps / 10) \cap \RR$ and $x' \in B(x, \eps / 10) \cap \RR$ such that the following holds:
The arclength minimizing geodesic $\gamma := \gamma_{p', x'} : [0,a] \to \RR$ between $p', q'$ is contained in $\RR$, for all $l = 1, \ldots, n$
\begin{equation} \label{eq:L1secondderivativesmall}
  \int_0^a \big| (u^l \circ \gamma)''(s) \big| ds \leq \int_0^a |\nabla^2 u^l | (\gamma(s)) ds < \Psi (\delta | \eps, Y) 
\end{equation}
and for all $l_1, l_2 = 1, \ldots, n$
\begin{equation} \label{eq:almostonb}
 \big|  \langle \nabla u^{l_1} (p'), \nabla u^{l_2} (p') \rangle - \delta_{l_1 l_2} \big| < \Psi (\delta | \eps, Y). 
\end{equation}
Since $a < 2$, the bound (\ref{eq:L1secondderivativesmall}) implies
\[ \big| u^l (x') - u^l (p') - a \langle \nabla u^l (p'), \gamma'(0) \rangle \big| < \Psi (\delta | \eps, Y). \]
So by (\ref{eq:almostonb}) we have
\[ \big| | u (x') - u(p') |^2 - a^2 \big| < \Psi (\delta | \eps, Y). \]
Now note that
\[ | a - d | = | d(x',p') - d(x,p) | <  \eps / 5. \]
and, $|u (p')| \leq (1+\eps) d(p,p') < \eps / 5$, as well as $|u(x') - u(x) | \leq (1+ \eps) d(x,x') < \eps / 5$.
So, we obtain that
\[ \big| | u(x)| - d \big| < 3 \eps / 5 + \Psi (\delta | \eps, Y). \]
This implies (\ref{eq:udeps}) for small $\delta$.
\end{proof}

\section{Volume Stability}
In this section we prove the first main property of singular spaces, Theorem~\ref{Thm:volconv}, which states that Gromov-Hausdorff closeness to a Cartesian product of a metric space with $\IR^n$ implies closeness of the volume to the volume of Euclidean space.
Our proof follows the lines of \cite[sec 2]{Colding-vol-conv}.

We first need the following lemma.

\begin{Lemma} \label{Lem:eps0splitting}
For any $Y < \infty$ there is an $\eps_0 = \eps_0 (Y) > 0$ such that the following holds:

Let $\XX$ be a singular space with mild singularities of codimension $\mathbf{p}_0 > 1$ that is $Y$-tame at some scale $r > 0$ and let $p \in X$ be a point.
If $u : B(p,r) \to \IR^n$ is an $\eps_0$-splitting map and $y \in \IR^n$ such that $|u(p)- y| \geq r$, then
\[ u(B(p,r)) \cap B(y,|u(p) - y|) \neq \emptyset. \]
\end{Lemma}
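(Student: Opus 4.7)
The plan is to exhibit a point $x^* \in B(p,r)$ with $|u(x^*) - y| < |u(p) - y|$. Setup: set $R := |u(p) - y| \geq r$, $v := (y - u(p))/R \in S^{n-1}$, and consider the harmonic function $h := \langle u - u(p), v \rangle : B(p,r) \to \IR$, which is harmonic as a linear combination of the $u^l$ and satisfies $h(p) = 0$. A direct expansion gives
\[
|u(x) - y|^2 - R^2 = |u(x) - u(p)|^2 - 2R \cdot h(x).
\]
The $\eps_0$-splitting gradient bound $|\nabla u| \leq 1+\eps_0$ yields $|u(x) - u(p)| \leq (1+\eps_0) r$ (since $(X,d)$ is the completion of the length metric on $(\RR,g)$), so together with $R \geq r$ any point $x^*$ with $h(x^*) > \tfrac12(1+\eps_0)^2 r$ will have $|u(x^*) - y| < R$. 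Thus it suffices to prove the oscillation estimate $\sup_{B(p,r)} h \geq (1 - \Psi(\eps_0 \mid Y)) r$, where $\Psi$ goes to $0$ as $\eps_0 \to 0$ with $Y$ fixed; choosing $\eps_0 = \eps_0(Y)$ small enough then finishes.

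To produce such a point I would first extract a good base $p' \in B(p, \eps_1 r) \cap \RR$, where $\eps_1 > 0$ is an auxiliary parameter. By $\eps_0$-splitting condition (3), $\int_{B(p,r) \cap \RR} \big| |\nabla h|^2 - 1 \big| \leq \Psi(\eps_0) r^n$, so combined with the $Y$-tameness volume lower bound this produces $p' \in B(p, \eps_1 r) \cap \RR$ with $|\nabla h(p')| \geq 1 - \Psi(\eps_0 \mid \eps_1, Y)$. Next I would apply Proposition \ref{Prop:segmenttype} to $h$ with base ball $B(p, \eps_1 r)$ and length $l := (1 - 2\eps_1) r$. The Hessian bound from $\eps_0$-splitting condition (4) and Cauchy--Schwarz give
\[
\int_{B(p,r) \cap \RR} |\nabla^2 h|\,dg \leq \big| B(p,r) \cap \RR \big|^{1/2}\Big(\textstyle \int |\nabla^2 h|^2\,dg \Big)^{1/2} \leq C(Y, n)\, \eps_0\, r^{n-1},
\]
so the right-hand side of Proposition \ref{Prop:segmenttype} is $\leq C(Y, n)\, \eps_0\, r^n$. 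Comparing with $|S^* B(p, \eps_1 r)| \geq c(Y)(\eps_1 r)^n$ produces an $L^1$-average defect bound
\[
\fint_{S^* B(p,\eps_1 r)} \big| \langle \nabla h(\gamma_w(0)), w\rangle - l^{-1}\big( h(\gamma_w(l)) - h(\gamma_w(0))\big) \big|\,dw \leq C(Y, \eps_1)\, \eps_0.
\]

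I would then combine these via a Fubini/Chebyshev selection together with the continuity of $w \mapsto \langle \nabla h(p'), w\rangle$: choose $(p', w)$ with $w$ in a small spherical cap of angle $\theta$ around $w_0 := \nabla h(p')/|\nabla h(p')|$, for which $\langle \nabla h(p'), w\rangle \geq (1 - \Psi)(1 - \theta)$ and the defect is $\leq \Psi$. Setting $x^* := \gamma_w(l) \in B(p, (1-\eps_1) r) \subset B(p, r)$, the segment estimate gives
\[
h(x^*) = h(p') + \big(h(\gamma_w(l)) - h(p')\big) \geq -(1+\eps_0)\eps_1 r + l(1 - \Psi)(1 - \theta) - l\Psi \geq (1 - 3\eps_1 - \Psi - \theta) r.
\]
Taking $\eps_1,\theta$ small in terms of $Y,n$ (e.g.\ $\eps_1 = \theta = \eps_0^{1/(2n+2)}$) and then $\eps_0 = \eps_0(Y)$ sufficiently small makes this strictly larger than $\tfrac{1}{2}(1+\eps_0)^2 r$, closing the quadratic inequality. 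Mildness of the singularities (used via $\mathbf{p}_0 > 1$) enters through Lemma \ref{Lem:SstarRR}, which guarantees that $S\RR \setminus S^*\RR$ has measure zero so that Proposition \ref{Prop:segmenttype} may be applied and $\gamma_w$ actually extends to distance $l$ in $\RR$.

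The main obstacle is the simultaneous selection of $p'$ and $w$: the segment-type $L^1$ bound controls $(p',w)$ pairs generically on the unit tangent bundle, while we need $w$ inside a specific small cap at $p'$ determined by $\nabla h(p')$. The cap angle $\theta$ must be chosen large enough that the measure of the "bad" subset inside the cap falls below the cap's total measure $\sim \theta^{n-1}$; since the total bad measure on $S^* B(p, \eps_1 r)$ is $\leq \Psi(\eps_0 \mid \eps_1, Y) \cdot (\eps_1 r)^n$, a pigeonhole over $p'$ (most of which are good, by the first step) succeeds provided $\eps_0$ is small compared to $\theta^{n-1}\eps_1^n$. All remaining ingredients --- harmonicity of $h$, Lipschitz control, the Cauchy--Schwarz reduction of the Hessian integral --- are immediate from Definition \ref{Def:epssplitting} and the $Y$-tameness volume bounds, so no other structural hypothesis on $\XX$ is needed and $\eps_0$ depends only on $Y$.
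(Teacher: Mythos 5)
Your proposal is correct and follows the same strategy as the paper's proof: use the segment inequality (Proposition \ref{Prop:segmenttype}) to convert the $L^2$ Hessian smallness of the splitting map into approximate linearity along almost every geodesic, select a favourable base point and direction by a Fubini/Chebyshev (pigeonhole) argument, and shoot a geodesic whose endpoint beats the quadratic threshold $|u(\cdot)-y|<|u(p)-y|$. The cosmetic differences are that you pre-contract $u$ to the single scalar $h=\langle u-u(p),v\rangle$ and shoot a nearly full-length geodesic, whereas the paper keeps all $n$ coordinates (aiming the direction via the almost-orthonormality of the $\nabla u^l$ at the chosen base) and uses a half-length geodesic from a base in $B(p,\tfrac{1}{10}r)$; both routes close the same quadratic inequality.
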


\begin{proof}
Without loss of generality, we may assume that $r = 1$ and $u (p ) = 0^n$.
Then, assuming $\eps_0$ to be small enough depending on $Y$, we need to find some $z \in B(p,r)$ such that $2\langle u(z), y \rangle > |u(z)|^2$.
Using Proposition \ref{Prop:segmenttype}, we have
\[ \int_{S^*B(p,1/2)} \Big| \langle \nabla u^l, v \rangle - 2 \big( u^l (\gamma_v ( 1/2)) - u^l (\gamma_v (0) \big) \Big| < \Psi( \eps_0 | Y) \]
for all $l =1, \ldots, n$ and
\[ \int_{B(p,1)} \big| \langle \nabla u^{l_1}, \nabla u^{l_2} \rangle - \delta_{l_1 l_2} \big| < \Psi(\eps_0) \]
for all $l_1, l_2 = 1, \ldots, n$.
Note that by $Y$-tameness, we have $|B(p, r') \cap \RR | > Y^{-1} r^{\prime n}$ for all $0 < r' < 1$.
So there is a point $x \in B(p, 1/10) \cap \RR$ and vector $v \in T_x \RR$ such that $S_x \RR \setminus S^* \RR$ is a set of measure zero (here $S_x \RR$ denotes the fiber of $S\RR$ over $x$), such that at $x$ we have
\[ \big| \langle \nabla u^{l_1}, \nabla u^{l_2} \rangle - \delta_{l_1 l_2} \big| < \Psi (\eps_0 | Y) \textQq{for all} l_1, l_2 = 1, \ldots, n\]
and
\[ \int_{S_x \RR \cap S^* \RR}  \Big| \langle \nabla u^l, v \rangle - 2 \big( u^l (\gamma_v ( 1/2)) - u^l (\gamma_v (0) \big) \Big|  < \Psi(\eps_0 | Y) \textQq{for all} l = 1, \ldots, n. \]
So for sufficiently small $\eps_0$, we can find some $v \in S_x \RR$ such that
\begin{equation} \label{eq:nabuvalmost1}
 \sum_{l=1}^n \langle \nabla u^l, v \rangle y^l > (1 - \Psi(\eps_0 | Y)) |y| 
\end{equation}
and such that for all $l = 1, \ldots, n$
\begin{equation} \label{eq:almostlinearatv}
 \Big| \langle \nabla u^l, v \rangle - 2 \big( u^l (\gamma_v ( 1/2)) - u^l (\gamma_v (0)) \big) \Big|  < \Psi(\eps_0 | Y) .
\end{equation}
Note that $x = \gamma_v(0)$.
Combining (\ref{eq:nabuvalmost1}) and (\ref{eq:almostlinearatv}) yields
\[ \sum_{l=1}^n 2 \big( u^l (\gamma_v ( 1/2)) - u^l (x) \big) y^l > - \Psi(\eps_0 | Y) |y| + \sum_{l=1}^n  \langle \nabla u^l, v \rangle y^l > (1- \Psi(\eps_0 | Y)) |y|. \]
Setting $z := \gamma_v (1/2)$ we then obtain for small $\eps_0$ (recall that $u(p) = 0^n$)
\begin{multline*}
2 \langle u(z) , y \rangle > (1 - \Psi(\eps_0 | Y)) |y| -  2 | u(x) | \cdot | y| \\
\geq (1 - \Psi(\eps_0 | Y)) |y| - 2(1+\eps_0) d(x, p) |y|  > 0.7 |y| \geq 0.7 . 
\end{multline*}
On the other hand, for small $\eps_0$
\begin{multline*}
 |u(z)| = | u(z) - u(p) |  \leq (1+\eps_0) d(z,p) \leq (1+\eps_0) ( d(z,x) + d(x, p)) \\
 < (1+\eps_0) (0.5 + 0.1) < 0.7. 
\end{multline*}
This shows that $2 \langle u(z) , y \rangle > |u(z)|^2$ for sufficiently small $\eps_0$.
\end{proof}

\begin{Lemma} \label{Lem:splittingimpliesvolbound}
For every $\eps > 0$ and $Y < \infty$ there is a $\delta = \delta (\eps, Y) > 0$ such that the following holds:

Let $\XX$ be a singular space with mild singularities of codimension $\mathbf{p}_0 > 1$ that is $Y$-tame at some scale $r > 0$.
Assume that $\Ric \geq - (n-1)$ on $\RR$.
Let $p \in X$ be a point and consider a $\delta$-splitting map $u : B(p,r) \to \IR^n$ that satisfies
\begin{equation} \label{eq:almostradiallem}
 \big| |u(x)| - d(p,x) | < \delta r \textQQqq{for all} x \in B(p,r).
\end{equation}
Then
\[ |B(p,  r) \cap \RR | > (\omega_n - \eps)  r^n. \]
\end{Lemma}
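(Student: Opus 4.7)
After rescaling I assume $r=1$ and, after translating the image, $u(p)=0^n$. The plan is to compare $|B(p,1)\cap\RR|$ with $|u(B(p,1)\cap\RR)|$ via the area formula for the Lipschitz map $u:B(p,1)\cap\RR\to\IR^n$: a pointwise Jacobian upper bound from the $\delta$-splitting hypothesis controls the image from above in terms of the domain, and an almost-surjectivity onto $B(0^n,1)$ coming from Lemma~\ref{Lem:eps0splitting} together with the almost-radial condition~\eqref{eq:almostradiallem} forces the image to have measure close to $\omega_n$.

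The Jacobian bound is cheap. Setting $A(x):=(\langle\nabla u^i,\nabla u^j\rangle(x))_{ij}$, the gradient bound~\eqref{eq:epssplittinggradientestimate} places the eigenvalues of $A$ in $[0,(1+\delta)^2]$, so $J_u=\sqrt{\det A}\le(1+\delta)^n$ pointwise on $B(p,1)\cap\RR$, which already gives
\begin{equation*}
|u(B(p,1)\cap\RR)|\;\le\;\int_{B(p,1)\cap\RR}J_u\,dg\;\le\;(1+\delta)^n\,|B(p,1)\cap\RR|
\end{equation*}
by the area formula applied inside the smooth manifold $\RR$.

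For the almost-surjectivity, fix $y\in B(0^n,1-\Psi(\delta\mid Y))$ and consider $h_y(x):=|u(x)-y|$. By Corollary~\ref{Cor:ChengYau} each $u^l$ is $(1+\delta)$-Lipschitz on $B(p,1)\cap\RR$ and extends continuously to $\ov{B(p,1)}$, so $h_y$ attains its minimum at some $x_0\in\ov{B(p,1)}$. The condition~\eqref{eq:almostradiallem} forces $|u|\ge1-\delta$ on $\partial B(p,1)$ and hence $h_y>0$ there, so $x_0$ lies strictly inside. Set $r':=(\delta/\eps_0(Y))^{2/n}$ where $\eps_0(Y)$ is the constant of Lemma~\ref{Lem:eps0splitting}; the $L^2$-conditions in Definition~\ref{Def:epssplitting}(3)--(4) rescale correctly from $B(p,1)$ to any sub-ball $B(x_0,r')\subset B(p,1-r')$ so that $u|_{B(x_0,r')}$ is itself an $\eps_0$-splitting map. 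If $h_y(x_0)\ge r'$, Lemma~\ref{Lem:eps0splitting} produces $z\in B(x_0,r')$ with $h_y(z)<h_y(x_0)$, contradicting minimality. Hence $h_y(x_0)<r'=\Psi(\delta\mid Y)$, so every $y\in B(0^n,1-\Psi)$ is within distance $\Psi$ of $u(B(p,1)\cap\RR)$. To upgrade this density to a measure bound, I use Definition~\ref{Def:epssplitting}(3) plus Chebyshev to produce a ``good'' subset of $B(p,1)\cap\RR$ of measure $\ge|B(p,1)\cap\RR|-\Psi$ on which $|A-I_n|\le\sqrt{\delta}$, and the Hessian integral bound~\eqref{eq:Hessianestimate} plus local elliptic regularity gives quantitative $C^1$ control of $u$ on this set; a quantitative inverse function theorem then shows $u$ is locally open there, so the density statement yields $|u(B(p,1)\cap\RR)|\ge\omega_n-\Psi(\delta\mid Y)$.

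Combining the two displayed inequalities gives $|B(p,1)\cap\RR|\ge(1+\delta)^{-n}(\omega_n-\Psi)\ge\omega_n-\eps$ once $\delta=\delta(\eps,Y)$ is chosen small enough, and rescaling back yields the lemma. The main obstacle I expect is this ``density to measure'' upgrade: the natural degree-theoretic route is awkward because $B(p,1)$ need not be a manifold with boundary (the singular set can intersect it in complicated ways), so I rely on the Chebyshev-selected good set, on which the argument remains entirely inside $\RR$ and uses only local quantitative inverse function theorem together with the scale $r'=(\delta/\eps_0)^{2/n}$ coming from Lemma~\ref{Lem:eps0splitting}.
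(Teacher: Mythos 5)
Your plan correctly identifies the two ingredients the paper also uses --- the pointwise Jacobian bound $J_u \le (1+\delta)^n$ from the gradient estimate, and Lemma~\ref{Lem:eps0splitting} as the engine forcing the image to be large --- but the almost-surjectivity step has a genuine gap. The assertion that ``$h_y>0$ on $\partial B(p,1)$ forces $x_0$ strictly inside'' is not valid: $h_y$ is positive everywhere (you are trying to show it is \emph{small}, not zero), and a positive continuous function on a compact set can very well attain its minimum on the boundary. Even if $x_0$ happens to be interior, nothing keeps $x_0$ at distance $\ge r'$ from $\partial B(p,1)$; if it is closer, $B(x_0,r')$ pokes out of $B(p,1)$, so Lemma~\ref{Lem:eps0splitting} is not applicable, and even if you shrink the domain of minimization to $\ov{B(p,1-r')}$, the comparison point $z\in B(x_0,r')$ produced by the lemma may land outside that domain, so no contradiction with minimality results. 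You have essentially flagged the second gap yourself: a pointwise $\Psi$-density of $u(B(p,1)\cap\RR)$ in $B(0^n,1-\Psi)$ does not by itself give a lower bound on $|u(B(p,1)\cap\RR)|$, and the proposed Chebyshev/good-set/inverse-function-theorem patch only gives openness on the good set; it does not show that the density points are hit by the \emph{good} set, which is what the measure bound would need.

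The paper sidesteps both problems by running a Vitali covering argument on the complement $A:=B(0^n,1-3\nu)\setminus u(B(p,1))$ rather than trying to argue surjectivity point by point. For each $y\in A$ it takes the maximal radius $r_y$ with $B(y,r_y)\cap u(B(p,1))=\emptyset$, selects a Vitali family $B(y_i,r_i)$, and produces points $x_i\in B(p,1)$ with $|u(x_i)-y_i|=r_i$. The radial condition forces $|u(x_i)|<1-2\nu$, hence $x_i\in B(p,1-\nu)$ and $B(x_i,r_i)\subset B(p,1)$, which is exactly the containment your argument fails to guarantee. Lemma~\ref{Lem:eps0splitting} then says $u|_{B(x_i,r_i)}$ is not an $\eps_0$-splitting, so each $B(x_i,r_i)$ carries at least $\eps_0^2 r_i^n$ of the $\delta$-splitting integrals in Definition~\ref{Def:epssplitting}(3)--(4); the $B(x_i,r_i)$ turn out to be pairwise disjoint, so summing gives $\sum r_i^n\le\eps_0^{-2}\delta^2$ and thus $|A|$ is small. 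This replaces your density-to-measure upgrade entirely. There is also a third step you did not address: showing $|u(B(p,1)\setminus\RR)|=0$, which the paper handles with a separate covering argument using the codimension bound on the singular set.
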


\begin{proof}
Without loss of generality, we may again assume that $r = 1$.

We will first show that for sufficiently small $\delta$ we have $|u(B(p,1)) | > \omega_n - \eps /2$.
Choose and fix $\nu > 0$ small enough such that
\begin{equation} \label{eq:nueps4}
 |B(0^n, 1) \setminus B(0^n, 1-3\nu) | < \eps / 4.
\end{equation}
Using (\ref{eq:almostradiallem}) we may assume that $\delta$ is chosen small enough such that
\[ u \big( B(p,1) \setminus B(p, 1 - \nu) \big) \cap B(0^n, 1-2\nu) = \emptyset. \]
So
\[ A := B(0^n,1-3\nu) \setminus u (B(p,1)) = B(0^n, 1-3\nu) \setminus u \big( \ov{B(p, 1-\nu)} \big) \]
is an open subset of $\IR^n$.
If $A = \emptyset$, then $|u(B(p,1)) | \geq |B(0^n, 1 - 3 \nu)| > \omega_n - \eps /4$ and we are done.
So assume that $A \neq \emptyset$.

For any $y \in A$ choose $r_y > 0$ maximal with the property that $B(y, r_y) \cap u ( B(p,1) ) = \emptyset$.
Note that $r_y$ depends continuously on $y$ and due to (\ref{eq:almostradiallem}) we have $\inf_{y \in A} r_y = 0$.
So if $r_y > \nu$ for some $y \in A$, then we can find some $y_0 \in A$ such that $r_0 := r_{y_0} = \nu$.

Assume for a moment that $r_y \leq \nu$ for all $y \in A$.
By Vitali's Covering Lemma we can choose (finitely or countably infinitely many) points $y_1, y_2, \ldots \in A$ and radii $r_1 := r_{y_1}, r_2 := r_{y_2}, \ldots > 0$ such that
\[
 B(y_i, r_i) \cap u (B(p,1)) = \emptyset, \qquad   \partial B(y_i, r_i) \cap u (B(p,1)) \neq \emptyset, 
\]
such that the balls $B(y_i, 3 r_i )$ are pairwise disjoint and such that
\[ \bigcup_i B(y_i, 15 r_i)  \supset A . \]
It follows, using Proposition \ref{Prop:volumecomparison}, that
\begin{equation} \label{eq:sumrinA}
 \sum_{i>0} \omega_n r_i^{n} \geq \frac{|A |}{100^n}. 
\end{equation}

By the definition of the radii $r_i$, we can choose a point $x_i \in B(p,1)$ such that $| u(x_i) - y_i | = r_i$.
This can be done both in the cases in which $r_y \leq \nu$ for all $y \in A$ and in which we have picked the points $y_1, y_2, \ldots$, as well as in the case in which $r_y > \nu$ for some $y \in A$ and in which we have picked the point $y_0$ with $r_{y_0} = \nu$.
Note that in both cases $|u(x_i)| \leq |y_i| + r_i < 1 - 3\nu + \nu = 1 - 2\nu$.
This implies that $x_i \in B(p, 1 - \nu)$ and hence that $B(x_i, r_i) \subset B(p, 1)$.
By Lemma \ref{Lem:eps0splitting}, the restriction $u |_{B(x_i, r_i)} : B(x_i, r_i) \to \IR^n$ cannot be an $\eps_0$-splitting map.
So we have for $i = 0$ or $i =1, 2, \ldots$
\begin{equation} \label{eq:eps0splitting}
 \int_{B(x_i, r_i) \cap \RR} \Big( r^{2}_i \sum_{l=1}^n |\nabla^2 u^l |^2 + \sum_{l_1, l_2 = 1}^n \big| \langle \nabla u^{l_1}, \nabla u^{l_2} \rangle - \delta_{l_1l_2} \big|^2 \Big) \geq \eps_0^2 r_i^{n} .
\end{equation}
Consider first the case $i = 0$ and $r_0 = \nu$.
Since $u$ is a $\delta$-splitting, we then have $\eps_0^2 \nu^n \leq \delta^2$.
This gives us a contradiction for sufficiently small $\delta$.
We may hence assume in the following consider the case $i = 1, 2, \ldots$.

We now claim that the balls $B(x_i, r_i)$ are pairwise disjoint.
Assume that two such balls, for two indices $i$, $i'$, intersect.
Then, for sufficiently small $\delta$, we have
\begin{multline*}
 |y_i - y_{i'} | \leq |y_i - u(x_i) | + |u(x_i) - u(x_{i'})| + |u(x_{i'}) - y_{i'} | \\ \leq r_i + (1+\delta) d(x_i, x_{i'})   + r_{i'}  < r_i + (1+\delta) (r_i+r_{i'}) + r_{i'} 
 < 3 r_i +3 r_{i'},
\end{multline*}
in contradiction to the fact that all $B(y_i, 3 r_i)$ are pairwise disjoint.
Using the fact that $u$ is an $\delta$-splitting map and (\ref{eq:eps0splitting}), it follows that
\[ \sum_{i} \eps_0^2 r_i^{n} \leq \delta^2. \]
Combining this with (\ref{eq:sumrinA}) yields that for sufficiently small $\delta$
\[ |A| < \eps/4. \]
Using (\ref{eq:nueps4}), it follows that
\[ |u(B(p,1))| > \omega_n - \eps/2. \]

Next, we show that $|u(B(p,1))| = |u(B(p,1) \cap \RR)|$.
Let $0 < s < 1$ and choose a minimal $s$-net $y_1, \ldots, y_N$ of $\{  \rrm < s \}  \cap B(p,1)$.
Then the balls $B(y_i, s)$ cover $\{  \rrm < s \}  \cap B(p,1)$ and the balls $B(y_i, s/2)$ are pairwise disjoint.
Since $\rrm$ is $1$-Lipschitz, we have $B(y_i, s/2) \subset \{ \rrm < 2s \} \cap B(p,2)$.
So for some generic constant $C_* < \infty$, which may depend on $\XX$ and $p$ but not on $s$, we have
\[ N < C_* (s/2)^{-n} |\{   \rrm < 2 s \} \cap B(p,2) \cap \RR| . \]
It follows that
\begin{multline*}
 |u(B(p,1) \setminus \RR)| \leq |u( \{  \rrm < s \} \cap B(p,1) )| \\
 \leq \Big| \bigcup_{i=1}^N B(u(z_i),(1+\delta) s) \Big| \leq C_* N s^n \leq C_* |\{   \rrm < 2 s \} \cap B(p,2) \cap \RR|.
\end{multline*}
Letting $s \to 0$ yields $|u(B(p,1) \setminus \RR)| = 0$ and hence
\[ |u(B(p,1) \cap \RR)| = |u(B(p,1))| > \omega_n - \eps/2. \]

Since $u$ is a $\delta$-splitting, we find a constant $C < \infty$ such that the Jacobian $|{\det du}|$ of $u$ satisfies the following estimate
\[ \int_{B(p,1) \cap \RR} \big| |{\det d u}| - 1 \big| < C \delta. \]
So
\[ \omega_n - \eps/2 < |u (B(p,1) \cap \RR) | = \int_{B(p,1) \cap \RR} |{\det du}| \leq (1+ C\delta) | B(p,1) \cap \RR | . \]
The claim now follows for sufficiently small $\delta$.
\end{proof}

We can finally prove Theorem \ref{Thm:volconv}.

\begin{proof}[Proof of Theorem \ref{Thm:volconv}]
The claim follows by combining Proposition \ref{Prop:GHsplittingepssplitting} with Lem\-ma~\ref{Lem:splittingimpliesvolbound}.
\end{proof}

\section{Cone rigidity}
The goal of this section is to reprove the Cone Rigidity Theorem of Cheeger and Colding (see \cite{Cheeger-Colding-Cone}) for singular spaces.
More specifically, we will show Theorem \ref{Thm:conerigidityIntroduction} from subsection \ref{subsec:strcttheory}.

We first establish the existence of a function $b$ on an annulus around $p$ that approximates $d^2 (p, \cdot)$ in $W^{1,2}$ and for which $| \nabla^2 b - 2 g |$ is small in an $L^2$-sense.

\begin{Lemma}[cf {\cite[Corollary 4.83]{Cheeger-Colding-Cone}}] \label{Lem:bonannulus}
For any $\eps, \eta > 0$, $\mathbf{p}_0 > 3$ and $Y < \infty$ there is a $\delta = \delta(\eps, \eta, \mathbf{p}_0, Y) < \infty$ such that the following holds:

Let $\XX$ be a singular space with mild singularities of codimension $\mathbf{p}_0$ that is $Y$-tame at scale $\delta^{-1} r$ for some $r > 0$.
Assume moreover that $\Ric \geq - (n-1) \kappa$ on $\RR$ for some $0 \leq \kappa \leq \delta r^{-2}$.
Let $p \in X$ be a point and set $f := d^2(\cdot, p)$.
Assume that
\[ \frac{|B(p, \eps r / 4) \cap \RR|}{v_{-\kappa} (\eps r / 4)} - \frac{|B(p,8r) \cap \RR|}{v_{-\kappa} (8r)} < \delta. \]
Then there is a locally Lipschitz function $b : A(p, \eps r, r) \to \IR$ such that $b$ is $C^3$ on $A(p, \eps r, r) \cap \RR$ and such that the following holds
\[  |b-f| < \eta r^2 \textQQqq{on} A(p,\eps r, r), \]
\begin{align*}
  \int_{A(p, \eps r, r) \cap \RR} |\nabla (b - f)|^2 dg &< \eta r^{n+2}, \\
 \int_{A(p, \eps r, r) \cap \RR} \big| \nabla^2 b - 2 g \big|^2 dg &< \eta r^{n}.
\end{align*}
\end{Lemma}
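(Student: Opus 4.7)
The plan is to obtain $b$ as an algebraic transformation of a harmonic approximator of $d^{2-n}(\cdot,p)$. Specifically, I would apply the $Y$-tameness property~(3) from Definition~\ref{Def:tameness} with inner radius roughly $\eps r/2$ and outer radius roughly $2r$ to produce a harmonic function $\tilde h$ on the relevant annulus with $r_2^{2-n}\leq\tilde h\leq r_1^{2-n}$ and with the $W^{1,2}$ and $L^2$ bounds~(\ref{eq:part2Ytame1}), (\ref{eq:part2Ytame2}) against $d^{2-n}(\cdot,p)$. The error terms there involve $\kappa r^2$ (which is $\leq\delta$ by hypothesis) and the Bishop--Gromov pinching between the inner and outer scales; using the volume-comparison monotonicity of Proposition~\ref{Prop:volumecomparison} this pinching is controlled by the given one between $\eps r/4$ and $8r$, hence small. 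I would then set $b:=\tilde h^{-2/(n-2)}$, chosen so that on Euclidean space the identity $\tilde h=d^{2-n}$ gives exactly $b=d^2$, and so that the chain-rule computation yields the algebraic identity $\nabla^2(\phi(\tilde h))=2g$ in the Euclidean model.

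For the sup and gradient closeness I would argue as follows. Since $\tilde h$ is harmonic, bounded, and defined on an annulus on which $\Ric\geq -(n-1)\kappa$ and $\XX$ is $Y$-tame, the Cheng--Yau estimate (Corollary~\ref{Cor:ChengYau}) gives a quantitative Lipschitz bound on $\tilde h$ depending only on $Y,\mathbf{p}_0$. The function $d^{2-n}(\cdot,p)$ is manifestly Lipschitz on the annulus (bounded away from $p$), so the difference $\tilde h-d^{2-n}$ is Lipschitz with a universal constant. Together with the $L^2$ bound~(\ref{eq:part2Ytame2}) and the volume non-collapsing from $Y$-tameness~(1), a standard covering argument then upgrades $L^2$-smallness to $C^0$-smallness on the slightly smaller annulus $A(p,\eps r,r)$. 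Since $\tilde h$ is uniformly bounded above and below away from $0$, the smooth map $t\mapsto t^{-2/(n-2)}$ is bi-Lipschitz on the relevant range, and this converts both the $C^0$ and the $W^{1,2}$ estimate on $\tilde h-d^{2-n}$ into the claimed estimates $|b-f|<\eta r^2$ and $\int|\nabla(b-f)|^2<\eta r^{n+2}$.

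The main obstacle is the $L^2$-Hessian bound $\int|\nabla^2 b-2g|^2<\eta r^n$, since the $Y$-tameness property gives only a $W^{1,2}$-type estimate, not a Hessian estimate. My strategy is to invoke Bochner's identity for the harmonic function $\tilde h$,
\[
\tfrac{1}{2}\triangle|\nabla\tilde h|^2=|\nabla^2\tilde h|^2+\Ric(\nabla\tilde h,\nabla\tilde h),
\]
integrate it against a suitable cutoff $\phi\geq0$ supported in the annulus and equal to $1$ on a slightly smaller annulus (constructed via Proposition~\ref{Prop:Uiphii}, using that singularities have codimension $\mathbf{p}_0>3$ so that the resulting boundary-type terms near $\{\rrm<s_i\}$ vanish in the limit via Proposition~\ref{Prop:integrationbyparts}), and then integrate by parts twice to get
\[
\int\phi|\nabla^2\tilde h|^2=\tfrac{1}{2}\int(\triangle\phi)|\nabla\tilde h|^2-\int\phi\,\Ric(\nabla\tilde h,\nabla\tilde h).
\]
Subtracting the analogous identity for $d^{2-n}$ on the model space of constant curvature $-\kappa$, where all of these quantities are computed explicitly, the right-hand side is controlled by the $L^2$ bound~(\ref{eq:part2Ytame1}) on $\nabla(\tilde h-d^{2-n})$, the Ricci lower bound, and the small quantity $\kappa r^2$. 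This yields $\int|\nabla^2\tilde h-H_{\kappa}|^2$ small, where $H_\kappa$ is the Hessian of $d^{2-n}$ in the model. The chain-rule identity
\[
\nabla^2 b=\phi'(\tilde h)\nabla^2\tilde h+\phi''(\tilde h)\,d\tilde h\otimes d\tilde h,\qquad\phi(t)=t^{-2/(n-2)},
\]
together with the exact cancellation $\nabla^2 b=2g$ in the Euclidean model, the sup and gradient closeness of $\tilde h$ to $d^{2-n}$ established above, and the small $L^2$ deviation of $\nabla^2\tilde h$ from $H_\kappa$, then gives the Hessian estimate. The delicate point in this last step is to keep the boundary terms arising from $\phi$ and from the singular set well under control, which is exactly what Proposition~\ref{Prop:Uiphii} and the codimension hypothesis $\mathbf{p}_0>3$ are designed to accomplish.
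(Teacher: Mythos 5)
The first part of your plan—producing a harmonic approximator $\tilde h$ of $d^{2-n}(\cdot,p)$ from tameness property~(3) of Definition~\ref{Def:tameness}, applying Cheng--Yau for the Lipschitz bound, setting $b:=\tilde h^{-2/(n-2)}$, and upgrading the $L^2$ closeness of $\tilde h$ to $d^{2-n}$ to $C^0$ closeness—matches the paper exactly.

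The gap is in the $L^2$-Hessian estimate, which is the real content of the lemma. You propose to apply Bochner to $\tilde h$, integrate against a geometric cutoff, ``subtract the analogous identity in the model'' to obtain $\int\phi|\nabla^2\tilde h-H_\kappa|^2$ small, and then transfer to $\nabla^2 b$ via the chain rule. This route leaves several things unresolved. First, Bochner only gives an identity for $\int\phi|\nabla^2\tilde h|^2$; to control $\int\phi|\nabla^2\tilde h-H_\kappa|^2$ you must also estimate the cross term $\int\phi\langle\nabla^2\tilde h,H_\kappa\rangle$ and the reference term $\int\phi|H_\kappa|^2$, and ``subtracting identities'' does not do this — a dedicated computation is needed. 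Second, the reference tensor $H_\kappa$ must be built out of $d(\cdot,p)$ and $\nabla d(\cdot,p)$; since $\nabla d$ exists only off the cut locus and is not $C^1$, the integration by parts in the cross term (which would produce $\DIV H_\kappa$) is not justified. Third, the chain rule gives $\nabla^2 b-2g=\phi'(\tilde h)\bigl(\nabla^2\tilde h-H_\kappa\bigr)+\bigl[\phi'(\tilde h)H_\kappa+\phi''(\tilde h)\,d\tilde h\otimes d\tilde h-2g\bigr]$; the bracket vanishes exactly only if $\tilde h=d^{2-n}$ and $\nabla\tilde h=\nabla d^{2-n}$ \emph{pointwise}, and it contains $\nabla\tilde h\otimes\nabla\tilde h$, which is quadratic in the gradient — the $L^2$ gradient closeness coming from tameness property~(3) is not enough to make the bracket small in $L^2$ without further input. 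The paper avoids all of this by computing the \emph{intrinsic} trace-free Hessian $\bigl|\nabla^2 b-\tfrac1n(\triangle b)\,g\bigr|^2$ and exploiting the algebraic identity $\triangle b=\tfrac n2\,|\nabla b|^2/b$, valid because $\tilde h$ is harmonic. Multiplying by a cutoff $\Phi(b)$ that is a function of $b$ itself — so it vanishes near both edges of the annulus by construction, while the singular set is handled by Proposition~\ref{Prop:integrationbyparts} — and integrating by parts three times, the entire Hessian integral collapses to a purely zeroth-order expression in $b$ and $|\nabla b|^4$, which is then shown to be small by comparison with Bishop--Gromov volume ratios. No reference Hessian and no pointwise gradient comparison is needed; I suggest reworking the Hessian step along these lines.
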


\begin{proof}
We follow the lines of the proof of \cite[Proposition 4.81]{Cheeger-Colding-Cone}.
Without loss of generality, we may assume that $r = 1$.

Let us first deduce the following bound, using Proposition \ref{Prop:volumecomparison}:
\begin{multline*}
 \frac{|A(p,4,8) \cap \RR|}{v_{-\kappa} (8) - v_{-\kappa} (4)} 
= \frac{|B(p,8) \cap \RR|}{v_{-\kappa} (8) - v_{-\kappa} (4)} - \frac{|B(p,4) \cap \RR|}{v_{-\kappa} (8) - v_{-\kappa} (4)} \\
\geq \frac{|B(p,8) \cap \RR|}{v_{-\kappa} (8) - v_{-\kappa} (4)} - \frac{|B(p, \eps / 4) \cap \RR| }{v_{-\kappa} (8) - v_{-\kappa} (4)} \cdot \frac{v_{-\kappa} (4)}{v_{-\kappa} (\eps / 4)} . 
\end{multline*}
So, for some constant $C< \infty$, which only depends on the dimension,
\begin{multline*}
 \frac{|B(p, \eps  / 4) \cap \RR|}{v_{-\kappa} (\eps  / 4)} - \frac{|A(p,4,8) \cap \RR|}{v_{-\kappa} (8) - v_{-\kappa} (4)} \\
\frac{v_{-\kappa} (8)}{v_{-\kappa} (8) - v_{-\kappa} (4)} \bigg( \frac{|B(p, \eps  / 4) \cap \RR|}{v_{-\kappa} (\eps / 4)} - \frac{|B(p,8) \cap \RR|}{v_{-\kappa} (8)} \bigg)
  < C \delta. 
\end{multline*}

By the third tameness property applied to the annulus $A(p, \eps  / 4, 4 )$ and Corollary \ref{Cor:harmonic}, we can find a harmonic function $h : A(p, \eps  / 4, 4) \to \IR$ that satisfies
\begin{subequations}
\begin{equation} \label{eq:hpropertiesa}
 4^{2-n} \leq h \leq (\eps /4)^{2-n}, 
\end{equation}
\begin{align}
 \int_{A(p, \eps / 4, 4) \cap \RR} |\nabla (h-d^{2-n} (p, \cdot) ) |^2 dg &< C \delta, \displaybreak[1] \\
 \int_{A(p, \eps  / 4, 4) \cap \RR} | h-d^{2-n} (p, \cdot) |^2 dg &< C \delta. 
\end{align}
Here, and in the rest of the proof, $C < \infty$ is a generic constant that only depends on $\eps$, $\mathbf{p}_0$ and $Y$.
By Corollary \ref{Cor:harmonic}, we know that $h$ is harmonic and locally Lipschitz.
Moreover, by Corollary \ref{Cor:ChengYau} we have
\begin{equation} \label{eq:hpropertiesd}
|\nabla h| < C  \textQQqq{on} A(p, 0.26 \eps , 3.9 ).
\end{equation}
\end{subequations}

Set now
\[ b := h^{\frac{2}{2-n}}. \]
Then
\begin{subequations}
\begin{equation} \label{eq:bpropertiesa}
 \triangle b = \frac{n}2 \frac{|\nabla b|^2}{b}
\end{equation}
and, using (\ref{eq:hpropertiesa})--(\ref{eq:hpropertiesd}), we find that
\begin{equation} \label{eq:bpropertiesb}
 (\eps /4)^2 \leq b \leq 4^2, 
\end{equation}
\begin{align}
 \int_{A(p, \eps / 4, 4) \cap \RR} |b - f |^2 dg <  \Psi(\delta | \eps, Y), \label{eq:bpropertiesc} \\
  \int_{A(p, \eps / 4, 4) \cap \RR} |\nabla (b - f) |^2 dg <  \Psi(\delta | \eps, Y), \label{eq:bpropertiesd}
\end{align}
\begin{equation} 
 |\nabla b| < C \textQQqq{on} A(p, 0.26 \eps , 3.9 ) \cap \RR. \label{eq:bpropertiese}
\end{equation}
Combining (\ref{eq:bpropertiesc}) with (\ref{eq:bpropertiese}) yields moreover
\begin{equation} \label{eq:bpropertiesf}
 |b - f| < C \Psi (\delta | \eps, Y) \textQQqq{on} A(p, 0.27 \eps , 3.8 ).
\end{equation}
\end{subequations}

Let $\Phi : \IR \to [0, 1]$ be a smooth cutoff function such that $\supp \Phi \subset ( (0.3 \eps)^2, 3^2)$ and such that $\Phi \equiv 1$ on $[\eps^2, 1]$.
By (\ref{eq:bpropertiesf}) we have $\supp \Phi \circ b \subset A := A(p, \eps  / 2, 2)$ for sufficiently small $\delta$.
Then
\begin{equation} \label{eq:intbyp0}
 \int_{A \cap \RR}  \bigg| \nabla^2 b - \frac1n \triangle b g \bigg|^2 \Phi (b) dg = \int_{A \cap \RR} \bigg( |\nabla^2 b|^2 - \frac1n (\triangle b)^2 \bigg) \Phi (b) dg. 
\end{equation}
By standard elliptic estimates and (\ref{eq:bpropertiesb}), (\ref{eq:bpropertiese}) there is a (non-uniform) constant $C_* < \infty$ such that we have on $A \cap \RR$
\[ |\nabla^2 b| < C_*  \rrm^{-1} \textQQqq{and}  |\nabla^3 b| < C_*  \rrm^{-2}. \]
So, using Proposition \ref{Prop:integrationbyparts}, we can conclude
\begin{align}
  \int_{A \cap \RR}  |\nabla^2 b|^2 \Phi (b) dg &= - \int_{A \cap \RR} \Big( \nabla b \nabla \triangle b \cdot \Phi (b) + \nabla_i b \nabla^2_{ij} b \nabla_j b \cdot \Phi' (b) \Big) dg,  \displaybreak[1] \\
\int_{A \cap \RR}  \nabla b \nabla \triangle b \cdot \Phi (b) dg &= - \int_{A \cap \RR} \Big(  ( \triangle b )^2 \cdot \Phi (b)  + \triangle b |\nabla b|^2 \Phi' (b) \Big) dg , 
\end{align}
\begin{multline}
 \int_{A \cap \RR}  \nabla_i b \nabla^2_{ij} b \nabla_j b \cdot \Phi' (b) dg \\
  = - \int_{A \cap \RR} \Big( \triangle b |\nabla b |^2 \Phi' (b) + \nabla_i b \nabla_j b \nabla^2_{ij} b  \cdot \Phi' (b) + |\nabla b |^4 \Phi'' (b) \Big) dg.
\end{multline}
So
\begin{equation} \label{eq:intbyp3}
 \int_{A \cap \RR}  \nabla_i b \nabla^2_{ij} b \nabla_j b \cdot \Phi' (b) dg 
  = - \frac12 \int_{A \cap \RR} \Big( \triangle b |\nabla b |^2 \Phi' (b)  + |\nabla b |^4 \Phi'' (b) \Big) dg. 
 \end{equation}
Combining (\ref{eq:intbyp0})--(\ref{eq:intbyp3}) and applying (\ref{eq:bpropertiesa}) yields
\begin{align*}
 \int_{A \cap \RR}  \bigg| \nabla^2 b & - \frac1n \triangle b g \bigg|^2  \Phi (b) dg \\
 &= \int_{A \cap \RR} \Big( ( \triangle b )^2 \cdot \Phi (b)  + \triangle b |\nabla b|^2 \Phi' (b) \\
&\qquad\qquad\qquad + \frac12 \triangle b |\nabla b |^2 \Phi' (b)  + \frac12 |\nabla b |^4 \Phi'' (b) 
 - \frac1n (\triangle b)^2 \Phi (b) \bigg) dg \displaybreak[1] \\
 &= \int_{A \cap \RR} \bigg( \frac{n^2}{4} \frac{|\nabla b|^4}{b^2} \cdot \Phi (b)  + \frac{n}2 \frac{|\nabla b|^2}{b} |\nabla b|^2 \Phi' (b) \\
&\qquad\qquad\qquad + \frac{n}4 \frac{|\nabla b|^2}{b} |\nabla b |^2 \Phi' (b)  + \frac12 |\nabla b |^4 \Phi'' (b) 
 - \frac{n}4 \frac{|\nabla b|^4}{b^2} \Phi (b) \bigg) dg \displaybreak[1] \\
 &= \int_{A \cap \RR} \bigg( \frac{n(n-1)}{4} b^{-2} \Phi (b)  + \frac{3n}4 b^{-1} \Phi' (b) + \frac12  \Phi'' (b)  \Big) |\nabla b|^4 dg.
\end{align*}
Using (\ref{eq:bpropertiesb})--(\ref{eq:bpropertiesf}) and \cite[Lemma 4.74]{Cheeger-Colding-Cone}, we find
\begin{align}
 \int_{A \cap \RR}&  \bigg| \nabla^2 b  - \frac1n \triangle b g \bigg|^2  \Phi (b) dg \notag \\
 &< \Psi(\delta | \eps, Y) +  \int_{A \cap \RR} \bigg( \frac{n(n-1)}{4} d^{-4}(\cdot, p) \Phi (d^2(\cdot, p)) \notag \\
 &\qquad\qquad\qquad + \frac{3n}4 d^{-2}(\cdot, p) \Phi' (d^2(\cdot, p)) + \frac12  \Phi'' (d^2(\cdot, p))  \bigg) \cdot 16 d^4(\cdot, p)  dg \notag \\
 &< \Psi (\delta | \eps, Y) + \int_{A \cap \RR} T( d(\cdot, p)) dg, \label{eq:nab2bint1}
\end{align}
for some universal smooth function $T : \IR \to \IR$ with $T(\eps / 2) = 0$.
We can estimate the last integral as follows.
We have
\[
 \int_{A \cap \RR} T( d(\cdot, p)) dg 
= \int_{A \cap \RR}  \int_{\eps  / 2}^{d(x,p)} T'( s) ds dg(x)  
=   \int_{\eps  / 2}^{2 } T'( s) |A(p,s, 2) \cap \RR| ds 
\]
and, using integration by parts,
\[
 \frac{n |B(p,2) \cap \RR|}{2^n}  \int_{\eps /2}^{2} T(s) s^{n-1} ds
  =  \int_{\eps  / 2}^{2} T'(s)\cdot \bigg( 1 - \Big( \frac{s}{2} \Big)^{n} \bigg)  |B(p,2) \cap \RR| ds. 
 \]
It follows that, using Proposition \ref{Prop:volumecomparison} and the assumption of this proposition,
\begin{align*}
\bigg|  \int_{A \cap \RR} & T(d)  dg  - \frac{n |B(p,2) \cap \RR|}{2^n}  \int_{\eps /2}^{2} T(s) s^{n-1} dg \bigg|   \\
&\leq   \int_{\eps  / 2}^{2} |T'(s)| \bigg| |A(p,s, 2) \cap \RR|  - \bigg( 1 - \Big( \frac{s}{2} \Big)^n \bigg) | B(p, 2) \cap \RR | \bigg| ds \notag \\
&\leq   \int_{\eps  / 2}^{2} |T'(s)| \bigg| |B(p,s) \cap \RR|   - \Big( \frac{s}{2} \Big)^n  | B(p, 2) \cap \RR | \bigg| ds \notag \\
 &< \Psi (\delta | \eps). 
\end{align*}
Using integration by parts and the definition of $T(s)$, we can check that
\begin{equation}  \label{eq:nab2bint3}
  \int_{\eps /2}^{2} T(s) s^{n-1} dg =  0. 
\end{equation}
Combining (\ref{eq:nab2bint1})--(\ref{eq:nab2bint3}) yields
\begin{equation} \label{eq:nab2bintfinal}
  \int_{A \cap \RR}  \bigg| \nabla^2 b  - \frac1n \triangle b g \bigg|^2  \Phi (b) dg < \Psi (\delta | \eps, Y). 
\end{equation}
The claim now follows from (\ref{eq:bpropertiesa}), (\ref{eq:bpropertiesc}), (\ref{eq:bpropertiesd}), (\ref{eq:bpropertiesf}) and (\ref{eq:nab2bintfinal}) for sufficiently small $\delta$.
\end{proof}

Next, we show that almost every minimizing geodesic in an annulus around $p$ encloses an angle with the vector field $\nabla d( \cdot, p)$ that can be estimated using the law of cosines.

\begin{Lemma}[cf {\cite[Corollary 2.48]{Cheeger-Colding-Cone}}] \label{Lem:almostlawcos}
For any $0 <  \eps < 1$, $\mathbf{p}_0 > 3$ and $Y < \infty$ there is a $0 < \delta = \delta (\eps, \mathbf{p}_0, Y) < \eps$ such that the following holds:

Let $\XX$ be a singular space with mild singularities of codimension $\mathbf{p}_0$ that is $Y$-tame at scale $\delta^{-1} r$ for some $r > 0$.
Assume moreover that $\Ric \geq - (n-1) \kappa$ on $\RR$ for some $0 \leq \kappa \leq \delta r^{-2}$.
Let $p \in \RR$ and assume that
\[ \frac{| B(p, \delta r) \cap \RR |}{v_{-\kappa} (\delta r)} - \frac{|B(p, 16r) \cap \RR|}{v_{-\kappa} (16r)} < \delta. \]
Let $x \in A(p, \eps r, r)$.
Then there is a point $x' \in B(x, \eps r) \cap \RR$ and an open subset $S \subset A(p, \eps r, r) \cap \RR$ such that the following holds:
\begin{enumerate}[label=(\alph*)]
\item $|(A(p, \eps r, r) \cap \RR) \setminus S| < \eps r^n$.
\item $d(\cdot, p)$ and $d(\cdot, x')$ are $C^1$ on $S$.
\item For any $y \in S$ there is a unique minimizing arclength geodesic $\gamma_{x',y} : [0, d(x', y)] \to X$ with $\gamma_{x', y} ([0, d(x', y)] ) \subset \RR$, $\gamma_{x', y}$ varies continuously in $y$, and we have
\begin{equation} \label{eq:almostlawcos}
\qquad\qquad \bigg|  \big\langle \gamma'_{x',y} (d(x', y)), \nabla d(\cdot, p) \big\rangle + \frac{d^2(x', p) - d^2(y, p) - d^2(x', y)}{2 d(y,p) d(x', y)} \bigg| < \eps . 
\end{equation}
\end{enumerate}
\end{Lemma}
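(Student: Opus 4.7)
The strategy is a singular-space adaptation of the classical argument in Cheeger--Colding: one builds an almost quadratic comparison function $b$ on an annulus around $p$, integrates the bound $\nabla^2 b\approx 2g$ along minimizing geodesics emanating from $x'$, and compares the resulting identity with the Euclidean law of cosines. First I would apply Lemma \ref{Lem:bonannulus} with radii $(\eps/2)r$ and $2r$ (after checking that the hypothesis $|B(p,\delta r)|/v_{-\kappa}(\delta r)-|B(p,16r)|/v_{-\kappa}(16r)<\delta$ feeds into its hypothesis up to universal constants) to obtain a locally Lipschitz $b$ on $A(p,\eps r/2,2r)$, $C^3$ on the regular part, with
\[
\|b-f\|_{L^\infty}+r^{-1}\|\nabla(b-f)\|_{L^2}+\|\nabla^2 b-2g\|_{L^2}\le \Psi(\eta\mid\eps,Y)\,r^{n/2+\text{appropriate power}},
\]
where $f=d^2(\cdot,p)$ and $\eta$ is an auxiliary small constant to be chosen later. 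Corollary \ref{Cor:ChengYau} supplies the needed pointwise bound $|\nabla b|\le C(\eps,Y)$ on $A(p,\eps r,r)\cap\RR$.

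Next I would select $x'\in B(x,\eps r)\cap\RR$. Set $G(x',y):=\int_0^{d(x',y)}|\nabla^2 b-2g|\bigl(\gamma_{x',y}(s)\bigr)\,ds$. Applying the segment inequality (Proposition \ref{Prop:segmentinequ}) with $U_1=B(x,\eps r)$, $U_2=A(p,\eps r,r)$ and integrand $|\nabla^2 b-2g|$, then Cauchy--Schwarz with the $L^2$-bound from Lemma \ref{Lem:bonannulus}, yields
\[
\int_{(U_1\cap\RR)\times(U_2\cap\RR)\cap\mathcal G^*}G(x',y)\,dg(x')\,dg(y)\le \Psi(\eta\mid\eps,Y)\,r^{2n+1}.
\]
By Fubini one can find $x'\in B(x,\eps r)\cap\RR$ with $\int_{A(p,\eps r,r)\cap\RR}G(x',\cdot)\,dg\le \Psi(\eta\mid\eps,Y)\,r^{n+1}$ and simultaneously $|b-f|(x')$, $|\nabla(b-f)|$ small in an averaged sense along the geodesics leaving $x'$; mildness of $\XX$ guarantees that $\mathcal{G}^*_{x'}$ has full measure in $A(p,\eps r,r)\cap\RR$, so almost every such geodesic stays in $\RR$.

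I then define the good set $S\subset A(p,\eps r,r)\cap\RR$ to be the (open) set of $y$ such that: (a) $y\in\mathcal G^*_{x'}\cap\mathcal G^*_p$ (so $d(\cdot,x')$ and $d(\cdot,p)$ are $C^1$ at $y$ by Proposition \ref{Prop:expincomplete}(d)(e) and $\gamma_{x',y}$ depends continuously on $y$ by (g) and stays in $\RR$), (b) $|\nabla(b-f)|(y)<\eps$, and (c) $G(x',y)<\eps'$ for a threshold $\eps'=\eps'(\eps)$ to be fixed. The complement of (a) has measure zero (mildness plus Proposition \ref{Prop:expincomplete}(c)); (b) and (c) fail only on sets of measure $O(\Psi(\eta\mid\eps,Y)r^n/\eps^{2})$ by Chebyshev. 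Choosing $\eta$ small yields $|(A(p,\eps r,r)\cap\RR)\setminus S|<\eps r^n$; a standard tightening argument promotes $S$ to an open subset with the same measure bound.

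For $y\in S$ set $F(s)=b(\gamma_{x',y}(s))$, $a=d(x',y)$. Since $\gamma$ is a unit-speed geodesic lying in $\RR$, $F''(s)=\nabla^2 b(\gamma',\gamma')$, hence $|F''(s)-2|\le |\nabla^2 b-2g|(\gamma(s))$. Two successive integrations give
\[
|F'(a)-F'(0)-2a|\le G(x',y),\qquad |F(a)-F(0)-aF'(0)-a^2|\le a\,G(x',y).
\]
Eliminating $F'(0)$ and using $F(a)=b(y)$, $F(0)=b(x')$, $F'(a)=\langle\nabla b(y),\gamma'(a)\rangle$, I obtain
\[
\Bigl|\langle\nabla b(y),\gamma'(a)\rangle-\tfrac{b(y)-b(x')}{a}-a\Bigr|\le 2G(x',y)<2\eps'.
\]
Replacing $b$ by $f$ costs $\Psi(\eta\mid\eps,Y)/(\eps r)$ from the pointwise bound $|b-f|\le\Psi(\eta)r^2$ and from condition (b), and then $\nabla f(y)=2d(y,p)\nabla d(\cdot,p)(y)$ plus division by $2d(y,p)\ge 2\eps r$ delivers the desired inequality \eqref{eq:almostlawcos}.

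The main obstacle will be the juggling of error terms: the quantity $\nabla b$ is controlled only in $L^2$, but the conclusion demands a \emph{pointwise} statement at $y$; this is exactly what forces the passage to a measure-theoretically large subset $S$ via Chebyshev. A second subtlety is ensuring that the chosen $x'$ simultaneously satisfies the Fubini selection for $G$ and lies in $\RR$ at a point where $b$ and $f$ agree up to the expected pointwise error---both conditions are nontrivial at $x'$ because $x$ itself need not be regular. Both issues are resolved by taking $\eta$ (hence $\delta$) small enough that all bad sets are simultaneously of measure $\le \tfrac{1}{10}|B(x,\eps r)\cap\RR|$, so that a common good $x'$ exists by the $Y$-tame volume lower bound.
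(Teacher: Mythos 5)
Your outline correctly identifies the three main ingredients (Lemma \ref{Lem:bonannulus} to manufacture an almost-quadratic comparison function $b$, the segment inequality to average $|\nabla^2 b - 2g|$ over geodesics, and a Fubini/Chebyshev selection of a good center $x'$ and good set $S$), and these are exactly the tools the paper uses. But your proof has a genuine gap: you integrate $F(s)=b(\gamma_{x',y}(s))$ twice and use $F''\approx 2$ along the whole geodesic from $x'$ to $y$, which requires $\gamma_{x',y}$ to stay inside the annulus $A(p,\eps r/2, 2r)$ where $b$ is defined. This is simply false in general. For a near-conical space, a minimizing geodesic between two points of the annulus on ``opposite sides'' of $p$ can pass arbitrarily close to (or through) the vertex $p$, hence can exit any fixed inner annulus, and there $b$ is undefined, $F$ is undefined, and the double-integration step breaks. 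Your good set $S$ does not exclude such $y$ either: the Chebyshev condition $G(x',y)<\eps'$ is phrased with $|\nabla^2 b-2g|\chi_A$, which vanishes outside the annulus and therefore does not detect the excursion.

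The paper handles this head-on. It chooses the inner radius of the annulus much smaller than $\eps$ (roughly $\rho < \eps^2/10$) and splits into two cases. If $\gamma_{x',y}([0,d])$ stays inside $A(p,\rho/2,4r)$, the double-integration argument you describe goes through. If the geodesic leaves that annulus, i.e.\ gets within distance $\rho$ of $p$, then a short triangle-inequality computation (the paper's display (\ref{eq:almost1oververtex})) shows that
\[
\frac{d^2(x',p)-d^2(y,p)-d^2(x',y)}{2\,d(y,p)\,d(x',y)}
\]
is within $\eps/4$ of $-1$, and a separate integration over the tail $\gamma|_{(s,d]}$ (where $s$ is the last time $\gamma$ is at distance $\rho$ from $p$) shows that $\langle\nabla b(y),\gamma'(d)\rangle/(2d(y,p))$ is close to $+1$. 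The estimate (\ref{eq:almostlawcos}) then holds trivially because both sides are near $\pm 1$. To repair your argument you need both the smaller inner radius and this explicit second case; otherwise the step ``$F''(s)=\nabla^2 b(\gamma',\gamma')$ for all $s\in[0,d]$'' is not justified.
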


\begin{proof}
We argue similarly as in the proof of \cite[Corollary 2.48]{Cheeger-Colding-Cone}.
Without loss of generality, we may assume that $r = 1$.

We first argue that for a sufficiently small choice of $0 < \rho < \eps^2 / 10$ (depending on $\eps$) the following is true:
If $x' \in B(x, \rho )$, $y \in A(p, \eps , 1)$, $\nabla d(\cdot, p)$ is $C^1$ at $y$ and $\gamma : [0, d(x,y)] \to X$ is a minimizing arclength geodesic between $x', y$ that leaves the annulus $A(p, \rho, 4)$, then
\begin{equation} \label{eq:almost1oververtex}
 \bigg| \frac{d^2(x', p) - d^2(y, p) - d^2(x', y)}{2 d(y,p) d(x', y)} + 1\bigg| < \eps / 4. 
\end{equation}
In fact, if $\rho < \eps / 2$, then $x' \in A(p, \eps / 2, 2)$.
If $\gamma$ leaves $A(p, \rho, 4)$, then there is some $s \in [0, d(x,y)]$ such that $d(p, \gamma(s)) \leq \rho$.
It follows that
\[ d(y, x') \leq d(y, p) + d(p, x') \leq d(y, \gamma(s)) + \rho + d(\gamma(s), x') + \rho = d(y,x') + 2 \rho. \]
So for sufficiently small $\rho$, we can ensure (\ref{eq:almost1oververtex}).
We will fix $\rho$ from now on.

Let $\nu > 0$ be a constant whose value we will determine at the end of the proof, depending on $\eps, Y, \rho$.
Assuming $\delta$ to be sufficiently small (depending on $\nu, \rho, \eps, \mathbf{p}_0, Y$), we apply Lemma \ref{Lem:bonannulus} with $r \leftarrow 4$, $\eps \leftarrow \rho / 2$ and $\eta \leftarrow \nu$ to obtain the function $b : A := A(p, \rho / 2, 4) \to \IR$.
Note that then
\begin{subequations}
\begin{equation} \label{eq:lawofcosida}
 | b- d^2 (\cdot, p)  | < \nu \textQQqq{on} A(p, \rho, 4).
\end{equation}

We now apply Proposition \ref{Prop:segmentinequ} with $U_1 \leftarrow B(x, \rho)$, $U_2 \leftarrow A= A(p, \eps, 1)$, $f \leftarrow |\nabla^2 b - 2 g| \chi_A$.
Doing this, and assuming $\nu$ and hence $\delta$ to be sufficiently small (depending on $\eps, Y, \rho$), we can find a point $x' \in B(x, \rho) \cap \RR$ and and open subset $S \subset A(p, \eps, 1) \cap \RR$ in such a way that assertions (a) and (b) hold and such that additionally the following is true:
\begin{equation} \label{eq:lawofcosidb}
 \big| \nabla b - \nabla d^2 (\cdot, p )  \big| < \Psi ( \nu | \eps, \eta, Y ) \textQQqq{on} S 
\end{equation}
and for any $y \in S$ there is a unique minimizing arclength geodesic $\gamma_{x',y} : [0, \linebreak[1] d (x', y)] \linebreak[1] \to X$ with $\gamma_{x', y} ([0, d(x', y)] ) \subset \RR$, that varies continuously in $y$, and for any $y \in S$
\begin{equation} \label{eq:lawofcosidc}
 \int_0^{d(x',y)} \big( \big| \nabla^2 b - 2 g \big| \chi_A \big) (\gamma_{x',y}(s)) ds < \Psi (\nu | \eps, \rho, Y) . 
\end{equation}
\end{subequations}
We will now show that (\ref{eq:lawofcosida})--(\ref{eq:lawofcosidc}) imply (\ref{eq:almostlawcos}).
So fix $y$ for the rest of the proof and set $\gamma := \gamma_{x', y}$ and $d := d(x',y)$.
Then
\begin{equation} \label{eq:1dintissmall}
 \int_0^d  \big| (b\circ \gamma)''(s) - 2 \big| \chi_A (\gamma(s)) ds < \Psi (\nu | \eps, \rho, Y). 
\end{equation}

Assume first that $\gamma ([0,d]) \subset A$.
Then by (\ref{eq:1dintissmall}), we have for any $s_1, s_2 \in [0,d]$
\begin{equation} \label{eq:deralonggamma}
 \big| \big( (b \circ \gamma)' (s_1) - 2s_1 \big) - \big( (b \circ \gamma) ' (s_2) - 2s_2 \big) \big| < \Psi (\nu | \eps, \rho, Y) \cdot d. 
\end{equation}
Integrating (\ref{eq:deralonggamma}) one more time yields
\begin{equation} \label{eq:twiceintegrated}
 \big| \big( (b \circ \gamma)(d) - d^2  \big) -  (b \circ \gamma) (0) - \big( (b \circ \gamma )' (d) - 2d \big) \cdot d    \big| < \Psi (\nu | \eps, \rho, Y) \cdot d^2. 
\end{equation}
So
\[ \bigg| (b \circ \gamma)' (d) + \frac{b(x')  - b(y) - d^2}{d} \bigg| < \Psi (\nu | \eps, \rho, Y) \cdot d. \]
Using (\ref{eq:lawofcosida}) and the fact that $d(y,p) > \eps$ we find that for sufficiently small $\nu$
\begin{equation} \label{eq:bgammaprime}
 \bigg| \frac{(b \circ \gamma)' (d)}{2 d(y,p)} + \frac{d^2 (x', p)  - d^2(y, p) - d^2(x',y)}{2 d(y,p) d(x',y) } \bigg| < \eps / 2. 
\end{equation}

Next, we verify (\ref{eq:bgammaprime}) in the case in which $\gamma ([0,d]) \not\subset A$.
Choose $s \in [0,d]$ such that $\gamma((s,d]) \subset A$ and $d(p, \gamma(s)) = \rho$.
Then (\ref{eq:deralonggamma}) holds for all $s_1, s_2 \in (s,d]$ and instead of (\ref{eq:twiceintegrated}) we get
\begin{multline*}
 \big| \big( (b \circ \gamma)(d) - d^2 \big) - \big( (b \circ \gamma)(s) - s^2 \big) \\ - \big( (b \circ \gamma)' (d) - 2d\big) \cdot (d-s) \big| < \Psi (\nu | \eps, \rho, Y ) ,
\end{multline*}
which implies
\[ \big| b(y)  - b(\gamma(s)) + (d-s)^2  - (b \circ \gamma)'(d) \cdot (d-s) \big| < \Psi (\nu | \eps, \rho, Y ). \]
Using (\ref{eq:lawofcosida}) we find
\[ \big| d^2 (y,p) - \rho^2 + (d-s)^2  - (b \circ \gamma)'(d) \cdot (d-s) \big| < \Psi (\nu | \eps, \rho, Y). \]
So by $d^2 (y,p) + (d-s)^2 - 2 d (y,p) \cdot (d - s) = ( d(y,p) - (d-s))^2 \leq \rho^2$, we get
\[ \big| 2 d (y,p) \cdot (d - s)  - (b \circ \gamma)'(d) \cdot (d-s) \big| < 2\rho^2 + \Psi (\nu | \eps, \rho, Y). \]
Since  $d-s > \eps - \rho > \eps > \rho^{1/2}$ and $d(y,p) > \eps > \rho^{1/2}$, we get
\[ \bigg|  \frac{(b \circ \gamma)'(d)}{2d(y,p)} - 1 \bigg| < \rho + \Psi (\nu | \eps, \rho, Y).  \]
Combining this inequality with (\ref{eq:almost1oververtex}) we find that (\ref{eq:bgammaprime}) also holds for small $\nu$ in the case in which $\gamma ([0,d]) \not\subset A$.

Lastly, observe that by (\ref{eq:lawofcosidb})
\begin{multline*}
 \bigg| \frac{(b \circ \gamma)'(d)}{2 d(y,p)} -  \langle \nabla_y d(y,p), \gamma'(d) \rangle \bigg| = \frac1{2 d(y,p)} \big| \langle \nabla b , \gamma' (d) \rangle - \langle \nabla_y d^2 (y,p), \gamma' (d) \rangle \big| \\
  \leq \frac1{2 \eps} \big| \nabla b - \nabla_y d^2 (y,p) \big| < \Psi(\nu | \eps, \rho, Y).
\end{multline*}
So (\ref{eq:almostlawcos}) follows from (\ref{eq:bgammaprime}) for sufficiently small $\nu$.
\end{proof}

Next, we integrate the approximate law of cosines identity from Lemma \ref{Lem:almostlawcos} to obtain an approximation of an identity that is true on metric cones.

\begin{Lemma} \label{Lem:simpleconeid}
For any $0 < \eps < 1$, $\mathbf{p}_0 > 3$ and $Y < \infty$ there is a $0 < \delta = \delta (\eps, \mathbf{p}_0, Y) < \eps$ such that the following holds:

Let $\XX$ be a singular space with mild singularities of codimension $\mathbf{p}_0$ that is $Y$-tame at scale $\delta^{-1} r$ for some $r > 0$.
Assume moreover that $\Ric \geq - (n-1) \kappa$ on $\RR$ for some $0 \leq \kappa \leq \delta r^{-2}$.
Let $p \in \RR$ and assume that
\[ \frac{| B(p, \delta r) \cap \RR |}{v_{-\kappa} (\delta r)} - \frac{|B(p, 32r) \cap \RR|}{v_{-\kappa} (32r)} < \delta. \]
Let $x, z_1, z_2 \in A(p, \eps r, r)$ such that
\[ d(p, z_2) = d(p, z_1) + d(z_1, z_2). \]
Then
\begin{equation} \label{eq:simpleconeid}
\bigg| \frac{d^2(x,z_1) - d^2(x,p) - d^2(z_1, p)}{2d(z_1, p) d(x,p)} - \frac{d^2(x,z_2) - d^2(x,p) - d^2(z_2, p)}{2d(z_2, p) d(x,p)} \bigg| < \eps. 
\end{equation}
\end{Lemma}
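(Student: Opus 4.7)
My plan is to translate (\ref{eq:simpleconeid}) into an approximate ODE along a radial geodesic from $p$ that passes close to both $z_1$ and $z_2$, apply Lemma~\ref{Lem:almostlawcos} to identify that ODE, and integrate. Normalize $r = 1$, set $a := d(x, p)$, $t_i := d(z_i, p) \in [\eps, 1]$, and let $\eta = \eta(\eps) > 0$ be a small auxiliary parameter to be chosen at the end. Apply Lemma~\ref{Lem:almostlawcos} with its $\eps$ replaced by $\eta$ at the point $x$: this yields $x' \in B(x, \eta) \cap \RR$ and an open subset $S \subset A(p, \eta, 1) \cap \RR$ with $|(A(p, \eta, 1) \cap \RR) \setminus S| < \eta$ on which the almost law of cosines
\[ \bigl|\langle \gamma'_{x', y}(d(x', y)),\, \nabla d(\cdot, p)(y)\rangle + G_{x'}(y)\bigr| < \eta, \quad G_{x'}(y) := \tfrac{d^2(x', p) - d^2(y, p) - d^2(x', y)}{2 d(y, p)\, d(x', y)}, \]
holds. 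The hypotheses of Lemma~\ref{Lem:almostlawcos} are satisfied by choosing $\delta$ in our lemma small enough in terms of $\eta$, noting that the volume comparison it needs follows from our volume-ratio hypothesis together with the monotonicity in Proposition~\ref{Prop:volumecomparison}.

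Next I select a generic radial geodesic from $p$ in a direction close to that of $\sigma$ (the minimizing geodesic through $z_1, z_2$). Since $p \in \RR$ and $\XX$ has mild singularities, $\exp_p$ is defined on a star-shaped $\mathcal{D}_p \subset T_p \RR$, with $\mathcal{D}_p \setminus \mathcal{D}^*_p$ of measure zero by Proposition~\ref{Prop:expincomplete}. By volume comparison and the volume-ratio hypothesis, the Jacobian $J_p(tv)$ of $\exp_p$ is comparable to $t^{n-1}$ throughout the annular region $t \in [\eta, 1]$. A Fubini/Markov argument in polar coordinates then produces a unit vector $v \in S_p \RR$, within an arbitrarily small angle $\mu > 0$ of $\sigma'(0)$, such that $\tilde\sigma(t) := \exp_p(tv)$ is defined on $[0, t_2 + 1]$ with $tv \in \mathcal{D}^*_p$ for almost every $t$, the endpoints $\tilde\sigma(t_i)$ lie within distance $C\mu$ of $z_i$, and the $1$-dimensional Lebesgue measure of $B := \{t \in [t_1, t_2] : \tilde\sigma(t) \notin S \cap \mathcal{G}^*_{x'}\}$ is less than $\sqrt{\eta}$.

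Set $\Phi(t) := d^2(x', \tilde\sigma(t))$ (a $2$-Lipschitz function of $t$) and $a' := d(x', p) \geq \eps/2$. Because $\tilde\sigma$ is radial, $\tilde\sigma'(t) = \nabla d(\cdot, p)(\tilde\sigma(t))$ a.e., so the first variation of distance together with Lemma~\ref{Lem:almostlawcos} yields, a.e.\ on $[t_1, t_2] \setminus B$,
\[ \Phi'(t) = -2 d(x', \tilde\sigma(t)) \cdot G_{x'}(\tilde\sigma(t)) + O(\eta) = \frac{\Phi(t) + t^2 - (a')^2}{t} + O(\eta), \]
equivalently $(\Phi(t)/t)' = 1 - (a')^2/t^2 + O(\eta/\eps^2)$. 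On $B$ only the uniform a.e.\ bound $|(\Phi/t)'| \lesssim 1/\eps^2$ is available, but $|B| < \sqrt{\eta}$, so integrating gives
\[ \frac{\Phi(t_2)}{t_2} - \frac{\Phi(t_1)}{t_1} = (t_2 - t_1) - (a')^2\bigl(\tfrac{1}{t_1} - \tfrac{1}{t_2}\bigr) + O\bigl(\sqrt{\eta}/\eps^2\bigr). \]
A direct algebraic check shows this is exactly the statement that (\ref{eq:simpleconeid}), with $x$ replaced by $x'$ and $z_i$ by $\tilde\sigma(t_i)$, holds with error $O(\sqrt{\eta}/\eps^3)$. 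Since $(w, z) \mapsto F_w(z)$ is Lipschitz on $\{d(\cdot, p) \geq \eps/2\}$, reversing the perturbations $x' \to x$ and $\tilde\sigma(t_i) \to z_i$ costs an additional $O((\eta + \mu)/\eps^4)$; choosing $\eta$ and then $\mu$ small enough in terms of $\eps$ proves (\ref{eq:simpleconeid}). The main obstacle is the Fubini step producing $v$: it relies crucially on the volume-ratio hypothesis, which controls the Jacobian of $\exp_p$ to prevent pathological behavior along radial rays—a pure Ricci lower bound would not suffice.
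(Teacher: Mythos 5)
Your overall strategy — approximate $d^2(x',\cdot)$ by solving an ODE obtained from Lemma~\ref{Lem:almostlawcos} along a curve that proceeds roughly radially from $p$, and integrate from the ``time'' of $z_1$ to the ``time'' of $z_2$ — is the right one, and it is indeed what the paper does; your algebraic identity for $(\Phi/t)'$ and the integration from $t_1$ to $t_2$ check out.

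However, there is a genuine gap in your construction of the curve. You want a literal radial geodesic $\tilde\sigma(t)=\exp_p(tv)$ emanating from $p$ and staying in $\RR$, with $\tilde\sigma(t_1)$ close to $z_1$ and $\tilde\sigma(t_2)$ close to $z_2$. Your Fubini/Markov argument in polar coordinates can certainly produce a generic direction $v$ with $tv\in\mathcal{D}^*_p$ on the relevant interval and with the bad set $B$ small, but it cannot control both endpoints. The hypothesis $d(p,z_2)=d(p,z_1)+d(z_1,z_2)$ guarantees that some minimizing \emph{metric} geodesic $\sigma$ from $p$ to $z_2$ passes through $z_1$, but that $\sigma$ lives in $X$ and can pass through $X\setminus\RR$, where it is not governed by the geodesic ODE and need not coincide with $\exp_p(\cdot\,\sigma'(0))$ once it has left $\RR$. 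Perturbing the initial direction gives a Riemannian geodesic $\tilde\sigma$ in $\RR$, but there is no reason for $\tilde\sigma(t_1)$ to be close to $z_1$; at a singular point a metric geodesic can branch, so a nearby radial direction can land far from $z_1$. Mildness only says that \emph{almost every} $z\in\RR$ admits a $p$-to-$z$ minimizer in $\RR$ — it says nothing about uniqueness of the geodesic to $z_2$ or about that geodesic passing near $z_1$. Your final Lipschitz argument for ``reversing the perturbations $\tilde\sigma(t_i)\to z_i$'' therefore has no input.

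The paper sidesteps this entirely: instead of a radial geodesic from $p$, it uses the segment inequality (Proposition~\ref{Prop:segmentinequ}) applied to the small balls $B(z_1,\mu)$, $B(z_2,\mu)$ to produce $z'_1,z'_2\in\RR$ and a unique minimizing geodesic $\sigma:[0,l]\to\RR$ \emph{from $z'_1$ to $z'_2$}, which by construction lies in $\RR$ and is mostly in the good set $S$. It then shows $\sigma$ is ``approximately radial'' in the two senses actually needed: $|d(\sigma(s),p)-(d(p,z_1)+s)|<\mu$ pointwise from the triangle inequality plus the collinearity hypothesis, and $\int_{\sigma^{-1}(S)}|\sigma'(s)-\nabla d(\sigma(s),p)|^2\,ds<6\mu$ from first variation, again using collinearity. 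This delivers a curve with the correct endpoints by fiat and is almost-radial in the integrated sense required to run your same ODE argument, without ever having to produce a literal radial geodesic. You should also handle the degenerate case where $x$ lies nearly on the $z_1z_2$ segment (where $\Phi(t)$ can vanish and $G_{x'}$ is ill-defined), which the paper treats separately via the parameter $\zeta$; your $a'\ge\eps/2$ bound controls $d(x',p)$ but not $d(x',\tilde\sigma(t))$.
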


\begin{proof}
Assume without loss of generality that $r = 1$.
Let $\zeta > 0$ be a constant whose value will be determined and fixed at the end of this paragraph and consider first the case in which $x$ almost lies on the minimizing segment between $z_1, z_2$, in the sense that
\begin{equation} \label{eq:zetacondition1}
 d(x,z_1) + d(x, z_2) < d(z_1, z_2) + \zeta. 
\end{equation}
Then, using the triangle inequality,
\begin{multline*}
 0 \geq d(x, p) - d(z_1, p) - d(x, z_1) = d(x,p) - d(z_2, p) + d(z_1, z_2) - d(x, z_1) \\
 > d(x,p) - d(z_2, p) + d(x,z_2) - \zeta \geq - \zeta.
\end{multline*}
and
\begin{multline*}
0 \geq d(z_2, p) - d(x, z_2) - d(x, p)  \geq d(z_2, p) - d(x,z_2) - d(x,z_1) - d(z_1, p) \\
> d(z_2, p) - d(z_1, z_2) - \zeta - d(z_1, p) = - \zeta.
\end{multline*}
So for $i = 1, 2$
\[ \bigg|  \frac{d^2(x,z_i) - d^2(x,p) - d^2(z_i, p)}{2d(z_i, p) d(x,p)} + 2 \bigg|  < \Psi (\zeta | \eps ). \]
So for sufficiently small $\zeta$ the bound (\ref{eq:zetacondition1}) implies the bound (\ref{eq:simpleconeid}).
For the remainder of the proof we fix $\zeta$ to be so small and we consider from now on the case
\begin{equation} \label{eq:zetacondition2}
 d(x,z_1) + d(x, z_2) \geq d(z_1, z_2) + \zeta. 
\end{equation}

Let $0 < \nu < \eps/10$, $\mu > 0$ be constants whose values will be determined in the course of the proof, depending on $\eps, Y$.
Apply Lemma \ref{Lem:almostlawcos} for $r \leftarrow 2$, $\eps \leftarrow \nu$ to obtain the point $x' \in B(x, \nu ) \cap \RR$ and the open subset $S \subset A(p, \eps/2, 2) \cap \RR$.

For a given $\mu > 0$ apply Proposition \ref{Prop:segmentinequ} to $U_i \leftarrow B(z_i, \mu)$ and $f$ being the characteristic function of $(A(p, \eps/2, 2) \cap \RR) \setminus S$.
If $0 < \nu < \mu / 10$ is sufficiently small (depending on $\mu, \eps, Y$), then we can find points
\[ z'_1 \in B(z_1, \mu ) \cap \RR \textQQqq{and} z'_2 \in B(z_2, \mu ) \cap \RR \]
such that there is a unique minimizing arclength geodesic $\sigma : [0, l] \to X$ such that $\sigma (0) = z'_1$, $\sigma(l) = z'_2$, $\sigma ([0,l] ) \subset \RR$ and such that
\begin{equation} \label{eq:preimagealmall}
 |[0,l] \setminus \sigma^{-1}(S)| < \mu. 
\end{equation}

We will now analyze the function $\ell : [0,l] \to \IR$ defined by
\[ \ell(s) := d(x', \sigma(s)). \]
We first claim that, assuming $\mu$ to be small enough, that
\[ \ell(s) > \zeta/ 4 \textQQqq{for all} s \in [0,l]. \]
Indeed, otherwise
\begin{multline*}
 d(x,z_1) + d(x,z_2) < 4\mu + d(x', z'_1) + d(x', z'_2) \\
 \leq 4\mu + d(x', \sigma(s)) + d(\sigma(s), z'_1) + d(x', \sigma(s)) + d(\sigma(s), z'_2) \\
 \leq 4\mu + \zeta/2 + d(z'_1, z'_2) < 6\mu + \zeta/2 + d(z_1, z_2), 
\end{multline*}
contradicting (\ref{eq:zetacondition2}) for $\mu < \zeta / 2$.
Next, note that $\ell(s)$ is $1$-Lipschitz on $[0,l]$, continuous and differentiable on $\sigma^{-1} (S)$ and for all $s \in \sigma^{-1} (S)$ we have
\begin{multline*}
 \ell' (s) = \big\langle \gamma'_{x', \sigma(s)} (d (x', \sigma(s))), \sigma'(s) \big\rangle \\
 = \big\langle \gamma'_{x', \sigma(s)} (d( x', \sigma(s))), \nabla_1 d(\sigma(s), p) \big\rangle \\
 + \big\langle \gamma'_{x', \sigma(s)} (d( x', \sigma(s))), \sigma'(s) -  \nabla_1 d(\sigma(s), p) \big\rangle. 
\end{multline*}
Here $\nabla_1 d(\cdot, \cdot)$ denotes the gradient with respect to the first argument.
So by (\ref{eq:almostlawcos}), we have for all $s \in \sigma^{-1} (S)$
\[ \bigg| \ell' (s) + \frac{d^2(x',p) - d^2 (\sigma(s),p) - \ell^2 (s)}{2d(\sigma(s), p) \ell(s)} \bigg| < \nu + |\sigma'(s) -  \nabla_1 d(\sigma(s), p)|. \]

Now note that, 
\begin{multline}
 \int_{\sigma^{-1} (S)} |\sigma'(s) - \nabla d(\sigma(s), p)|^2 ds = \int_{\sigma^{-1} (S)} \big( 2- 2\langle \sigma' (s), \nabla_1 d(\sigma(s), p) \rangle \big) ds \\ 
 < 2 l - 2 \int_{\sigma^{-1} (S)} \frac{d}{ds} d(\sigma(s), p) ds
 < 2d(z'_1, z'_2) - 2 \big( d(z'_2, p) - d(z'_1, p) \big) + 2 \mu
 < 6 \mu. \label{eq:L2ssigmanab}
\end{multline}
Set
\[ U := \sigma^{-1}( S ) \cap \big\{ |\sigma' (s) - \nabla_1 d(\sigma(s), p)| <  \mu^{1/4} \big\} \subset [0,l]. \]
Then, by (\ref{eq:preimagealmall}) and (\ref{eq:L2ssigmanab}),
\begin{equation} \label{eq:Ualmostall}
 |[0,l] \setminus U | < \mu + 6 \mu^{1/2}
\end{equation}
and for all $s \in U$ we have
\begin{equation} \label{eq:ellprime1}
 \bigg| \ell' (s) + \frac{d^2(x',p) - d^2 (\sigma(s),p) - \ell^2 (s)}{2d(\sigma(s), p) \ell(s)} \bigg| < 2\mu^{1/4}.
\end{equation}

Next, observe that for all $s \in [0,l]$
\[  d(\sigma(s), p) - d(z_1, p) - d(z'_1, \sigma(s)) \leq d(z'_1, z_1) < \mu \] 
and
\begin{multline*}
 d(\sigma(s), p) - d(z_1, p) - d(z'_1, \sigma(s)) \\
   = d(\sigma(s), p) - d(z_1, p) - \big( d(z'_1, z'_2) - d(z'_2, \sigma(s) ) \big) \\
   \geq d(z'_2, p) - d(z_1, p) - d(z'_1, z'_2) \geq - d(z_1, z'_1) > - \mu.
 \end{multline*}
So
\[ \big| d(\sigma(s), p) - \big( d(p, z_1) + s \big) \big| = \big| d(\sigma(s), p) - d(z_1,p) - d(z'_1, \sigma(s)) \big| <  \mu. \]
So, by (\ref{eq:ellprime1}) and the fact that $d(p,z_1) > \eps$ and $\ell (s) > \zeta / 2$ for all $s \in [0,l]$, we have for all $s \in U$
\[  \bigg| \ell' (s) + \frac{d^2(x',p) - (d (p, z_1) +s)^2 - \ell^2 (s)}{2(d(p, z_1)+s) \ell(s)} \bigg| < \Psi (\mu). \]
It follows that on $U$
\begin{multline}
 \bigg| \frac{d}{ds} \bigg( \frac{ \ell^2(s) - d^2(x', p) - (d(p, z_1) + s)^2}{2 (d(p,z_1) + s) d(x', p)} \bigg) \bigg| \\
= \frac1{d(x',p)} \bigg| \frac{2\ell' (s) \ell (s) - 2 (d(p,z_1) + s)}{2 (d(p,z_1) + s)} - \frac{\ell^2(s) - d^2 (x', p) - (d(p,z_1) + s)^2}{2 (d(p,z_1) + s)^2} \bigg| \\
= \frac{\ell(s)}{d(x',p) (d(p,z_1) + s)} \bigg| \ell' (s) +  \frac{d^2(x',p) - (d (p, z_1) +s)^2 - \ell^2 (s)}{2(d(p, z_1)+s) \ell(s)} \bigg| < \Psi(\mu). \label{eq:deronUsmall}
\end{multline}
Since $\ell(s)$ is $1$-Lipschitz, we furthermore find a uniform $C = C(\eps) < \infty$ such that the quantity
\[ \frac{ \ell^2(s) - d^2(x', p) - (d(p, z_1) + s)^2}{2 (d(p,z_1) + s) d(x', p)} \]
is $C$-Lipschitz on $[0,l]$.
Combining this fact with (\ref{eq:Ualmostall}) and (\ref{eq:deronUsmall}) yields that
\[ \bigg| \frac{ \ell^2(0) - d^2(x', p) - d^2(p, z_1)}{2 d(p,z_1) d(x', p)} - \frac{ \ell^2(l) - d^2(x', p) - (d(p, z_1)+l)^2}{2 (d(p,z_1) + l) d(x', p)} \bigg| < \Psi(\mu) \]
Since
\begin{align*}
 | \ell (0) - d(x,z_1)| = | d(x',z'_1) - d(x,z_1)| &< 2 \mu, \\
 | \ell (l ) - d(x, z_2)| = | d(x', z'_2) - d(x,z_1)| &< 2 \mu, 
\end{align*}
\[ | d(x', p) - d(x,p) | < \mu, \]
\[ \big| \big(d(p,z_1) + l \big) - d(p, z_2) \big| = | d(p,z_1) + d(z'_1, z'_2) - d(p,z_2) | < 2 \mu, \]
we obtain (\ref{eq:simpleconeid}) for sufficiently small $\mu$.
Fixing $\mu$, we can then choose $\nu$ and eventually $\delta$.
\end{proof}

Applying Lemma \ref{Lem:simpleconeid} twice yields:

\begin{Lemma} \label{Lem:coneidentity}
For any $0 < \eps < 1$, $\mathbf{p}_0 > 3$ and $Y < \infty$ there is a $0 < \delta = \delta (\eps, \mathbf{p}_0, Y) < \eps$ such that the following holds:

Let $\XX$ be a singular space with mild singularities of codimension $\mathbf{p}_0$ that is $Y$-tame at scale $\delta^{-1} r$ for some $r > 0$.
Assume moreover that $\Ric \geq - (n-1) \kappa$ on $\RR$ for some $0 \leq \kappa \leq \delta r^{-2}$.
Let $p \in \RR$ and assume that
\[ \frac{| B(p, \delta r) \cap \RR |}{v_{-\kappa} (\delta r)} - \frac{|B(p, 32r) \cap \RR|}{v_{-\kappa} (32r)} < \delta. \]
Let $x_1, x_2, y_1, y_2 \in A(p, \eps r, r)$ such that
\begin{align*}
 d(p, x_2) &= d(p, x_1) + d(x_1, x_2), \\
 d(p, y_2) &= d(p, y_1) + d(y_1, y_2). 
\end{align*}
Then
\[ \bigg| \frac{d^2(x_1,y_1) - d^2(x_1,p) - d^2(y_1, p)}{2d(x_1, p) d(y_1,p)} - \frac{d^2(x_2,y_2) - d^2(x_2,p) - d^2(y_2, p)}{2d(x_2, p) d(y_2,p)} \bigg| < \eta. \]
\end{Lemma}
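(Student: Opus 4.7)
The plan is to obtain this by applying Lemma \ref{Lem:simpleconeid} twice in succession, each time using one of the four points as the ``external vertex'' and the other two (which are radially aligned through $p$) as the pair that gets displaced. Concretely, introduce the shorthand
\[ Q(a,b) := \frac{d^2(a,b) - d^2(a,p) - d^2(b,p)}{2 d(a,p) d(b,p)}, \]
which is symmetric in its two arguments. The target inequality is then simply $|Q(x_1,y_1) - Q(x_2,y_2)| < \eta$, and I would prove it by triangulating through the mixed quantity $Q(x_2, y_1)$ (equivalently $Q(y_1, x_2)$).

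For the first step I would apply Lemma \ref{Lem:simpleconeid} with external vertex $x \leftarrow y_1$ and radially aligned pair $(z_1, z_2) \leftarrow (x_1, x_2)$; the hypothesis $d(p,x_2) = d(p,x_1) + d(x_1,x_2)$ is exactly the radial alignment condition there, and all other hypotheses on $\XX$, $\Ric$, and the volume pinching at $p$ transfer verbatim. Choosing the tolerance of that lemma to be $\eta/2$ and its constant to be $\delta_1 := \delta_{\ref{Lem:simpleconeid}}(\eta/2, \mathbf{p}_0, Y)$ yields
\[ \big| Q(y_1, x_1) - Q(y_1, x_2) \big| < \eta/2. \]
For the second step I would apply the same lemma, now with external vertex $x \leftarrow x_2$ and radially aligned pair $(z_1, z_2) \leftarrow (y_1, y_2)$, using the second radial hypothesis. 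This produces
\[ \big| Q(x_2, y_1) - Q(x_2, y_2) \big| < \eta/2. \]

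Combining these two inequalities by the triangle inequality, and using the symmetry $Q(y_1,x_2) = Q(x_2, y_1)$, gives the desired bound $|Q(x_1,y_1) - Q(x_2,y_2)| < \eta$. One then sets $\delta := \delta_1$ (and shrinks if necessary so that $\delta < \eta$), which is the promised constant $\delta(\eta, \mathbf{p}_0, Y)$. There is no substantive obstacle — the whole argument is just bookkeeping — but the one point requiring mild care is checking that the four points $y_1, x_1, x_2$ (respectively $x_2, y_1, y_2$) all lie in the annulus $A(p, \eps r, r)$ as required by Lemma \ref{Lem:simpleconeid}'s hypothesis, which is immediate from the hypotheses of the present lemma.
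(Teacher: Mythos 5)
Your argument is correct and is exactly what the paper intends when it writes ``Applying Lemma \ref{Lem:simpleconeid} twice yields'' as the entire justification for Lemma \ref{Lem:coneidentity}: first with $x \leftarrow y_1$ and $(z_1,z_2) \leftarrow (x_1,x_2)$, then with $x \leftarrow x_2$ and $(z_1,z_2) \leftarrow (y_1,y_2)$, followed by the triangle inequality through $Q(x_2,y_1)$. (The $\eta$ in the conclusion of the statement is a typo for $\eps$; with that understood, your choice $\delta := \delta_{\ref{Lem:simpleconeid}}(\eps/2,\mathbf{p}_0,Y)$, together with the inclusion $A(p,\eps r,r) \subset A(p,(\eps/2)r,r)$ and the Bishop--Gromov monotonicity from Proposition \ref{Prop:volumecomparison} needed to transfer the volume-pinching hypothesis, makes the bookkeeping close.)
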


Next we show that balls around $p$ are almost star-shaped.

\begin{Lemma} \label{Lem:almoststar}
For any $\eps, \eta, a > 0$ there is a $\delta = \delta (\eps, \eta, a) > 0$ such that the following holds:

Let $\XX$ be a singular space with mild singularities, $r > 0$ and $p \in \RR$ and assume that for some $0 \leq \kappa \leq r^{-2}$ we have $\Ric \geq - (n-1) \kappa$ on $\RR$ and that
\[ \frac{| B(p, \eps r/4) \cap \RR |}{v_{-\kappa} (\eps r/4)} - \frac{|B(p, 4r) \cap \RR|}{v_{-\kappa} (4r)} < \delta. \]
Assume moreover, that $\XX$ is $a$-noncollapsed at scale $r$ in the sense that for any $0 < r' < r$ and $x \in X$ we have
\[ | B(x, r') \cap \RR | > a r^{\prime n}. \]
Then for any $x \in A(p, \eps r,r)$ there is an arclength minimizing geodesic $\gamma : [0, r] \to \RR$ with $\gamma(0) = p$ and a parameter $s \in [0,r]$ such that $d(x, \gamma(s)) \leq \eta r$.
\end{Lemma}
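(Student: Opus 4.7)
The argument is by contradiction. Let $\mathcal{T}$ denote the union of the images of all arclength minimizing geodesics $[0, r] \to \RR$ starting at $p$; assume there is $x \in A(p, \eps r, r)$ with $d(x, \mathcal{T}) > \eta r$. After rescaling we may take $r = 1$, and shrinking $\eta$ (which only strengthens the conclusion) we may assume $\eta < \eps/4$. By mildness and Proposition~\ref{Prop:expincomplete}, the subset $\mathcal{G}^*_p \subset \RR$ of Definition~\ref{Def:expmap} has full measure in $\RR$ and $\exp_p : \mathcal{D}^*_p \to \mathcal{G}^*_p$ is a diffeomorphism; the noncollapsing hypothesis then gives $|B| \geq a(\eta/4)^n$ for $B := B(x, \eta/4) \cap \mathcal{G}^*_p$.

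The key geometric step is the following. For $y \in B$ set $v_y := \exp_p^{-1}(y) \in \mathcal{D}^*_p$, $u_y := v_y/|v_y|$, $b_{u_y} := \sup\{t : tu_y \in \mathcal{D}^*_p\}$, and $U := \{u_y : y \in B\} \subset S_p\RR$. If any such $\gamma_{u_y}$ were minimizing on $[0, 1]$, then $z := \gamma_{u_y}(\min\{d(p, y), 1\}) \in \mathcal{T}$ and
\[ d(x, z) \leq d(x, y) + \max\{d(p, y) - 1,\,0\} \leq \eta/4 + \eta/4 = \eta/2, \]
contradicting $d(x, \mathcal{T}) > \eta$. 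Hence $b_{u_y} < 1$ for every $y \in B$, and consequently $d(p, y) = |v_y| < b_{u_y} < 1$, so $B \subset B(p, 1)$.

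Now I convert the almost-cone hypothesis into a bound on the solid angle $U$ via polar coordinates. Set $\tilde J(v) := J_p(v)\,|v|^{n-1}/\sn_{-\kappa}(|v|)^{n-1}$ on $\mathcal{D}^*_p$, extended by $0$ elsewhere; Proposition~\ref{Prop:Jnonincreasing} together with $\tilde J \to 1$ at the origin gives $\tilde J$ radially non-increasing and $\leq 1$ everywhere. With
\[ \psi(s) := \tfrac{1}{n\omega_n}\,v_{-\kappa}(s), \qquad J(u, s) := \int_0^s \tilde J(tu)\,\sn_{-\kappa}(t)^{n-1}\,dt, \qquad M(u, s) := J(u, s)/\psi(s), \]
polar coordinates and mildness give $|B(p, s) \cap \RR|/v_{-\kappa}(s) = (n\omega_n)^{-1}\int_{S_p\RR} M(u, s)\,du$, and $M(u, \cdot)$ is non-increasing in $s$. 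The elementary monotonicity of $w \mapsto w^{-n}\int_0^w \sinh^{n-1} du$ yields $\psi(4) \geq 4^n \psi(1)$, and the previous paragraph shows $\tilde J(tu) = 0$ for $t \geq 1$ whenever $u \in U$, so $J(u, 4) = J(u, 1)$. Combining these,
\[ M(u, \eps/4) - M(u, 4) \;\geq\; M(u, 1) - M(u, 4) \;=\; J(u, 1)\Bigl(\tfrac{1}{\psi(1)} - \tfrac{1}{\psi(4)}\Bigr) \;\geq\; \frac{J(u, 1)}{2\psi(1)} \]
for every $u \in U$.

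Finally, integrating over $U$ and observing that $\int_U J(u, 1)\,du$ equals the $g$-volume of $\exp_p(\{tu : u \in U,\,0 < t < \min\{1, b_u\}\}) \subset \mathcal{G}^*_p$, which by the second paragraph contains $B$, the almost-cone hypothesis gives
\[ n\omega_n\,\delta \;>\; \int_{S_p\RR}[M(u, \eps/4) - M(u, 4)]\,du \;\geq\; \frac{|B|}{2\psi(1)} \;\geq\; \frac{a(\eta/4)^n}{2\psi(1)}. \]
Since $\kappa \leq 1$ bounds $\psi(1)$ by a dimensional constant, this forces $\delta > c(n, a)\,\eta^n$, producing a contradiction once $\delta = \delta(\eps, \eta, a)$ is chosen smaller than this threshold. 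The only slightly delicate point is the bookkeeping in passing between $X$, the tangent space $T_p\RR$ via $\exp_p$, and the radial decomposition on which Proposition~\ref{Prop:Jnonincreasing} acts; this is precisely where mildness (and with it the null set $\RR \setminus \mathcal{G}^*_p$) is used.
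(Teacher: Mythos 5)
Your proof is correct, and the underlying mechanism is the same as the paper's: Bishop--Gromov monotonicity of the polar Jacobian around $p$ (Proposition~\ref{Prop:Jnonincreasing} via the $\exp_p\colon\mathcal{D}^*_p\to\mathcal{G}^*_p$ diffeomorphism supplied by mildness), the non-collapsing bound $|B(x,\eta/4)\cap\RR|>a(\eta/4)^n$, and the observation that minimizing geodesics emanating from $p$ that come $\eta/4$-close to $x$ cannot continue to radius $1$ without producing the desired $\gamma$. The implementations differ in how the polar-coordinate bookkeeping is organized: the paper considers minimizing geodesics from $p$ to $A(p,2,4)$, notes they avoid $B(x,\eta)$, and sums the annulus comparison (\ref{eq:volquotientA}) over a dyadic decomposition of $A(p,\eps/4,2)$ (which is why it reduces to $\eps=2^{-k}$), whereas you work per direction with $M(u,s)=J(u,s)/\psi(s)$ and integrate the deficit $M(u,1)-M(u,4)\ge J(u,1)/2\psi(1)$ over the set $U$ of directions pointing into $B(x,\eta/4)$. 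These are equivalent, but your formulation is somewhat cleaner: by tracking the \emph{short} rays hitting $B(x,\eta/4)$ and concluding they must terminate by radius $1$, you sidestep the issue of checking that the tails $\gamma^*|_{(1,2]}$ of the paper's long geodesics also avoid $B(x,\eta)$, which in the paper costs an extra factor of $2$ in $\eta$ that is glossed over. One cosmetic inaccuracy: the contradiction only yields $b_{u_y}\le 1$, not $b_{u_y}<1$; a geodesic might be minimizing on $[0,1)$ yet exit $\RR$ at time exactly $1$, in which case $l_p(u_y)=1$ and $b_{u_y}=1$. This costs nothing, since $\mathcal{D}^*_p$ is open and thus $\tilde J(tu_y)=0$ for all $t\ge 1\ge b_{u_y}$, and $|v_y|<b_{u_y}\le 1$ still gives $B\subset B(p,1)$.
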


\begin{proof}
Without loss of generality, we may assume that $r = 1$, $\eps = 2^{-k} < 1/10$ and $\eta < \eps /10$.
We will determine $\delta$ at the end of the proof.

Assume that the conclusion was wrong for some $x \in A(p, \eps, 1)$.
Consider an arbitrary point $y \in (A(p,2, 4) \cap \RR) \setminus Q_p$ and let $\gamma^* : [0, l] \to \RR$ be an arclength minimizing geodesic between $p$ and $y$.
Then by assumption
\[ \gamma^*([0, l]) \cap B(x, \eta ) \subset \gamma^* ([0,2]) \cap B(x, \eta ) = \emptyset. \]
So, by looking at the proof of Proposition \ref{Prop:volumecomparison}, and taking into account that these geodesic segments $\gamma^*$ between $p$ and points in $A(p,2,4)$ avoid $B(x, \eta)$, we find that for any $j =  0, 1, 2, \ldots$
\[ \frac{v_{-\kappa} (2^{-j+1}) - v_{-\kappa} (2^{-j})}{v_{-\kappa} (4) - v_{-\kappa} (2)}  |A (p, 2, 4) \cap \RR | \leq | (A(p,2^{-j},2^{-j+1}) \setminus B(x, \eta)) \cap \RR|. \]
Summing this inequality over all $j = 0, \ldots, k+2$ yields
\[   \frac{v_{-\kappa} (2) - v_{-\kappa} (\eps / 4)}{v_{-\kappa} (4) - v_{-\kappa} (2)} |A (p, 2, 4) \cap \RR | \leq | A(p, \eps/4, 2) \cap \RR | - |B(x, \eta) \cap \RR|. \]
Combining this with volume comparison, Proposition \ref{Prop:volumecomparison} and the assumption of this lemma, we get
\begin{align*}
 |B(x,\eta ) \cap \RR|  
  &\leq  \frac{v_{-\kappa} (2) - v_{-\kappa} (\eps / 4)}{v_{-\kappa} (\eps/ 4)} | B(p, \eps/4) \cap \RR | \\
  &\qquad -  \frac{v_{-\kappa} (2) - v_{-\kappa} (\eps / 4)}{v_{-\kappa} (4) - v_{-\kappa} (2)}  \big( |B (p,  4) \cap \RR | - |B(p,2) \cap \RR| \big) \\
  &\leq \bigg(  \frac{v_{-\kappa} (2) - v_{-\kappa} (\eps / 4)}{v_{-\kappa} (\eps/ 4)} \\
  &\qquad\qquad + \frac{v_{-\kappa} (2) - v_{-\kappa} (\eps / 4)}{v_{-\kappa} (4) - v_{-\kappa} (2)} \cdot \frac{v_{-\kappa} (2)}{v_{-\kappa}(\eps / 4)} \bigg) | B(p, \eps/4) \cap \RR | \\
 &\qquad  - \frac{v_{-\kappa} (2) - v_{-\kappa} (\eps / 4)}{v_{-\kappa} (4) - v_{-\kappa} (2)}   |B (p,  4) \cap \RR | \\
 &= v_{-\kappa} (4) \cdot \frac{v_{-\kappa} (2) - v_{-\kappa} (\eps / 4)}{v_{-\kappa} (4) - v_{-\kappa} (2)}  \bigg( \frac{ |B(p,4) \cap \RR |}{v_{-\kappa} (4)}- \frac{| B(p, \eps / 4) \cap \RR|}{v_{-\kappa}(\eps/4)} \bigg) \\
 &< v_{-\kappa}(4) \cdot \frac{v_{-\kappa} (2) - v_{-\kappa} (\eps / 4)}{v_{-\kappa} (4) - v_{-\kappa} (2)} \cdot  \delta.
\end{align*}
Using our assumption, we get
\[ a \eta^n < |B(x, \eta) \cap \RR| \leq v_{-\kappa} (4) \cdot \frac{v_{-\kappa} (2) - v_{-\kappa} (\eps / 4)}{v_{-\kappa} (4) - v_{-\kappa} (2)} \cdot \delta. \]
So we obtain a contradiction for sufficiently small $\delta$, depending on $\eps, \eta, a$.
\end{proof}

We can finally prove the Cone Rigidity Theorem.

\begin{proof}[Proof of Theorem \ref{Thm:conerigidityIntroduction}]
It suffices to prove the following fact:
{\it Fix $Y < \infty$, $\mathbf{p}_0 > 3$ and consider a sequence $\delta_i \to 0$, a sequence of singular spaces $\XX_i$ with mild singularities of codimension $\mathbf{p}_0$ that are $Y$-tame at scale $\delta_i^{-1}$ that satisfy $\Ric \geq -(n-1) \kappa_i$ on $\RR$ for some $0 \leq \kappa_i \leq \delta_i^2$ and consider points $p_i \in X_i$ such that
\[ \frac{|B(p_i, \delta_i) \cap \RR_i |}{v_{-\kappa_i} (\delta_i)} - \frac{|B(p_i, 32) \cap \RR_i |}{v_{-\kappa_i}(32)} < \delta_i. \]
Then, after passing to a subsequence, the pointed metric spaces $(B^{X_i} (p_i, 1), p_i)$ Gromov-Hausdorff converge to a $1$-ball $(B^{\mathcal{C}}(\ov{p}, 1), \ov{p})$ around the vertex $\ov{p} \in \mathcal{C}$ of a metric cone $\mathcal{C}$.}

First, observe that we may assume without loss of generality that $p_i \in \RR_i$.
Next, note that in this setting, for any $i$ and any $\delta_i \leq r_1 \leq r_2 \leq 32$ we have
\[ \frac{|B(p_i, r_1) \cap \RR_i|}{v_{-\kappa_i} (r_1)} - \frac{|B(p_i, r_2) \cap \RR_i |}{v_{-\kappa_i} (r_2)} < \delta_i. \]
After passing to a subsequence, we may assume that the pointed metric spaces $(B^{X_i} (p_i, 1), p_i)$ converge to some locally compact, pointed metric length space $(Z, d_Z, z_0)$.
Using Lemma \ref{Lem:almoststar}, we obtain that for any $z \in Z$ and $\eta > 0$ there is a point $z' \in Z$ such that $d(z_0, z') > 1 - \eta$ and $d(z_0, z) + d(z, z') < d(z_0, z') + \eta$.
By local compactness and letting $\eta \to 0$ we can then conclude that there is even a point $z'' \in Z$ with $d(z_0, z'') = 1$ and $d(z_0, z) + d(z, z') = d(z_0, z')$.
It follows that for any $z \in Z$ there is an arclength minimizing geodesic $\gamma : [0,1] \to Z$ with $\gamma(0) = z_0$ such that $z = \gamma(s)$ for some $s \in [0,1]$.

Let $W$ be the set of all arclength minimizing geodesics $\gamma : [0,1] \to Z$ with $\gamma(0) = z_0$.
By our previous conclusion, image of the map $\Phi : W \times [0,1] \to Z, (\gamma, s) \mapsto \gamma(s)$ contains $B(z_0, 1)$.
For any $\gamma_1, \gamma_2 \in W$, we define the angle
\[ \alpha (\gamma_1, \gamma_2) := \arccos \bigg( \frac{d^2(\gamma_1(1), \gamma_2(1)) - 2}{2} \bigg) \]
according to the law of cosines.
Then, by Lemma \ref{Lem:coneidentity}, for any $\gamma_1, \gamma_2 \in W$ and $s_1, s_2 \in [0,1]$
\begin{equation} \label{eq:lawcosalpha}
 d^2 (\gamma_1(s_1), \gamma_2(s_2)) = s_1^2 + s_2^2 - 2 s_1 s_2 \alpha (\gamma_1, \gamma_2). 
\end{equation}
Using this identity, it can be seen that $\alpha$ is a pseudometric on $W$.
Let $W'$ be the metric space obtained from $(W, \alpha)$ by quotienting out the relation $d(\gamma_1, \gamma_2) = 0$ and define $(\mathcal{C}, \ov{p})$ to be the cone over $W'$.
By (\ref{eq:lawcosalpha}), the pointed metric space $(Z, z_0)$ is isometric to $(B^{\mathcal{C}}( \ov{p}, 1), \ov{p})$.
This finishes the proof.
\end{proof}

\section{Splitting maps induce almost metric splitting}
The goal of this section is to prove the following result:

\begin{Proposition} \label{Prop:splittingmapGHsplitting}
For every $\eps > 0$, $\mathbf{p}_0 > 3$ and $Y < \infty$ there is a $\delta = \delta (\eps, \mathbf{p}_0, Y) > 0$ such that the following holds:

Let $\XX$ be a singular space with mild singularities of codimension $\mathbf{p}_0$ that is $Y$-tame at scale $\delta^{-1} r$ for some $r > 0$.
Assume moreover that $\Ric \geq - (n-1) r^{-2}$ on $\RR$.
Let $p \in X$ and $k \leq n$ and consider a $\delta$-splitting map $u : B(p, 100 r) \to \IR^k$.
Then there is a pointed metric space $(Z, d_Z, z)$ such that
\[ d_{GH} \big( \big( B^X (p, r), p \big), \big( B^{Z \times \IR^k} ((z, 0^k), r), (z, 0^k) \big) \big) < \delta r. \]
\end{Proposition}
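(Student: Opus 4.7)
My plan is to argue by contradiction and compactness, in the spirit of the proof of Theorem~\ref{Thm:conerigidityIntroduction}. Suppose the proposition fails for some $\eps > 0$, $\mathbf{p}_0 > 3$, $Y < \infty$. Then I can extract a sequence $\XX_i = (X_i, d_i, \RR_i, g_i)$ of singular spaces satisfying the hypotheses with $\delta_i \to 0$, points $p_i \in X_i$ (after rescaling, $r_i = 1$), and $\delta_i$-splitting maps $u_i : B(p_i, 100) \to \IR^k$ for which no pointed metric space $(Z, d_Z, z)$ realises a Gromov-Hausdorff approximation within $\eps$. Using $Y$-tameness and volume comparison (Proposition~\ref{Prop:volumecomparison}) to obtain uniform total boundedness of $(B(p_i, 10), p_i)$, together with the $(1+\delta_i)$-Lipschitz property of $u_i$ (Definition~\ref{Def:epssplitting}(2) and the fact that $X_i$ is the length-metric completion of $\RR_i$), I pass to a subsequence along which $(B(p_i, 10), p_i)$ Gromov-Hausdorff converges to a pointed complete length space $(Y_\infty, y_\infty)$ and $u_i - u_i(p_i)$ converges uniformly on compacta to a $1$-Lipschitz map $u_\infty : Y_\infty \to \IR^k$ with $u_\infty(y_\infty) = 0^k$.

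The heart of the argument is a \emph{fibre transitivity} statement: for every $\eta > 0$, for all sufficiently large $i$, every $x \in B(p_i, 2)$ and every $v \in \IR^k$ with $|v| \leq 2$, there exists $y \in B(p_i, 5)$ with
\[
  |u_i(y) - u_i(x) - v| < \eta \textQQqq{and} d_{X_i}(x,y) \leq |v| + \eta.
\]
I would prove this by adapting the argument behind Lemma~\ref{Lem:eps0splitting}. Proposition~\ref{Prop:segmenttype} applied to each $u_i^l$ on $B(x,4)$ together with the Hessian estimate of Definition~\ref{Def:epssplitting}(4) yields that for most unit tangents $\xi \in S_x^*\RR_i$ and all $t \in [0,2]$,
\[
  \big| u_i^l(\gamma_\xi(t)) - u_i^l(x) - t \langle \nabla u_i^l(x), \xi\rangle \big| < \Psi(\delta_i \mid \eta, Y, \mathbf{p}_0).
\]
The near-orthonormality of $\{\nabla u_i^l(x)\}$ at a generic $x$ --- obtained from Definition~\ref{Def:epssplitting}(3) on a small ball around $x$ combined with the segment inequality (Proposition~\ref{Prop:segmentinequ}) to upgrade the $L^1$-orthonormality to a pointwise statement at $x$ --- implies that $\xi \mapsto (\langle \nabla u_i^l(x), \xi\rangle)_l$ sends a large-measure subset of $S_x^*\RR_i$ to a subset of $\IR^k$ that is $\Psi$-close to the full unit ball image of the standard inclusion. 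Choosing $\xi$ whose image is $\Psi$-close to $v/|v|$ and setting $y := \gamma_\xi(|v|)$ produces the required $y$. A similar measure-theoretic selection ensures that the set of $x$ where this procedure succeeds is almost all of $B(p_i, 2)$, which suffices for the GH application.

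Passing the fibre transitivity to the limit, $u_\infty$ is a $1$-Lipschitz \emph{submetry} onto its image: for every $y \in Y_\infty$ in the relevant ball and every $v \in \IR^k$ there is $y' \in Y_\infty$ with $u_\infty(y') = u_\infty(y) + v$ and $d_{Y_\infty}(y, y') = |v|$. Setting $Z := u_\infty^{-1}(0) \cap B^{Y_\infty}(y_\infty, 10)$ with the induced length metric, and for each $y$ choosing $\pi(y) \in Z$ with $d(y, \pi(y)) = |u_\infty(y)|$, a standard triangle/law-of-cosines argument using that $u_\infty$ is simultaneously $1$-Lipschitz and a submetry yields the exact Pythagorean identity
\[
  d_{Y_\infty}(y_1, y_2)^2 = |u_\infty(y_1) - u_\infty(y_2)|^2 + d_Z(\pi(y_1), \pi(y_2))^2,
\]
so that $y \mapsto (\pi(y), u_\infty(y))$ is an isometry onto $Z \times \IR^k$. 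For sufficiently large $i$ this $(Z, \pi(y_\infty))$ furnishes the desired GH approximation for $\XX_i$, contradicting the choice of the sequence.

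The principal obstacle will be making the fibre transitivity rigorous in the singular setting. The minimizing geodesics involved may exit $\RR_i$, Proposition~\ref{Prop:segmenttype} controls only directions in $S_x^*\RR_i$, and the Hessian integral along $\gamma_\xi$, the pointwise orthonormality defect at $x$, and the surjectivity of $\xi \mapsto (\langle \nabla u_i^l(x),\xi\rangle)_l$ onto the unit ball of $\IR^k$ all need to be simultaneously small on the same positive-measure set of $\xi$. Combining Proposition~\ref{Prop:segmenttype}, the segment inequality (Proposition~\ref{Prop:segmentinequ}) applied to the defect $|\langle \nabla u_i^a, \nabla u_i^b\rangle - \delta_{ab}|$, the non-collapsing consequence of $Y$-tameness, and mildness of the singularities (Lemma~\ref{Lem:SstarRR}) should furnish this set and ensure that $\gamma_\xi$ remains inside the region where these estimates apply.
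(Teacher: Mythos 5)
Your overall architecture (contradiction, rescale, pass to a Gromov--Hausdorff limit $(X_\infty, u_\infty)$ with $u_\infty$ $1$-Lipschitz, then build a splitting of $X_\infty$) matches the paper's, and your ``fibre transitivity'' statement is essentially the paper's Lemma~\ref{Lem:pointinapproxdir}, which is indeed one of the two ingredients. However, there is a genuine gap in the final and most important step.

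You claim that because $u_\infty$ is simultaneously $1$-Lipschitz and a submetry, a ``standard triangle/law-of-cosines argument'' yields the exact Pythagorean identity
\[
d_{X_\infty}(y_1,y_2)^2 = |u_\infty(y_1)-u_\infty(y_2)|^2 + d_Z(\pi(y_1),\pi(y_2))^2.
\]
This implication is false in general metric spaces, and no such standard argument exists. Consider $\IR^2$ with the taxicab metric $d((x_1,y_1),(x_2,y_2)) = |x_1-x_2| + |y_1-y_2|$ and $u(x,y)=x$. Then $u$ is $1$-Lipschitz, and it is a submetry in exactly your sense: for any point $q$ and any $v\in\IR$, the point $q+(v,0)$ satisfies $u(q+(v,0)) = u(q)+v$ and $d(q,q+(v,0))=|v|$. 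Yet the space is not a metric product of $Z=\{x=0\}$ with $\IR$ in the $\ell^2$ sense, and the Pythagorean identity fails (the cross terms are additive, not quadratic). The point is that a submetry only tells you that moving in the $u$-direction realizes the Lipschitz constant; it says nothing about the ``angle'' this direction makes with the fibre. You also never establish that $\pi(y)$ is well-defined (uniqueness of the nearest point in the fibre), which is again not a consequence of $1$-Lipschitz plus submetry. Both the Pythagorean identity and the uniqueness of projection require a quantitative rigidity statement coming from the differential-geometric structure of the splitting map --- specifically the integral Hessian bound (Definition~\ref{Def:epssplitting}(4)) propagated along geodesics via the segment inequality. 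The paper encodes this in Lemmas~\ref{Lem:almostlawcossplittingcase}, \ref{Lem:u3ptid} and \ref{Lem:u4ptid}: the four-point identity of Lemma~\ref{Lem:u4ptid} says that if two pairs of points each realize their $u$-distance, then the quantity $d^2 - |u(\cdot)-u(\cdot)|^2$ is invariant along a simultaneous translation of the pair, and passing this to the limit is exactly what gives both uniqueness of $\Phi(y,v)$ and the Pythagorean identity. To make your argument work you would have to prove, in addition to fibre transitivity, an almost law-of-cosines estimate of this type; it does not come for free from the submetry structure.
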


The proof of this proposition is similar to the proof of the cone splitting theorem, Theorem \ref{Thm:conerigidityIntroduction}.
As a preparation we first prove a lemma that is similar to Lemma \ref{Lem:almostlawcos}.

\begin{Lemma} \label{Lem:almostlawcossplittingcase}
For any $\eps > 0$, $\mathbf{p}_0 > 3$ and $Y < \infty$ there is a $\delta = \delta (\eps, \mathbf{p}_0, Y) > 0$ such that the following holds:

Let $\XX$ be a singular space with mild singularities of codimension $\mathbf{p}_0$ that is $Y$-tame at scale $\delta^{-1} r$ for some $r > 0$.
Assume moreover that $\Ric \geq - (n-1) r^{-2}$ on $\RR$.
Let $p \in X$ and $k \leq n$ and consider a $\delta$-splitting map $u : B(p, 3 r) \to \IR^k$.

Let $x \in B(p, r)$.
Then there is a point $x' \in B(x, \eps r)$ and an open subset $S \subset B(p, r) \cap \RR$ such that the following holds:
\begin{enumerate}[label=(\alph*)]
\item $|(B(p,r) \cap \RR) \setminus S | < \eps r^n$.
\item For any $y \in S$ there is a unique minimizing arclength geodesic $\gamma_{x', y} : [0, d(x',y)] \to X$ with $\gamma_{x', y} ([0, d(x',y)]) \subset \RR$, $\gamma_{x',y}$ varies continuously in $y$ and we have for any $l = 1, \ldots, k$
\begin{equation} \label{eq:almlinearul}
 \bigg| \langle \gamma'_{x',y} ( d(x', y)), \nabla u^l \rangle - \frac{u^l (y) - u^l(x')}{d(x', y)} \bigg| < \eps . 
\end{equation}
\end{enumerate}
\end{Lemma}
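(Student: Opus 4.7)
The plan is to adapt the proof of Lemma \ref{Lem:almostlawcos}, substituting each splitting coordinate $u^l$ for the role played there by $d^2(\cdot, p)$. The built-in $L^2$-Hessian estimate (\ref{eq:Hessianestimate}) from the definition of a $\delta$-splitting plays the part of the Laplace-comparison bound on $|\nabla^2 b - 2g|^2$ in that proof. After rescaling so that $r = 1$, I would fix an auxiliary parameter $\nu > 0$ (to be chosen as a small power of $\eps$) and reduce the lemma to producing an $x' \in B(x, \eps) \cap \RR$ and a measurable subset of $B(p, 1) \cap \RR$ of near-full measure along which the integral of $\sum_l |\nabla^2 u^l|$ on the geodesic from $x'$ is at most $\nu$.

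To execute this reduction, I would combine the $L^2$-Hessian estimate on the ball $B(p, 3)$, which gives $\sum_l \int |\nabla^2 u^l|^2 \leq k \delta^2 \cdot 3^{n-2}$, with Cauchy--Schwarz and the tameness volume bound to get
\[
\sum_{l=1}^k \int_{B(p, 3) \cap \RR} |\nabla^2 u^l|\, dg \leq \Psi(\delta \mid \mathbf{p}_0, Y, k).
\]
Then I would apply the segment inequality, Proposition \ref{Prop:segmentinequ}, with $U_1 = B(x, \eps)$, $U_2 = B(p, 1)$ and $f = \sum_l |\nabla^2 u^l| \chi_{B(p, 3)}$ (the fact that $f$ is only locally bounded is handled by truncating to $\min(f, M)$ and letting $M \to \infty$ by monotone convergence). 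Using the lower volume bound $|B(x, \eps) \cap \RR| \geq Y^{-1} \eps^n$ coming from tameness and Fubini's theorem on the outer $x_1$-integration, one can select $x' \in B(x, \eps) \cap \RR$ with
\[
\int_{(B(p, 1) \cap \RR) \cap \mathcal{G}^*_{x'}} \int_0^{d(x', y)} \sum_l |\nabla^2 u^l|(\gamma_{x', y}(s))\, ds \, dg(y) \leq \Psi(\delta \mid \eps, \mathbf{p}_0, Y, k).
\]
Markov's inequality then produces a measurable set $S' \subset \mathcal{G}^*_{x'} \cap B(p, 1)$ on which the inner integral is $\leq \nu$, with $|S'| \geq |B(p, 1) \cap \RR| - \Psi(\delta \mid \nu, \eps, \mathbf{p}_0, Y, k)$; mild singularities together with Proposition \ref{Prop:expincomplete}(c) ensure that $\mathcal{G}^*_{x'} \cap B(p, 1)$ already agrees with $B(p, 1) \cap \RR$ up to a null set.

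The pointwise estimate on $S'$ is the standard fundamental-theorem-of-calculus argument: for $y \in S'$ set $\gamma = \gamma_{x', y}$ and $d = d(x', y)$. Since $\gamma \subset \RR$ and $u^l \in C^3(\RR)$, the scalar function $u^l \circ \gamma \in C^2([0, d])$ with second derivative $\nabla^2 u^l(\gamma'(s), \gamma'(s))$; picking $s_0 \in [0, d]$ with $(u^l \circ \gamma)'(s_0) = d^{-1}(u^l(y) - u^l(x'))$ by the mean value theorem and integrating $(u^l \circ \gamma)''$ from $s_0$ to $d$ yields
\[
\bigg| \langle \gamma'(d), \nabla u^l(y) \rangle - \frac{u^l(y) - u^l(x')}{d} \bigg| \leq \int_0^d |\nabla^2 u^l|(\gamma(s))\, ds \leq \nu.
\]
Taking $\nu < \eps$ therefore verifies (\ref{eq:almlinearul}) pointwise on $S'$.

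Finally, to obtain the required open set I would define $S$ to be the subset of $\mathcal{G}^*_{x'} \cap B(p, 1)$ on which (\ref{eq:almlinearul}) holds with strict inequality for every $l = 1, \ldots, k$. The map $y \mapsto \gamma'_{x', y}(d(x', y))$ is continuous on $\mathcal{G}^*_{x'}$ by Proposition \ref{Prop:expincomplete}(d),(g), and $\nabla u^l$ is continuous on $\RR$, so the left-hand side of (\ref{eq:almlinearul}) depends continuously on $y \in \mathcal{G}^*_{x'}$; hence $S$ is open. Since $S' \subset S$, assertion (a) follows; the uniqueness, continuous variation, and inclusion of $\gamma_{x', y}$ in $\RR$ are automatic from $S \subset \mathcal{G}^*_{x'}$ and the definition of $\mathcal{G}^*_{x'}$. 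The main technical obstacle is this bookkeeping between the \emph{measurable} set $S'$, on which the quantitative Hessian bound holds, and the \emph{open} set $S$ required by the conclusion, since replacing $S'$ by its interior could in principle lose too much measure; the resolution, as just described, is to exploit the continuity of the error expression itself rather than of the integral bound that produced it.
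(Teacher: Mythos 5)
Your proof is correct and matches the strategy the paper intends (the paper's own proof of this lemma is a four-line sketch deferring to Lemma \ref{Lem:almostlawcos}): apply the segment inequality to $\sum_l |\nabla^2 u^l|\chi_{B(p,3r)}$ with $U_1$ a small ball around $x$ and $U_2 = B(p,r)$, use the $Y$-tame lower volume bound on $|U_1 \cap \RR|$ plus Fubini to pick $x'$, then Markov to carve out a large-measure subset of $\mathcal{G}^*_{x'}$ along which $\int_0^{d(x',y)}\sum_l|\nabla^2 u^l|\,ds$ is small, and finish with the one-variable Taylor/MVT estimate $\langle \gamma'(d),\nabla u^l(y)\rangle - d^{-1}(u^l(y)-u^l(x')) = \int_{s_0}^d (u^l\circ\gamma)''$. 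You also correctly notice that the $\rho$-annulus case-analysis and the auxiliary function $b$ of Lemma \ref{Lem:almostlawcos} have no analogue here, and that the geodesic from $x'\in B(p,(1+\eps)r)$ to $y\in B(p,r)$ stays inside $B(p,3r)$ by the midpoint estimate, so the truncation $\chi_{B(p,3r)}$ never actually bites along the relevant geodesics.

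The one place where you add substance beyond the paper's sketch is the passage from the \emph{measurable} large-measure set $S'$ produced by Markov to the \emph{open} set $S$ required by the statement. The paper glosses over this; your resolution — define $S$ by strict inequality in (\ref{eq:almlinearul}), note $\mathcal{G}^*_{x'}$ is open by Proposition \ref{Prop:expincomplete}(b), and observe that $y\mapsto\gamma'_{x',y}(d(x',y))$ is continuous there by Proposition \ref{Prop:expincomplete}(g) while $\nabla u^l\in C^2(\RR)$ — is exactly the right one, since it keeps the containment $S'\subset S$ and hence the measure bound (a). One cosmetic remark: the point $y=x'$ should be tacitly excluded from $S$ (the difference quotient is undefined there), but removing a single point costs nothing; you may also want to note explicitly that uniqueness and continuous dependence of $\gamma_{x',y}$ on $y\in S\subset\mathcal{G}^*_{x'}$ are Proposition \ref{Prop:expincomplete}(d),(g) rather than consequences of the segment inequality. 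Neither affects the validity of the argument.
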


\begin{proof}
The proof is similar to that of Lemma \ref{Lem:almostlawcos}.
Instead of applying Proposition \ref{Prop:segmentinequ} to $|\nabla^2 b - 2g|$, we now need to apply this proposition to $|\nabla^2 u^1| + \ldots + |\nabla^2 u^k|$.
Then (\ref{eq:1dintissmall}) becomes
\[ \int_0^d \big| (u^l \circ \gamma)''(s) \big| ds < \Psi (\nu | Y) \textQQqq{for all} l = 1, \ldots, k. \]
Consequently, (\ref{eq:twiceintegrated}) becomes
\[ | (u^l \circ \gamma )(0) + (u^l \circ \gamma)'(d) \cdot d - (u^l \circ \gamma)(d) | < \Psi (\nu | Y) \cdot d^2 \textQq{for all} l = 1, \ldots, k. \]
The bound (\ref{eq:almlinearul}) can be derived from this inequality.
\end{proof}

Next, we prove a lemma that is similar to Lemma \ref{Lem:simpleconeid}.

\begin{Lemma} \label{Lem:u3ptid}
For any $\eps > 0$, $\mathbf{p}_0 > 3$ and $Y < \infty$ there is a $\delta = \delta(\eps, \mathbf{p}_0, Y) > 0$ such that the following holds:

Let $\XX$ be a singular space with mild singularities of codimension $\mathbf{p}_0$ that is $Y$-tame at scale $\delta^{-1} r$ for some $r > 0$.
Assume moreover that $\Ric \geq - (n-1) r^{-2}$ on $\RR$.
Let $p \in X$ and $k \leq n$ and consider a $\delta$-splitting map $u : B(p, 9 r) \to \IR^k$.

Let $x, z_1, z_2 \in B(p, r)$ such that
\begin{equation} \label{eq:dz1z2almostdeltau}
 \big| d(z_1, z_2) - |u(z_1) - u(z_2)| \big| < \delta r. 
\end{equation}
Then
\[ \big| \big( d^2(z_1, x) - |u(z_1) - u(x)|^2 \big) - \big( d^2 (z_2, x) - |u(z_2) - u(x)|^2 \big) \big| < \eps r^2. \]
\end{Lemma}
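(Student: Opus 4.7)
The plan is to mimic the proof of Lemma \ref{Lem:simpleconeid}, using the approximate linearity of the $u^l$ along geodesics (Lemma \ref{Lem:almostlawcossplittingcase}) in place of the approximate law of cosines, and exploiting the hypothesis (\ref{eq:dz1z2almostdeltau}) to force a good minimizing geodesic between slight perturbations of $z_1, z_2$ to have its velocity almost tangent to $\spann\{\nabla u^l\}$ at almost every point.

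Without loss of generality, $r = 1$. For parameters $\nu, \mu > 0$ to be chosen later, first apply Lemma \ref{Lem:almostlawcossplittingcase} with parameter $\nu$ to produce $x' \in B(x, \nu)$ and an open set $S \subset B(p, 1) \cap \RR$ of nearly full measure admitting unique $\RR$-contained minimizing geodesics $\gamma_{x',y}$ whose terminal direction $v(y) := \gamma_{x',y}'(d(x',y))$ satisfies the approximate linearity (\ref{eq:almlinearul}) with error $\nu$. Next, apply the segment inequality (Proposition \ref{Prop:segmentinequ}) with $U_1 = B(z_1, \mu)$, $U_2 = B(z_2, \mu)$ and integrand
\begin{equation*}
f = \chi_{(B(p,1) \cap \RR) \setminus S} + \sum_l \big| \nabla^2 u^l \big| + \sum_{l_1, l_2} \big| \langle \nabla u^{l_1}, \nabla u^{l_2} \rangle - \delta_{l_1 l_2} \big|,
\end{equation*}
whose $L^1$-norm on $B(p, 9) \cap \RR$ is bounded by $\Psi(\delta, \nu | Y)$ after invoking the $\delta$-splitting conditions (3), (4) of Definition \ref{Def:epssplitting} together with H\"older's inequality. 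This produces points $z_1' \in B(z_1, \mu) \cap \RR$, $z_2' \in B(z_2, \mu) \cap \RR$, and a unique minimizing arclength geodesic $\sigma : [0, l] \to \RR$ between them along which the pullback of each of the three summands of $f$ has small $L^1$-norm on $[0, l]$.

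Set $F(s) := d^2(x', \sigma(s)) - |u(\sigma(s)) - u(x')|^2$ and write $\ell(s) = d(x', \sigma(s))$, $v(s) = v(\sigma(s))$, $w(s) = \sigma'(s)$. For $s \in \sigma^{-1}(S)$, the first variation formula gives
\begin{equation*}
F'(s) = 2\ell(s) \langle v(s), w(s) \rangle - 2 \sum_l \big( u^l(\sigma(s)) - u^l(x') \big) \langle \nabla u^l, w(s) \rangle;
\end{equation*}
substituting $u^l(\sigma(s)) - u^l(x') = \ell(s)\bigl(\langle v(s), \nabla u^l \rangle + O(\nu)\bigr)$ via (\ref{eq:almlinearul}) reduces this to $2\ell(s) \langle v(s), (I - A(s)) w(s) \rangle + O(\nu \ell(s))$, where $A(s) = \sum_l \nabla u^l(\sigma(s)) \otimes \nabla u^l(\sigma(s))$. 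The $L^1$-control of $|\langle \nabla u^{l_1}, \nabla u^{l_2} \rangle - \delta_{l_1 l_2}|$ along $\sigma$ makes $A(s)$ agree on average with the orthogonal projection $P(s)$ onto $\spann\{\nabla u^l(\sigma(s))\}$. Simultaneously, the gradient bound (\ref{eq:epssplittinggradientestimate}) gives $|u(z_1') - u(z_2')| \le \int_0^l |du(w(s))| ds \le (1+\delta) l$, while hypothesis (\ref{eq:dz1z2almostdeltau}) together with the Lipschitz bound on $u$ and $|z_i - z_i'| < \mu$ yields $|u(z_1') - u(z_2')| \ge l - \Psi(\delta, \mu)$. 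Hence $\int_0^l (1 - |du(w(s))|) ds < \Psi(\delta, \mu)$, and combining this with the near-orthonormality of $\{\nabla u^l\}$ along $\sigma$ and Chebyshev's inequality yields $|(I - P(s)) w(s)|^2 < \Psi(\delta, \mu | \nu)$ outside a subset of $[0,l]$ of small measure.

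Putting everything together, outside a set of small measure in $[0, l]$ we have $|F'(s)| \le \ell(s) \cdot \Psi(\delta, \mu, \nu | \eps, Y)$; since $F$ is globally Lipschitz on $[0, l]$ with constant bounded in terms of $Y$ (using again (\ref{eq:epssplittinggradientestimate}) and the tameness volume bound), integration yields $|F(0) - F(l)| < \Psi(\delta, \mu, \nu | \eps, Y)$. Finally, $F(0)$ and $F(l)$ differ from the target quantities $d^2(z_i, x) - |u(z_i) - u(x)|^2$ by $O(\mu + \nu)$ coming from the perturbations $x \to x'$ and $z_i \to z_i'$. Choosing $\nu$, then $\mu$, then $\delta$ sufficiently small in terms of $\eps, \mathbf{p}_0, Y$ completes the proof. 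The main obstacle is promoting the bulk $L^2$-closeness of $\{\nabla u^l\}$ to an orthonormal frame (and the $L^2$-smallness of $|\nabla^2 u^l|$) to pointwise-almost-everywhere statements along the one-dimensional slice $\sigma$, while simultaneously using (\ref{eq:dz1z2almostdeltau}) to pin down the direction of $\sigma'$ relative to $\spann\{\nabla u^l\}$ and using mild singularities together with the segment inequality to keep $\sigma$ inside $\RR$.
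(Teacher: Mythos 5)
Your plan is sound and, at the level of overall geometry, mirrors the paper's proof: both produce a perturbed point $x'$ and a set $S$ via Lemma~\ref{Lem:almostlawcossplittingcase}, use the segment inequality to obtain perturbations $z_1', z_2'$ and a good minimizing geodesic $\sigma \subset \RR$, then estimate the derivative of a scalar quantity along $\sigma$ using the approximate linearity of $u$ and the directional constraint on $\sigma'$ coming from hypothesis~(\ref{eq:dz1z2almostdeltau}), and finally integrate and correct for the perturbations. The one substantive place where you diverge is that you work with the full vector-valued map $u$ throughout, handling the near-orthonormality of $\{\nabla u^l\}$ via the Gram matrix and the projection $P(s)$ onto $\spann\{\nabla u^l(\sigma(s))\}$, together with a Chebyshev-type argument to make the $L^1$-closeness along $\sigma$ pointwise outside a small set. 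The paper instead first performs an exact algebraic reduction to $k=1$: setting $u^* := \langle (u(z_1)-u(z_2))/|u(z_1)-u(z_2)|,\, u\rangle$, one has $|u(z_1)-u(x)|^2 - |u(z_2)-u(x)|^2 = (u^*(z_1)-u^*(x))^2 - (u^*(z_2)-u^*(x))^2$, and $u^*$ is still a $C\delta$-splitting satisfying the same hypothesis; the paper then controls $\ell(s) := d(x',\sigma(s))$ and $(\ell^2)'$ directly, exactly as in Lemma~\ref{Lem:simpleconeid}, with the scalar bound $\int_0^l |\sigma' - \nabla u^*|^2\,ds \lesssim \mu$ replacing your projection argument. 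The paper's reduction buys a cleaner argument (no Gram matrix, no explicit projection, no need to separately promote near-orthonormality to pointwise control along $\sigma$) and keeps the Cauchy--Schwarz / FTC bookkeeping identical to Lemma~\ref{Lem:simpleconeid}; your version is more direct but requires the extra care around $|A(s)-P(s)|$, which you gesture at but would need to carry out by first controlling $|G(\sigma(s))-I|$ in $L^1$, inverting $G$ on the good set, and only then comparing $\langle Aw,w\rangle = |du(w)|^2$ with $|Pw|^2$. Both routes work; the scalar reduction is worth knowing since it recurs in \cite{Cheeger-Naber-Codim4}.
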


\begin{proof}
Without loss of generality, we may assume that $r= 1$.
Next, consider the function
\[ u^* := \sum_{l=1}^n \frac{u^l (z_1) - u^l(z_1)}{|u (z_1) - u (z_2)|} \cdot u^l. \]
Note that $u^* : B(p, 9) \to \IR$ is a $C \delta$-splitting for some uniform $C < \infty$ and we still have
\[ \big| d(z_1, z_2) - |u^*(z_1) - u^*(z_2) | \big| < k \delta. \]
Moreover
\begin{multline*}
 |u(z_1) - u(x)|^2 - |u(z_2) - u(x)|^2 = \big\langle u(z_1) - u(z_2), u(z_1) + u(z_2) - 2 u(x) \big\rangle  \\
 = \big( u^* (z_1) - u^*(z_2) \big) \cdot \big( u^* (z_1) + u^*(z_2) - 2 u^* (x) \big) \\
 = \big( u^* (z_1) - u^* (x) \big)^2 - \big( u^* (z_2) - u^* (x) \big)^2.
 \end{multline*}
This shows that in the following we may assume without loss of generality that $k =1$.
Moreover, we may assume without loss of generality that $u(z_1) \leq u(z_2)$.

We now follow closely the arguments in the proof of Lemma \ref{Lem:simpleconeid}, replacing $d(\cdot, p)$ by $u$.
Let $0 < \nu < \mu < \eps$ be constants whose values we will determine in the course of the proof, depending on $\eps, Y$.
Apply Lemma \ref{Lem:almostlawcossplittingcase} for $r \leftarrow 3r$ and $\eps \leftarrow \nu / 3$ to obtain a point $x' \in B(x, \nu) \cap \RR$ and the open subset $S \subset B(p, 3r)$.
As in the proof of Lemma \ref{Lem:simpleconeid}, assuming $\nu$ to be sufficiently small depending on $\mu$, we can find points
\[ z'_1 \in B(z_1, \mu) \cap \RR \textQQqq{and} z'_2 \in B(z_2, \mu) \cap \RR \]
such that there is a unique minimizing arclength geodesic $\sigma : [0,l] \to X$ such that $\sigma(0) = z'_1$, $\sigma(l) = z'_2$, $\sigma([0,l]) \subset \RR$ and such that
\begin{equation} \label{eq:sigmainverseSsplitting}
 |[0,l] \setminus \sigma^{-1} (S)| < \mu. 
\end{equation}

As in the proof of Lemma \ref{Lem:simpleconeid}, we define the function $\ell : [0,l] \to \IR$ by
\[ \ell (s) := d( x', \sigma(s)). \]
Then $\ell$ is $1$-Lipschitz on $[0,l]$, $C^1$ on $\sigma^{-1} (S)$ and we have for any $s \in \sigma^{-1} (S)$
\begin{multline*}
 \ell' (s) = \big\langle \gamma'_{x', \sigma(s)} (d(x', \sigma(s))), \sigma'(s) \big\rangle \\ = \big\langle \gamma'_{x', \sigma(s)} (d(x', \sigma(s))), \nabla u \big\rangle + \big\langle \gamma'_{x', \sigma(s)} (d(x', \sigma(s))), \sigma'(s) - \nabla u \big\rangle. 
\end{multline*}
So, using Lemma \ref{Lem:almostlawcossplittingcase}, it follows that for all $s \in \sigma^{-1} (S)$
\begin{equation} \label{eq:ellprimeuu}
 \bigg| \ell' (s) - \frac{u(\sigma(s)) - u (x') }{\ell(s)} \bigg|  < \mu + |\sigma'(s) - \nabla u(\sigma(s))|.  
\end{equation}
We can estimate that for small $\delta$, depending on $\mu$, we have for all $s \in [0,l]$
\[  u(\sigma(s)) - u(z_1) - d(z'_1, \sigma(s)) \leq (1+\delta) d(\sigma(s), z_1) - d(z'_1, \sigma(s)) < 2 \delta + \mu < 2 \mu \]
and, using (\ref{eq:dz1z2almostdeltau}),
\begin{multline*}
 u(\sigma(s)) - u(z_1) - d(z'_1, \sigma(s))=  \big( u(\sigma(s)) - u(z_2) \big) + \big( u(z_2) - u(z_1) \big) - d(z'_1, \sigma(s)) \displaybreak[1] \\
 \geq - (1+\delta) d(\sigma(s), z_2) + d(z_1, z_2) - \delta - d(z'_1, \sigma(s)) \displaybreak[1] \\
 \geq - 3 \delta - 3\mu + d(z'_1, z'_2) - d(\sigma(s), z'_2) - d(z'_1, \sigma(s)) = - 3 \delta - 3 \mu > - 4 \mu.
\end{multline*}
So for all $s \in [0,l]$
\[ \big| u(\sigma(s)) - u(z_1) - s \big| < 4 \mu. \]
Combining this with (\ref{eq:ellprimeuu}) shows that for all $s \in \sigma^{-1} (S)$
\begin{equation} \label{eq:ellprimesquare}
 \big|  \big( \ell^2 (s) \big)' - 2u(z_1) - 2s +2 u(x') \big| < 10 \mu + | \sigma' (s) - \nabla u (\sigma(s)) | . 
\end{equation}
Next, using (\ref{eq:dz1z2almostdeltau}) we find for sufficiently small $\delta$, depending on $\mu$, that
\begin{multline} \label{eq:L2smallsigmaprime}
 \int_0^l |\sigma'(s) - \nabla u (\sigma(s))|^2 ds = \int_0^l \big( 1 + |\nabla u |^2(\sigma(s)) - 2 \langle \sigma' (s), \nabla u (\sigma(s)) \rangle \big) \\
 \leq l + (1+ \delta) l  - 2 \big( u (z'_2) - u(z'_1) \big) \leq 2 d(z_1, z_2) + 4 \mu + 2 \delta - |u(z'_2) - u(z'_1) | \\
 \leq 4 \mu + 4 \delta + |u(z_1) - u(z_2)|  - | u(z'_1) - u(z'_2) | \leq 8 \mu + 4 \delta \leq 10 \mu.
\end{multline}
The bounds (\ref{eq:sigmainverseSsplitting}), (\ref{eq:ellprimesquare}), (\ref{eq:L2smallsigmaprime}) and the fact that $\ell(s)$ is $1$-Lipschitz then implies as in the proof of Lemma \ref{Lem:simpleconeid} that
\[ \big|  \ell^2(l) - \ell^2(0) - 2l u(z_1) + 2l u(x') - l^2 \big| < \Psi ( \mu). \]
It follows that
\[ \big| d^2(z_2, x) - d^2 (z_1, x) - 2 d(z_1, z_2) u(z_1) + 2 d(z_1, z_2) u(x) - d^2 (z_1, z_2) \big| < \Psi ( \mu). \]
Using (\ref{eq:dz1z2almostdeltau}), we find
\begin{multline*}
 \big| d^2(z_2, x) - d^2 (z_1, x) - 2 \big( u(z_2) - u(z_1) \big)  u(z_1) \\
  + 2 \big( u(z_2) - u(z_1) \big) u(x)  - \big( u(z_1) - u(z_2) \big)^2 \big| < \Psi(\delta, \mu). 
\end{multline*}
So
\[ \big| d^2(z_2,x) - d^2(z_1,x) + \big(u(z_1) - u(x) \big)^2  - \big( u(z_2) - u(x) \big)^2 \big| < \Psi (\delta, \mu). \]
Hence the claim follows for sufficiently small $\mu$ and $\delta$.
\end{proof}

Applying Lemma \ref{Lem:u3ptid} twice yields

\begin{Lemma} \label{Lem:u4ptid}
For any $\eps > 0$, $\mathbf{p}_0 > 3$ and $Y < \infty$ there is a $\delta = \delta(\eps, \mathbf{p}_0, Y) > 0$ such that the following holds:

Let $\XX$ be a singular space with mild singularities of codimension $\mathbf{p}_0$ that is $Y$-tame at scale $\delta^{-1} r$ for some $r > 0$.
Assume moreover that $\Ric \geq - (n-1) r^{-2}$ on $\RR$.
Let $p \in X$ and $k \leq n$ and consider a $\delta$-splitting map $u : B(p, 9 r) \to \IR^k$.

Let $x_1, x_2, y_1, y_2 \in B(p, r)$ such that
\begin{align*}
 \big| d(x_1, x_2) - |u(x_1) - u(x_2)| \big| &< \delta r, \\
 \big| d(y_1, y_2) - |u(y_1) - u(y_2)| \big| &< \delta r.
\end{align*}
Then
\[ \big| \big( d^2(x_1, y_1) - |u(x_1) - u(y_1)|^2 \big) - \big( d^2 (x_2, y_2) - |u(x_2) - u(y_2)|^2 \big) \big| < \eps r^2. \]
\end{Lemma}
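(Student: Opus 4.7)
The plan is to derive Lemma \ref{Lem:u4ptid} by a direct two-fold application of Lemma \ref{Lem:u3ptid}, using $y_1$ and $x_2$ as the ``pivot'' points to interpolate between the pairs $(x_1,y_1)$ and $(x_2,y_2)$.

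More precisely, given $\eps > 0$, I would first choose $\eps' := \eps/2$ and let $\delta' = \delta'(\eps', \mathbf{p}_0, Y) > 0$ be the constant produced by Lemma \ref{Lem:u3ptid} applied to the ball $B(p,r)$ (note that Lemma \ref{Lem:u3ptid} requires the splitting map to be defined on $B(p, 9r)$, which is exactly what we have here). Then set $\delta := \delta'$. Now, given the hypotheses of Lemma \ref{Lem:u4ptid}, the hypothesis $|d(x_1,x_2) - |u(x_1) - u(x_2)|| < \delta r$ lets us apply Lemma \ref{Lem:u3ptid} with the choices $x \leftarrow y_1$, $z_1 \leftarrow x_1$, $z_2 \leftarrow x_2$, yielding
\[
 \bigl| \bigl(d^2(x_1,y_1) - |u(x_1) - u(y_1)|^2\bigr) - \bigl(d^2(x_2,y_1) - |u(x_2) - u(y_1)|^2\bigr) \bigr| < \eps' r^2.
\]
Similarly, the hypothesis $|d(y_1,y_2) - |u(y_1) - u(y_2)|| < \delta r$ lets us apply Lemma \ref{Lem:u3ptid} with $x \leftarrow x_2$, $z_1 \leftarrow y_1$, $z_2 \leftarrow y_2$, yielding
\[
 \bigl| \bigl(d^2(x_2,y_1) - |u(x_2) - u(y_1)|^2\bigr) - \bigl(d^2(x_2,y_2) - |u(x_2) - u(y_2)|^2\bigr) \bigr| < \eps' r^2.
\]
Adding these two bounds via the triangle inequality gives the conclusion of Lemma \ref{Lem:u4ptid} with total error $2\eps' r^2 = \eps r^2$.

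There is essentially no obstacle here: the only subtle point is checking that the hypotheses of Lemma \ref{Lem:u3ptid} are actually satisfied in both applications, which they are verbatim (the points $y_1$, $x_2$ both lie in $B(p,r)$, and the required $\delta r$ closeness is precisely what we assumed). So the proof is a bookkeeping argument, and there is no hard step --- the substantive geometric content has already been done in Lemma \ref{Lem:u3ptid}.
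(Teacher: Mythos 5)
Your proof is correct and is precisely the argument the paper has in mind: the paper dispenses with this lemma by the single remark ``Applying Lemma \ref{Lem:u3ptid} twice yields'' the statement, and your two applications (pivoting through $y_1$ and then $x_2$) together with the triangle inequality are exactly the intended bookkeeping.
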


We also need

\begin{Lemma} \label{Lem:pointinapproxdir}
For any $\eps > 0$ and $Y < \infty$ there is a $\delta = \delta (\eps, Y) > 0$ such that the following holds:

Let $\XX$ be a singular space with mild singularities of codimension $\mathbf{p}_0 >1$ that is $Y$-tame at some scale $r > 0$.
Let $p \in X$ and $k \in \{ 1, \ldots, n \}$ and consider a $\delta$-splitting map $u : B(p, 3r) \to \IR^k$.
Let $x \in B(p,r)$ and choose $w \in \IR^k$ such that $| w - u(x)  | < r$.
Then we can find a point $y \in B(p,3r)$ such that
\begin{equation} \label{eq:duxuy}
 \big| d(x,y) - |u(x) - u(y)| \big| < \eps r 
\end{equation}
and
\begin{equation} \label{eq:uywepsr}
 |u(y) - w| < \eps r. 
\end{equation}
\end{Lemma}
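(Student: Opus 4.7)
Without loss of generality $r = 1$. Set $L := |w - u(x)|$. If $L < \eps$ we take $y = x$, so assume $L \geq \eps$. The strategy is to find $y$ as $\gamma_v(L)$ for an arclength geodesic starting near $x$ whose initial direction $v$ is nearly aligned with $\nabla u^*$, where
\[
u^*(z) := \langle e, u(z) - u(x)\rangle, \qquad e := L^{-1}(w - u(x)) \in S^{k-1}.
\]
If we can arrange $\langle \nabla u^*(x'), v\rangle \approx 1$ and $(u^*(\gamma_v(L)) - u^*(x'))/L \approx 1$, then $d(x,y) \approx L$ and $u^*(y) - u^*(x) \approx L$; the remaining control on $|u(y) - w|$ will follow from $|\nabla u| \le 1+\delta$.

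Being a linear combination of the $u^l$'s, the function $u^*$ is harmonic, $|\nabla u^*| \leq 1 + \delta$, and combining items (3) and (4) of Definition \ref{Def:epssplitting} with Cauchy-Schwarz gives
\[
\int_{B(p,3)} \big| |\nabla u^*|^2 - 1 \big|\, dg < C\delta, \qquad \int_{B(p,3)} |\nabla^2 u^*|\, dg < C\delta.
\]
I then apply Proposition \ref{Prop:segmenttype} to $f = u^*$ at basepoint $x$ with ball radius $\nu$ (chosen below) and path length $L$:
\[
\int_{S^* B(x,\nu)} \bigg| \langle \nabla u^*, v\rangle - \frac{u^*(\gamma_v(L)) - u^*(\gamma_v(0))}{L} \bigg|\, dv \;\leq\; 2L \int_{B(x, \nu + L) \cap \RR} |\nabla^2 u^*|\, dg \;\leq\; CL\delta.
\]
By $Y$-tameness, $|S^* B(x,\nu)| \geq c_Y \nu^n$. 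Together with Markov's inequality applied to the bound on $\int ||\nabla u^*|^2 - 1|$, outside a subset of $S^* B(x,\nu)$ of measure at most $C\delta(L+1)/\eta$ there exists a pair $(x', v)$ with $x' \in \RR$, $|\nabla u^*|(x') > 1 - \eta$, and the integrand above less than $\eta$.

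For each such $x'$ the set $\{ v \in S_{x'}\RR : \langle \nabla u^*(x'), v\rangle > 1 - 2\eta\}$ has spherical measure $\gtrsim_n \eta^{(n-1)/2}$. Choosing $\delta$ small enough relative to $\nu^n \eta^{(n+1)/2}$ guarantees a direction $v$ in the intersection. Setting $y := \gamma_v(L)$ then yields $d(x, y) \in [L - \nu, L + \nu]$ and $u^*(y) - u^*(x) = L + O(\nu + (\eta + \delta)L)$. Since $\langle e, u(y) - w\rangle = (u^*(y) - u^*(x)) - L$, this already controls the projection of $u(y) - w$ onto $e$. For the transverse part I use
\[
|u(y) - w|^2 = |u(y) - u(x)|^2 - 2L \langle e, u(y) - u(x)\rangle + L^2
\]
together with $|u(y) - u(x)|^2 \leq (1+\delta)^2 d(x,y)^2 \leq (1+\delta)^2 (L + \nu)^2$, giving $|u(y) - w|^2 \leq C(L\nu + (\eta + \delta)L^2)$. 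With $\nu := \eps^2$ and $\eta$ small in $\eps$, this gives (\ref{eq:uywepsr}); the bound (\ref{eq:duxuy}) follows directly from $|d(x,y) - L| \leq \nu$ and $L - O(\nu + (\eta + \delta)L) \leq |u(y) - u(x)| \leq (1+\delta)(L+\nu)$.

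\textbf{Main obstacle.} The chief difficulty is the quantitative interplay between the $L^1$-bound from the segment inequality (which only controls the average deviation from linearity) and the much smaller spherical measure $\sim \eta^{(n-1)/2}$ of directions aligned with $\nabla u^*$: nonemptiness of the intersection forces $\delta$ to be polynomially small in $\eps$ and in $n$, though this is permissible. A secondary subtlety is that bounding $|u(y) - u(x)|^2$ by $(1+\delta)^2 d(x,y)^2$ contributes a cross term of order $L\nu$, which forces $\nu$ to be quadratic in $\eps$ rather than linear, and correspondingly requires the base perturbation $B(x,\nu)$ to be much smaller than the target accuracy $\eps r$.
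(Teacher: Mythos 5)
Your proof is correct and takes a genuinely different route from the paper's. The paper applies Proposition \ref{Prop:segmenttype} to each component $u^i$ separately (with path length $l = |w - u(x)|$), picks a base point $x'$ where the $k$ segment-inequality bounds all hold in $L^1$ over $S^*_{x'}\RR$, and then looks for a single direction $v$ satisfying $\langle \nabla u^i(x'), v\rangle \approx (w-u(x))^i/|w-u(x)|$ for \emph{all} $i$ simultaneously. Making that step rigorous quietly uses that the gradients $\nabla u^1(x'), \ldots, \nabla u^k(x')$ are approximately orthonormal at the chosen $x'$, which in turn requires folding item (3) of the splitting definition into the same Markov argument. You sidestep this entirely by projecting onto the single scalar $u^* = \langle e, u - u(x)\rangle$: the segment inequality is applied once, the alignment condition is a one-dimensional statement $\langle \nabla u^*(x'), v\rangle \approx 1$ whose solution set is a spherical cap of measure $\sim \eta^{(n-1)/2}$, and the transverse part of $u(y) - w$ is recovered a posteriori from $|u(y)-u(x)| \leq (1+\delta)d(x,y)$ via the law-of-cosines identity $|u(y)-w|^2 = |u(y)-u(x)|^2 - 2L\langle e, u(y)-u(x)\rangle + L^2$. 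The trade-off you correctly flag is that the cross term $L\nu$ in that identity forces the base-perturbation radius $\nu$ to be quadratic rather than linear in $\eps$; the paper avoids this particular loss because it controls every coordinate of $u(y)$ directly, but pays with the more delicate $k$-fold alignment. Both approaches ultimately need $\delta$ polynomially small in $\eps$, so neither is cheaper quantitatively; yours is somewhat more self-contained since it makes the measure-theoretic step explicit.
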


\begin{proof}
The proof is similar to that of Lemma \ref{Lem:eps0splitting}.
Without loss of generality, we may assume again that $r = 1$.
Let $0 < \mu < 1/10$ be a constant whose value will be determined in the course of the proof, depending on $\eps$ and $Y$.
Set $w' := w - u(x) \in \IR^k$ and apply Proposition \ref{Prop:segmenttype} with $l := |w'| < 1$ and $f = u^i$.
Then
\[ \int_{S^* B(x, \mu)} \Big| |w'| \cdot \langle \nabla u^i, v \rangle  - \big( u^i (\gamma_v(l)) - u^i (\gamma_v(0)) \big) \Big| dv < \Psi (\delta). \]
So we can find a point $x' \in B(x, \mu) \cap \RR$ such that $S^*_{x'} \RR \setminus S^* \RR$ has measure zero and such that for all $i = 1, \ldots, k$
\[ \int_{S^*_{x'} \RR}  \Big| |w'| \cdot \langle \nabla u^i, v \rangle  - \big( u^i (\gamma_v(l)) - u^i (x') \big) \Big| dv < \Psi (\delta | Y). \]
For small $\delta > 0$ it is then possible to find some vector $v \in  S^*_{x'} \RR$ such that
\[ \big| (w')^i - |w'| \cdot \langle \nabla u^i, v \rangle \big| < \mu  \]
and
\[ \Big| |w'| \cdot \langle \nabla u^i, v \rangle  - \big( u^i (\gamma_v(l)) - u^i (x') \big) \Big| < \Psi (\delta |Y). \]
Set $y := \gamma_v(l)$.
Then
\[ \Big| (w')^i - \big( u^i(y) - u^i(x') \big)  \Big| < \Psi (\mu, \delta | Y). \]
As $| u(x') - u(x) | \leq (1+\delta) d(x',x) < 2 \mu$ for small $\delta$, we obtain (\ref{eq:uywepsr}) for small enough $\mu$.
To see (\ref{eq:duxuy}) observe that
\[  |u(x) - u(y) | \leq (1+\delta) d(x,y) \leq d(x,y) + 2 \delta \]
and
\begin{multline*}
 |u(x) - u(y) | \geq |w - u(x) | - |u(y) - w| \geq |w'| - \Psi (\mu, \delta | Y) \\
 = d(y,x') - \Psi(\mu, \delta | Y) \geq d(x,y) - \Psi(\mu, \delta | Y). 
\end{multline*}
This finishes the proof for small $\mu$ and small $\delta$.
\end{proof}

With Lemmas \ref{Lem:u4ptid} and \ref{Lem:pointinapproxdir} in hand, we can finally prove Proposition \ref{Prop:splittingmapGHsplitting}.

\begin{proof}[Proof of Proposition \ref{Prop:splittingmapGHsplitting}.]
Without loss of generality, we may assume that $r = 1$.
It suffices to show the following claim:
{\it Let $Y < \infty$, $\mathbf{p}_0 > 3$ and $k \in \{ 1, \ldots, n \}$ be fixed numbers and consider a sequence of positive numbers $\delta_i \to 0$ and a sequence of singular spaces with mild singularities of codimension $\mathbf{p}_0$ that are $Y$-tame at scale $\delta_i^{-1}$ and on whose regular part we have $\Ric \geq - (n-1)$.
Let moreover $p_i \in X_i$ be points and consider $\delta_i$-splitting maps $u_i : B^{X_i} (p_i, 100) \to \IR^k$.
Then, after passing to a subsequence, the pointed metric spaces $(X_i, d_{X_i}, p_i)$ Gromov-Hausdorff converge to a pointed metric space $(X_\infty, d_{X_\infty}, p_\infty)$ such that there is a pointed metric space $(Z,d_Z, z)$ with the property that $(B^{X_\infty} (p_\infty, 1), p_\infty)$ is isometric to $(B^{Z \times \IR^k} ((z ,0^k), 1), (z, 0^k))$.}

Let us now prove this claim.
After passing to a subsequence, we may assume that the $(X_i, d_{X_i}, p_i)$ already Gromov-Hausdorff converge to some pointed, locally compact, metric length space $(X_\infty, d_{X_\infty}, p_\infty)$.
After passing to a subsequence once again, and after replacing $u_i$ by $u_i - u_i(p_i)$, we may also assume that the $u_i$ uniformly converge to some $u_\infty : X_\infty \to \IR^k$.
By the definition of a splitting map, $u_\infty |_{B^{X_\infty} (p_\infty, 50)}$ must be $1$-Lipschitz.

We will now analyze the geometry of $B^{X_\infty} (p_\infty,1)$ and the function $u_\infty$ further.
Using Lemma \ref{Lem:pointinapproxdir}, we obtain the following fact:
For any $y \in B^{X_\infty} (p_\infty, 2)$ and $v \in \IR^k$ with $|v| < 1$ there is a $y' \in X_\infty$ such that $u_\infty (y') = v$ and $d_{X_\infty} (y,y') = |u_\infty (y) - u_\infty (y')|$.
On the other hand, if $y, y', y'' \in B^{X_\infty} (p_\infty, 10)$ such that $u_\infty (y') = u_\infty (y'')$ and
\[ d_{X_\infty} (y,y') = d_{X_\infty} (y, y'') = |u_\infty (y) - u_\infty (y')| , \]
then Lemma \ref{Lem:u4ptid} implies that $y' = y''$.
So we can define the map
\[ \Phi : \big( \{ u_\infty = 0^k \} \cap B^{X_\infty} (p_\infty, 2) \big) \times B^{\IR^k}(0^k, 1) \to X_\infty \]
that sends each $(y,v)$ pair to the unique point $y' \in X_\infty$ with the property that $u_\infty (y') = v$ and $d_{X_\infty} (y,y') = |u_\infty (y) - u_\infty (y')|$.
Moreover, this map is injective (by Lemma \ref{Lem:u4ptid}) and its image contains $B^{X_\infty} (p_\infty,1)$ (by Lemma \ref{Lem:pointinapproxdir}).
By Lemma~\ref{Lem:u4ptid}, the map $\Phi$ induces an isometry between the $1$-ball around $(p_\infty, 0^k)$ in
\[ \big( \{ u_\infty = 0^k \} \cap B^{X_\infty} (p_\infty, 2) \big) \times \IR^k \]
and $B^{X_\infty} (p_\infty, 1)$.
\end{proof}

\section{Curvature estimates under cone regularity assumption}
In this section we derive bounds on the volume of the sublevel sets $\{ \rrm < s \}$ on $Y$-tame and $Y$-regular singular spaces with mild singularities of codimension $\mathbf{p}_0 > 3$.
These bounds will be obtained by adapting the techniques developed by Cheeger and Naber in \cite{Cheeger-Naber-quantitative} to our setting.
The bounds on the volume of the sublevel sets of $\{ \rrm < s \}$ imply $L^p$-bounds on $\rrm^{-1}$ and they can also be used to bound the Hausdorff dimension of singular sets that occur as we analyze limits of degenerations $\XX_i$ of singular spaces.
The possible values of $p$ or, equivalently, the Hausdorff dimension of the singular set depend on a working condition, which states that Gromov-Hausdorff closeness to metric products of the form $Y \times \IR^{n-e}$ implies local regularity.
This working condition is true for $e = 0$ by default, by Corollary~\ref{Cor:GHepsregularity}.
In subsection \ref{subsec:Codim4}, we will improve this working condition to $e = 2$, giving us stronger $L^p$-bounds.

\begin{Proposition} \label{Prop:LpboundXXversion1}
For any $\eps > 0$, $\mathbf{p}_0 > 3$ and $Y < \infty$ there is an $E = E(\eps, \mathbf{p}_0, Y) < \infty$ such that the following holds:

Let $\XX$ be a singular space that has mild singularities of codimension $\mathbf{p}_0$.
Assume moreover that $\XX$ is $Y$-tame at some scale $r > 0$ and that $\Ric \geq - (n-1) r^{-2}$ on $\RR$.
Assume that for some $e \in \{ 0, \ldots, n \}$ the following property holds:
If $p' \in X$, $0 < r' < r$ and if there exists a metric cone $(Z, d_Z, z)$ with vertex $z$ such that
\begin{equation} \label{eq:conecondition}
 d_{GH} \big( \big( B(p',r'), p' \big), \big( B^{Z \times \IR^{n-e}} ((z, 0^{n-e}),r'), (z, 0^{n-e} ) \big) \big) < \eps r', 
\end{equation}
then $\rrm (p') > \eps r'$.

Then for any $0 < s < 1$ and $p \in X$ we have
\begin{equation} \label{eq:eplus1condition}
 |\{ 0 < \rrm < sr \} \cap B(p,r) | \leq E s^{(e+1) - \eps} r^n. 
\end{equation}
\end{Proposition}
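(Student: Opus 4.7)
The plan is to follow the quantitative stratification scheme of Cheeger--Naber \cite{Cheeger-Naber-quantitative}, adapted to the singular setting using the tools built up in the preceding sections (cone rigidity, volume stability, and $\delta$-splittings inducing GH-splittings). The idea is to show that the sublevel set $\{\rrm < sr\}$ is contained in a quantitative singular stratum of dimension $\leq n-e-1$, and then to bound the volume of this stratum by the standard covering/iteration argument.

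First I would introduce a quantitative stratum $\mathcal{S}^{k}_{\eta, s}(p,r)$ consisting of all $x \in B(p,r)$ such that for \emph{no} scale $r' \in [sr, r]$ and \emph{no} pointed metric cone $(\mathcal{C}, d_\mathcal{C}, z)$ with vertex $z$ does one have
\[
d_{GH}\bigl( (B^X(x,r'),x), (B^{\mathcal{C}\times \IR^{k+1}}((z,0^{k+1}),r'),(z,0^{k+1}))\bigr) < \eta r'.
\]
The reduction step is then straightforward: applying this definition with $k = n-e-1$ and $\eta = \eps$, the hypothesis of the proposition immediately yields the inclusion
\[
\{ 0 < \rrm < s r\} \cap B(p,r) \;\subset\; \mathcal{S}^{\,n-e-1}_{\eps,\,\eps^{-1}s}(p,r),
\]
since any $x$ outside this stratum would have some scale $r' \geq \eps^{-1} s r$ at which $B(x,r')$ is $\eps r'$-close to a ball in $\mathcal{C} \times \IR^{n-e}$, forcing $\rrm(x) > \eps r' \geq sr$ by hypothesis.

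The main work is then to prove the quantitative stratum estimate
\[
\bigl| \mathcal{S}^{k}_{\eta, s}(p,r) \cap B(p,r) \bigr| \;\leq\; C(\eta, k, \mathbf{p}_0, Y)\, s^{\,n-k-\eta}\, r^n
\]
for $0 < \eta < 1$ and $k \in \{0, \ldots, n\}$; applied with $k = n-e-1$ and (after readjusting $\eta$) this yields the proposition. I plan to prove this via a dyadic covering argument: cover the stratum by controllably many balls at scale $s^{1/N}$ for a suitable $N$, classify balls according to at how many dyadic scales they ``almost split'' with small defect, and iterate. The key geometric inputs are (i) Volume Stability (Theorem~\ref{Thm:volconv}) together with Cone Rigidity (Theorem~\ref{Thm:conerigidityIntroduction}), which guarantees that whenever the volume ratio on an annulus is nearly preserved between two dyadically separated scales, the ball is GH-close to a metric cone; (ii) the existence of $\eps$-splittings from GH-closeness (Proposition~\ref{Prop:GHsplittingepssplitting}) and conversely (Proposition~\ref{Prop:splittingmapGHsplitting}); and (iii) \emph{cone splitting}: if a cone $\mathcal{C}$ has two cone points at definite distance, then $\mathcal{C}$ splits off an $\IR$-factor, which at the level of $\delta$-splittings means two $\delta$-splittings with linearly independent ``radial'' directions combine into an $\eps$-splitting with one more component. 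This last fact follows at the quantitative level from combining Lemma~\ref{Lem:u4ptid} with the Hessian estimate Lemma~\ref{Lem:Hessianbound}.

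The main obstacle will be the bookkeeping in the dyadic iteration: at each scale one must show that only a bounded number of dyadic children of a ball can ``split off one more factor'', and this requires a clean quantitative version of cone splitting in the singular setting. The singular-space issues (possible failure of geodesics to stay in $\RR$, finite-mass but nontrivial Laplacians of distance functions) have all been handled in the preceding sections under the mild-singularities and $Y$-tameness assumptions, so the CN dyadic argument goes through essentially verbatim once the quantitative cone splitting lemma is in place. The dependence of $E$ on $(\eps, \mathbf{p}_0, Y)$ is inherited from the uniform dependence of the constants in Theorems~\ref{Thm:volconv} and~\ref{Thm:conerigidityIntroduction} and Proposition~\ref{Prop:GHsplittingepssplitting}.
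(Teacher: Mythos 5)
Your proposal follows the same route as the paper: the paper simply invokes the quantitative stratification argument of Cheeger--Naber \cite{Cheeger-Naber-quantitative}, observing that its only inputs are volume comparison (Proposition~\ref{Prop:volumecomparison}), cone rigidity (Theorem~\ref{Thm:conerigidityIntroduction}) and metric geometry, all of which have been established for the singular setting. Your sketch of the dyadic covering scheme, the reduction of $\{0 < \rrm < sr\}$ to a quantitative stratum $\mathcal{S}^{n-e-1}$, and the use of cone rigidity to pass from almost-monotonicity of volume ratios to GH-closeness to cones all match the CN argument.

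A few imprecisions worth flagging. First, Volume Stability (Theorem~\ref{Thm:volconv}) is the converse implication (GH-closeness implies volume closeness) and is \emph{not} needed for the cone-detection step --- only Cone Rigidity (Theorem~\ref{Thm:conerigidityIntroduction}) is. Second, the CN stratification theorem is proved by pure metric-measure arguments (segment inequality, Cheeger--Colding almost-cone functional $b$ with $|\nabla^2 b - 2g|$ small as in Lemma~\ref{Lem:bonannulus}), not via the splitting-map formalism of Definition~\ref{Def:epssplitting} and Propositions~\ref{Prop:GHsplittingepssplitting}--\ref{Prop:splittingmapGHsplitting}; your choice to route cone splitting through $\delta$-splitting maps is viable, but Lemma~\ref{Lem:u4ptid} together with Lemma~\ref{Lem:Hessianbound} is not quite the quantitative cone-splitting statement you need --- what you actually want is the statement that two almost-cone structures at definite distance force the Hessian of their difference to be small in $L^2$, which produces the new splitting coordinate. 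Third, note that both Theorem~\ref{Thm:conerigidityIntroduction} and Proposition~\ref{Prop:GHsplittingepssplitting} require $Y$-tameness at scale $\delta^{-1}\rho$ when applied at scale $\rho$, whereas the proposition only assumes $Y$-tameness at scale $r$; this forces the dyadic decomposition to start below scale $\delta r$ and be patched back up to $B(p,r)$ by a volume-comparison covering, a routine but necessary bookkeeping step that should be spelled out.
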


\begin{proof}
This fact is proved in \cite{Cheeger-Naber-quantitative}.
Note that this proof relies only on volume comparison estimates, which hold due to Proposition \ref{Prop:volumecomparison}, Cheeger and Colding's Cone-Splitting Theorem, which holds in the singular setting due to Theorem \ref{Thm:conerigidityIntroduction}, and estimates involving metric geometry, which can still be carried out in our setting.
\end{proof}

So by Corollary \ref{Cor:GHepsregularity} we have:

\begin{Corollary} \label{Cor:preliminaryLpbound}
For any $\eps > 0$, $\mathbf{p}_0 > 3$ and $Y < \infty$ there is an $E = E(\eps, \mathbf{p}_0, Y) < \infty$ such that the following holds:

Let $\XX$ be a singular space that has mild singularities of codimension $\mathbf{p}_0$.
Assume moreover that $\XX$ is $Y$-tame and $Y$-regular at some scale $r > 0$ and that $\Ric = \lambda g$ on $\RR$ for some $| \lambda | \leq (n-1) r^{-2}$.
Then for any $0 < s < 1$ and $p \in X$ we have
\[
 |\{ 0 < \rrm < sr \} \cap B(p,r) | \leq E s^{1 - \eps} r^n. 
\]
\end{Corollary}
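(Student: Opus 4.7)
The strategy is to deduce the corollary directly from Proposition \ref{Prop:LpboundXXversion1} with $e=0$, using Corollary \ref{Cor:GHepsregularity} to verify the ``working hypothesis'' required by that proposition. Corollary \ref{Cor:GHepsregularity} is tailor-made to supply, in the Einstein (or lower-Ricci-bounded) setting, the quantitative statement that Gromov-Hausdorff closeness to a product of the form $Z\times\IR^n$ forces a definite lower bound on the curvature radius --- this is exactly what is needed to invoke Proposition \ref{Prop:LpboundXXversion1} at the base level $e=0$.

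In more detail, given the target exponent-loss $\eps>0$, first let $\eps_1=\eps_1(n,\mathbf{p}_0,Y)>0$ be the constant furnished by Corollary \ref{Cor:GHepsregularity} and set $\eps':=\min\{\eps,\eps_1\}$. To apply Proposition \ref{Prop:LpboundXXversion1} with $e=0$ and parameter $\eps'$ I must check the following implication: whenever $p'\in X$, $0<r'<r$, and there exists a metric cone $(Z,d_Z,z)$ with
\[
d_{GH}\big(\big(B(p',r'),p'\big),\big(B^{Z\times\IR^n}((z,0^n),r'),(z,0^n)\big)\big)<\eps' r',
\]
then $\rrm(p')>\eps' r'$. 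Since $\XX$ is $Y$-tame and $Y$-regular at scale $r$, the same tameness and regularity properties hold at the smaller scale $r'<r$; the Einstein hypothesis $\Ric=\lambda g$ with $|\lambda|\leq(n-1)r^{-2}$ gives $\lambda\geq -(n-1)r^{-2}\geq -(n-1)(r')^{-2}$, so $\Ric\geq -(n-1)(r')^{-2}$ on $\RR$. Corollary \ref{Cor:GHepsregularity}, applied at scale $r'$ with the cone $Z$ viewed merely as a metric space, then yields $\rrm(p')>\eps_1 r'\geq \eps' r'$, verifying the hypothesis.

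Proposition \ref{Prop:LpboundXXversion1} with $e=0$ then produces a constant $E'=E'(\eps',\mathbf{p}_0,Y)$ with
\[
|\{0<\rrm<sr\}\cap B(p,r)|\leq E' s^{1-\eps'}r^n
\]
for all $0<s<1$ and $p\in X$. Since $\eps'\leq\eps$ we have $1-\eps'\geq 1-\eps$, and for $0<s<1$ this implies $s^{1-\eps'}\leq s^{1-\eps}$; taking $E:=E'$ gives the desired bound. There is essentially no obstacle here: Proposition \ref{Prop:LpboundXXversion1} and Corollary \ref{Cor:GHepsregularity} have been set up precisely to chain in this manner, with the $e=0$ case providing the baseline polynomial decay. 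The more substantive work, postponed to the next subsection, will be to upgrade the working hypothesis from $e=0$ to $e=2$, which will gain additional powers of $s$ and eventually yield the codimension-$4$ type bound of Theorem \ref{Thm:Lpbound}.
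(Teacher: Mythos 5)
Your proof is correct and takes the same route as the paper, which simply observes that Corollary \ref{Cor:GHepsregularity} supplies the $e=0$ working hypothesis of Proposition \ref{Prop:LpboundXXversion1}; you have just spelled out the bookkeeping (the passage to smaller scales and the choice of $\eps'=\min\{\eps,\eps_1\}$) that the paper leaves implicit.
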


In the case in which $e = 2$ and the singular space is Einstein in Proposition \ref{Prop:LpboundXXversion1}, we will soon replace the condition (\ref{eq:conecondition}) by an only slightly stronger condition, which will enable us to deduce (\ref{eq:eplus1condition}) for $e = 3$.
In order to do this, we need to establish the following lemma.

\begin{Lemma} \label{Lem:coneissmooth}
Let $\eps > 0$, $\mathbf{p}_0 > 3$ and $Y < \infty$ and consider a sequence $\XX_i$ of singular spaces with mild singularities of codimension $\mathbf{p}_0$ that are $Y$-tame at scale $1$.
Assume also that we have $\Ric \geq - (n-1)$ on the regular part of each $\XX_i$.
Let $e \in \{ 0, \ldots, n-1 \}$.

Assume that for any $i$, $p \in X_i$, $0 < r < 1$ and every pointed metric space $(Z, d_Z, z)$ the following holds:
If
\[ d_{GH} \big( \big( B^{X_i} (p, r), p \big), \big( B^{Z \times \IR^{n-e}} ((z,0^{n-e}), r) \big) \big) < \eps r, \]
then $\rrm (p) > \eps r$.

Let now $x_i \in X_i$ and assume that the pointed metric spaces $(X_i, d_i, x_i)$ Gromov-Hausdorff converge to a metric space of the form $(Z_0 \times \IR^{n-e-1}, (z_0, 0^{n-e-1}))$,  where $(Z_0, d_{Z_0}, z_0)$ is a metric cone with vertex $z_0$.
Then $Z_0 \setminus \{ z_0 \}$ is an $e+1$-dimensional differentiable manifold and the metric $d_{Z_0}$ on $Z_0 \setminus \{ z_0 \}$ is locally isometric to the length metric of a $C^3$-Riemannian metric $g_\infty$ on $Z_0 \setminus \{ z_0 \}$.

Moreover, for any relatively compact, open $U \subset Z_0 \setminus \{ z_0 \}$ and $r > 0$ there is a sequence of $C^3$-diffeomorphisms $\Phi_i : U \times B^{\IR^{n-e-1}} (0^{n-e-1}, r) \to \RR_i$ such that $\Phi^*_i g_i \to g_\infty + g_{\IR^{n-e-1}}$ in $C^2$ and such that the $\Phi_i$ converge to the identity map with respect to some fixed Gromov-Hausdorff convergence $(X_i, d_i, x_i) \to (Z_0 \times \IR^{n-e-1},  (z_0, 0^{n-e-1}))$.
\end{Lemma}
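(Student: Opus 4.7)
My plan is to exploit the fact that inside the metric cone $Z_0$ every non-vertex point acquires an extra approximate splitting at small scales: if $p\in Z_0\setminus\{z_0\}$ with $R:=d_{Z_0}(z_0,p)>0$, then rescaling $Z_0$ by $1/r$ pushes the vertex off to distance $R/r$, and elementary cone geometry supplies a Gromov--Hausdorff approximation between $(B^{Z_0}(p,r),p)$ and a ball in some product $(Y\times\IR,(y,0))$ with error at most $\psi(r/R)\cdot r$, where $\psi(r/R)\to 0$ as $r/R\to 0$. Taking the further Cartesian product with $\IR^{n-e-1}$, the ball $(B^{Z_0\times\IR^{n-e-1}}((p,0^{n-e-1}),r),(p,0^{n-e-1}))$ is $\psi(r/R)r$-close in GH distance to a ball in $Y\times\IR^{n-e}$.

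Given this, I would fix $p\in Z_0\setminus\{z_0\}$, choose $r$ so small compared to $R$ that $\psi(r/R)<\eps/10$, and pick $p_i\in X_i$ converging to $(p,0^{n-e-1})$. For large $i$ the ball $B^{X_i}(p_i,r)$ becomes $\eps r$-GH close to a ball in $Y\times\IR^{n-e}$, so the lemma's hypothesis immediately yields $\rrm(p_i)>\eps r$. In particular, the uniform bounds $|{\Rm}|<(\eps r)^{-2}$, $|{\nabla\Rm}|<(\eps r)^{-3}$, $|{\nabla^2\Rm}|<(\eps r)^{-4}$ hold on $B^{X_i}(p_i,\eps r)\subset\RR_i$ and, together with the $Y$-tame volume non-collapsing, a standard Cheeger--Gromov construction (harmonic coordinates on $(\RR_i,g_i)$ built so as to track the ambient GH approximation) will produce, after passing to a subsequence, $C^3$-diffeomorphisms $\Psi_i$ from a fixed Euclidean ball $W\subset\IR^n$ into $\RR_i$ under which $\Psi_i^*g_i\to \td{g}$ in $C^2$ for some smooth Riemannian $\td{g}$ on $W$. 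Since the $\Psi_i$ approximate the ambient GH maps, the length metric of $\td{g}$ locally coincides with the pullback of the limit metric on $Z_0\times\IR^{n-e-1}$, so a neighbourhood of $(p,0^{n-e-1})$ acquires a $C^3$ differentiable structure with a compatible $C^3$ Riemannian metric; patching over $p\in Z_0\setminus\{z_0\}$ yields the first two assertions of the lemma and supplies the local diffeomorphisms $\Phi_i$.

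To identify the smooth limit as a Riemannian product $g_\infty+g_{\IR^{n-e-1}}$, I will apply Proposition \ref{Prop:GHsplittingepssplitting} on $B^{X_i}(p_i,r)$ with $k=n-e-1$ to obtain $\eps_i$-splitting maps $u_i:B^{X_i}(p_i,r)\to\IR^{n-e-1}$ with $\eps_i\to 0$, approximating the canonical coordinates on the Euclidean factor. Pulled back by $\Psi_i$, the $u_i$ are harmonic for $\Psi_i^*g_i$ with uniformly bounded gradient (Corollary \ref{Cor:ChengYau}), so elliptic regularity against the $C^2$-converging background metrics will produce $C^2$-subsequential limits $u^1_\infty,\ldots,u^{n-e-1}_\infty$ on $(W,\td{g})$. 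The $L^2$-smallness conditions in Definition \ref{Def:epssplitting} then pass to the pointwise identities $\langle\nabla u^j_\infty,\nabla u^k_\infty\rangle=\delta_{jk}$ and $\nabla^2 u^j_\infty=0$, so the $\nabla u^j_\infty$ form $n-e-1$ parallel orthonormal vector fields, forcing $(W,\td{g})$ to split isometrically as $g_\infty+g_{\IR^{n-e-1}}$.

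The main technical obstacle will be the upgrade from bare Gromov--Hausdorff convergence to $C^2$ smooth convergence in a way that is compatible with the ambient GH approximation, rather than merely yielding an abstract smooth limit with no control over how it sits inside $Z_0\times\IR^{n-e-1}$. This is however standard once the uniform higher-derivative curvature bounds and non-collapsing are established, and can be implemented by building harmonic coordinates near the $p_i$ whose construction manifestly tracks the GH maps into the limit.
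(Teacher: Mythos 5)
Your argument is correct and follows the same route as the paper: you use the extra approximate $\IR$-splitting of the cone $Z_0$ near non-vertex points (radial direction splits at scales $r \ll R = d_{Z_0}(z_0,p)$, with error $\psi(r/R)\,r$) to invoke the lemma's hypothesis and obtain $\rrm(p_i) > \eps r$, then build the smooth limit via Cheeger--Gromov compactness, while the paper compresses that last step into a one-line ``center of gravity construction'' remark and does not bother spelling out the identification of the limit as a Riemannian product. One minor slip: $\rrm(p_i) > \eps r$ bounds only $|{\Rm}|$ and $|{\nabla\Rm}|$, not $|{\nabla^2\Rm}|$ (that is what $\tdrrm$ controls), but the first two bounds together with the tameness non-collapsing already yield $C^{3,\alpha}$ control of the metric in harmonic coordinates, which is enough for the stated $C^3$-diffeomorphisms and $C^2$-convergence.
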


\begin{proof}
Let $(Z'_0, d_{Z'_0})$ be the link of the cone $Z'_0$.
So we can identify $Z_0 \setminus \{ z_0 \}$ with $Z'_0 \times (0, \infty)$.
For any $z = (z', s) \in Z_0 \setminus \{ z_0 \}$ there is a $0 < r < 1$ such that for some rescaling $(Z,d_Z)$ of $(Z'_0, d_{Z'_0})$ we have
\begin{multline*}
 d_{GH} \big( \big( B^{Z_0 \times \IR^{n-e-1}} ((z, 0^{n-e-1}), r), (z, 0^{n-e-1} ) \big), \\
   \big( B^{Z \times \IR^{n-e}} ( (z, 0^{n-e}), r), (z, 0^{n-e}) \big) \big) < \eps r/2. 
\end{multline*}
Let $p_i \in X_i$ be a sequence of points such that $p_i \to z$ with respect to some Gromov-Hausdorff convergence
\[ (X_i, d_i, x_i) \to (Z_0 \times \IR^{n-e-1}, d_{Z_0 \times \IR^{n-e-1}}, (z_0, 0^{n-e-1})). \]
Then, for sufficiently large $i$, we have
\[ d_{GH} \big( \big( B^{X_i} (p_i, r), p_i \big), \big( B^{Z \times \IR^{n-e}} ( (z, 0^{n-e}), r), (z, 0^{n-e}) \big) \big) < \eps r. \]
So $\rrm (p_i) > \eps r$ for large $i$.
It follows that $(z,0^{n-e})$ is a regular point in $Z_0 \times \IR^{n-e}$, hence $z$ is a regular point in $Z_0$.
The construction of the maps $\Phi_i$ follows by a center of gravity construction.
\end{proof}

Using Lemma \ref{Lem:coneissmooth}, we can now exclude the existence of $3$-dimensional cones and improve Proposition \ref{Prop:LpboundXXversion1}.

\begin{Proposition} \label{Prop:LpboundXXversion2}
For any $\eps > 0$, $\mathbf{p}_0 > 3$ and $Y < \infty$ there is an $E = E(\eps, \mathbf{p}_0, Y) < \infty$ such that the following holds:

Let $\XX$ be a singular space that has mild singularities of codimension $\mathbf{p}_0$.
Assume that $\XX = (X, d, \RR, g)$ is $Y$-tame and $Y$-regular at some scale $r > 0$ and that $\Ric = \lambda g$ on $\RR$ for some $|\lambda | \leq (n-1) r^{-2}$.
Assume that $\RR$ is orientable and assume that the following property holds:

If $p' \in X$, $0 < r' < r$ and if there exists a pointed metric space $(Z, d_Z, z)$ such that
\begin{equation} \label{eq:productcondition}
 d_{GH} \big( \big( B(p',r'), p' \big), \big( B^{Z \times \IR^{n-2}} ((z, 0^{n-2}),r'), (z, 0^{n-2} ) \big) \big) < \eps r', 
\end{equation}
then $\rrm (p) > \eps r'$.

Then for any $0 < s < 1$ and $p \in X$ we have
\[ |\{ 0 < \rrm < sr \} \cap B(p,r) | \leq E s^{4 - \eps} r^n. \]
\end{Proposition}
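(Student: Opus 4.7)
The strategy is to reduce to Proposition \ref{Prop:LpboundXXversion1} applied with $e = 3$, which yields the bound $E s^{4-\eps} r^n$. For this I need to verify the $e = 3$ cone working condition: there exists $\eps_1 = \eps_1(\eps, \mathbf{p}_0, Y) > 0$ such that whenever $p' \in X$, $0 < r' < r$, and $(B(p', r'), p')$ is $\eps_1 r'$-Gromov-Hausdorff close to $(B^{Z \times \IR^{n-3}}((z, 0^{n-3}), r'), (z, 0^{n-3}))$ for a $3$-dimensional metric cone $(Z, z)$ with vertex, then $\rrm(p') > \eps_1 r'$. Observe that the stronger $e = 2$ working condition, allowing $Z$ to be any metric space, is precisely the standing hypothesis of the present proposition.

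I argue by contradiction and rescaling. Suppose the $e = 3$ cone condition fails for every $\eps_1 > 0$; extract a sequence of counterexamples $(\XX_i, p_i, r_i', Z_i, z_i)$ with $\eps_{1,i} \to 0$ and $\rrm(p_i) \leq \eps_{1,i} r_i'$. Rescale each $\XX_i$ by $1/r_i'$; the rescaled Einstein constants $\lambda_i (r_i')^2$ lie uniformly in $[-(n-1), n-1]$, and the $Y$-tameness, $Y$-regularity and lower Ricci bound survive at scale $\geq 1$. After passing to a subsequence, obtain Gromov-Hausdorff convergence $(X_i, p_i) \to (Z_\infty \times \IR^{n-3}, (z_\infty, 0^{n-3}))$ with $Z_\infty$ a $3$-dimensional metric cone with vertex, and $\lambda_i(r_i')^2 \to \lambda_\infty$. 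Apply Lemma \ref{Lem:coneissmooth} with $e = 2$: its working hypothesis is exactly the standing hypothesis of the present proposition, so the lemma yields that $Z_\infty \setminus \{z_\infty\}$ is a smooth $C^3$ Riemannian $3$-manifold with metric $g_\infty$, together with $C^3$-diffeomorphisms $\Phi_i : U \times B^{\IR^{n-3}}(0^{n-3}, \rho) \to \RR_i$ satisfying $\Phi_i^* g_i \to g_\infty + g_{\IR^{n-3}}$ in $C^2$ on every relatively compact open $U \subset Z_\infty \setminus \{z_\infty\}$.

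The Einstein equation passes to the limit: $\Ric^{g_\infty} = \lambda_\infty g_\infty$ on $Z_\infty \setminus \{z_\infty\}$. The cone structure identifies $(Z_\infty \setminus \{z_\infty\}, g_\infty)$ isometrically with $((0, \infty) \times L, dr^2 + r^2 g_L)$ for a Riemannian $2$-manifold $(L, g_L)$, the link; compactness of $L$ follows from the upper volume bound inherited from the $Y$-tameness of $\XX_i$. The cone identity $\Ric(\partial_r, \partial_r) \equiv 0$ combined with the Einstein equation forces $\lambda_\infty = 0$, so $g_\infty$ is Ricci-flat and hence flat in dimension $3$. The standard cone-to-link relation then forces $(L, g_L)$ to have constant Gaussian curvature $+1$, and by classification $L$ is isometric to $S^2 / \Gamma$ for some finite subgroup $\Gamma \subset O(3)$ acting freely by isometries on the unit sphere. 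Since every non-identity element of $SO(3)$ has fixed points on $S^2$ (rotations fix their axis), and the only order-$2$ orientation-reversing isometry of $S^2$ without fixed points is the antipodal map $-I$, one has $\Gamma \in \{\{e\}, \{\pm I\}\}$ and thus $L \in \{S^2, \IR P^2\}$.

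The orientability hypothesis on $\XX$ now enters decisively: orientability of each $\RR_i$ transports through the $C^3$-diffeomorphisms $\Phi_i$ to orientability of every relatively compact open subset of $Z_\infty \setminus \{z_\infty\}$, hence of $L$. This rules out $L = \IR P^2$, forcing $L = S^2$ (round of radius $1$) and $Z_\infty = \IR^3$. The Gromov-Hausdorff limit is therefore $(\IR^n, 0^n)$; for large $i$, $(X_i, p_i)$ is arbitrarily Gromov-Hausdorff close to Euclidean space at unit scale. Corollary \ref{Cor:GHepsregularity}, applied to the rescaled $\XX_i$, then gives a uniform lower bound $\rrm(p_i) > c > 0$, contradicting $\rrm(p_i) \leq \eps_{1,i} \to 0$. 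This establishes the $e = 3$ cone working condition for some $\eps_1 > 0$, and Proposition \ref{Prop:LpboundXXversion1} with $e = 3$ then delivers the desired bound. The principal obstacle is the chain of rigidity steps in the limit --- obtaining smoothness of $Z_\infty \setminus \{z_\infty\}$ via Lemma \ref{Lem:coneissmooth}, converting cone-plus-Einstein into flat-plus-constant-curvature-link, and leveraging orientability to exclude $\IR P^2$; the fact that $(C(\IR P^2) \times \IR^{n-3})$ is the only non-Euclidean obstruction explains why orientability appears as a hypothesis.
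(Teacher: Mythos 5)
Your overall strategy coincides with the paper's: reduce to the $e = 3$ case of Proposition \ref{Prop:LpboundXXversion1}, argue by contradiction and rescaling, apply Lemma \ref{Lem:coneissmooth} to make the $3$-dimensional cone factor of the limit smooth and flat away from its vertex, and invoke orientability to exclude $\IR^3/\IZ_2$. Two steps, however, do not go through as written.

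First, you rescale by $1/r_i'$ and then assert that the pointed Gromov--Hausdorff limit of $(X_i', p_i)$ is \emph{globally} of the form $Z_\infty \times \IR^{n-3}$ with $Z_\infty$ a metric cone. With this rescaling the cone comparison persists only at scale $1$; the unit ball of the limit agrees with the unit ball of a cone $\times\, \IR^{n-3}$, but outside that ball the limit is unconstrained, whereas Lemma \ref{Lem:coneissmooth} as stated requires the cone to be the full pointed limit. The paper instead rescales by a factor $\mu_i$ satisfying $\rrm(p_i) \ll \mu_i \ll r_i'$ (for instance $\mu_i = \sqrt{\rrm(p_i)\, r_i'}$), which simultaneously makes the comparison radius $r_i'/\mu_i \to \infty$ (so the whole pointed limit is a cone $\times\, \IR^{n-3}$), makes $\rrm(p_i)/\mu_i \to 0$, and makes $\lambda_i \mu_i^2 \to 0$; the last point means no cone identity is needed to get $\lambda_\infty = 0$, though your $\Ric(\partial_r, \partial_r) = 0$ observation is also valid.

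Second, you tacitly assume the link $L$ of $Z_\infty$ is connected, deduce $L \in \{S^2, \IR P^2\}$, use orientability to conclude $Z_\infty = \IR^3$, and then invoke Corollary \ref{Cor:GHepsregularity} via Gromov--Hausdorff closeness to $\IR^n$. But $Z_\infty \setminus \{z_\infty\}$ need not be connected: $Z_\infty$ can be several copies of $\IR^3$ (and/or $\IR^3/\IZ_2$) joined at their tips, with $L$ a disjoint union of round $2$-spheres and projective planes. In that case the limit is not $\IR^n$ and Corollary \ref{Cor:GHepsregularity} is not applicable. The paper's endgame avoids this: after orientability excludes the $\IR^3/\IZ_2$ factors, the unit ball of $Z_0 \times \IR^{n-3}$ has regular volume $\geq \omega_n$; the $C^3$-convergence from Lemma \ref{Lem:coneissmooth} transports this to $\liminf_{i} |B^{X_i}(p_i,1)\cap \RR_i| \geq \omega_n$, and the $Y$-regularity hypothesis (not $\eps$-regularity) then yields $\rrm(p_i) > Y^{-1}$, giving the contradiction. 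You should replace the Gromov--Hausdorff-closeness endgame with this volume-plus-$Y$-regularity argument so that the disconnected-link case is covered.
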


\begin{proof}
Without loss of generality, we may assume that $r = 1$.
In view of Proposition \ref{Prop:LpboundXXversion1}, we only need to verify that condition (\ref{eq:conecondition}) holds for $e = 3$ and for $\eps$ replaced by some $\eps' = \eps '( \eps, Y) > 0$.
So assume that this condition was false for some fixed $\eps, \mathbf{p}_0, Y$.
Then we can find a sequence $\XX_i$ of orientable singular spaces with mild singularities of codimension $\mathbf{p}_0$ that are $Y$-tame and $Y$-regular at scale $1$ and that satisfy $\Ric = \lambda_i g_i$ on their regular parts for some $|\lambda_i | \leq n-1$.
Moreover, the $\XX_i$ satisfy (\ref{eq:productcondition}) and there are points $p_i \in X_i$, scales $r_i > 0$ and metric cones $(Z_i, d_{Z_i})$ with vertex $z_i \in Z_i$ such that
\[ r_i^{-1} d_{GH} \big( \big( B^{X_i}(p_i, r_i) , p_i \big), \big( B^{Z_i \times \IR^{n-3}} (( z_i, 0^{n-3}, r_i)), (z_i, 0^{n-3}) \big) \big) \to 0, \]
but $r_i^{-1} \rrm (x_i) \to 0$.
After rescaling carefully, we may assume without loss of generality that $r_i \to \infty$, $\lambda_i \to 0$, $\rrm (x_i) \to 0$ and
\begin{equation} \label{eq:GHXYi}
  d_{GH} \big( \big( B^{X_i}(p_i, r_i) , p_i \big), \big( B^{Z_i \times \IR^{n-e-1}} (( z_i, 0^{n-3}, r_i)), (z_i, 0^{n-3}) \big) \big) \to 0. 
\end{equation}
After passing to a subsequence, we may further assume that $(X_i, d_{X_i}, x_i)$ Gromov-Hausdorff converge to some metric space $(Z_\infty, d_{Z_\infty}, z_\infty)$.
Due to (\ref{eq:GHXYi}), the pointed metric spaces $(Z_i \times \IR^{n-3}, d_{Z_i \times \IR^{n-3}}, (z_i, 0^{n-3}))$ also Gromov-Hausdorff converge to $(Z_\infty, d_{Z_\infty}, z_\infty)$.
Hence $(Z_\infty, d_{Z_\infty}, z_{\infty})$ is isometric to $(Z_0 \times \IR^{n-3}, \linebreak[1] d_{Z_0 \times \IR^{n-3}}, \linebreak[1] (z_0, 0^{n-3}))$, where $(Z_0, d_{Z_0}, z_0)$ is a metric cone with vertex $z_0$.

By Lemma \ref{Lem:coneissmooth}, we find that $Z_0 \setminus \{ z_0 \}$ is a $3$-dimensional differentiable manifold and $d_{Z_0}$ is locally isometric to the length metric of a $C^2$ Riemannian metric $g_0$ on $Z_0 \setminus \{ z_0 \}$.
This metric must be Ricci flat and hence, since $Z_0$ is $3$-dimensional, locally flat.
It follows that $Z_0$ is isometric to a union of copies of $\IR^3$ and/or $\IR^3 /  \IZ_2$ along their tips.
By the last part of Lemma \ref{Lem:coneissmooth} and our assumption that $\RR_i$ are orientable, we can exclude the cones $\IR^3 / \IZ_2$.

So $Z_0$ is isometric to the union of copies of $\IR^3$ along their tips and hence
\[ |B^{Z_0 \times \IR^{n-3}} ((y_0, 0^{n-3}), 1) \cap (( Z_0 \setminus \{z_0\} ) \times \IR^{n-3} ) | \geq \omega_n. \]
So, again by the last part of  Lemma \ref{Lem:coneissmooth}, we find that
\[ \liminf_{i \to \infty} |B^{X_i} (p_i, 1) \cap \RR_i | \geq \omega_n \]
So, by the $Y$-regularity of the $\XX_i$, we must have $\rrm(p_i) > Y^{-1}$ for large $i$, which contradicts our assumptions.
\end{proof}

\section{Codimension 4 and proof of Theorem \ref{Thm:Lpbound}} \label{subsec:Codim4}
In this section, we verify the condition (\ref{eq:productcondition}) in Proposition \ref{Prop:LpboundXXversion2} and prove Theorem \ref{Thm:Lpbound}.
We will largely follow the work of Cheeger and Naber (cf \cite{Cheeger-Naber-Codim4}) with a few simplifications.

We first prove the following lemma:

\begin{Lemma} \label{Lem:L1Laplacesplitting}
For every $\eps > 0$ and $Y < \infty$ there is a $\delta = \delta (\eps,  Y) > 0$ such that the following holds:

Let $\XX = (X, d, \RR, g)$ be a singular space with mild singularities of codimension $\mathbf{p}_0 > 2$.
Assume that $\XX$ is $Y$-tame at scale $\delta^{-1} r$ for some $r > 0$ and that $ |{\Ric} | \leq  (n-1) \delta r^{-2}$ on $\RR$.

Let $p \in X$ and let $u : B(p,2r) \to \IR^k$ be a $\delta$-splitting map for some $k \leq n$.
Define for all $1 \leq l \leq k$ the following $k$-form on $\RR$:
\[ \omega^l := d u^1 \wedge \ldots \wedge du^l. \]
Then
\[ r^2 \int_{B(p,r) \cap \RR} d |\mu_{\triangle |\omega^l |} | < \eps r^{n}. \]
\end{Lemma}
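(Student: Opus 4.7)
My plan is to bound both the positive and negative parts of the signed measure $\mu_{\triangle|\omega^l|}$ on $B(p,r)\cap\RR$ by $C\delta r^{n-2}$ for a constant $C=C(n,Y)$; choosing $\delta$ small in terms of $\eps$ and $Y$ then yields the desired bound. Note that by example (2) of Proposition \ref{Prop:weakLaplaciandistance} applied to the section $\omega^l$ of $\Lambda^l T^*\RR$ on the (incomplete) Riemannian manifold $(\RR,g)$, the function $|\omega^l|$ is locally Lipschitz and $\mu_{\triangle|\omega^l|}$ is a well-defined signed measure on $\RR$ whose negative part is absolutely continuous with respect to $dg$.

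The key pointwise estimates come from the harmonicity of the $u^i$ and from the gradient bound $|du^i|\le 1+\delta$ in Definition \ref{Def:epssplitting}. Since each $u^i$ is harmonic, commuting $d$ and $\triangle$ gives $\triangle_{\mathrm{conn}} du^i=\Ric(\nabla u^i,\cdot)^\sharp$, and applying the Leibniz rule twice to $\omega^l=du^1\wedge\cdots\wedge du^l$ yields
\[
  |\nabla\omega^l|\le C\sum_{i=1}^l |\nabla^2 u^i|,\qquad |\triangle_{\mathrm{conn}}\omega^l|\le C|{\Ric}|+C\sum_{i=1}^l|\nabla^2 u^i|^2,
\]
where the Ricci term comes from the $\triangle_{\mathrm{conn}} du^i$ contributions and the Hessian-squared sum comes from cross terms $g^{ab}(\nabla_a du^i)\wedge(\nabla_b du^j)$ with $i\ne j$. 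Combined with the $\delta$-splitting Hessian bound $\int_{B(p,2r)\cap\RR}|\nabla^2 u^i|^2\,dg\le C\delta^2 r^{n-2}$, the Ricci hypothesis $|{\Ric}|\le (n-1)\delta r^{-2}$, and the volume bound $|B(p,2r)\cap\RR|\le Y(2r)^n$ from tameness, this gives
\[
  \int_{B(p,2r)\cap\RR}|\triangle_{\mathrm{conn}}\omega^l|\,dg\le C\delta r^{n-2},\qquad \int_{B(p,2r)\cap\RR}|\nabla\omega^l|\,dg\le C\delta r^{n-1}.
\]

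For the negative part of the measure I use a Kato-type inequality: working with the smoothings $F_\beta:=\sqrt{|\omega^l|^2+\beta}$ of example (2) of Proposition \ref{Prop:weakLaplaciandistance}, the Bochner identity $\triangle|\omega^l|^2=2\langle\triangle_{\mathrm{conn}}\omega^l,\omega^l\rangle+2|\nabla\omega^l|^2$ and $2F_\beta\triangle F_\beta=\triangle F_\beta^2-2|\nabla F_\beta|^2$ combine with the Cauchy--Schwarz bound $|\nabla F_\beta|^2\le|\nabla\omega^l|^2$ to give, pointwise on $\RR$,
\[
  \triangle F_\beta\ge\frac{\langle\triangle_{\mathrm{conn}}\omega^l,\omega^l\rangle}{F_\beta}\ge-|\triangle_{\mathrm{conn}}\omega^l|.
\]
Passing $\beta\to 0$ and using that $F_\beta\to|\omega^l|$ uniformly, this yields $d(\mu_{\triangle|\omega^l|})_-\le|\triangle_{\mathrm{conn}}\omega^l|\,dg$ as measures, hence $(\mu_{\triangle|\omega^l|})_-(B(p,2r)\cap\RR)\le C\delta r^{n-2}$. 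For the positive part, I use property (4) of Definition \ref{Def:tameness} at scale $2r$ (valid since $2r<\delta^{-1}r$ for small $\delta$) to obtain cutoffs $\phi_i\in C^2_c(\RR)$ with $0\le\phi_i\le 1$, support in $B(p,2r)\cap U_i$, $\phi_i\equiv 1$ on $B(p,r)\cap U_i$, and $|\nabla\phi_i|\le Y(2r)^{-1}$. The defining identity of $\mu_{\triangle|\omega^l|}$ together with $|\nabla|\omega^l||\le|\nabla\omega^l|$ gives
\[
  \bigg|\int_\RR\phi_i\,d\mu_{\triangle|\omega^l|}\bigg|=\bigg|\int_\RR\nabla\phi_i\cdot\nabla|\omega^l|\,dg\bigg|\le\frac{Y}{2r}\int_{B(p,2r)\cap\RR}|\nabla\omega^l|\,dg\le C\delta r^{n-2}.
\]
Decomposing $\int\phi_i\,d\mu_{\triangle|\omega^l|}=\int\phi_i\,d(\mu_{\triangle|\omega^l|})_+-\int\phi_i\,d(\mu_{\triangle|\omega^l|})_-$ and letting $i\to\infty$ (so that $\phi_i\to 1$ pointwise on $B(p,r)\cap\RR$), the estimate on $(\mu_{\triangle|\omega^l|})_-$ yields $(\mu_{\triangle|\omega^l|})_+(B(p,r)\cap\RR)\le C\delta r^{n-2}$ as well. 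Summing the two parts and multiplying by $r^2$ gives $r^2\int_{B(p,r)\cap\RR}d|\mu_{\triangle|\omega^l|}|\le C\delta r^n$, which is $<\eps r^n$ for $\delta$ sufficiently small.

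The main obstacle is the algebraic bookkeeping in the Bochner computation for $\triangle_{\mathrm{conn}}\omega^l$: every term must either carry a factor of $\Ric$ (small by hypothesis) or be a product of two Hessians $\nabla^2 u^i\cdot\nabla^2 u^j$ (small in $L^1$ by Cauchy--Schwarz from the $L^2$-small Hessian bound of the $\delta$-splitting). It is crucial that no term linear in $\nabla^2 u^i$ survives, and this is exactly what the harmonicity of each $u^i$ ensures through $\triangle_{\mathrm{conn}} du^i=\Ric(\nabla u^i,\cdot)^\sharp$; without the harmonicity such linear terms would produce an $L^1$-error of order $\delta r^{n-1}$, which after multiplication by the cutoff gradient of size $r^{-1}$ would be too large.
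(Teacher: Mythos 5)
The overall strategy mirrors the paper's: pointwise Bochner bound on $|\triangle\omega^l|$ dominated by $|\Ric|$ and Hessian cross-terms, a Kato-type inequality $d\mu_{\triangle|\omega^l|}\ge -|\triangle\omega^l|\,dg$ to control the negative part, and integration by parts against a cutoff to bound the signed total. The pointwise estimates, the $L^1$/$L^2$ deductions, and the Kato step are all correct.

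However, there is a genuine gap in the cutoff argument. You claim that the functions $\phi_i$ from Definition~\ref{Def:tameness}(4), applied at scale $2r$, are in $C^2_c(\RR)$ and that the identity $\int_\RR\phi_i\,d\mu_{\triangle|\omega^l|}=-\int_\RR\nabla\phi_i\cdot\nabla|\omega^l|\,dg$ therefore applies. But these cutoffs are only defined on the open sets $U_i\subset B(p,4r)\cap\RR$, and the only vanishing condition imposed is (4b): $\phi_i$ vanishes on a neighborhood of $U_i\setminus B(p,2r)$. Nothing forces $\phi_i$ to decay toward $\partial U_i\cap B(p,2r)$; on the contrary, (4c) forces $\phi_i\equiv 1$ on $U_i\cap B(p,r)$. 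So the extension of $\phi_i$ by zero to $\RR$ is in general not even continuous near $\partial U_i\cap B(p,r)$, and the integration-by-parts formula from Proposition~\ref{Prop:weakLaplaciandistance} does not apply to it. The role of property (4) of tameness is precisely that the cutoffs have bounded gradient and Laplacian where defined, not that they can be extended across $\partial U_i$.

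The paper closes this hole by multiplying a spatial cutoff $\phi$ by an additional cutoff $\eta_i$ supported in $\{\rrm>s_i\}$ with $|\nabla\eta_i|\lesssim s_i^{-1}$; the product $\eta_i\phi$ is genuinely in $C^2_c(\RR)$. This introduces an extra error term of size $s_i^{-1}\,|\{\rrm<2s_i\}\cap B(p,2r)\cap\RR|^{1/2}\,\delta$, which tends to zero precisely because of the hypothesis $\mathbf{p}_0>2$ (via Definition~\ref{Def:codimensionsingularities}). The fact that your argument never invokes $\mathbf{p}_0>2$ is the telltale sign that this step is missing. To repair the proof you would either introduce the $\eta_i$ cutoff and invoke the codimension bound, or integrate over the nicer domains of Proposition~\ref{Prop:Uiphii} and explicitly control the resulting boundary integrals over $\partial_s U_i$ using assertion (e) there — again requiring $\mathbf{p}_0>2$.
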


\begin{proof}
Without loss of generality, we may assume that $r = 1$.
We will furthermore fix $l$ and set $\omega := \omega^l$.

First observe that
\[ \nabla \omega = \nabla d u^1 \wedge \ldots \wedge d u^l + \ldots + d u^1 \wedge \ldots \wedge \nabla d u^l. \]
So, for some generic constant $C < \infty$,
\[ |\nabla \omega | \leq C \big( |\nabla^2 u^1 | + \ldots + |\nabla^2 u^l | \big). \]
It follows that
\begin{equation} \label{eq:nabomegaL2}
 \int_{B(p,2) \cap \RR} |\nabla \omega|^2 dg \leq C \delta^2. 
\end{equation}
Next, we find that for some local orthonormal frame $\{ e_j \}_{i = 1, \ldots, n}$,
\begin{multline*}
 \triangle \omega = \sum_{j_1, j_2 = 1}^l \sum_{i=1}^n d u^1 \wedge \ldots \wedge \nabla_{e_i} du^{j_1} \wedge \ldots \wedge \nabla_{e_i} du^{j_2} \wedge \ldots \wedge du^l \\
  + \Ric ( du^1 ) \wedge \ldots \wedge du^l + \ldots + du^1 \wedge \ldots \wedge \Ric (d u^l). 
\end{multline*}
So
\[ | \triangle \omega | \leq C \big( |\nabla^2 u^1 |^2 + \ldots + |\nabla^2 u^l |^2 \big) + C \delta^2. \]
Therefore,
\begin{equation} \label{eq:nab2omegaL1}
 \int_{B(p,2) \cap \RR} |\triangle \omega | \leq C \delta^2. 
\end{equation}

Let now $\phi \in C^2 ( B(p,2) \cap \RR)$ be a non-negative function such that $|\nabla \phi | < 10$, $\phi \equiv 1$ on $B(p,1) \cap \RR$ and such that $\phi \equiv 0$ outside $B(p,1.5) \cap \RR$.
Fix moreover a sequence $s_i \to 0$.
Similarly as in the proof of Lemma \ref{Lem:trianglebpmsmall}, we can construct functions $\eta_i \in C^2_c ( B(p,2 ) \cap \RR )$ such that $\supp \eta_i \subset \{ \rrm > s_i \} \cap B(p,2) \subset U_i$, $\eta_i \equiv 1$ on $\{ \rrm > 2 s_i \}$ and $|\nabla \eta_i | < C s_i^{-1}$.
So $\eta_i \phi \in C^2_c (\RR)$ and $\eta_i \phi \to 1$ pointwise on $B(p, 1) \cap \RR$ as $i \to \infty$.
Then, by Proposition \ref{Prop:weakLaplaciandistance} and (\ref{eq:nabomegaL2}),
\begin{align*}
 \int_{B(p,2) \cap \RR} \eta_i \phi d \mu_{\triangle | \omega|} &= - \int_{\RR} \nabla (\eta_i \phi ) \nabla |\omega | \\
 &\leq \int_{\RR} \eta_i | \nabla \phi | | \nabla \omega |  + \int_{\{ s_i < \rrm < 8s_i \}} \phi  | \nabla \eta_i |  | \nabla \omega | \\
 &\leq C \bigg( \int_{B(p,2) \cap \RR} |\nabla \omega |^2 \bigg)^{1/2} \\
 &\; + C s_i^{-1} \big| \{ \rrm < 2 s_i \} \cap B(p, 2) \cap \RR \big|^{1/2} \bigg( \int_{B(p,2) \cap \RR} |\nabla \omega |^2 \bigg)^{1/2} \\
 &\leq C \delta + C s_i^{-1} \big| \{ \rrm < 2 s_i \} \cap B(p, 2) \cap \RR \big|^{1/2} \delta
\end{align*}
Since $\XX$ has singularities of codimension $\mathbf{p}_0 > 2$, we find
\[ \limsup_{i \to \infty} \int_{B(p,2) \cap \RR} \eta_i \phi d \mu_{\triangle | \omega|} \leq C \delta. \]

Next, we claim that
\begin{equation} \label{eq:dmutriangleomegaKato}
 d\mu_{\triangle |\omega |} \geq - |\triangle \omega | dg 
\end{equation}
To see this, let $\psi \in C^2_c (\RR)$ be some non-negative and compactly supported function and note that by Proposition \ref{Prop:weakLaplaciandistance}
\begin{multline} \label{eq:psidmuomega}
 \int_{\RR} \psi d\mu_{\triangle |\omega |} = \int_{\RR} (\triangle \psi) |\omega | dg 
 = \lim_{\alpha \to 0} \int_{\RR} (\triangle \psi ) \sqrt{ |\omega |^2 + \alpha } \, dg \\ = \lim_{\alpha \to 0} \int_{\RR} \psi \triangle \sqrt{ |\omega |^2 + \alpha } \, dg. 
\end{multline}
For any $\alpha > 0$ we obtain as in (\ref{eq:KatoHessian}) in the proof of Proposition \ref{Prop:weakLaplaciandistance}
\[ \triangle \sqrt{ | \omega |^2 + \alpha}   \geq - |\triangle \omega |. \]
Combining this with (\ref{eq:psidmuomega}) yields (\ref{eq:dmutriangleomegaKato}).

By (\ref{eq:nab2omegaL1}) we have
\[ \limsup_{i \to \infty} \int_{B(p,2) \cap \RR} \eta_i \phi \big( 2|\triangle \omega | dg + d\mu_{\triangle |\omega|} \big) \leq C \delta + C \delta^2. \]
By (\ref{eq:dmutriangleomegaKato}), the integrand of this integral is non-negative.
\begin{multline*} 
 \int_{B(p,1) \cap \RR} d |\mu_{\triangle |\omega|}| \le \int_{B(p,1) \cap \RR} \big( 2 d (\mu_{\triangle |\omega|} )_- +  d \mu_{\triangle |\omega|}  \big) \\
 \leq \int_{B(p,1) \cap \RR} \big( 2|\triangle \omega | dg + d\mu_{\triangle |\omega|} \big) \leq C \delta + C \delta^2. 
\end{multline*}
This proves the desired result.
\end{proof}

We now define the singular scale $s^\delta_x$ as in \cite{Cheeger-Naber-Codim4}.

\begin{Definition}[cf {\cite[Definition 1.30]{Cheeger-Naber-Codim4}}] \label{Def:singscale}
Let $\XX$ be a singular space, $p \in X$ and $r > 0$.
Consider a continuous, vector-valued function $u = (u^1, \ldots, u^k) : B(p,2) \to \IR^k$ such that $u |_{B(p,2) \cap \RR}$ is $C^3$ and define for all $1 \leq l \leq k$ the following $k$-form on $\RR$:
\[ \omega^l := d u^1 \wedge \ldots \wedge du^l. \]
For any $x \in B(p,1)$ and $\delta > 0$ we define the \emph{singular scale $s^\delta_x \geq 0$} as the infimum of all radii $0 < s \leq 1/2$ such that for all $r$ with $s \leq r < 1/2$ and $1 \leq l \leq k$ we have
\begin{equation} \label{eq:singscaleequation}
 r^2 \int_{B(x,r) \cap \RR} d| \mu_{\triangle |\omega^l|} | \leq \delta \int_{B(x,r) \cap \RR} |\omega^l|. 
\end{equation}
\end{Definition}

With this definition in hand, we can prove the Transformation Theorem.

\begin{Proposition}[Transformation Theorem, cf {\cite[Theorem 1.32]{Cheeger-Naber-Codim4}}] \label{Prop:TransformationThm}
For every $\eps > 0$, $\mathbf{p}_0 > 3$ and $Y < \infty$ there is a $\delta = \delta (\eps, \mathbf{p}_0, Y) > 0$ such that the following holds:

Let $\XX = (X, d, \RR, g)$ be a singular space with mild singularities of codimension $\mathbf{p}_0$.
Assume that $\XX$ is $Y$-tame and $Y$-regular at scale $\delta^{-1}$.
Assume moreover that $\Ric = \lambda g$ on $\RR$ for some $|\lambda | \leq (n-1) \delta^2 $.

Let $p \in X$ and let $u : B(p,2) \to \IR^k$ be a $\delta$-splitting map for some $k \leq n$.
Then for any $x \in B(p,1)$ and $s^\delta_x \leq r < 1/2$ there is a lower triangular matrix $A = A(x,r) \in \IR^{k \times k}$ with positive diagonal entries such that $Au|_{B(x,r)}$ is an $\eps$-splitting.
\end{Proposition}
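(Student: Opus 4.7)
The proof is by contradiction and compactness, following the strategy of Cheeger–Naber \cite{Cheeger-Naber-Codim4}. Suppose the conclusion fails: fix $\eps, \mathbf{p}_0, Y$ for which the statement is false, choose $\delta_i \to 0$, singular spaces $\XX_i$ satisfying all hypotheses, $\delta_i$-splitting maps $u_i \colon B(p_i,2) \to \IR^k$, points $x_i \in B(p_i,1)$ and radii $r_i \geq s^{\delta_i}_{x_i}$ for which no admissible lower triangular matrix transforms $u_i|_{B(x_i,r_i)}$ into an $\eps$-splitting. After rescaling by $r_i^{-1}$ (which improves the Ricci bound and the singular-scale condition, and preserves the $Y$-tameness/regularity hypothesis at the relevant scales), we may assume $r_i = 1$.

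The engine of the proof is a one-scale lemma: there exist $\tau = \tau(n) \in (0,1)$ (e.g.\ $\tau = 1/4$) and $\eta_0 = \eta_0(\eps, \mathbf{p}_0, Y) > 0$ such that if $v \colon B(x, \rho) \to \IR^k$ is an $\eta$-splitting with $0 < \eta \leq \eta_0$ and $\rho \geq \tau^{-1} s^{\delta}_x$, then there is a lower triangular matrix $B$ with positive diagonal entries in $[1 - C\eta^{1/2}, 1 + C\eta^{1/2}]$ such that $Bv$ is an $\eta$-splitting at scale $\tau \rho$. The matrix $B$ is produced by Cholesky decomposition of the positive definite matrix $M_{ij} := |B(x, \tau\rho) \cap \RR|^{-1} \int_{B(x,\tau\rho) \cap \RR} \langle \nabla v^i, \nabla v^j \rangle\, dg$, namely $M = LL^T$ and $B := L^{-1}$. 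At the previous scale, $M \approx I$, so $B$ is well-defined and close to $I$; the four conditions of Definition \ref{Def:epssplitting} for $Bv$ at scale $\tau\rho$ then follow from: Lemma \ref{Lem:Hessianbound} for the $L^2$-Hessian bound (item (4)), Lemma~\ref{Lem:gradienboundalldirections} for the pointwise gradient bound (item (2)), the defining property (3) via the choice of $B$, and Corollary \ref{Cor:harmonic} for item~(1).

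Starting from scale $\rho_0 = 1/2$ (where $u_i$ is itself a $\delta_i$-splitting, so the hypothesis of the one-scale lemma holds for large $i$), iterate the one-scale lemma along the geometric sequence $\rho_j := \tau^j \rho_0$ while $\rho_j \geq 1 = r_i$. After $N_i \leq \log_{1/\tau}(1/r_i)$ iterations, the composition $A_i := B_{N_i} \cdots B_1$ is still lower triangular with positive diagonal, and $A_i u_i|_{B(x_i, 1)}$ is an $\eps$-splitting, contradicting the original assumption.

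The principal obstacle is ensuring that the condition number of $A_i$ does not blow up as $N_i \to \infty$. The diagonal entries of $B_j$ are products of square roots of ratios of the averages $|B(x_i,\rho_j)|^{-1}\int |\omega^l|^2$ and $|B(x_i,\rho_{j+1})|^{-1}\int|\omega^l|^2$ (and of the individual Cholesky factors). Without any control, these could decay or grow with $j$. The crucial input is the singular-scale hypothesis $\rho_j \geq s_{x_i}^{\delta_i}$: together with Lemma \ref{Lem:L1Laplacesplitting} (guaranteeing that $\triangle|\omega^l|$ is small in $L^1$ for a splitting map) and a weak Harnack / Moser-iteration-type estimate valid on $Y$-tame singular spaces (via the heat-kernel bound in the tameness axiom (5)), one obtains that for some universal $\lambda > 0$, the averages of $|\omega^l|$ over $B(x_i, \rho_j)$ change by at most a factor $(1 + C\delta_i)$ between consecutive scales. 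Summing the geometric series ensures that $\prod_j \| B_j \|$ stays bounded by a constant depending only on $\eps, \mathbf{p}_0, Y$, and the sharpness of Lemma \ref{Lem:L1Laplacesplitting} guarantees that the $\eta$-splitting condition is preserved (rather than degrading) at each iteration. This last quantitative propagation is the heart of the argument and the main technical point where the singular structure must be handled with care.
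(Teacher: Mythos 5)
Your proposal takes a genuinely different route from the paper, and as written it has a real gap at its core. The paper's proof is a contradiction-and-blowup argument with \emph{induction on $k$}: one picks the maximal bad scale $r_j$, rescales so $r_j = 1$, and then establishes (Claim~1) that the normalizing matrices $A_{r,j}$ grow at most like $r^{C\eps}$ as $r \to \infty$. This polynomial (but unbounded) growth is compatible with the theorem only because the argument proceeds by compactness: the inductive hypothesis for the first $k-1$ components of $v_j$ produces $\eps_j(r)$-splittings with $\eps_j(r) \to 0$ (Claim~2), and the heat-kernel estimates from the tameness axioms are used to show (Claim~3) that $\langle \nabla v^l_j, \nabla v^k_j \rangle$ and $|\omega^l_j|$ become almost constant on a fixed ball as $j \to \infty$. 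Only then is the limiting degeneracy ruled out by a Gram--Schmidt-type normalization and Lemma~\ref{Lem:stronggradientestimate}.

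Your central step is a claimed ``one-scale lemma'': if $v$ is an $\eta$-splitting at scale $\rho \geq \tau^{-1} s^\delta_x$, then a Cholesky factor $B$ close to $I$ makes $Bv$ an $\eta$-splitting (with the \emph{same} $\eta$) at scale $\tau\rho$. This is essentially a no-loss restatement of the Transformation Theorem at a single ratio of scales, and it is not established by the ingredients you list. The Gram matrix at scale $\tau\rho$ being close to $I$ is exactly what must be proved, not an input; Lemma~\ref{Lem:Hessianbound} converts deviation of $|\nabla u|^2$ from a constant into a Hessian bound, but controlling that deviation at the smaller scale without loss is the entire point of the theorem. Moreover, the paper's Claim~1 shows the best one can obtain directly is $r^{C\eps}$ growth of the matrices across scales, which is unbounded as the scale ratio diverges --- so the naive geometric iteration you propose (summing degradations of size $(1+C\delta_i)$) would not close without the compactness structure. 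Finally, you make no use of induction on $k$, which in the paper is indispensable: the almost-constancy of $\langle \nabla v^l_j, \nabla v^k_j\rangle$ in Claim~3 relies on the first $k-1$ components already being an $\eps_j \to 0$ splitting. The ``weak Harnack / Moser iteration'' heuristic you invoke is also not something the paper proves or needs; the actual mechanism is the Gaussian heat-kernel propagation combined with the cutoff scheme of Proposition~\ref{Prop:Uiphii}. In short: your contradiction setup is vestigial (an iterative one-scale lemma, if true, would prove the statement directly), and the one-scale lemma itself is unproven and likely unprovable by direct estimates alone.
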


\begin{proof}
We argue as in \cite[subsec 3.2]{Cheeger-Naber-Codim4}.
First observe that we may assume without loss of generality that $\eps < \eps_*$ for some uniform constant $\eps_* > 0$, which we will determine in the course of the proof.
Assume by induction on $k$ that the proposition holds for $k$ replaced by $1, \ldots, k-1$ (If $k = 1$, then we make no assumption).
We will show in the following by contradiction that then the proposition also holds for $k$.
So fix $\eps > 0$, $\mathbf{p}_0 > 3$, $Y < \infty$ and assume that no $\delta$ exists for which the Proposition holds for $k$.
Then we can find a sequence $\delta_j \to 0$ and a sequence $\XX_j$ of singular spaces with mild singularities of codimension $\mathbf{p}_0$ that are $Y$-tame and $Y$-regular at scale $\delta_j^{-1}$ and that satisfy $\Ric = \lambda_j g_j$ on $\RR_j$ for some $|\lambda_j | \leq (n-1) \delta_j^2$.
Moreover, we can find points $p_j \in X_j$, $\delta_j$-splitting maps $u_j : B(p_j, 2) \to \IR^k$ for $\delta_j \to 0$, points $x_j \in B(p_j, 1)$ and $r_j \in [s^{\delta_j}_{x_j}, 1/2]$ such that there is now lower triangular matrix $A$ with positive diagonal entries such that $A u_j |_{B(x_j,r_j)}$ is an $\eps$-splitting.
By Lemma \ref{Lem:L1Laplacesplitting} we have $s^{\delta_j}_{x_j}, r_j \to 0$, since otherwise $u_j |_{B(x_j, r_j)}$ would be a $\delta'$-splitting for infinitely many $j$ and an arbitrary small $\delta' > 0$ and hence (\ref{eq:singscaleequation}) would hold for infinitely many $j$.
Without loss of generality, we may furthermore assume that each $r_j$ is chosen maximal in the following sense: for any $j$ and any $r \in [2 r_j, 1/2]$ the proposition holds, that is, there is a lower triangular matrix $A \in \IR^{k \times k}$ with positive diagonal entries such that $A u_j |_{B(x_j, r)}$ is an $\eps$-splitting.
Let us now use this assumption for $r \leftarrow 4 r_j$ and choose such matrices $A_j \in \IR^{k \times k}$ such that $A_j u_j |_{B(x_j, 4 r_j)}$ are $\eps$-splittings.

Next, we rescale each singular Ricci flat space $\XX_j$ by $r_j^{-1}$ and denote the result by $\XX'_j := r_j^{-1} \XX_j$.
From now on we will almost exclusively consider the spaces $\XX'_j$ and any geometric object or quantity will be understood with respect to this sequence of rescaled spaces.
Moreover, we consider the rescaled functions
\[ v_j := r_j^{-1} A_j (u_j - u_j(x_j) ), \]
which are harmonic on the regular parts $\RR'_j$ of $\XX'_j$.
So $v_j |_{B(x_j, 1)}$ is not an $\eps$-splitting, but $v_j |_{B(x_j, 4)}$ is.
Moreover, for any $r \in [4, r_j^{-1}/2]$ there is a lower triangular matrix $A_{r,j} \in \IR^{k \times k}$ with positive diagonal entries such that $A_{r,j} v_j |_{B(x_j, r)}$ is an $\eps$-splitting.
We may also assume that $A_{4, j} = I_k =: I$ is the identity matrix.

For the rest of the proof let $C < \infty$ be some generic constant that only depends on $Y, \mathbf{p}_0$ and $n$ and we set for all $1 \leq l \leq k$
\[ \omega^l_j := d v_j^1 \wedge \ldots \wedge d v_j^l \in \Omega_l (\RR'_j). \]

\begin{Claim1}
For each $j$ and $r \in [4, r_j^{-1}/2]$ we have
\[ | A_{4r, j} A_{r,j}^{-1} - I | < C \eps. \]
\end{Claim1}

\begin{proof}
The proof is the same as in \cite[sec 3.2, proof of Claim 1]{Cheeger-Naber-Codim4}.
Note that we have expressed the identity above in a slightly different but equivalent way.
The fact that the $\XX'_j$ are singular does not create any issues since no arguments are used in the proof that require the regular parts $\RR'_j$ to be complete.
\end{proof}

As in \cite{Cheeger-Naber-Codim4}, we conclude from Claim 1 that for all $j$ and $r \in [4, r_j^{-1}/2]$
\begin{subequations}
\begin{align}
|A_{r,j}|, |A^{-1}_{r,j} | &\leq r^{C \eps} \label{eq:boundsforvj1} \displaybreak[1] \\
\sup_{B(x_j, r) \cap \RR'_j} |\nabla v^l_j | &\leq (1+ C\eps) r^{C \eps} \label{eq:boundsforvj2} \displaybreak[1] \\
\sup_{B(x_j, r) \cap \RR'_j} |\omega^l_j| &\leq (1+ C\eps) r^{C \eps} \label{eq:boundsforvj3} \displaybreak[1] \\
r^{2-n} \int_{B(x_j, r) \cap \RR'_j} |\nabla^2 v^l_j|^2 &\leq C \eps r^{C \eps}. \label{eq:boundsforvj4}
\end{align}
\end{subequations}

\begin{Claim2}
For every $r \geq 4$ there are lower triangular matrices $A'_{r,j} \in \IR^{k \times k}$ with positive diagonal entries such that $|A'_{r,j} - I |< C \eps$ for large $j$, such that the maps $A'_{r,j} v_j |_{B(x_j, r)} : B(x_j, r) \to \IR^k$ are $C \eps$-splittings and such that the restricted maps
\[ \proj_{\IR^{k-1} \times 0} (A'_j v_j) |_{B(x_j,r)} : B(x_j, r) \longrightarrow \IR^{k-1} \]
are $\eps_j (r)$-splittings, where $\eps_j (r) \to 0$ for $j \to \infty$ and fixed $r$.
Moreover, for each $r \geq 4$ and $l = 1, \ldots, k-1$,
\begin{equation} \label{eq:nab2vlto0}
 \int_{B(x_j, r) \cap \RR'_j} |\nabla^2 v^l_j |^2 \to 0.
\end{equation}
\end{Claim2}

\begin{proof}
This fact, which is almost the same as \cite[sec 3.2, proof of Claim 2]{Cheeger-Naber-Codim4}, follows similarly as in \cite{Cheeger-Naber-Codim4}.
Fix $r \geq 4$.
By the inductive assumption, we can find lower triangular $(k-1) \times (k-1)$ matrices $\td{A}_{j,r} \in \IR^{(k-1) \times (k-1)}$ such that $\td{A}_{j,r} \proj_{\IR^{k-1} \times 0} (v_j) |_{B(x_j, r)}$ are $\td\eps_j(r)$-splittings for $\td\eps_j (r) \to 0$.
In particular, $\td{A}_{j,r} \proj_{\IR^{k-1} \times 0} (v_j) |_{B(x_j, 4)}$ are $\td\eps'_j(r)$-splittings for $\td\eps'_j (r) \to 0$.
Since the maps $\proj_{\IR^{k-1} \times 0} (v_j) |_{B(x_j, 4)}$ are $\eps$-splittings, we conclude, using similar arguments as in the proof of Claim 1, that $| \td{A}_{j,r} - I_{k-1} | < C \eps$ for large $j$, where $I_{k-1}$ denotes the identity $(k-1) \times (k-1)$-matrix.
So if we set $A'_{j,r} = \td{A}_{j,r} \oplus \id_{0^{k-1} \oplus \IR}$, then the first part of the claim follows.

Lastly, identity (\ref{eq:nab2vlto0}) holds since for each $l = 1, \ldots, k-1$
\[ \int_{B(x_j, r) \cap \RR'_j} \big| \nabla^2 \big( \td{A}_{j,r} v_j \big)^l \big|^2 \to 0 \]
and the inverse matrices $|\td{A}_{j,r}^{-1} |$ are uniformly bounded for small enough $\eps_*$.
\end{proof}

We now take a slightly different route from \cite{Cheeger-Naber-Codim4} in order to avoid having to use the theory of singular spaces with lower Ricci curvature bounds.
First, we note that due to Claim 2, we may replace $v_j$ by $A'_{4, j} v_j$.
Then we still have (\ref{eq:boundsforvj1})--(\ref{eq:boundsforvj4}) and (\ref{eq:nab2vlto0}).
Moreover, we get that $(v^1_j, \ldots, v^{k-1}_j) |_{B(x_j,4)}$ is an $\eps_j$-splitting for some sequence $\eps_j \to 0$.
Finally, we mention that due to Definition \ref{Def:singscale} and (\ref{eq:boundsforvj3}) we have
\begin{equation} \label{eq:triangleomegato0}
\int_{B(x_j, r) \cap \RR'_j} d |\mu_{\triangle |\omega^l|}| \to 0 \textQQqq{for all} r \in [4, r_j^{-1}/2], \qquad l = 1, \ldots, k.
\end{equation}

We will now prove our main claim.

\begin{Claim3}
For each $l = 1, \ldots, k-1$ we have
\begin{equation} \label{eq:navlnabvkto0}
 \int_{B(x_j, 4) \cap \RR'_j} \bigg| \langle \nabla v^l_j, \nabla v^k_j \rangle - \fint_{B(x_j, 4) \cap \RR'_j} \langle \nabla v^l_j, \nabla v^k_j \rangle \bigg| \to 0 
\end{equation}
and
\begin{equation} \label{eq:omegato0}
 \int_{B(x_j, 4) \cap \RR'_j} \bigg| |\omega^l_j| - \fint_{B(x_j, 4) \cap \RR'_j} |\omega^l_j| \bigg| \to 0. 
\end{equation}
as $j \to \infty$.
\end{Claim3}

\begin{proof}
The identities (\ref{eq:navlnabvkto0}) and (\ref{eq:omegato0}) follow using the same technique.
In order to present the proof only once, we set
\begin{subequations}
\begin{equation} \label{eq:fjcase1}
 f_j := \langle \nabla v^l_j, \nabla v^k_j \rangle,
\end{equation}
or
\begin{equation} \label{eq:fjcase2}
 f_j := |\omega^l_j|,
\end{equation}
\end{subequations}
depending on whether we want to establish (\ref{eq:navlnabvkto0}) or (\ref{eq:omegato0}).
Our goal will then be to show that in both cases we have
\begin{equation} \label{eq:Claim3fj}
 \int_{B(x_j, 4) \cap \RR'_j} \bigg| f_j - \fint_{B(x_j, 4) \cap \RR'_j} f_j \bigg| \to 0. 
\end{equation}

Let us first collect the identities for $f_j$ from which we will deduce (\ref{eq:Claim3fj}).
We claim that there is a uniform constant $C < \infty$ such that for any $S > 1$ we have for sufficiently large $j$ (depending on $S$)
\begin{subequations}
\begin{alignat}{2} 
 | f_j | &< C\big( 1+d(x_j, \cdot) \big)^{C\eps} && \textQQqq{on} B(x_j, S) \cap \RR'_j, \label{eq:fjid1} \displaybreak[1] \\
 |\nabla f_j| &< C \big( 1+\rrm^{-1}(\cdot) \big) \big( 1+d(x_j, \cdot) \big)^{C\eps} &&\textQQqq{on} B(x_j, S) \cap \RR'_j, \label{eq:fjid2}
\end{alignat}
\begin{equation} \label{eq:fjid3}
S^{-n+1} \int_{B(x_j, S) \cap \RR'_j} |\nabla f_j| \leq C S^{C\eps}.
\end{equation}
We also claim that for any $S > 1$
\begin{equation} \label{eq:fjid4}
\int_{B(x_j, S) \cap \RR'_j} d | \mu_{\triangle f_j} | \to 0 
\end{equation}
\end{subequations}
as $j \to 0$.
In fact, identity (\ref{eq:fjid1}) is a direct consequence of (\ref{eq:boundsforvj2}) and (\ref{eq:boundsforvj3}).
Identity (\ref{eq:fjid2}) follows similarly using (\ref{eq:fjid1}), the fact that $\triangle \nabla v_j^l = 0$ on $\RR'_j$ and local elliptic regularity.
In order to show (\ref{eq:fjid3}), note that by (\ref{eq:boundsforvj2}) we have in both cases, (\ref{eq:fjcase1}) and (\ref{eq:fjcase2}), that
\[ |\nabla f_j| \leq C S^{C\eps} \sum_{i=1}^k |\nabla^2 v_j^i | \textQQqq{on} B(x_j, S) \cap \RR'_j. \]
So, using (\ref{eq:boundsforvj4}), we have
\[ S^2 \fint_{B(x_j, S) \cap \RR'_j} |\nabla f_j|^2 \leq C S^{2C\eps}. \]
So (\ref{eq:fjid3}) follows using H\"older's inequality.
Finally, we show (\ref{eq:fjid4}).
In the case (\ref{eq:fjcase2}), this identity is the same as (\ref{eq:triangleomegato0}).
In the case (\ref{eq:fjcase1}), we first use Bochner's identity and (\ref{eq:boundsforvj2}) to derive
\[ | \triangle f_j | = | 2\langle \nabla^2 v_j^l, \nabla^2 v_j^k \rangle + 2 \Ric ( \nabla v^l_j, \nabla v^k_j ) | \leq 2 |\nabla^2 v^l_j | \cdot |\nabla^2 v^k_j | + 2 \delta_j^2 S^{2C\eps}. \]
So, by H\"older's inequality,
\[ \int_{B(x_j, S) \cap \RR'_j} |\triangle f_j|  \leq 2 \bigg( \int_{B(x_j, S) \cap \RR'_j} | \nabla^2 v_j^l|^2 \bigg)^{1/2} \bigg( \int_{B(x_j, S) \cap \RR'_j} | \nabla^2 v_j^k|^2 \bigg)^{1/2} + C \delta_j^2 S^{2C\eps} .  \]
By (\ref{eq:boundsforvj4}), the second factor of the first term stays bounded as $j \to 0$.
The first factor of the first term goes to $0$, by Claim 2.
The second term goes to $0$ by assumption, proving (\ref{eq:fjid4}).

From now on we will only work with the identities (\ref{eq:fjid1})--(\ref{eq:fjid4}).
Equations (\ref{eq:fjcase1}), (\ref{eq:fjcase2}) will not be used anymore.

Let $S > 8$ be a large constant whose value we will determine later.
Fix some $j$ for the moment, and choose $y \in B(x_j, 4) \cap \RR'_j$.
We now apply Proposition \ref{Prop:Uiphii} to $B(x_j, S)$ and obtain numbers $s_i \to 0$, subsets $U_i \subset \RR'_j$ and cutoff functions $\phi_i = \phi_{S,i} : U_i \to [0,1]$.
Next, we choose a smoothing $h \in C^2 (\RR'_j)$ of $\rrm$ such that
\[ \tfrac12 \rrm < h < 2 \rrm \textQQqq{and} |\nabla h | < 2, \]
fix a smooth function $H : [0, \infty) \to [0,1]$ such that $H \equiv 0$ on $[0,2]$ and $H \equiv 1$ on $[4, \infty)$ and set
\[ \eta_i (x) := H ( s_i^{-1} h(x)). \]
Then $\supp \eta_i \phi_i \subset \{ \rrm > s_i \} \cap B(x_j, S)$ and $\eta_i \equiv 1$ on $\{ \rrm > 8 s_i \}$ and $|\nabla \eta_i | < C s_i^{-1}$.
It follows that $\eta_i \phi_i$ is a $C^2$ function with compact support in $\RR'_j$ and that $\eta_i \phi_i \to 1$ pointwise on $B(x_j, \tfrac12 S)$ as $i \to \infty$.

In the following, we denote by $C_* < \infty$ a generic constant that may depend on $\eps, Y, \XX'_j$, $S$, an upper bound on $f_j$ and $\rrm(y)$, but not on $i$.
Let $K$ be the heat kernel from item (5) of the $Y$-tameness assumptions (cf. Definition \ref{Def:tameness}).
Note that due to the Gaussian bounds from these tameness properties and Proposition \ref{Prop:ChengYau-heatequation}, we can assume that $K (y, \cdot, t) < C_*$ and $|\nabla K (y, \cdot, t) | < C_*$ on $\{ 0 < \rrm < 8 s_i \} \cap B(x_j,  S)$ for all $t \in (0, S^2]$ and for large enough $i$.
We can then compute, using (\ref{eq:fjid1})--(\ref{eq:fjid3}), that for any $t \in (0, S^2]$ and large $i$
\begin{align*}
 \bigg| \frac{d}{dt} \int_{\RR'_j} & K(y, \cdot, t) f_j \eta_i \phi_i \bigg| = \bigg| \int_{\RR'_j} \triangle K(y, \cdot, t) f_j \eta_i \phi_i \bigg|  \displaybreak[1] \\
 &\leq \bigg| \int_{\RR'_j} \triangle \big( K(y, \cdot, t) \eta_i \phi_i \big) f_j \bigg| 
 + \bigg| \int_{\RR'_j} \Big( 2 \nabla K(y, \cdot, t) \nabla \eta_i \phi_i f_j \\
 &\qquad\qquad + 2 \nabla K(y, \cdot, t) \eta_i \nabla \phi_i f_j + 2 K(y, \cdot, t) \nabla \eta_i \nabla \phi_i f_j  \\
 &\qquad\qquad + K(y, \cdot, t) (\triangle \eta_i) \phi_i f_j + K(y, \cdot, t) \eta_i (\triangle \phi_i ) f_j \Big) \bigg|  \displaybreak[1] \\
 &\leq \bigg| \int_{\RR'_j}  K(y, \cdot, t) \eta_i \phi_i \, d\mu_{\triangle f_j} \bigg| 
 + \bigg| \int_{\RR'_j} \Big( 2 \nabla K(y, \cdot, t) \nabla \eta_i \phi_i f_j \\
 &\qquad\qquad - 2  K(y, \cdot, t) \nabla \eta_i \nabla \phi_i f_j - 2  K(y, \cdot, t) \eta_i \triangle \phi_i f_j \\
 &\qquad\qquad - 2  K(y, \cdot, t) \eta_i \nabla \phi_i \nabla f_j + 2 K(y, \cdot, t) \nabla \eta_i \nabla \phi_i f_j  \\
 &\qquad\qquad - \nabla K(y, \cdot, t) \nabla \eta_i \phi_i f_j -  K(y, \cdot, t) \nabla \eta_i \nabla \phi_i f_j \\
 &\qquad\qquad - K(y, \cdot, t) \nabla \eta_i \phi_i \nabla f_j + K(y, \cdot, t) \eta_i (\triangle \phi_i ) f_j \Big) \bigg| \displaybreak[1] \\ 
 &\leq  \bigg| \int_{\RR'_j} \Big(  \nabla K(y, \cdot, t) \nabla \eta_i \phi_i f_j   - K(y, \cdot, t) \nabla \eta_i \phi_i \nabla f_j  \\
 &\qquad\qquad -  K(y, \cdot, t) \nabla \eta_i \nabla \phi_i f_j \Big) \bigg|  \\
 &\qquad + \bigg| \int_{\RR'_j}  K(y, \cdot, t) \eta_i \phi_i \, d\mu_{\triangle f_j} \bigg| \\
 &\qquad + \bigg| \int_{\RR'_j}   K(y, \cdot, t) \eta_i \Big(2 \nabla \phi_i \nabla f_j + \triangle \phi_i f_j \Big) \bigg| \displaybreak[1] \\
 &\leq C_*  |\{ \rrm < 8 s_i \} \cap \RR'_j| \cdot \big( s_i^{-2} + s_i^{-1} S^{-1} \big) \\
 &\qquad   + C t^{-n/2} \int_{B(x_j, S) \cap \RR'_j} d |\mu_{\triangle f_j} | \\
 &\qquad + C t^{-n/2} \exp \bigg({ - \frac{S^2}{16Yt} }\bigg)  \bigg( S^{-1} \int_{A(x_j, S/2, S) \cap \RR'_j} |\nabla f_j| \\
 &\qquad\qquad\qquad\qquad\qquad\qquad\qquad\qquad +  S^{-2}\int_{A(x_j, S/2, S) \cap \RR'_j} |f_j| \bigg) \displaybreak[1] \\
 &\leq C_* s_i + C t^{-n/2} \int_{B(x_j, S) \cap \RR'_j} d |\mu_{\triangle f_j} | \\
 &\qquad + C t^{-n/2} \exp \bigg({- \frac{S^2}{16Yt} } \bigg) \big( S^{-1} S^{n + C \eps -1} + S^{-2} S^{n+C\eps} \big) \displaybreak[1] \\
 &\leq C_* s_i + C t^{-n/2} \int_{B(x_j, S) \cap \RR'_j} d |\mu_{\triangle f_j} |  + C t^{-n/2} S^{n+ C \eps -2} \exp \bigg( {- \frac{S^2}{16Yt} } \bigg) .
\end{align*}
Using (\ref{eq:fjid4}), we hence obtain the following statement for sufficiently small $\eps_*$:
For every $\alpha, \tau > 0$ and $T < \infty$ there are $J = J(\alpha, \tau, T), S_0 = S_0(\alpha, \tau, T) < \infty$ such that if $j > J$, $S > S_0$, then for any $y \in B(x_j, 4) \cap \RR'_j$ and for sufficiently large $i$ (depending on $\alpha, T, \tau, \rrm(y)$ etc.)
\[ \bigg| \frac{d}{dt} \int_{\RR'_j}  K(y, \cdot, t) f_j \eta_i \phi_{S,i} \bigg| < \alpha \textQQqq{for all} t \in [\tau, T]. \]
Integrating this inequality over $t$ from $\tau$ to $T$ yields the following statement:
For every $\alpha, \tau > 0$ and $T < \infty$ there are $J = J(\alpha, \tau, T), S_0 = S_0(\alpha, \tau, T) < \infty$ such that if $j > J$, $S > S_0$, then for any $y \in B(x_j, 4) \cap \RR'_j$ and for sufficiently large $i$ (depending on $y$)
\[ \bigg| \int_{\RR'_j} K(y,\cdot, \tau) f_j \eta_i \phi_{S,i} - \int_{\RR'_j} K(y, \cdot, T) f_j \eta_i \phi_{S,i} \bigg| < \alpha. \]
Next, observe that due to (\ref{eq:fjid2}) we have for any $z \in B(y, \min \{ \rrm(y)/2, 1 \}) \cap \RR'_j$
\[ |f_j(z) - f_j(y)| \leq C \rrm^{-1} (y) d(z,y). \]
So for any $0 < \rho < \rrm(y)/2$ and large $i$ we have
\begin{align*}
 \bigg| f_j(y) &- \int_{B(x_j, S) \cap \RR'_j}  K(y,\cdot, \tau) f_j \eta_i \phi_{S,i} \bigg| \\
 &=  \bigg| f_j(y) \int_{\RR'_j} K(y, \cdot, \tau) - \int_{B(x_j, S) \cap \RR'_j}  K(y,\cdot, \tau) f_j \eta_i \phi_{S,i} \bigg| \displaybreak[1] \\
 &\leq  \int_{B(x_j, \rho)} | f_j (y) - f_j (z) | K(y, z, \tau) dg(z) \\
 &\qquad+ \int_{(B(x_j, S) \cap \RR'_j) \setminus B(x_j, \rho)} |f_j (z) |  K(y, z, \tau) dg(z) \\
 &\qquad + |f_j(y) | \int_{\RR'_j \setminus B(x_j, \rho)} K(y, \cdot, \tau) \displaybreak[1] \\
 &\leq C  \rrm^{-1}(y) \rho +\int_{(B(x_j, S) \cap \RR'_j) \setminus B(x_j, \rho)} C \big( 1 + d(x_j, z) \big)^{C \eps} \\
 &\qquad\qquad\qquad\qquad \cdot \frac{Y}{\tau^{n/2}} \exp \Big( {- \frac{d^2(x_j, z)}{Y\tau}} \Big) dg(z) + C  \tau^{-n/2} \exp \Big({ - \frac{\rho^2}{2 Y \tau} } \Big) \displaybreak[1] \\
 &\leq C  \rrm^{-1}(y) \rho + C \tau^{-n/2} \int_{\rho}^\infty C(1+r)^{C\eps} r^{n-1} \exp \Big({ - \frac{r^2}{Y\tau} } \Big)dr \\
 &\qquad+ C \tau^{-n/2} \exp \Big( {- \frac{\rho^2}{10 Y \tau}} \Big).
\end{align*}
So, given $y$, we may choose $\rho$ small enough (depending on $\rrm(y)$) to make the first term on the right-hand side arbitrarily small.
Next we can choose $\tau$ small enough to make the second and third term arbitrarily small.
We then obtain the following statement:
For any $\alpha, s > 0$ and $T < \infty$ there are $J = J(\alpha, s, T), S_0=S_0(\alpha, s, T) < \infty$ such that if $j > J$ and $S > S_0$, then we have for large $i$
\begin{equation} \label{eq:statement1}
 \bigg| f_j(y) - \int_{B(x_j, S) \cap \RR'_j} K(y, \cdot, T) f_j \eta_i \phi_{S,i} \bigg| < \alpha \textQq{for all} y \in B(x_j,4) \cap \{ \rrm > s \}. 
\end{equation}

Let now $y_1, y_2 \in B(x_j, 4)$ and let $S >2$.
Since $X'_j$ is the completion of the length metric on $\RR'_j$, we can find an arclength $C^1$-curve $\gamma : [0, a] \to B(x_j, 4) \cap \RR'_j$ of length $a=\ell(\gamma) < 10$ such that $\gamma(0) = y_1$, $\gamma(a) = y_2$.
Then for any $j$ and $T > 1$ and $u \in [0,a]$ we have, by the symmetry of $K$ from the tameness conditions (Definition \ref{Def:tameness}) and by Proposition \ref{Prop:ChengYau-heatequation},
\begin{align*}
\bigg| \frac{d}{du} & \int_{B(x_j, S) \cap \RR'_j}  K(\gamma(u), \cdot, T) f_j \eta_i \phi_{S,i} \bigg| \leq |\gamma' (u)| \int_{\RR'_j} |\nabla_1 K |(\gamma(u), z, T) |f_j (z)| dg(z) \displaybreak[1] \\
&\leq \frac{C}{T^{(n+1)/2}} \int_{\RR'_j} \exp \bigg({ - \frac{d^2(\gamma(u), z)}{2YT} }\bigg) (1+ d(x_j, z))^{C \eps} dg(z) \displaybreak[1] \\
&\leq \frac{C}{T^{(n+1)/2}} \int_{\RR'_j} \exp \bigg({ - \frac{d^2(x_j, z)}{4YT} + \frac{d(x_j, \gamma(u))^2}{2YT} }\bigg) (1+ d(x_j, z))^{C \eps} dg(z) \displaybreak[1] \\
&\leq \frac{C}{T^{(n+1)/2}} \int_0^\infty \exp \bigg({ - \frac{r^2}{4YT} }\bigg) (1+r)^{C \eps} r^{n-1} dr \displaybreak[1] \\
&= \frac{C}{T^{1/2}}  \int_0^\infty \exp \bigg( {- \frac{r^2}{4Y}} \bigg) (1+T^{1/2} r)^{C \eps} r^{n-1} dr.
\end{align*}
So if $\eps_*$ is small enough such that $C \eps_* < 1$, then the right-hand side can be made arbitrarily small for large $T$.
So we obtain the following statement:
For any $\alpha, s > 0$ there are $T = T (\alpha, s), J = J(\alpha, s), S_0 = S_0 (\alpha, s) < \infty$ such that for $S > S_0$, $j > J$, any $y_1, y_2 \in B(x_j, 4) \cap \{  \rrm > s \}$ and for large $i$ (depending on $j, y_1, y_2$) we have
\begin{equation} \label{eq:statement2}
 \bigg| \int_{B(x_j, S) \cap \RR'_j}  K(y_1, \cdot, T) f_j \eta_i \phi_{S,i} - \int_{B(x_j, S) \cap \RR'_j} K(y_2, \cdot, T) f_j \eta_i \phi_{S,i} \bigg| < \alpha. 
 \end{equation}

Combining the statement involving (\ref{eq:statement1}) with the statement involving (\ref{eq:statement2}), we conclude:
For all $\alpha, s > 0$ there is $J = J(\alpha, s) < \infty$ such that for any $y_1, y_2 \in B(x_j, 4) \cap \{ \rrm > s \}$ we have
\[ |f_j(y_1) - f_j(y_2)|< \alpha. \]

We can finally derive (\ref{eq:Claim3fj}):
For any $s > 0$ and we have, using Corollary \ref{Cor:preliminaryLpbound}
\begin{align*}
|B(x_j,&4) \cap \RR'_j|  \int_{B(x_j, 4) \cap \RR'_j}  \bigg| f_j(y_1) - \fint_{B(x_j, 4) \cap \RR'_j} f_j(y_2) dg(y_2) \bigg| dg(y_1)   \\
 &\leq \int_{B(x_j, 4) \cap \RR'_j} \int_{B(x_j, 4) \cap \RR'_j} | f_j(y_1) - f_j (y_2) | dg(y_1) dg(y_2) \\
 &\leq \int_{B(x_j, 4) \cap \{ \rrm > s \}} \int_{B(x_j, 4) \cap \{ \rrm > s\}} | f_j(y_1) - f_j (y_2) | dg(y_1) dg(y_2) \\
 &\qquad + 2 \int_{B(x_j, 4) \cap \RR'_j} \int_{B(x_j, 4) \cap \{ \rrm \leq s \} \cap \RR'_j}  |f_j(y_1) - f_j(y_2) | dg(y_1) dg(y_2) \\
 &\leq |B(x_j, 4) \cap \RR'_j|^2 \sup_{(y_1, y_2) \in (B(x_j,4) \cap \{ \rrm > s \})^2 } |f_j(y_1) - f_j (y_2) |  \\
 &\qquad + C |B(x_j, 4) \cap \RR'_j| \cdot |B(x_j, 4) \cap \{ 0 < \rrm \leq s \} |  \\
 &\leq C  \sup_{(y_1, y_2) \in (B(x_j,4) \cap \{ \rrm > s \})^2 } |f_j(y_1) - f_j (y_2) | + C s^{0.9} .
\end{align*}
The second term can be made arbitrarily small by choosing $s$ small enough.
Once $s$ is chosen, the first term goes to $0$ as $j \to \infty$.
This proves (\ref{eq:Claim3fj}).
\end{proof}

\begin{Claim4}
We have
\begin{equation} \label{eq:nabalmostconst}
 \int_{B(x_j, 4) \cap \RR'_j} \bigg| |\nabla v^k_j |^2 - \fint_{B(x_j, 4) \cap \RR'_j} |\nabla v^k_j|^2 \bigg| \to 0. 
\end{equation}
and
\begin{equation} \label{eq:Hessianto0}
 \int_{B(x_j, 2) \cap \RR'_j} |\nabla^2 v^k_j |^2  \to 0. 
\end{equation}
as $j \to \infty$.
\end{Claim4}

\begin{proof}
Identity (\ref{eq:nabalmostconst}) follows using (\ref{eq:navlnabvkto0}) and (\ref{eq:omegato0}) from Claim 3.
Identity (\ref{eq:Hessianto0}) follows from Lemma \ref{Lem:Hessianbound}, using (\ref{eq:nabalmostconst}).
\end{proof}

Recall that since $v_j |_{B(x_j,2)}$ is an $\eps$-splitting we have for $l = 1, \ldots, k-1$
\[ \bigg| 1-  \fint_{B(x_j, 2) \cap \RR'_j} \big| \nabla v^l_j \big|^2 \bigg| < C \eps \textQq{and} \qquad \bigg|   \fint_{B(x_j, 2) \cap \RR'_j} \big\langle \nabla v^l_j, \nabla v^k_j \big\rangle \bigg| < C \eps. \]
So if $\eps_*$ is sufficiently small, we can find
\[ 1 - C \eps < a^1_j, \ldots, a^{k-1}_j < 1 + C \eps \]
such that
\[ \fint_{B(x_j,2) \cap \RR'_j} \big\langle \nabla v^l_j, \nabla v^k_j \big\rangle = a^l_j \fint_{B(x_j, 2) \cap \RR'_j} \big| \nabla v^l_j \big|^2 . \]
Set now $(\td{v}^1_j, \ldots, \td{v}^{k-1}_j) := (v^1_j, \ldots, v^{k-1}_j)$.
\[ \td{v}^k_j := v^k - a^1_j v^1_j - \ldots - a^{k-1}_j v^{k-1}_j . \]
Then for any $l = 1, \ldots, k-1$
\begin{multline*}
 \fint_{B(x_j,2) \cap \RR'_j} \big| \big\langle \nabla v^l_j, \nabla \td{v}^k_j \big\rangle \big|  \leq \fint_{B(x_j,2) \cap \RR'_j} \bigg| \big\langle \nabla v^l_j, \nabla v^k_j \big\rangle - \fint_{B(x_j,2) \cap \RR'_j} \big\langle \nabla v^l_j, \nabla v^k_j \big\rangle \bigg| \\
 + \fint_{B(x_j,2) \cap \RR'_j} \bigg| a^l_j \big| \nabla v^l_j \big|^2 - a^l_j  \fint_{B(x_j,2) \cap \RR'_j} \big| \nabla v^l_j \big|^2 \bigg| \\
 + \sum_{i = 1,i\neq l}^{k-1}  \fint_{B(x_j,2) \cap \RR'_j} a^i_j \big| \big\langle \nabla v^i_j, \nabla v^k_j \big\rangle \big|.
 \end{multline*}
The first term on the right-hand side goes to $0$ as $j \to \infty$ by Claim 3, the second and third term go to $0$ as $j \to \infty$ since $(v^1_j, \ldots, v^{k-1}_j) |_{B(x_j,2)}$ is an $\eps_j$-splitting for $\eps_j \to 0$ and $a^l_j, a^i_j$ are uniformly bounded.
So we obtain that for any $l_1 = 1, \ldots, k-1$ and any $l_2 = 1, \ldots, k$ we have
\begin{equation} \label{eq:km1almostorth}
 \fint_{B(x_j, 2) \cap \RR'_j} \big| \big\langle \nabla \td{v}^{l_1}_j, \nabla \td{v}^{l_2}_j \big\rangle - \delta_{ab} \big| \to 0 
\end{equation}
as $j \to \infty$.
Moreover, $\td{v}_j |_{B(x_j, 2)}$ is still a $C \eps$-splitting.
We now need to modify $\td{v}$ again to ensure that this identity also holds for $l_1=l_2=k$.
For this set
\[ b_j := \fint_{B(x_j,2) \cap \RR'_j} \big| \nabla \td{v}^k_j \big|^2  \]
and note that $| 1 - b_j | < C \eps$.
Now let $(w^1_j, \ldots, w^{k-1}_j) := (\td{v}^1_j, \ldots, \td{v}^{k-1}_j)$ and $w^k_j := b_j^{-1/2} \td{v}^k_j$.
Then $w_j |_{B(x_j, 2)}$ is a $C \eps$-splitting and
\begin{equation} \label{eq:nabwksqto0}
 \fint_{B(x_j,2) \cap \RR'_j} \bigg| \big| \nabla w^k_j \big|^2 - 1 \bigg| \leq b_j^{-2} \fint_{B(x_j,2) \cap \RR'_j} \bigg| \big| \nabla \td{v}^k_j \big|^2 - \fint_{B(x_j, 2) \cap \RR'_j} \big| \nabla \td{v}^k_j \big|^2 \bigg|. 
\end{equation}
Using (\ref{eq:km1almostorth}), (\ref{eq:omegato0}) from Claim 3 for $l = k-1$ and $l = k$ and the fact that $\omega_j^l = \nabla v^1_j \wedge \ldots \wedge \nabla v^l_j$, we find that the right-hand side of (\ref{eq:nabwksqto0}) goes to $0$ as $j \to \infty$.
It follows that for all $l_1,l_2 = 1, \ldots, k$ we have
\begin{equation} \label{eq:wjnabtodelta}
 \fint_{B(x_j, 2) \cap \RR'_j} \big| \big\langle \nabla w^{l_1}_j, \nabla w^{l_2}_j \big\rangle - \delta_{l_1 l_2} \big| \to 0. 
\end{equation}
Moreover, by Claim 4 we have for any $l = 1, \ldots, k$
\begin{equation} \label{eq:wjnab2to0}
 \fint_{B(x_j, 2) \cap \RR'_j} \big| \nabla^2 w^l_j \big|^2 \to 0.
\end{equation}
It remains to show the following claim.

\begin{Claim5}
We have $|\nabla w | < 1 + \eps_j$ in the sense of (\ref{eq:epssplittinggradientestimate}) in Definition \ref{Def:epssplitting} on $B(x_j, 1) \cap \RR'_j$, where $\eps_j \to 0$ as $j \to \infty$.
\end{Claim5}

\begin{proof}
The claim follows from Lemma \ref{Lem:stronggradientestimate} using (\ref{eq:wjnab2to0}) and (\ref{eq:wjnabtodelta}).
\end{proof}

We can finally finish the proof of Proposition \ref{Prop:TransformationThm}.
Note that $(w^1_j, \ldots, w^k_j) = A''_j (v^1_j, \ldots, v^k_j)$ for some upper triangular matrices $A''_j$ with positive diagonal entries.
Moreover, by (\ref{eq:wjnabtodelta}), (\ref{eq:wjnab2to0}) and Claim 5, we find that $(w^1_j, \ldots, w^k_j)|_{B(x_j, 1)}$ is an $\eps_j$-splitting for some $\eps_j \to 0$.
So for sufficiently large $j$ it is an $\eps$-splitting, which contradicts our assumptions.
\end{proof}

\begin{Lemma}[Weak Version of Slicing Theorem, cf {\cite[Theorem 1.23]{Cheeger-Naber-Codim4}}] \label{Lem:weakslicing}
For every $\eps, \alpha > 0$, $\mathbf{p}_0 > 3$ and $Y < \infty$ there are $\delta = \delta (\eps, \alpha, \mathbf{p}_0, Y) > 0$ such that the following holds:

Let $\XX = (X, d, \RR, g)$ be a singular space with mild singularities of codimension $\mathbf{p}_0$.
Assume that $\XX$ is $Y$-tame and $Y$-regular at scale $\delta^{-1}$.
Assume moreover that $\Ric = \lambda g$ on $\RR$ for some $|\lambda | \leq (n-1) \delta^2$.

Let $p \in X$ and $u : B(p,1) \to \IR^{n-2}$ a $\delta$-splitting map.
Then there is an $s \in \IR^{n-2}$ such that:
\begin{enumerate}[label=(\alph*)]
\item $u^{-1} (s) \cap B(p, \alpha) \neq \emptyset$,
\item for all $x \in u^{-1} (s)$ and $0 < r \leq 1/10$ there is a lower triangular matrix $A$ with positive diagonal entries such that $A u|_{B(x,r)} : B(x,r) \to \IR^{n-2}$ is an $\eps$-splitting map.
\item $u^{-1} (s) \cap B(p,1/2) \subset \RR$,
\end{enumerate}
\end{Lemma}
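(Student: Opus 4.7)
The plan is to locate $s \in \IR^{n-2}$ so that the fiber $u^{-1}(s)$ avoids two auxiliary subsets: the \emph{bad set} $E := \{x \in B(p, 3/4) : s^\delta_x > 0\}$, where $s^\delta_x$ is the singular scale of Definition~\ref{Def:singscale} associated to $u$, together with an open thickening of the singular part $X \setminus \RR$. Avoidance of $E$ combined with the Transformation Theorem (Proposition~\ref{Prop:TransformationThm}) applied at every scale $0 < r \leq 1/10$ yields (b); avoidance of the thickened singular part yields (c); and (a) is arranged by restricting the search for $s$ to a ball of radius $\sim \alpha$ around $u(p)$ in $\IR^{n-2}$.

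First, apply Lemma~\ref{Lem:L1Laplacesplitting} with its parameter $\eps$ replaced by a small $\eta \ll \delta$; choosing the present hypothesis $\delta$ sufficiently small forces $\int_{B(p,3/4) \cap \RR} d|\mu_{\triangle |\omega^l|}| < \eta$ for every $l \leq n-2$. For each $x \in E$ there is a radius $r_x \in (0, 1/2]$ witnessing the failure of (\ref{eq:singscaleequation}); since $u$ is a $\delta$-splitting and by the volume comparison of Proposition~\ref{Prop:volumecomparison} we have $\int_{B(x,r_x)\cap\RR} |\omega^l| \geq c r_x^n$, whence
\[ r_x^2 \int_{B(x, r_x) \cap \RR} d|\mu_{\triangle |\omega^l|}| > c\delta r_x^n. \]
A standard Vitali covering argument (doubling again provided by Proposition~\ref{Prop:volumecomparison}) then gives $|E \cap B(p, 1/2)| \leq C\eta/\delta$, which is as small as desired. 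Next, by Corollary~\ref{Cor:preliminaryLpbound} the open thickening $T_\rho := \{0 < \rrm < \rho\} \cap B(p, 3/4)$ of $X \setminus \RR$ satisfies $|T_\rho| < C\rho^{1-\eps_0}$. Finally, since $u$ is a $\delta$-splitting, one has $\bigl|\,|\omega^{n-2}| - 1\bigr| < C\delta$ in $L^1(B(p, 3/4) \cap \RR)$, so the coarea formula applied to $u|_\RR$ yields, for any Borel $F \subset B(p, 3/4) \cap \RR$,
\[ \int_{\IR^{n-2}} \mathcal{H}^2\bigl(u^{-1}(s) \cap F\bigr) \, ds \leq (1 + C\delta) |F|. \]
By an argument paralleling Lemma~\ref{Lem:eps0splitting}, $u(B(p, \alpha/2))$ contains $B^{\IR^{n-2}}(u(p), (1-C\delta)\alpha/2)$, so the target of admissible $s$ has measure at least $c\alpha^{n-2}$. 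Taking $F$ to be a small open neighborhood of $E \cup (X\setminus\RR) \cup T_\rho$ (shrinking with $\rho \to 0$) and applying Chebyshev in $s$, we find $s \in B(u(p), \alpha/2)$ for which $u^{-1}(s)$ is disjoint from this neighborhood, yielding (a), (b), (c).

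The main technical obstacle is the passage from the integral $\mathcal{H}^2$-bound in the coarea step to pointwise avoidance of the open sets $E$ and the thickened singular set by a single slice. This is precisely where the codimension assumption $\mathbf{p}_0 > 3$ is essential: both the singular part and (by the Vitali argument) the bad set $E$ lie in subsets of codimension strictly greater than $2$, so a generic $2$-dimensional slice can be arranged to miss them. The clean implementation requires either a maximal-function argument inside $\IR^{n-2}$ (bounding those $s$ for which $\mathcal{H}^2(u^{-1}(s) \cap F) $ exceeds any positive threshold) combined with a limiting procedure as the thickening parameter $\rho$ tends to $0$, or the dyadic-scale decomposition of Cheeger-Naber \cite{Cheeger-Naber-Codim4}. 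A secondary subtlety in the first step is that $\mu_{\triangle|\omega^l|}$ is signed; its positive part is controlled by the Kato-type Bochner bound appearing in the proof of Lemma~\ref{Lem:L1Laplacesplitting}, so the $L^1$ total variation is genuinely bounded by $\eta$ and the Vitali estimate goes through.
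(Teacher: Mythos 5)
Your overall plan---identify a bad set $E$ of points where the singular scale $s^\delta_x$ of Definition~\ref{Def:singscale} is positive, control $E$ by a Vitali covering built from the failure of (\ref{eq:singscaleequation}) and the $L^1$ bound of Lemma~\ref{Lem:L1Laplacesplitting}, and then pick a slice $u^{-1}(s)$ avoiding both $E$ and the singular set---is the right skeleton, and it is indeed the Cheeger--Naber argument that the paper invokes. The paper itself does much less: for assertions (a) and (b) it simply cites Theorem~1.23 of \cite{Cheeger-Naber-Codim4} (with Proposition~\ref{Prop:TransformationThm} supplying the singular-space version of the Transformation Theorem), and its only new contribution is a short covering argument showing $|u(B(p,1/2)\setminus\RR)|=0$, which gives the extra assertion~(c). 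So you are reconstructing a proof the paper delegates.

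The genuine gap is in the final avoidance step. You deduce from the coarea formula that $\int_{\IR^{n-2}}\mathcal{H}^2(u^{-1}(s)\cap F)\,ds \le (1+C\delta)|F|$ and then invoke Chebyshev to find an $s$ for which $u^{-1}(s)$ is \emph{disjoint} from $F$. This inference does not hold: Chebyshev controls the measure of $\{s:\mathcal{H}^2(u^{-1}(s)\cap F)\ge t\}$ for $t>0$, but the set of $s$ for which $u^{-1}(s)$ \emph{meets} $F$ at all is exactly $u(F)$, and $u(F)$ is not controlled by the coarea integral --- a slice can intersect $F$ in a set of $\mathcal{H}^2$-measure zero. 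The quantity you actually need to bound is $|u(E)|$ (and $|u(B(p,1/2)\setminus\RR)|$) directly. Fortunately your own Vitali covering already does this work: if $E\subset\bigcup_i B(x_i,5r_i)$ with $B(x_i,r_i)$ disjoint and $\sum_i r_i^{n-2}$ small, then since $u$ is $(1+\delta)$-Lipschitz one has $u(E)\subset\bigcup_i B^{\IR^{n-2}}(u(x_i),Cr_i)$, hence $|u(E)|\le C\sum_i r_i^{n-2}$, which is the small quantity you already produced. Similarly, for the singular part, covering $\{\rrm<\sigma\}\cap B(p,1/2)$ by $N\lesssim \sigma^{-n}\cdot\sigma^{\mathbf{p}}$ balls of radius $2\sigma$ (using codimension $\mathbf{p}_0>3$, $\mathbf{p}<\mathbf{p}_0$) gives $|u(\{\rrm<\sigma\}\cap B(p,1/2))|\lesssim \sigma^{\mathbf{p}-2}\to 0$; this is precisely the computation the paper carries out for~(c), and it is cleaner than the thickening-plus-limiting scheme you sketch. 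Once $|u(E)|$ is small and $|u(B(p,1/2)\setminus\RR)|=0$, any $s\in u(B(p,\alpha))\setminus\bigl(u(E)\cup u(B(p,1/2)\setminus\RR)\bigr)$, which is nonempty because $u(B(p,\alpha))$ contains a ball of radius $\sim\alpha$, gives (a), (b), (c) simultaneously. So the ingredients you list are sufficient, but the coarea step should be replaced by the direct image-measure bounds; the coarea/Chebyshev route is a dead end for the disjointness you need.
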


\begin{proof}
The proof is follows as in \cite{Cheeger-Naber-Codim4}, using Proposition \ref{Prop:TransformationThm}.
Following the lines of the proof yields a subset $U \subset \IR^{n-2}$ of non-zero measure such that assertions (a) and (b) hold for all $s \in U$.
To ensure assertion (c), it remains to check that $U \not\subset u (  B(p, 1/2)  \setminus \RR)$.
To see this, fix some $0 < \sigma < 1/10$ and consider a minimal $2\sigma$-net $z_1, \ldots, z_N \in B(p, 1/2)  \cap \{  \rrm  < \sigma \}$ of $ B(p, 1/2)  \cap \{  \rrm  < \sigma \}$.
Then $B(z_1, \sigma), \ldots, B(z_N, \sigma) \subset B(p, 1) \cap \{  \rrm < 2\sigma \}$ are pairwise disjoint and hence for some generic constant $C_* < \infty$, which may depend on $\XX$, but not on $\sigma$, we have
\[ N < C_* \sigma^{-n} | B(p, 1) \cap \{ 0 < \rrm < 2\sigma \} | < C_* \sigma^{-n} \sigma^{3}. \]
On the other hand, the balls $B(z_1, 2\sigma), \ldots, B(z_N, 2\sigma)$ cover $ B(p, 1/2)  \cap \{  \rrm  < \sigma \}$ and thus, for sufficiently small $\delta$, the balls $B(u(z_1), 4 \sigma), \ldots, B(u(z_N), 4 \sigma)$ cover $u ( B(p, 1/2)  \cap \{  \rrm  < \sigma \} )$.
It follows that
\[ \big| u \big(  B(p, 1/2)  \cap \{  \rrm  < \sigma \} \big) \big| \leq C_* \sigma^n \sigma^{-n+3} < C_* \sigma^{3}. \]
Letting $\sigma \to 0$ yields $| u (  B(p, 1/2)  \setminus \RR) | = 0$ and therefore we have indeed $U \not\subset u ( B(p, 1/2)  \setminus \RR)$.
\end{proof}

We can now verify condition (\ref{eq:productcondition}) in Proposition \ref{Prop:LpboundXXversion2}.

\begin{Lemma} \label{Lem:codim4regularity}
For every $\mathbf{p}_0 > 3$ and $Y < \infty$ there is an $\eps = \eps ( \mathbf{p}_0, Y) > 0$ such that the following holds:

Let $\XX$ be a singular space with mild singularities of codimension $\mathbf{p}_0$.
Assume that $\XX$ is $Y$-tame and $Y$-regular at scale $r$ for some $r > 0$.
Assume moreover that $\Ric = \lambda g$ on $\RR$ for some $|\lambda | \leq (n-1) r^{-2}$.

Assume that there is a pointed metric space $(Z, d_Z, z)$ such that
\begin{equation} \label{eq:closetonminus2}
 d_{GH} \big( \big( B(p,r), p \big), \big( B^{Z \times \IR^{n-2}} ((z, 0^{n-2}),r), (z, 0^{n-2} ) \big) \big) < \eps r. 
\end{equation}
Then $\rrm (p) > \eps r$.
\end{Lemma}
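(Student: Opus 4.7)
The plan is to argue by contradiction. Suppose the conclusion fails for some $\mathbf{p}_0 > 3$ and $Y < \infty$; then one can find a sequence of singular spaces $\XX_i$ satisfying all the hypotheses at scale $r_i = 1$ (after rescaling), basepoints $p_i \in X_i$, pointed metric spaces $(Z_i, z_i)$, and $\eps_i \to 0$ such that $d_{GH}(B(p_i, 1), B^{Z_i \times \IR^{n-2}}((z_i, 0^{n-2}), 1)) < \eps_i$ while $\rrm(p_i) \leq \eps_i \to 0$. The strategy is to extract a rescaled pointed limit that is forced to be globally flat Euclidean, thereby contradicting the fact that the curvature radius at a distinguished basepoint remains pinned to a finite value along the sequence.

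First, I would apply Proposition~\ref{Prop:GHsplittingepssplitting} to obtain $\delta_i$-splitting maps $u_i : B(p_i, c) \to \IR^{n-2}$ with $c > 0$ a small universal constant and $\delta_i \to 0$. Next, I would invoke the Weak Slicing Theorem (Lemma~\ref{Lem:weakslicing}) with parameter sequences $\alpha_i, \eta_i \to 0$ to produce a slice $S_i = u_i^{-1}(s_i) \subset \RR_i$ that is entirely regular, meets $B(p_i, \alpha_i c)$, and on which every small ball admits a lower-triangular affine transformation making $u_i$ an $\eta_i$-splitting. Picking $p_i' \in S_i \cap B(p_i, \alpha_i c)$, the $1$-Lipschitz property of $\rrm$ gives $\rrm(p_i') \leq \eps_i + \alpha_i c \to 0$, so $p_i'$ is a sequence of regular points along which the curvature radius degenerates, while still sitting inside a slice with strong splitting properties at every scale below $c/10$.

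I would then rescale $\XX_i$ by $\sigma_i := \rrm(p_i')^{-1} \to \infty$, producing $\XX_i^{(\sigma_i)}$ in which $\rrm^{(\sigma_i)}(p_i') = 1$, the $Y$-tameness and $Y$-regularity hold at the diverging scale $\sigma_i$, the mild codimension-$\mathbf{p}_0$ property is preserved (as it is rescale-invariant), and the Einstein constant $\sigma_i^{-2}\lambda_i$ tends to $0$. The rescaled maps $\sigma_i u_i$ retain the feature that on every ball $B^{(\sigma_i)}(p_i', R)$ with $R \leq c\sigma_i/10$, an affine transformation turns them into an $\eta_i$-splitting. Proposition~\ref{Prop:splittingmapGHsplitting} applied at each fixed $R$ then yields $o_i(1)$-Gromov-Hausdorff closeness of $B^{(\sigma_i)}(p_i', R/100)$ to a product $W \times \IR^{n-2}$. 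Extracting a subsequential pointed limit and assembling the products at successive scales produces a singular space $(\XX_\infty', p_\infty')$ that globally splits as $\XX_\infty' = Z_\infty \times \IR^{n-2}$ with $Z_\infty$ a complete $2$-dimensional singular space, Ricci-flat on its regular part, $Y$-tame and $Y$-regular at all scales, and with mild singularities of codimension $\mathbf{p}_0 > 3$. Since $Z_\infty$ has dimension $2$ and its singular set has Hausdorff dimension at most $2 - \mathbf{p}_0 < -1$, the singular set of $Z_\infty$ must be empty; hence $Z_\infty$ is a smooth complete Ricci-flat (equivalently flat) $2$-manifold. It follows that $\XX_\infty'$ is flat, so $\rrm^\infty(p_\infty') = \infty$; combined with the semicontinuity bound $\rrm^\infty(p_\infty') \leq \liminf_i \rrm^{(\sigma_i)}(p_i') = 1$ from Theorem~\ref{Thm:detailedconvergence}(b), this gives the contradiction.

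The main obstacle will be executing the limit construction in the singular-space category: one needs a convergence statement that extracts a singular-space limit from a sequence of singular spaces with uniform $Y$-tameness, $Y$-regularity, codimension-$\mathbf{p}_0$, and mildness, and that propagates these properties along with the Ricci-flatness to $\XX_\infty'$. A secondary subtlety is verifying that the family of local product decompositions obtained at successive scales via Proposition~\ref{Prop:splittingmapGHsplitting} glues coherently into a single global splitting $\XX_\infty' = Z_\infty \times \IR^{n-2}$; this should follow from the uniqueness of the $2$-dimensional factor at each scale once the splitting directions are identified in the limit with the $\IR^{n-2}$ factor pinned by $\sigma_i u_i$.
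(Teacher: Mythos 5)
Your high-level plan — blow up at a point on the regular slice, obtain a splitting $Z_\infty\times\IR^{n-2}$ in the limit, and derive a contradiction from the flatness of a $2$-dimensional factor — mirrors the paper, but there are two linked gaps that prevent the argument as written from closing.

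The central gap is the one you yourself flag: you invoke a compactness theorem for \emph{singular spaces} that would produce $\XX'_\infty$ as a singular space with all of the properties (mild singularities of codimension $\mathbf{p}_0$, $Y$-tameness, $Y$-regularity, Ricci-flatness on the regular part) inherited from the sequence. No such theorem is available here. Theorem~\ref{Thm:detailedconvergence} handles limits of \emph{smooth} Riemannian manifolds converging to singular spaces; it does not give a precompactness/structure statement for sequences of singular spaces, and building one is not a routine step. Without this tool the dimension count on the singular set of $Z_\infty$ and the final semicontinuity bound via Theorem~\ref{Thm:detailedconvergence}(b) are not supported, since both presuppose a limiting singular-space structure that has not been established.

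The second gap, which is exactly what forces you to rely on the missing compactness theorem, is the omission of the point-picking argument. Rescaling at $p'_i$ so that $\rrm^{(\sigma_i)}(p'_i)=1$ controls the curvature radius only at the one basepoint; other points on the slice in $B^{(\sigma_i)}(p'_i,D)$ can have $\rrm\ll 1$, so you cannot conclude that the slice, let alone a neighborhood of it, is uniformly regular after rescaling. The paper instead selects $x_i\in u_i^{-1}(s_i)\cap B(p_i,1/2)$ minimizing $\rrm(\cdot)\,(1/2-d(p_i,\cdot))^{-1}$; after rescaling by $r_i^{-1}=\rrm(x_i)^{-1}$ this guarantees $\rrm>1/2$ on $(u'_i)^{-1}(s_i)\cap B^{X'_i}(x_i,D)$ for every $D$. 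That uniform lower bound on the slice is what drives the paper's inductive propagation of smoothness: every point of $Z_\infty$ near a ``smooth'' point is again smooth, because $C^3$-convergence near a smooth point plus the $\eps'_i$-splitting property lets one locate a nearby slice point with $\rrm>1/2$, and the slice is a connected length space. This shows directly that the Gromov--Hausdorff limit is a smooth Ricci-flat manifold — no singular-space compactness statement is needed. Corollary~\ref{Cor:GHepsregularity} then upgrades the convergence to $C^3$ everywhere and the contradiction $\rrm(z_\infty,0^{n-2})=1<\infty$ follows. You should incorporate the point-picking and replace the singular-space limit argument with this propagation-of-smoothness argument.
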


\begin{proof}
Using Proposition \ref{Prop:splittingmapGHsplitting} and rescaling, we may replace the assumption (\ref{eq:closetonminus2}) of the lemma by the assumption that there is an $\eps$-splitting map $u : B(p, 1) \to \IR^{n-2}$, that $\XX$ is $Y$-tame and $Y$-regular at scale $\eps^{-1}$ and that the Einstein constant satisfies $|\lambda| \leq (n-1) \eps^2$.
We now need to show that we have $\rrm (p) > \eps$.

Assume that this was not the case for some fixed $Y$ and $\mathbf{p}_0$.
Then we can find a sequence $\eps_i \to 0$ and a sequence of singular spaces $\XX_i$ with mild singularities of codimension $\mathbf{p}_0$ that are $Y$-tame and $Y$-regular at scale $\eps_i^{-1}$ and satisfy $\Ric = \lambda g_i$ on $\RR_i$ for some $|\lambda_i | \leq (n-1) \eps_i^2$.
Moreover, we can find points $p_i \in X_i$ and $\eps_i$-splitting maps $u_i : B(p_i,1) \to \IR^{n-2}$ such that $\rrm (p_i) \leq \eps_i$.
Using Lemma \ref{Lem:weakslicing}, we can find a sequence $\alpha_i \to 0$ and a sequence $s_i \in \IR^{n-2}$ such that assertions (a)--(c) hold.
Note that for any $x \in u^{-1} (s_i) \cap B(p_i, \alpha_i) \neq \emptyset$ we have $\rrm (x) < \rrm (p_i) + \alpha_i \leq \eps_i + \alpha_i$.
For each $i$ pick a point $x_i \in u_i^{-1} (s_i) \cap B(p_i, 1/2)$ where $\rrm (\cdot) (1/2 - d(p_i, \cdot))^{-1}$ attains its minimum.
Then
\[ \rrm (x_i) \big( 1/2 - d(p_i, x_i) \big)^{-1} <  (\eps_i + \alpha_i) (1/2 - \alpha_i)^{-1} \to 0. \]
In particular $r_i := \rrm (x_i) \to 0$.

Rescale each $\XX_i$ now by $r_i^{-1}$ and call the resulting space $\XX'_i$.
The corresponding $\eps_i$-splitting map will be denoted by $u'_i : B^{X'_i} (p_i, r_i^{-1}) \to \IR^{n-2}$.
By the choice of $x_i$, for every $D < \infty$ we have $\rrm > 1/2$ on $(u'_i)^{-1} (s_i) \cap B^{X'_i} (x_i, D)$ for large $i$.

Using Lemma \ref{Lem:weakslicing}(b), we can find a sequence $r'_i \to \infty$ such that $B^{X'_i} (x_i, r'_i) \subset B^{X'_i} (p_i, r_i^{-1})$ and such that there are lower triangular matrices with positive diagonal entries $A_i \in \IR^{(n-2) \times (n-2)}$ such that $u''_i := A_i u'_i |_{B^{X'_i} (x_i, r)}$ is an $\eps'_i$-splitting for all $r \in [1, r'_i]$ for some $\eps'_i \to 0$.
Set $s''_i := A_i s_i$.
Let us now pass to a subsequence and assume that we have Gromov-Hausdorff convergence $(X'_i, d_{X'_i}, x_i)$ to some pointed metric space $(X_\infty, d_{X_\infty}, x_\infty)$ and convergence of the (uniformly Lipschitz) maps $u''_i - s''_i$ to some $1$-Lipschitz $u_\infty : X_\infty \to \IR^{n-2}$.
With the help of Proposition \ref{Prop:splittingmapGHsplitting}, we conclude that $(X_\infty, d_{X_\infty})$ is isometric to a Cartesian product $Z_\infty \times \IR^{n-2}$ for some complete metric length space $(Z_\infty, d_{\infty})$.
We will henceforth assume that $X_\infty = Z_\infty \times \IR^{n-2}$.
We will moreover assume that $x_\infty$ corresponds to the point $(z_\infty, 0^{n-2})$ for some $z_\infty \in Z_\infty$ and that $u_\infty : X_\infty = Z_\infty \times \IR^{n-2} \to \IR^{n-2}$ corresponds to the projection onto the second factor.

Fix a Gromov-Hausdorff convergence 
\begin{equation} \label{eq:GHconvergencefix}
(X'_i, d_{X'_i}, x_i) \to (Z_\infty \times \IR^{n-2}, d_{Z_\infty \times \IR^{n-2}}, (z_\infty, 0^{n-2})), 
\end{equation}
e.g. by specifying a sequence of almost-isometries on larger and larger balls.
We now claim that for every $z \in Z_\infty$ there is a sequence $y_i \in X'_i$ that converges to $(z,0^{n-2})$ with respect to (\ref{eq:GHconvergencefix}) such that $\rrm (y_i) > 1/10$ for infinitely many $i$.
Call every point $z \in Z_\infty$ that satisfies this claim \emph{smooth}.
By the choice of the points $x_i$, we know that $z_\infty$ is smooth.
Consider now a smooth point $z \in Z_\infty$ and let $y_i \in X'_i$ be the corresponding sequence that converges to $(z, 0^{n-2})$.
Then, after passing to a subsequence, the Gromov-Hausdorff convergence (\ref{eq:GHconvergencefix}) is actually $C^3$ on $B^{Z_\infty \times \IR^{n-2}} ((z, 0^{n-2}), 1/100)$ and the convergence $u''_i - s''_i \to u_\infty$ happens in $C^2$.
So, since $u''_i (y_i) - s''_i \to 0$, we can find points $y'_i \in X'_i$ with $d^{X'_i} (y_i, y'_i) \to 0$ such that $u''_i (y'_i) - s''_i = 0$.
This implies $u'_i (y'_i) = s_i$ and, by our previous discussion, that $\rrm (y'_i) > 1/2$ for large $i$.
Therefore, all points in $B^{Z_\infty} (z, 1/4)$ are smooth.
Summarizing our arguments, we have shown that for any smooth $z \in Z_\infty$, all points in $B^{Z_\infty} (z, 1/4)$ are smooth.
As $Z_\infty$ is a length space, it follows that all points of $Z_\infty$ are smooth.

So $(Z_\infty, d_{Z_\infty})$ is the length space of complete $2$-dimensional Riemannian metric $g_{Z_\infty}$ of regularity $C^2$ and the Gromov-Hausdorff convergence (\ref{eq:GHconvergencefix}) is $C^3$ in a neighborhood of $Z_\infty \times \{ 0^{n-2} \}$.
As the limiting metric must be Ricci flat, we conclude that $g_{Z_\infty}$ is locally flat.
So the limit $Z_\infty \times \IR^{n-2}$ is flat as well.
Using Corollary \ref{Cor:GHepsregularity}, we conclude that the Gromov-Hausdorff convergence (\ref{eq:GHconvergencefix}) is $C^3$ everywhere.
Thus $\rrm (z_\infty, 0^{n-2}) = \lim_{i \to \infty} \rrm (x_i) = 1$, which contradicts the flatness of the limit.
\end{proof}

We can now prove the main theorem of this paper, Theorem \ref{Thm:Lpbound}.

\begin{proof}[Proof of Theorem \ref{Thm:Lpbound}]
The theorem is a consequence of Proposition \ref{Prop:LpboundXXversion2} and Lemma~\ref{Lem:codim4regularity}.
\end{proof}

\bibliography{RF-bounded-scal}{}
\bibliographystyle{amsalpha}
\end{document}